\newtheorem{theorem}{Theorem}[section]
\newtheorem{corollary}[theorem]{Corollary}
\newtheorem{lemma}[theorem]{Lemma}
\newtheorem{proposition}[theorem]{Proposition}
\theoremstyle{definition}
\newtheorem{definition}[theorem]{Definition}
\newtheorem{remark}[theorem]{Remark}
\newtheorem{example}[theorem]{Example}
\newtheorem{question}[theorem]{Question}
\newcommand{\rank}{{\rm rank}}
\newcommand{\CPA}{{\rm CPA}}
\newcommand{\rcp}{{\rm rcp}}
\newcommand{\spn}{{\rm span}}
\newcommand{\re}{{\rm re}}
\newcommand{\im}{{\rm im}}
\newcommand{\interior}{{\rm int}}
\newcommand{\rc}{{\rm rc}}
\newcommand{\id}{{\rm id}}
\newcommand{\cB}{{\mathcal B}}
\newcommand{\cH}{{\mathcal H}}
\newcommand{\cW}{{\mathcal W}}
\newcommand{\cE}{{\mathcal E}}
\newcommand{\cU}{{\mathcal U}}
\newcommand{\cQ}{{\mathcal Q}}
\newcommand{\cP}{{\mathcal P}}
\newcommand{\cR}{{\mathcal R}}
\newcommand{\cS}{{\mathcal S}}
\newcommand{\cZ}{{\mathcal Z}}
\newcommand{\cV}{{\mathcal V}}
\newcommand{\cY}{{\mathcal Y}}
\newcommand{\Cb}{{\mathbb C}}
\newcommand{\Zb}{{\mathbb Z}}
\newcommand{\Rb}{{\mathbb R}}
\newcommand{\Nb}{{\mathbb N}}
\newcommand{\sB}{{\mathscr B}}
\newcommand{\sA}{{\mathscr A}}
\newcommand{\sI}{{\mathscr I}}
\newcommand{\sP}{{\mathscr P}}
\newcommand{\sR}{{\mathscr R}}
\newcommand{\sX}{{\mathscr X}}
\newcommand{\sY}{{\mathscr Y}}
\newcommand{\sZ}{{\mathscr Z}}
\newcommand{\diam}{{\rm diam}}
\newcommand{\supp}{{\rm supp}}
\newcommand{\esssup}{{\rm ess\, sup}}
\newcommand{\IE}{{\rm IE}}
\newcommand{\IN}{{\rm IN}}
\newcommand{\oA}{{\boldsymbol{A}}}
\newcommand{\oB}{{\boldsymbol{B}}}
\newcommand{\oC}{{\boldsymbol{C}}}
\newcommand{\ox}{{\boldsymbol{x}}}
\newcommand{\oX}{{\boldsymbol{X}}}
\newcommand{\comp}{{\rm c}}
\newcommand{\comb}{{\rm c}}
\newcommand{\hcpa}{{\rm hcpa}}
\newcommand{\upind}{{\overline{\rm I}}}
\newcommand{\lwind}{{\underline{\rm I}}}
\newcommand{\hmeas}{{h}}
\newcommand{\Hmeas}{{H}}
\newcommand{\fs}{{\mathfrak{s}}}
\newcommand{\cpct}{{\rm cpct}}
\newcommand{\wm}{{\rm wm}}
\newcommand{\lwh}{{\underline{h}}}
\newcommand{\uph}{{\overline{h}}}
\newcommand{\LX}{L^\infty (X,\mu )}
\begin{document}

\title{Combinatorial independence in measurable dynamics}

\author{David Kerr}
\author{Hanfeng Li}
\address{\hskip-\parindent
David Kerr, Department of Mathematics, Texas A{\&}M University,
College Station TX 77843-3368, U.S.A.}
\email{kerr@math.tamu.edu}

\address{\hskip-\parindent
Hanfeng Li, Department of Mathematics, SUNY at Buffalo,
Buffalo NY 14260-2900, U.S.A.}
\email{hfli@math.buffalo.edu}

\date{May 5, 2007}

\begin{abstract}
We develop a fine-scale local analysis of measure entropy and measure
sequence entropy based on combinatorial independence.
The concepts of measure
IE-tuples and measure IN-tuples are introduced and studied in analogy with their
counterparts in topological dynamics.
Local characterizations of the Pinsker von Neumann algebra and its sequence entropy
analogue are given in terms of combinatorial independence, $\ell_1$ geometry, and
Voiculescu's completely positive approximation entropy.
Among the novel features of our local study is the treatment of general discrete
acting groups, with the structural assumption of amenability in 
the case of entropy.  
\end{abstract}

\maketitle

\section{Introduction}

Many of the fundamental concepts in measurable dynamics revolve around the notion
of probabilistic independence as an indicator of randomness or unpredictability.
Ergodicity, weak mixing, and mixing are all expressions of asymptotic independence,
whether in a mean or strict sense. At a stronger level, completely positive entropy
can be characterized by a type of uniform asymptotic independence (see \cite{ETJ}).

In topological dynamics the appropriate notion of independence is the combinatorial
(or set-theoretic) one, according to which a family of tuples of subsets of a set is
independent if when picking any one subset from each of finitely many tuples
one always ends up with a collection having nonempty intersection.
Combinatorial independence manifests itself dynamically in many
ways and has long played an important role in the topological theory, although it has not
received the same kind of systematic attention as probabilistic independence has in
measurable dynamics. 
In fact it has only been recently that precise relationships
have been established between independence and the properties of
nullness, tameness, and positive entropy \cite{LVRA,Ind}.
For example, a topological $\Zb$-system has uniformly positive entropy if and only if
the orbit of each pair of nonempty open subsets of the space is independent along a
positive density subset of $\Zb$ \cite{LVRA} (see \cite{Ind} for a combinatorial proof
that applies more generally to actions of discrete amenable groups).

The aim of this paper is to develop a theory of combinatorial independence in
measurable dynamics. That such a cross-pollination is at all possible
might be surprising, but it ends up providing, among other things, the missing link
for a geometric understanding of local entropy production in connection with 
Voiculescu's operator-algebraic notion of approximation entropy \cite{Voi}. 
One of our main motivations is to establish local combinatorial
and linear-geometric characterizations of positive entropy and positive sequence entropy.
For automorphisms of a Lebesgue space, the extreme situation of complete positive
entropy was characterized in terms
of combinatorial independence by Glasner and Weiss in Section~3 of \cite{GW}
using Karpovsky and Milman's generalization of the Sauer-Perles-Shelah lemma.
What we see in this case however is an essentially topological phenomenon
whereby independence over positive density subsets of iterates occurs for every
finite partition of the space into sets of positive measure
(cf.\ Theorem~\ref{T-cpe} in this paper). This does not help us much in the analysis
of entropy production for other kinds of systems, as it can easily happen
that combinatorial independence is present but not in a robust enough way
to be measure-theoretically meaningful (indeed every free ergodic
$\Zb$-system has a minimal topological model
with uniformly positive entropy \cite{SEUPEM}).
We seek moreover a fine-scale localization predicated not on
partitions but rather on tuples of subsets that together compose only a very small
fraction of the space, which the Glasner-Weiss result provides for $\Zb$-systems with
completely positive entropy but in the purely topological sense of \cite{Ind}.

It turns out that we should ask whether combinatorial
independence can be observed to the appropriate degree in orbits of tuples of
subsets whenever we hide from view a small portion of the
ambient space at each stage of the dynamics. Thus the recognition of positive entropy
or positive sequence entropy becomes a purely combinatorial issue, with 
the measure being relegated to the role of observational control device.
This way of counting sets appears in the global entropy formulas of Katok 
for metrizable topological $\Zb$-systems with an ergodic invariant measure \cite{Katok}, 
which rely on the Shannon-McMillan-Breiman
theorem for the uniformization of entropy measurement. Here we avoid
the Shannon-McMillan-Breiman theorem in our focus on local entropy production 
and its relation to independence for arbitrary systems. 
What is particularly important at the technical level is that we be able to make observations 
over finite sets of group elements in a nonuniform manner (see Subsection~\ref{SS-density}), 
as this will permit us to work with $L^2$ perturbations and thereby establish the link with
Voiculescu's approximation entropy. We will thus be developing
probabilistic arguments that will render the theory rather different from the
topological one, despite the obvious analogies in the statements of the main results,
although we will make critical use of the key combinatorial lemma from \cite{Ind}.

Our basic framework will be that of a discrete group acting on a
compact Hausdorff space with an invariant Borel
probability measure, with the structural assumption of amenability on the group in the
context of entropy. With a couple of exceptions, our results do not
require any restrictions of metrizability on the space or countability on the group. 
In analogy with topological IE-tuples and IN-tuples \cite{Ind},
we introduce the notions of measure IE-tuple (in the entropy context) and
measure IN-tuple (in the sequence entropy context) as tuples of points
in the space such that the orbit of every tuple of neigbourhoods of the respective
points exhibits independence with fixed density on certain finite subsets.
For IE-tuples these finite subsets will be required to be approximately invariant 
in the sense of the F{\o}lner characterization of amenability, while for IN-tuples we
will demand that they can be taken to be arbitrarily large.

Our main application of measure IE-tuples will be the derivation of a series of local
descriptions of the Pinsker $\sigma$-algebra (or maximal zero entropy factor)
in terms of combinatorial independence,
$\ell_1$ geometry, and Voiculescu's c.p.\ (completely positive) approximation entropy
(Theorem~\ref{T-Pinsker}).
These local descriptions are formulated as conditions on an $L^\infty$ function $f$
which are equivalent to the containment of $f$ in the Pinsker von Neumann algebra,
i.e., the von Neumann subalgebra corresponding to the Pinsker $\sigma$-algebra.
These conditions include:
\begin{enumerate}
\item there exist $\lambda\geq 1$ and $d>0$ such that
every $L^2$ perturbation of the orbit of $f$ exhibits 
$\lambda$-equivalence to the standard basis of $\ell_1$ over subsets 
of F{\o}lner sets with density at least $d$,

\item the local c.p.\ approximation entropy with respect to $f$ is positive.
\end{enumerate}
If the action is ergodic we can add:
\begin{enumerate}
\item[(3)] every $L^2$ perturbation of the orbit of $f$ contains a
subset of positive asymptotic density which is equivalent to the standard basis of
$\ell_1$.
\end{enumerate}
In the case that $f$ is continuous we can add:
\begin{enumerate}
\item[(4)] $f$ separates a measure IE-pair.
\end{enumerate}
This provides new geometric insight into the phenomenon of positive
c.p.\ approximation entropy, in parallel to what was done in the
topological setting for Voiculescu-Brown approximation entropy in \cite{EID,DEBS}.
In fact the only way to establish positive c.p.\ approximation entropy
until now has been by means of a comparison with Connes-Narnhofer-Thirring
entropy, whose definition is based on Abelian models (see Proposition~3.6 in \cite{Voi}).
We also do not require the Shannon-McMillan-Breiman theorem, which
factors crucially into Voiculescu's proof for $^*$-automorphisms in
the separable commutative ergodic case that c.p.\ approximation entropy
coincides with the underlying measure entropy \cite[Cor.\ 3.8]{Voi}.
One consequence of the characterization of elements in the Pinsker von Neumann algebra
given by condition (1) is a linear-geometric explanation for the well-known disjointness 
between zero entropy systems and systems with completely positive entropy,
as discussed at the end of Section~\ref{S-Pinsker}.  

The notion of measure entropy tuple
was introduced in \cite{BHMMR} in the pair case and in \cite{LVRA} in general
and has been a key tool in the local study of both measure entropy and 
topological entropy for $\Zb$-systems (see Section~19 of \cite{ETJ}).
We show in Theorem~\ref{T-IE E} that nondiagonal measure IE-tuples are
the same as measure entropy tuples. The argument depends in part on a
theorem of Huang and Ye for $\Zb$-systems from \cite{LVRA}, whose proof
involves taking powers of the generating automorphism and thus does not
extend as is to actions of amenable groups. For more general systems we
reduce to Huang and Ye's result by applying the orbit equivalence technique of
Rudolph and Weiss \cite{EMAGA}.
We point this out in particular because, with the
exception of the product formula of Theorem~\ref{T-IE product} 
and the characterizations of completely positive entropy in Theorem~\ref{T-cpe}, our 
study of measure IE-tuples and their relation to the topological theory
does not otherwise rely on orbit equivalence or any special treatment of the
integer action case, in contrast to what the measure 
entropy tuple approach in its present $\Zb$-system form seems to demand
(see \cite{mu,LVRA}). It is worth emphasizing however that we do need the relation 
with measure entropy tuples to establish the product formula for measure IE-tuples
(Theorem~\ref{T-IE product}), while the corresponding product formula
for topological IE-tuples as established in Theorem~3.15 of \cite{Ind} completely 
avoids the entropy tuple perspective, which would only serve to complicate matters
(compare the proof of the entropy pair product formula
for topological $\Zb$-systems in \cite{mu}). 
We also show (without the use of orbit equivalence) that the set of topological IE-tuples 
is the closure of the union of the sets 
of measure IE-tuples over all invariant Borel probability measures 
(Theorem~\ref{T-mu-IE IE closure}), and furthermore that when the space is metrizable
there exists an invariant Borel probability measure such that the sets of measure 
IE-tuples and topological IE-tuples coincide (Theorem~\ref{T-mu-IE IE}). In the 
$\Zb$-system setting, the latter result for entropy pairs was established in \cite{BGH} 
and more generally for entropy tuples in \cite{LVRA}.

One of the major advantages of the combinatorial viewpoint is the universal nature
of its application to entropy and independence density problems, as was demonstrated
in the topological-dynamical domain in \cite{Ind}. This means that many of 
the methods we develop for the study of measure IE-tuples apply equally well
to the sequence entropy context of measure IN-tuples. Accordingly, using measure IN-tuples
we are able to establish various local descriptions of the maximal null von Neumann algebra,
i.e., the sequence entropy analogue of the Pinsker von Neumann algebra (Theorem~\ref{T-null}).
We thus have the following types of conditions on a $L^\infty$ function $f$ characterizing
its containment in the maximal null von Neumann algebra:
\begin{enumerate}
\item there exist $\lambda\geq 1$ and $d>0$ such that every $L^2$ perturbation of the orbit 
of $f$ contains arbitrarily large finite subsets possessing subsets of density at least $d$ 
which are $\lambda$-equivalent to the standard basis
of $\ell_1$ in the corresponding dimension,

\item the local sequence
c.p.\ approximation entropy with respect to $f$ is positive for some sequence,
\end{enumerate}
and, in the case that $f$ is continuous,
\begin{enumerate}
\item[(3)] $f$ separates a measure IN-pair.
\end{enumerate}
Here, however, additional equivalent conditions
arise that have no counterpart on the entropy side, such as:
\begin{enumerate}
\item[(4)] every $L^2$ perturbation of the orbit of $f$ contains an infinite
subset which is equivalent to the standard basis of $\ell_1$, and

\item[(5)] every $L^2$ perturbation of the orbit of $f$ contains arbitrarily large finite
subsets which are $\lambda$-equivalent to the standard basis of $\ell_1$ for some
$\lambda > 0$.
\end{enumerate}
The presence of such conditions reflects the fact that there is a
strong dichotomy between nullness and nonnullness, which registers as
compactness vs.\ noncompactness for orbit closures in $L^2$ and is thus tied to weak mixing
and the issue of finite-dimensionality for group subrepresentations. In its probabilistic
manisfestation this dichotomy underlies Furstenberg's ergodic-theoretic approach
to Szemer{\'e}di's theorem \cite{Fur} and fits within a broader mathematical theme of
structure vs.\ randomness as discussed by Tao in \cite{Tao}.
Notice that the appearance of condition (4) indicates that the distinction
between tameness and nullness in topological dynamics collapses in the measurable setting.
In parallel with measure IE-tuples, it turns out (Theorem~\ref{T-IN SE}) that
nondiagonal measure IN-tuples are the same as measure sequence
entropy tuples as introduced in \cite{HMY}, which leads in particular to a simple product 
formula (Theorem~\ref{T-IN product}).

The main body of the paper is divided into four sections.
Section~\ref{S-IE} consists of four subsections. The first discusses measure independence
density for tuples of subsets, while in the second we define measure IE-tuples and establish
several basic properties. In the third subsection we address the problem of
realizing IE-tuples as measure IE-tuples. The fourth subsection contains the proof
that nondiagonal measure IE-tuples are the same as measure entropy tuples
and includes the product formula for measure IE-tuples.
Section~\ref{S-Pinsker} furnishes the local characterizations of the
Pinsker von Neumann algebra. In Section~\ref{S-IN} we define measure IN-tuples, record
their basic properties, show that nondiagonal measure IN-tuples are the same as
sequence measure entropy tuples, and derive the measure IN-tuple product formula. 
Finally, in Section~\ref{S-null} we establish
the local characterizations of the maximal null von Neumann algebra.

We now describe some of the basic concepts and notation used in the paper.
A collection $\{ (A_{i,1} , \dots , A_{i,k} ) : i\in I \}$ of $k$-tuples of subsets
of a given set is said to be {\it independent} if
$\bigcap_{i\in J} A_{i,\sigma (i)} = \emptyset$ for every finite set $J\subseteq I$
and $\sigma\in \{ 1,\dots ,k\}^J$.
The following definition captures a relativized version of this idea of combinatorial
independence in a group action context and forms the basis for our analysis of
measure-preserving dynamics. The relativized form is not necessary for topological dynamics
(cf.\ Definition~2.1 of \cite{Ind}) but becomes crucial in the measure-preserving case,
where we will need to consider independence relative to subsets of nearly full measure.

\begin{definition}\label{D-indset}
Let $G$ be a group acting on a set $X$. Let $\oA = (A_1 ,\dots ,A_k )$ be a
tuple of subsets of $X$. Let $D$ be a map from $G$ to the power set $2^X$
of $X$, with the image of $s\in G$ written as $D_s$. We say that a set $J\subseteq G$ is
an {\em independence set for $\oA$ relative to $D$} if for every nonempty
finite subset $F\subseteq J$ and map $\sigma : F\to \{ 1,\dots ,k \}$ we have
$\bigcap_{s\in F} (D_s \cap s^{-1} A_{\sigma (s)} ) \neq\emptyset$.
For a subset $D$ of $X$, we say that $J$ is an
{\em independence set for $\oA$ relative to $D$} if for every nonempty
finite subset $F\subseteq J$ and map $\sigma : F\to \{ 1,\dots ,k \}$ we have
$D\cap \bigcap_{s\in F} s^{-1} A_{\sigma (s)} \neq\emptyset$, i.e., if $J$
is an independence set for $\oA$ relative to the map $G\to 2^X$
with constant value $D$.
\end{definition}

By a {\it topological dynamical system} we mean a pair $(X,G)$ where $X$ is a
compact Hausdorff space and $G$ is a discrete group acting on $X$ by homeomorphisms.
We will also speak of a {\it topological $G$-system}.
In this context we will always use $\sB$ to denote the Borel $\sigma$-algebra of $X$.
Given a $G$-invariant Borel probability measure $\mu$ on $X$, we will invariably
write $\alpha$ for the induced action of $G$ on $\LX$ given by
$\alpha_s (f)(x) = f(s^{-1} x)$ for all $s\in G$, $f\in\LX$, and $x\in X$.
Given another topological $G$-system $(Y,G)$, a continuous surjective
$G$-equivariant map $X\to Y$ will be called a {\em topological $G$-factor map}.
In this situation we will regard $C(Y)$ as a unital $C^*$-subalgebra of $C(X)$.

By a {\it measure-preserving dynamical system} we mean a
quadruple $(X,\sX , \mu , G)$ where $(X,\sX , \mu )$ is a probability space and $G$ is a
discrete group acting on $(X,\sX , \mu )$ by $\mu$-preserving bimeasurable
transformations. 
The expression {\it measure-preserving $G$-system} will also be used.
The action of $G$ is said to be {\it free} if for
every $s\in G\setminus \{ e \}$ the fixed-point set $\{ x\in X : sx = x \}$ has
measure zero. A {\it topological model} for $(X,\sX , \mu , G)$ is a 
measure-preserving $G$-system $(Y,\sY , \nu , G)$ isomorphic to $(X,\sX , \mu , G)$
such that $(Y,G)$ is a topological dynamical system.

We will actually work for the most part with an
invariant Borel probability measure for a topological dynamical system instead of an
abstract measure-preserving dynamical system, since the local study of independence properties
requires the specification of a topological model and such a specification entails no
essential loss of generality from the measure-theoretic viewpoint. So our basic
setting will consist of $(X,G)$ along with a $G$-invariant Borel probability measure $\mu$.
In Sections~\ref{S-IE} and \ref{S-Pinsker} we will also suppose $G$ to be amenable,
as the entropy context naturally requires.

For a finite $K\subseteq G$ and $\delta>0$ we write $M(K, \delta)$ for
the set of all nonempty finite subsets $F$ of $G$ which are $(K, \delta)$-invariant
in the sense that
\[ |\{s\in F: Ks\subseteq F\}|\ge (1-\delta )|F| . \]
The F{\o}lner characterization of amenability asserts that $M(K, \delta)$ is nonempty
for every finite set $K\subseteq G$ and $\delta > 0$. Given a real-valued function $\varphi$
on the finite subsets of $G$ we define the {\it limit supremum} 
and {\it limit infimum of $\varphi (F)/|F|$
as $F$ becomes more and more invariant} by
\[ \lim_{(K,\delta )} \sup_{F\in M(K,\delta )} \frac{\varphi (F)}{|F|}
\hspace*{7mm} \text{and} \hspace*{7mm}
\lim_{(K,\delta )} \inf_{F\in M(K,\delta )} \frac{\varphi (F)}{|F|} \]
respectively, where the net is constructed by stipulating that
$(K,\delta ) \succ (K' , \delta' )$ if $K\supseteq K'$ and $\delta\leq\delta'$.
These limits coincide under the following conditions:
\begin{enumerate}
\item $0\le \varphi(A)<+\infty$ and $\varphi(\emptyset)=0$,

\item $\varphi(A)\le \varphi(B)$ whenever $A\subseteq B$,

\item $\varphi(As)=\varphi(A)$ for all finite $A\subseteq G$ and
$s\in G$,

\item $\varphi(A\cup B)\le \varphi(A)+\varphi(B)$ if $A\cap
B=\emptyset$.
\end{enumerate}
See Section~6 of \cite{LW} and the last part of Section~3 in \cite{Ind}.
These conditions hold in the definition of measure entropy, 
which we recall next.

The entropy of a finite measurable partition $\cP$ of a probability space
$(X,\sX , \mu )$ is defined by $\Hmeas (\cP ) = \sum_{p\in\cP} - \mu (P) \ln \mu (P)$
(sometimes we write $\Hmeas_\mu (\cP )$ for precision).
Let $(X,\sX , \mu ,G)$ be a measure-preserving dynamical system. For a finite set
$F\subseteq G$, we abbreviate the join $\bigvee_{s\in F} s^{-1} \cP$ to $\cP^F$.
When $G$ is amenable, we write
$h_\mu (\cP )$ (or sometimes $h_\mu (X,\cP )$) for the limit of
$\frac{1}{|F|} \Hmeas (\cP^F )$ as $F$ becomes more and
more invariant, and we define the measure entropy $h_\mu (X)$ to be the supremum of
$h_\mu (\cP)$ over all finite Borel partitions $\cP$ of $X$. For general $G$,
given a sequence $\mathfrak{s}=\{s_j\}_{j\in \Nb}$ in $G$ we set
$h_\mu (\cP ; \fs ) = \limsup_{n\to\infty} \frac1n \Hmeas (\bigvee_{i=1}^n s_i^{-1} \cP)$
and define the measure sequence entropy $h_\mu (X ; \fs )$ to be the supremum of
$h_\mu (\cP ; \fs )$ over all finite measurable partitions $\cP$. The system
is said to be {\it null} if $h_\mu (X ; \fs ) = 0$ for all sequences $\fs$ in $G$.

The conditional entropy of a finite measurable partition $\cP = \{ P_1 , \dots , P_n \}$
with respect to a $\sigma$-subalgebra $\sA \subseteq\sX$ is defined by
\[ \Hmeas (\cP | \sA ) = \int I^\sA (\cP )(x)\, d\mu (x) \]
where $I^\sA (\cP )(x) = - \sum_{i=1}^n \boldsymbol{1}_{P_i} (x) \ln \mu (P_i | \sA ) (x)$
is the conditional information function.
For references on entropy see \cite{ETJ,Wal,OW}.

A unitary representation $\pi : G\to\cB (\cH )$ of a discrete group $G$ is
said to be {\it weakly mixing} if for all $\xi , \zeta\in\cH$ the function
$f_{\xi , \zeta } (s) = \langle \pi (s)\xi , \zeta \rangle$ on $G$ satisfies
$\mathfrak{m} (|f_{\xi , \zeta} |) = 0$, where $\mathfrak{m}$ is the unique invariant
mean on the space of weakly almost periodic bounded functions on $G$.
A subset $J$ of $G$ is {\it syndetic} if there is a finite set $F\subseteq G$
such that $FJ = G$ and {\it thickly syndetic} if for every finite set $F\subseteq G$
the set $\bigcap_{s\in F} sJ$ is syndetic.
Weak mixing is equivalent to each of the following conditions:
\begin{enumerate}
\item $\pi$ has no nonzero finite-dimensional subrepresentations,

\item for every finite set $F\subseteq\cH$ and $\varepsilon > 0$ there exists
an $s\in G$ such that $| \langle \pi (s)\xi , \zeta \rangle | < \varepsilon$ for
all $\xi , \zeta\in F$,

\item for all $\xi , \zeta \in\cH$ and $\varepsilon > 0$ the set of all
$s\in G$ such that $| \langle \pi (s)\xi , \zeta \rangle | < \varepsilon$
is thickly syndetic.
\end{enumerate}
We say that a measure-preserving dynamical system
$(X,\sX , \mu , G)$ is {\it weakly mixing} if the associated unitary representation
of $G$ on $L^2 (X,\mu ) \ominus \Cb\boldsymbol{1}$ is weakly mixing.
For references on weak mixing see \cite{BerRos,ETJ}.

For a probability space $(X , \sX , \mu )$ we write $\| \cdot \|_\mu$ for the
corresponding Hilbert space norm on elements of $L^\infty (X,\mu )$, i.e.,
$\| f \|_\mu = \mu (|f|^2 )^{1/2}$.

After this paper was completed we received a preprint by Huang, Ye, and Zhang \cite{LETCDAGA}
which uses orbit equivalence to establish a local variational principle 
for measure-preserving actions of countable discrete amenable groups on compact metrizable
spaces. For such systems they provide an entropy tuple variational relation 
(cf.\ Subsection~\ref{SS-realize} herein) and a positive answer to our Question~\ref{Q-open cover}.
They also obtained what appears here as Lemma~\ref{L-positive inf} 
\cite[Thm.\ 5.11]{LETCDAGA}.
\medskip

\noindent{\it Acknowledgements.} The first author was partially supported by NSF 
grant DMS-0600907. He is grateful to Bill Johnson and Gideon Schechtman for seminal
discussions and in particular for indicating the relevance of the Sauer-Perles-Shelah
lemma to the types of perturbation problems considered in the paper.


\section{Measure IE-tuples}\label{S-IE}

Throughout this section $(X,G)$ is a topological dynamical system with $G$ amenable
and $\mu$ is a $G$-invariant Borel probability measure on $X$.

\subsection{Measure independence density for tuples of subsets}\label{SS-density}

Our concept of measure IE-tuple will be based on the following definition of
independence density for tuples of subsets, which is formulated in terms of the
notion of independence set from Definition~\ref{D-indset}.
For $\delta > 0$ denote by $\mathscr{B} (\mu , \delta )$ the collection of all
Borel subsets $D$ of $X$ such that $\mu (D) \geq 1 - \delta$, and
by $\mathscr{B}' (\mu , \delta )$ the collection of all maps
$D : G\to\mathscr{B} (X)$ such that $\inf_{s\in G} \mu (D_s ) \geq 1-\delta$.
Let $\oA = (A_1 , \dots , A_k )$ be a tuple of subsets of $X$ and let $\delta > 0$.
For every finite subset $F$ of $G$ we define
\begin{align*}
\varphi_{\oA ,\delta} (F) &= \min_{D\in \mathscr{B} (\mu , \delta )}
\max \big\{ |F\cap J| : J\text{ is an independence set for }\oA
\text{ relative to } D \big\} , \\
\varphi'_{\oA ,\delta} (F) &= \min_{D\in \mathscr{B}' (\mu , \delta )}
\max \big\{ |F\cap J| : J\text{ is an independence set for }\oA
\text{ relative to } D \big\} .
\end{align*}
Since the action of $G$ on $X$ is $\mu$-preserving, we have
$\varphi_{\oA ,\delta} (Fs) = \varphi_{\oA ,\delta} (F)$ and
$\varphi'_{\oA ,\delta} (Fs) = \varphi'_{\oA ,\delta} (F)$ for all finite sets
$F\subseteq G$ and $s\in G$. However, neither $\varphi_{\oA ,\delta}$ nor
$\varphi'_{\oA ,\delta}$ satisfy
the subadditivity condition in Proposition~3.22 of \cite{Ind}, so that the
limit of $\frac{1}{|F|} \varphi_{\oA ,\delta} (F)$ or
$\frac{1}{|F|} \varphi'_{\oA ,\delta} (F)$ as $F$ becomes more and more invariant
might not exist. We define $\upind_\mu (\oA , \delta )$ to be the limit supremum
of $\frac{1}{|F|} \varphi_{\oA ,\delta} (F)$ as $F$ becomes more and more invariant,
and $\lwind_\mu (\oA , \delta )$ to be the corresponding limit infimum.
Similarly, we define $\upind{}'_\mu (\oA , \delta )$ to be the limit supremum
of $\frac{1}{|F|} \varphi'_{\oA ,\delta} (F)$ as $F$ becomes more and more invariant,
and $\lwind{}'_\mu (\oA , \delta )$ to be the corresponding limit infimum.
Note that $\upind{}'_\mu (\oA , \delta ) \leq \upind_\mu (\oA , \delta )$ and
$\lwind{}'_\mu (\oA , \delta ) \leq \lwind_\mu (\oA , \delta )$.


\begin{definition}
We set
\[ \upind_\mu (\oA ) = \sup_{\delta > 0} \upind_\mu (\oA , \delta )
\hspace*{5mm}\text{and}\hspace*{5mm}
\lwind_\mu (\oA ) = \sup_{\delta > 0} \lwind_\mu (\oA , \delta ) \]
and refer to
these quantities respectively as the {\em upper $\mu$-independence density} and
{\em lower $\mu$-independence density} of $\oA$.
\end{definition}

In order to relate independence and c.p.\ approximation entropy in the local description
of the Pinsker von Neumann algebra (Theorem~\ref{T-Pinsker}), we will need
to know that in the definitions of $\upind_\mu (\oA )$ and $\lwind_\mu (\oA )$ the
quantitites $\upind{}_\mu (\oA , \delta )$ and $\lwind{}_\mu (\oA , \delta )$
can be replaced by their primed versions, i.e., if the subsets of $X$ of measure at
least $1-\delta$ relative to which independence is gauged are not required to be
uniform over $G$, then the resulting versions of upper and lower independence
density agree with the original ones. This is the content of
Proposition~\ref{P-alternative}, which we now aim to establish.

\begin{lemma}\label{L-density}
Let $k\geq 2$. Then for every $\lambda$ in the interval $(\log_k (k-1),1)$
there are\linebreak $a,b > 0$ such that
for every $n\in\Nb$ and $S\subseteq \{ 0,1,\dots ,k \}^{\{ 1,\dots ,n\}}$ with
$|S| \geq k^{\lambda n}$\linebreak and $\max_{\sigma\in S} |\sigma^{-1} (0)| \leq bn$
there exists an $I\subseteq \{ 1,\dots ,n \}$ with $|I|\geq an$ and\linebreak
$S|_{I} \supseteq \{ 1,\dots ,k \}^{\{ 1,\dots ,n\}}$. Moreover as
$\lambda\nearrow 1$ we may choose $a\nearrow 1$.
\end{lemma}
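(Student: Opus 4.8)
The statement is a robustification of the Sauer-Perles-Shelah / Karpovsky-Milman lemma in which we allow a bounded number of coordinates per word to take an extra ``joker'' value $0$, and still extract a sizeable coordinate block on which $S$ surjects onto $\{1,\dots,k\}$. The plan is to reduce to the classical Karpovsky-Milman statement — which says that a subset of $\{1,\dots,k\}^{\{1,\dots,n\}}$ of size at least $k^{\lambda n}$ with $\lambda > \log_k(k-1)$ has trace equal to all of $\{1,\dots,k\}$ on some block of size $\ge cn$, with $c\to 1$ as $\lambda\to 1$ — by first ``cleaning up'' the $0$'s. Fix $\lambda\in(\log_k(k-1),1)$ and choose $\lambda'\in(\log_k(k-1),\lambda)$; I will pick $b$ small (depending on $k,\lambda,\lambda'$) so that discarding the at most $bn$ joker coordinates of any single word costs at most a factor $k^{(\lambda-\lambda')n}$ in cardinality after a pigeonhole/averaging step.

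First I would partition $S$ according to the location of the joker set $\sigma^{-1}(0)$. Since each $|\sigma^{-1}(0)|\le bn$, the number of possible joker-location sets is at most $\sum_{j\le bn}\binom{n}{j}\le 2^{H(b)n}$ where $H$ is binary entropy; choosing $b$ small makes $2^{H(b)n}\le k^{(\lambda-\lambda')n}$. Hence by pigeonhole there is a fixed set $Z\subseteq\{1,\dots,n\}$ with $|Z|\le bn$ and a subfamily $S'\subseteq S$ with $\sigma^{-1}(0)\subseteq Z$ for all $\sigma\in S'$ and $|S'|\ge k^{\lambda n}/k^{(\lambda-\lambda')n}=k^{\lambda' n}$. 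Now restrict every word in $S'$ to the complement $Z^c$, which has size $n':=|Z^c|\ge(1-b)n$; on these coordinates the words are honest $\{1,\dots,k\}$-valued, so $S'|_{Z^c}\subseteq\{1,\dots,k\}^{Z^c}$ is a genuine combinatorial family. If $b$ is small enough that $\lambda' n\ge \lambda'' n'$ for some $\lambda''$ still exceeding $\log_k(k-1)$ (possible since $n'\ge(1-b)n$, so take $\lambda''=\lambda'/(1-b)$ adjusted to stay below $1$, or simply note $\lambda' n/n'\ge\lambda'$), then $|S'|_{Z^c}|\ge k^{\lambda'' n'}$ — here one must check that distinct words of $S'$ stay distinct after restriction, which may fail, but a further pigeonhole at the cost of a negligible cardinality factor fixes the fibers, or alternatively one simply works with the trace directly since Karpovsky-Milman only needs a cardinality bound on the trace, not injectivity. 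Applying the Karpovsky-Milman lemma to $S'|_{Z^c}\subseteq\{1,\dots,k\}^{Z^c}$ yields $I\subseteq Z^c$ with $|I|\ge c(\lambda'')\,n'\ge c(\lambda'')(1-b)n=:an$ and $S'|_I=\{1,\dots,k\}^I$, hence a fortiori $S|_I\supseteq\{1,\dots,k\}^I$, as desired.

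For the ``moreover'' clause, as $\lambda\nearrow 1$ one can let $\lambda',\lambda''\nearrow 1$ and $b\searrow 0$ in tandem; the block-density constant $c(\lambda'')$ from Karpovsky-Milman tends to $1$ as $\lambda''\to 1$, and the extra factor $(1-b)$ tends to $1$, so $a=c(\lambda'')(1-b)\nearrow 1$. The main obstacle I anticipate is purely bookkeeping: coordinating the three small parameters ($b$ controlling the joker budget, and the two cardinality-exponent losses $\lambda\to\lambda'\to\lambda''$) so that at the end $\lambda''>\log_k(k-1)$ strictly — this is exactly where the hypothesis $\lambda>\log_k(k-1)$ is consumed, and it is the reason the joker budget $b$ must be taken small rather than merely positive. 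The quantitative form of Karpovsky-Milman (with explicit $c(\lambda)\to 1$) is the one analytic input; everything else is averaging.
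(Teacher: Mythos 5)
Your proposal is correct and follows essentially the same route as the paper's proof: pigeonhole on the location of the zero set (the paper bounds the number of possible zero sets by $cn\binom{n}{cn}\leq k^{f(\lambda)n}$ via Stirling where you use a binary-entropy bound), then apply Karpovsky--Milman to the restriction of the surviving subfamily to the complement of the common zero set, with the density constant tending to $1$ as the exponent tends to $1$. Your injectivity worry is in fact moot: since the pigeonholing is over the exact zero set, every word of $S'$ vanishes precisely on $Z$, so restriction to $Z^{\comp}$ is injective and the cardinality bound on the trace is immediate.
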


\begin{proof}
Let $\lambda\in (\log_k (k-1),1)$. Set
$f(\lambda ) = (1-\lambda )(\lambda - \log_k (k-1))$. Then the quantity
$\lambda - f(\lambda )$ lies in the interval $(\log_k (k-1),1)$ and
tends to one as $\lambda\nearrow 1$.
By Karpovsky and Milman's generalization of the Sauer-Perles-Shelah lemma
\cite{Sauer,Shelah,KM} there is an $a\in (0,1)$ such that for every $n\in\Nb$ and
$S\subseteq \{ 1,\dots ,k \}^{\{ 1,\dots ,n\}}$
with $|S| \geq k^{(\lambda - f(\lambda ))n}$ there exists an
$I\subseteq \{ 1,\dots ,n \}$ with $|I|\geq a^{\frac12} n$ and
$S|_{I} = \{ 1,\dots ,k \}^I$, and we may choose $a\nearrow 1$ as $\lambda\nearrow 1$.
By Stirling's formula there is a $c\in (0,1/2)$ such that
$cn\binom{n}{cn} \leq k^{f(\lambda )n}$ for all $n\in\Nb$.
Set $b = \min (c,1-a^{\frac12} )$.
Now suppose we are given an $n\in\Nb$ and
$S\subseteq \{ 0,1,\dots ,k \}^{\{ 1,\dots ,n\}}$
with $|S| \geq k^{\lambda n}$ and $\max_{\sigma\in S} |\sigma^{-1} (0)| \leq bn$.
Then we can find a $J\subseteq \{ 1,\dots ,n \}$ with
$|J| \geq (1-b)n \geq a^{\frac12} n$ such that the cardinality of the set
$\big\{ \sigma\in S : \sigma^{-1} \{ 1,\dots ,k \} = J \big\}$ is at least
$\frac{|S|}{cn\binom{n}{cn}} \geq k^{(\lambda - f(\lambda )) n}$.
Consequently there exists an
$I\subseteq J$ with $|I|\geq a^{\frac12} |J|\geq an$ and
$S|_{I} \supseteq \{ 1,\dots ,k \}^I$, as desired.
\end{proof}

\begin{lemma}\label{L-prime}
For every $\delta > 0$ there is a $\delta' > 0$ such that
$\frac{1}{|F|}\varphi'_{\oA ,\delta'} (F) \geq \frac{1}{|F|}\varphi_{\oA ,\delta} (F) -
\delta$ for all finite sets $F\subseteq G$.
\end{lemma}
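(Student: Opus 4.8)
The plan is to reduce the lemma to the following assertion, for a $\delta' > 0$ to be chosen depending only on $\delta$ and $k$ (where $\oA = (A_1,\dots,A_k)$): \emph{every $D \in \mathscr{B}'(\mu,\delta')$ admits an independence set $I$ for $\oA$ relative to $D$ with $|F \cap I| \ge \varphi_{\oA,\delta}(F) - \delta|F|$.} Taking the minimum over $D \in \mathscr{B}'(\mu,\delta')$ and dividing by $|F|$ then gives the lemma. We may assume $F \ne \emptyset$, that $\varphi_{\oA,\delta}(F) > \delta|F|$ (otherwise the right-hand side is nonpositive and there is nothing to prove), and that $k \ge 2$, the case $k = 1$ being a simpler variant of what follows in which a single witnessing point suffices.

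First I would fix parameters. By the last assertion of Lemma~\ref{L-density} we may pick $\lambda \in (\log_k(k-1),1)$ close enough to $1$ that the associated constant satisfies $a \ge 1-\delta$; write $b > 0$ for the accompanying constant of Lemma~\ref{L-density}, set $\beta = \delta\min(b,1-\lambda) > 0$, and let $\delta' = \beta\delta$. Now let $D \in \mathscr{B}'(\mu,\delta')$ be given and pass to the Borel set
\[
D_0 = \bigl\{ x \in X : |\{ s \in F : x \notin D_s \}| \le \beta|F| \bigr\} .
\]
Since $\sum_{s\in F}\mu(X\setminus D_s) \le |F|\delta'$, a Markov estimate gives $\mu(X\setminus D_0) \le \delta'/\beta = \delta$, so $D_0 \in \mathscr{B}(\mu,\delta)$. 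By the definition of $\varphi_{\oA,\delta}$ there is an independence set for $\oA$ relative to the constant set $D_0$ meeting $F$ in at least $\varphi_{\oA,\delta}(F)$ points; intersecting it with $F$ we get such a set $J \subseteq F$, and we put $m = |J|$, so that $\delta|F| < \varphi_{\oA,\delta}(F) \le m$ and in particular $|F| < m/\delta$.

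The core is an encoding-and-extraction step. For each $\sigma \in \{1,\dots,k\}^J$ fix a point $x_\sigma \in D_0 \cap \bigcap_{s\in J} s^{-1}A_{\sigma(s)}$ (nonempty, as $J$ is an independence set relative to $D_0$), and define $\tilde\sigma \in \{0,1,\dots,k\}^J$ by $\tilde\sigma(s) = \sigma(s)$ if $x_\sigma \in D_s$ and $\tilde\sigma(s) = 0$ otherwise. Put $S = \{\tilde\sigma : \sigma \in \{1,\dots,k\}^J\}$. Because $x_\sigma \in D_0$ we have $|\tilde\sigma^{-1}(0)| \le \beta|F| < (\beta/\delta)m \le bm$ for every $\tilde\sigma \in S$; and since $\tilde\sigma$ determines $\sigma$ on $J \setminus \tilde\sigma^{-1}(0)$, each fibre of $\sigma \mapsto \tilde\sigma$ has at most $k^{\beta|F|}$ elements, so $|S| \ge k^{m-\beta|F|} > k^{(1-\beta/\delta)m} \ge k^{\lambda m}$. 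Lemma~\ref{L-density}, applied with $m$ in place of $n$, now yields $I \subseteq J$ with $|I| \ge am$ and $S|_I \supseteq \{1,\dots,k\}^I$.

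It remains to check that $I$ is an independence set for $\oA$ relative to $D$ and to add up. Given a nonempty finite $F' \subseteq I$ and $\rho_0 : F' \to \{1,\dots,k\}$, extend $\rho_0$ to $\rho \in \{1,\dots,k\}^I$, choose $\tilde\sigma \in S$ with $\tilde\sigma|_I = \rho$ and a corresponding $\sigma$; since $\rho$ never takes the value $0$, the definition of $\tilde\sigma$ forces $x_\sigma \in D_s$ and $\sigma(s) = \rho(s)$ for all $s \in I$, whence $x_\sigma \in \bigcap_{s\in F'}(D_s \cap s^{-1}A_{\rho_0(s)})$. Thus $|F \cap I| = |I| \ge am \ge a\,\varphi_{\oA,\delta}(F) \ge \varphi_{\oA,\delta}(F) - (1-a)|F| \ge \varphi_{\oA,\delta}(F) - \delta|F|$, which is what we wanted. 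The one nonroutine point is recognizing that the Karpovsky--Milman/Sauer--Perles--Shelah extraction of Lemma~\ref{L-density} is exactly the tool that upgrades the pointwise statement ``$x_\sigma$ lies in all but a small fraction of the $D_s$ ($s\in F$)'' into a single set $I$ of group elements on which \emph{every} sign pattern can be realized relative to $D$; the measure-theoretic and fibre-counting bounds surrounding it are routine once the constants $\lambda,b,\beta,\delta'$ are introduced in this order.
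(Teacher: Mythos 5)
Your proposal is correct and follows essentially the same route as the paper's proof: a Markov estimate produces the single set $D_0$ (the paper's $E$) in $\mathscr{B}(\mu,\delta)$, witnessing points for each $\sigma$ are encoded in $\{0,1,\dots,k\}^J$ by zeroing out the coordinates where they leave $D_s$, a fibre-counting bound gives $|S|\geq k^{\lambda m}$, and Lemma~\ref{L-density} (with $a$ close to $1$) extracts the large independence set relative to the map $s\mapsto D_s$. The only differences are bookkeeping: you make the constants $\lambda,b,\beta,\delta'$ explicit where the paper leaves ``$d$ small enough,'' and you argue for an arbitrary $D\in\mathscr{B}'(\mu,\delta')$ rather than first fixing a minimizing one.
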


\begin{proof}
Let $\delta > 0$, and $d$ be a positive number to be further specified
below as a function of $\delta$. Set $\delta' = \delta d$.
Let $F$ be a finite subset of $G$. To
establish the inequality in the proposition statement we may assume
that $\varphi_{\oA ,\delta} (F) \geq \delta |F|$.
Let $D$ be an element of $\mathscr{B}' (\mu , \delta' )$ such that
$\varphi'_{\oA ,\delta'} (F)$ is equal to
the maximum of $|F\cap J|$ over all independence sets $J$ for $\oA$
relative to $D$. Put
\[ E = \{ x\in X : | \{ s\in F : x\notin D_s \} | \leq d|F| \} . \]
Since $\mu (D_s ) \geq 1 - \delta'$ for each $s\in F$ we have
\[ \mu (E^\comp ) d|F| \leq \sum_{s\in F} \mu (D_s^\comp ) \leq |F|\delta' \]
and so $\mu (E) \geq 1 - \frac{\delta'}{d} = 1 - \delta$, that is,
$E\in \mathscr{B} (\mu , \delta )$. Hence
there exists an $I\subseteq F$ with $|I| = \varphi_{\oA ,\delta} (F)$ which is an
independence set for
$\oA$ relative to $E$. For each $\sigma\in \{ 1,\dots ,k \}^I$ we can find by the
definition of $E$ a set $I_\sigma \subseteq I$ with
$|I\setminus I_\sigma | \leq d|F|$ such that
$\bigcap_{s\in I_\sigma} (D_s \cap s^{-1} A_{\sigma (s)}) \neq\emptyset$,
and we define $\rho_\sigma \in \{ 0,1,\dots ,k \}^I$ by
\[ \rho_\sigma (s) = \left\{ \begin{array}{l@{\hspace*{8mm}}l}
\sigma (s) &
\text{if } s\in I_\sigma , \\
0 &
\text{if } s\notin I_\sigma .
\end{array} \right. \]
Since for every $\rho\in \{ 0,1,\dots ,k \}^I$ the number of
$\sigma\in \{ 1,\dots ,k \}^I$ for which $\rho_\sigma = \rho$ is at most
$k^{d|F|}$, the set
$\cS = \big\{ \rho_\sigma : \sigma \in \{ 1,\dots ,k \}^I \big\}$ has
cardinality at least $k^{|I|} / k^{d|F|} \geq k^{(1 - d/\delta )|I|}$.
It follows by Lemma~\ref{L-density} that if $d$ is small enough as
a function of $\delta$ then there exists a
$J\subseteq I$ with $|J|\geq (1-\delta ) |I|$ such that
$\cS |_J \supseteq \{ 1,\dots ,k \}^J$. Such a $J$ is an independence
set for $\oA$ relative to $D$, and so we conclude that
$\frac{1}{|F|} \varphi'_{\oA ,\delta'} (F) \geq
\frac{1}{|F|} (1-\delta )\varphi_{\oA ,\delta} (F) \geq
\frac{1}{|F|} \varphi_{\oA ,\delta} (F) - \delta$. Since our choice of $\delta'$
does not depend on $F$ this completes the proof.
\end{proof}

It follows from Lemma~\ref{L-prime} that for every $\delta > 0$ there is a
$\delta' > 0$ such that
$\upind{}'_\mu (\oA ,\delta' ) \geq \upind_\mu (\oA , \delta ) - \delta$ and
$\lwind{}'_\mu (\oA ,\delta' ) \geq \lwind_\mu (\oA , \delta ) - \delta$.
We thus obtain the following alternative means of expressing
upper and lower $\mu$-independence density.

\begin{proposition}\label{P-alternative}
We have $\upind_\mu (\oA ) = \sup_{\delta > 0} \upind{}'_\mu (\oA , \delta )$
and $\lwind_\mu (\oA ) = \sup_{\delta > 0} \lwind{}'_\mu (\oA , \delta )$.
\end{proposition}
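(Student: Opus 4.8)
The plan is to deduce Proposition~\ref{P-alternative} directly from Lemma~\ref{L-prime} together with the trivial reverse inequalities $\upind{}'_\mu (\oA ,\delta ) \leq \upind_\mu (\oA ,\delta )$ and $\lwind{}'_\mu (\oA ,\delta ) \leq \lwind_\mu (\oA ,\delta )$ already recorded in the text. The only real work is to pass from the finite-set inequality in Lemma~\ref{L-prime} to the corresponding inequalities between the limit suprema $\upind_\mu (\oA ,\delta )$, $\upind{}'_\mu (\oA ,\delta')$ and between the limit infima $\lwind_\mu (\oA ,\delta )$, $\lwind{}'_\mu (\oA ,\delta')$, and then to take the supremum over $\delta$.

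First I would fix $\delta > 0$ and let $\delta' > 0$ be the number produced by Lemma~\ref{L-prime}, so that $\frac{1}{|F|}\varphi'_{\oA ,\delta'} (F) \geq \frac{1}{|F|}\varphi_{\oA ,\delta} (F) - \delta$ for every finite $F\subseteq G$. Since this inequality holds pointwise in $F$, it is inherited by the net limits: taking the limit supremum of both sides as $F$ becomes more and more invariant (monotonicity of $\limsup$ over a net) gives $\upind{}'_\mu (\oA ,\delta') \geq \upind_\mu (\oA ,\delta ) - \delta$, and taking the limit infimum gives $\lwind{}'_\mu (\oA ,\delta') \geq \lwind_\mu (\oA ,\delta ) - \delta$. (No convergence of the nets is needed here, only that $\limsup$ and $\liminf$ are monotone and subtracting the constant $\delta$ commutes with them.)

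Now I would take the supremum over $\delta > 0$. For the upper density: given any $\delta > 0$, the inequality just obtained shows $\sup_{\varepsilon > 0}\upind{}'_\mu (\oA ,\varepsilon ) \geq \upind{}'_\mu (\oA ,\delta') \geq \upind_\mu (\oA ,\delta ) - \delta$; letting $\delta\to 0$ yields $\sup_{\varepsilon > 0}\upind{}'_\mu (\oA ,\varepsilon ) \geq \sup_{\delta > 0}\upind_\mu (\oA ,\delta ) = \upind_\mu (\oA )$. The reverse inequality $\sup_{\varepsilon > 0}\upind{}'_\mu (\oA ,\varepsilon ) \leq \upind_\mu (\oA )$ is immediate from $\upind{}'_\mu (\oA ,\varepsilon ) \leq \upind_\mu (\oA ,\varepsilon )$ for each $\varepsilon$. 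Hence $\upind_\mu (\oA ) = \sup_{\delta > 0}\upind{}'_\mu (\oA ,\delta )$. The argument for $\lwind_\mu (\oA )$ is identical, using $\lwind{}'_\mu (\oA ,\delta') \geq \lwind_\mu (\oA ,\delta ) - \delta$ in place of its upper counterpart.

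There is essentially no obstacle here — the entire difficulty has been packaged into Lemma~\ref{L-prime} (and through it Lemma~\ref{L-density} and the Karpovsky--Milman lemma). The only point requiring a moment's care is that $\delta'$ depends on $\delta$ but not on $F$, which is exactly what Lemma~\ref{L-prime} asserts and is what allows the pointwise-in-$F$ inequality to survive the passage to the net limit; this is already emphasized at the end of the proof of Lemma~\ref{L-prime}. So the proof is just the bookkeeping of combining that lemma with the obvious monotonicity in the supremum over $\delta$.
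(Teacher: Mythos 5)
Your proof is correct and follows exactly the route the paper takes: the paper likewise deduces the proposition from Lemma~\ref{L-prime} by passing the pointwise-in-$F$ inequality to the limit supremum and limit infimum, combining with the trivial inequalities $\upind{}'_\mu (\oA ,\delta ) \leq \upind_\mu (\oA ,\delta )$ and $\lwind{}'_\mu (\oA ,\delta ) \leq \lwind_\mu (\oA ,\delta )$, and taking the supremum over $\delta$. Nothing is missing.
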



\subsection{Definition and basic properties of measure IE-tuples}

In \cite{Ind} we defined a tuple $\ox = (x_1 , \dots, x_k )\in X^k$
to be an IE-tuple (or an IE-pair in the case $k=2$) if for every product
neighbourhood $U_1 \times\cdots\times U_k$ of $\ox$ the $G$-orbit of the tuple
$(U_1, \dots, U_k)$ has an independent subcollection of positive density. The following
is the measure-theoretic analogue.

\begin{definition}
We call a tuple $\ox = (x_1 , \dots , x_k )\in X^k$ a {\em $\mu$-IE-tuple}
(or {\em $\mu$-IE-pair} in the case $k=2$) if
for every product neighbourhood $U_1 \times\cdots\times U_k$ of $\ox$ the tuple
$(U_1 , \dots , U_k )$ has positive upper $\mu$-independence density. We denote the set
of $\mu$-IE-tuples of length $k$ by $\IE^\mu_k (X)$.
\end{definition}

Evidently every $\mu$-IE-tuple is an IE-tuple. The problem
of realizing IE-tuples as $\mu$-IE-tuples for some $\mu$ will be addressed in
Subsection~\ref{SS-realize}.

We proceed now with a series of lemmas which will enable us to establish some
properties of $\mu$-IE-tuples as recorded in Proposition~\ref{P-IE basic}.

\begin{lemma}\label{L-split}
Let $\oA = (A_1 , \dots , A_k )$ be a tuple of subsets of $X$ which has
positive upper $\mu$-independence density. Suppose that $A_1 = A_{1,1} \cup A_{1,2}$.
Then at least one of the tuples $\oA_1 = (A_{1,1} , A_2 , \dots , A_k )$ and
$\oA_2 = (A_{1,2} , A_2 , \dots , A_k )$ has positive upper $\mu$-independence density.
\end{lemma}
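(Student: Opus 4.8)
The plan is to argue by contradiction at the level of the combinatorial density functions $\varphi_{\oA,\delta}$, using a pigeonhole/Ramsey-type splitting of independence sets. Suppose both $\oA_1$ and $\oA_2$ have zero upper $\mu$-independence density. Then there is a $\delta>0$ with $\upind_\mu(\oA,\delta)>0$, and we should aim to show that this forces $\upind_\mu(\oA_i,\delta')>0$ for some $i$ and some $\delta'$, contradicting the hypothesis after taking the supremum over $\delta'$. The natural intermediate claim is a finite, quantitative statement: there is a constant $\kappa=\kappa(k)\in(0,1)$ such that for every finite $F\subseteq G$ and every $\delta>0$,
\[
\max\bigl(\varphi_{\oA_1,\delta}(F),\,\varphi_{\oA_2,\delta}(F)\bigr)\ \geq\ \kappa\,\varphi_{\oA,\delta}(F).
\]
Dividing by $|F|$ and passing to the limit supremum as $F$ becomes more and more invariant would then give $\max(\upind_\mu(\oA_1,\delta),\upind_\mu(\oA_2,\delta))\geq \kappa\,\upind_\mu(\oA,\delta)>0$, finishing the proof.

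First I would fix a finite $F\subseteq G$ and a witnessing $D\in\mathscr{B}(\mu,\delta)$ for, say, $\varphi_{\oA_1,\delta}(F)$ — actually it is cleaner to work the other way: take $D\in\mathscr{B}(\mu,\delta)$ and let $J\subseteq F$ be an independence set for $\oA$ relative to $D$ of size $\varphi_{\oA,\delta}(F)$ (any $D$ achieving the outer minimum for $\varphi_{\oA_1,\delta}(F)$ will do, and the same $D$ must then be used for $\oA$, giving a possibly larger independence set). For each map $\sigma\colon J\to\{1,\dots,k\}$ we have a point $x_\sigma\in D\cap\bigcap_{s\in J}s^{-1}A_{\sigma(s)}$; at each coordinate $s$ with $\sigma(s)=1$ we have $s^{-1}x_\sigma\in A_1=A_{1,1}\cup A_{1,2}$, so we may record a choice $c_\sigma(s)\in\{1,2\}$ with $s^{-1}x_\sigma\in A_{1,c_\sigma(s)}$. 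This produces a colouring of the $1$-coordinates of each $\sigma$, and the goal is to find a large $J'\subseteq J$ on which the split is ``consistent'' — i.e.\ one of $A_{1,1}$, $A_{1,2}$ works uniformly — so that $J'$ becomes an independence set for $\oA_1$ or $\oA_2$ relative to $D$.

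The key step, and the one I expect to be the main obstacle, is extracting this consistent large subset: it is a statement about the density of an independent family under a $2$-colouring, and the right tool is precisely the combinatorial lemma from \cite{Ind} (the one invoked in Lemma~\ref{L-density} via Karpovsky--Milman / Sauer--Perles--Shelah), or rather the version of it that handles relative independence sets; this is in fact exactly the mechanism by which splitting lemmas are proved for topological IE-tuples in \cite{Ind}, so the combinatorics should transfer verbatim once the relativization to $D$ is carried along, since $D$ plays a purely passive role — it is the same set $D$ throughout and intersecting everything with it does not interfere with the counting argument. Concretely: restrict attention to the $2^{|J|}$ functions $\sigma$ taking values in $\{1,2\}\subseteq\{1,\dots,k\}$; the map $\sigma\mapsto c_\sigma\in\{1,2\}^J$ shows, by an averaging argument, that some fibre has size at least $2^{|J|}/2^{|J|}=1$ which is too weak, so instead one applies the Sauer--Perles--Shelah-type extraction directly: the family of sets $\{s\in J: s^{-1}x_\sigma\in A_{1,1}\}$ indexed by $\sigma$, being $2^{|J|}$ in number as subsets of $J$, shatters a subset $J'$ of $J$ of size $\geq\kappa|J|$, and on such a $J'$ one reads off that both all-$A_{1,1}$ and all-$A_{1,2}$ patterns are realized compatibly with arbitrary $\{1,\dots,k\}$-patterns on the remaining coordinates — hence $J'$ is an independence set for both $\oA_1$ and $\oA_2$ relative to $D$, which is even better than needed. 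Taking $\varphi_{\oA_1,\delta}(F)\geq |F\cap J'|\geq\kappa|F\cap J|=\kappa\varphi_{\oA,\delta}(F)$ (after arranging $J\subseteq F$) closes the argument. One should double-check the edge case $k=2$ where $\{1,\dots,k\}$ and $\{1,2\}$ coincide, and confirm that $\kappa$ can be taken independent of $\delta$ and of $F$, which it is since it comes only from the Sauer--Shelah constant for the alphabet size.
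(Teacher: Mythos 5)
Your overall shape (reduce to a finite quantitative inequality for the functions $\varphi_{\cdot,\delta}$, then extract a proportional subset via the combinatorics behind Sauer--Perles--Shelah) is the right one, and indeed the paper's proof runs through Lemma~3.6 of \cite{Ind}. But two essential points in your execution are wrong, and they are exactly the points where the measure-theoretic relativization matters. First, the combinatorial conclusion you extract is too strong and false: a shattering argument cannot give a proportional $J'$ that is an independence set for \emph{both} $\oA_1$ and $\oA_2$ relative to $D$. Take $A_{1,1}=\emptyset$ and $A_{1,2}=A_1$; then $\oA_1$ has no nonempty independence sets at all, so your closing inequality $\varphi_{\oA_1,\delta}(F)\geq\kappa\,\varphi_{\oA,\delta}(F)$ (which, as written, would prove that $\oA_1$ \emph{always} works) fails. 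In the same example all the sets $\{s\in J: s^{-1}x_\sigma\in A_{1,1}\}$ are empty, which also shows that your count ``being $2^{|J|}$ in number'' is unjustified: the map $\sigma\mapsto\{s:s^{-1}x_\sigma\in A_{1,1}\}$ need not be injective, and even a genuinely shattered $J'$ for that family would only control $\{1,2\}$-patterns at the chosen points, not arbitrary $\{1,\dots,k\}$-patterns mixing $A_{1,j}$ with $A_2,\dots,A_k$. What is true, and what the paper quotes from Lemma~3.6 of \cite{Ind}, is a dichotomy with a constant $c=c(k)$: any independence set for $\oA$ relative to $D$ has a subset of proportional size that is an independence set relative to $D$ for $\oA_1$ \emph{or} for $\oA_2$.

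Second, once you only have this dichotomy, your choice of a single witnessing set $D$ (the minimizer for $\varphi_{\oA_1,\delta}(F)$) no longer closes the argument: if the extracted subset happens to be an independence set for $\oA_2$ relative to that $D$, you learn nothing about $\varphi_{\oA_2,\delta}(F)$, whose defining minimum may be attained at a completely different set. This is precisely where the relativization is \emph{not} ``purely passive.'' The paper's fix is to take minimizers $D_1$ for $\oA_1$ and $D_2$ for $\oA_2$ at tolerance $\delta/2$, observe $D_1\cap D_2\in\mathscr{B}(\mu,\delta)$, run the dichotomy inside an independence set for $\oA$ relative to $D_1\cap D_2$ (of size at least $\varphi_{\oA,\delta}(F)$), and use that independence relative to $D_1\cap D_2$ implies independence relative to each $D_i$; this yields
$\max\{\upind_\mu(\oA_1,\delta/2),\ \upind_\mu(\oA_2,\delta/2)\}\geq c\,\upind_\mu(\oA,\delta)$,
from which the lemma follows. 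Your proposal, with the same $\delta$ on both sides and one $D$, does not prove its intermediate claim; with the $\delta/2$ intersection trick and the correct either/or lemma, it becomes the paper's proof.
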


\begin{proof}
By Lemma~3.6 of \cite{Ind} there is a constant
$c>0$ depending only on $k$ such that, for all $n\in\Nb$, if $S$ is a subset
of $(\{ (1,0),(1,1) \} \cup \{2,\dots ,k\} )^{\{ 1,\dots ,n\}}$ for which the
restriction $\Gamma_n |_S$ is bijective, where $\Gamma_n :
(\{ (1,0),(1,1) \} \cup \{2,\dots ,k\} )^{\{ 1,\dots ,n\}} \to
\{ 1,\dots ,k \}^{\{ 1,\dots ,n\}}$
converts the coordinate values $(1,0)$ and $(1,1)$ to $1$, then there is an
$I\subseteq \{ 1,\dots ,n \}$ with $|I|\geq cn$ and either
$S|_I \supseteq (\{ (1,0) \} \cup \{ 2,\dots ,k \} )^I$ or
$S|_I \supseteq (\{ (1,1) \} \cup \{ 2,\dots ,k \} )^I$.
Thus, given sets $D_1 , D_2 \subseteq X$,
any finite set $I\subseteq G$ which is an independence set for $\oA$ relative
to $D_1 \cap D_2$ has a
subset $J$ of cardinality at least $c|I|$  which is either an independence set for
$\oA_1$ relative to $D_1 \cap D_2$ (and hence relative to $D_1$) or an independence set for
$\oA_2$ relative to $D_1 \cap D_2$ (and hence relative to $D_2$). Given a $\delta > 0$,
we have $D_1 \cap D_2 \in\mathscr{B} (\mu , \delta )$
whenever $D_1 , D_2 \in\mathscr{B} (\mu , \delta /2)$ and
so we deduce that $\max \{ \upind_\mu (\oA_1 , \delta /2),\upind_\mu (\oA_2 , \delta /2) \}
\geq c\cdot \upind_\mu (\oA , \delta )$.
By hypothesis there is a $\delta > 0$ such that $\upind_\mu (\oA ,\delta ) > 0$, from
which we conclude that $\upind_\mu (\oA_j , \delta /2) > 0$ for at least one
$j\in \{ 0,1 \}$, yielding the proposition.
\end{proof}

\begin{lemma}\label{L-mind}
For every $d>0$ there exist $\delta >0$, $c>0$, and $M>0$ such
that if $F$ is a finite subset of $G$ with $|F|\ge M$, $D$ is in
$\mathscr{B}'(\mu, \delta)$, $\cP=\{P_1, P_2\}$ is a Borel
partition of $X$ with $\frac{\Hmeas (\cP^F)}{|F|}\ge d$,
and $A_1 \subseteq P_1$ and $A_2 \subseteq P_2$ are Borel sets
with $\mu(P_1\setminus A_1), \mu(P_2\setminus
A_2)<\delta$, then $(A_1, A_2)$ has a $\mu$-independence set
$I\subseteq F$ relative to $D$ with $|I|\ge c|F|$.
\end{lemma}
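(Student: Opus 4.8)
The plan is to convert the entropy bound $\Hmeas(\cP^F)\ge d|F|$ into a lower bound on the number of $\cP^F$-names realized on a set of measure close to $1$ on which, moreover, membership in the sets $D_s$ and $s^{-1}A_{\sigma(s)}$ fails for only a small fraction of the $s\in F$, then extract from those names a large shattered coordinate set, and finally thin it down to a genuine independence set for $(A_1,A_2)$ relative to $D$. (We may assume $d\le\ln 2$, since otherwise $\Hmeas(\cP^F)\le|F|\ln 2<d|F|$ and the hypothesis is vacuous.) Concretely, for $\mu$-a.e.\ $x$ let $\sigma_x\in\{1,2\}^F$ be defined by $\sigma_x(s)=i$ iff $x\in s^{-1}P_i$, so that $\Hmeas(\cP^F)$ is the Shannon entropy of the push-forward of $\mu$ under $x\mapsto\sigma_x$. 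Put $B_x=\{s\in F:x\notin D_s\}$ and $G_x=\{s\in F:x\notin s^{-1}A_{\sigma_x(s)}\}$. Using $\mu$-invariance of the action together with $\mu(D_s)\ge1-\delta$ for $s\in F$ and $\mu(P_i\setminus A_i)<\delta$, one gets $\int_X(|B_x|+|G_x|)\,d\mu\le 3\delta|F|$, so by Markov's inequality the set $E=\{x:|B_x|+|G_x|\le\sqrt{3\delta}\,|F|\}$ has $\mu(E)\ge1-\sqrt{3\delta}$. Splitting $\Hmeas(\cP^F)$ conditionally over the partition $\{E,X\setminus E\}$ and bounding the contribution of $X\setminus E$ by $\sqrt{3\delta}\,|F|\ln 2$ shows that, once $\delta$ is small and $|F|\ge M$ with $M$ depending only on $d$, the entropy of $\cP^F$ relative to $E$ is at least $\tfrac{d}{2}|F|$; hence the set $\cS$ of $\cP^F$-names realized on $E$ satisfies $|\cS|\ge e^{(d/2)|F|}=2^{\beta|F|}$ with $\beta=d/(2\ln 2)>0$.

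Next, for $x\in E$ define the thinned name $\rho_x\in\{0,1,2\}^F$ by $\rho_x(s)=\sigma_x(s)$ for $s\notin B_x\cup G_x$ and $\rho_x(s)=0$ otherwise, exactly as in the proof of Lemma~\ref{L-prime}; then $|\rho_x^{-1}(0)|\le\sqrt{3\delta}\,|F|$ and $x\in\bigcap_{s\in\rho_x^{-1}(\{1,2\})}\big(D_s\cap s^{-1}A_{\rho_x(s)}\big)$. Any two points of $E$ with equal thinned name have ordinary names differing in at most $\sqrt{3\delta}\,|F|$ coordinates, and the number of elements of $\{1,2\}^F$ within that Hamming distance of a fixed one is $2^{\varepsilon(\delta)|F|}$ with $\varepsilon(\delta)\to0$ as $\delta\to0$; hence $|\{\rho_x:x\in E\}|\ge|\cS|\,2^{-\varepsilon(\delta)|F|}\ge2^{\lambda|F|}$ for $\lambda:=\beta/2$, provided $\delta$ is small enough in terms of $d$. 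Now fix this $\lambda$; since $\lambda\in(0,1)=(\log_21,1)$, Lemma~\ref{L-density} with $k=2$ supplies constants $a,b>0$ depending only on $d$, and after shrinking $\delta$ so that $\sqrt{3\delta}\le b$ we may apply it to $S=\{\rho_x:x\in E\}$ to obtain a set $J\subseteq F$ with $|J|\ge a|F|$ and $S|_J\supseteq\{1,2\}^J$. Given any $\tau\in\{1,2\}^J$, choosing $x\in E$ with $\rho_x|_J=\tau$ forces $J\subseteq\rho_x^{-1}(\{1,2\})$, so $x\in\bigcap_{s\in J}(D_s\cap s^{-1}A_{\tau(s)})\ne\emptyset$; thus $J$ is an independence set for $(A_1,A_2)$ relative to $D$. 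Taking $c=a$, together with the $\delta$ and $M$ fixed above, finishes the argument.

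The one genuinely delicate point is the bookkeeping forced by the relativization. Because $D$ is a map $G\to2^X$ rather than a fixed set and $A_i$ only approximates $P_i$ from inside, the shattered set produced directly by the entropy count is not itself an independence set — for each pattern $\tau$ one controls a witness $x$ only on a sub-fraction of the coordinates in that set — which is exactly why the thinned names have to be fed through Lemma~\ref{L-density}. This, together with the insistence that $\delta$, $c$, and $M$ depend on $d$ but never on $F$, forces a careful order of quantifiers: the shattering fraction $a$ (hence $\lambda$ and $b$) must be pinned down as a function of $d$ alone, and only then may $\delta$ be taken small relative to $a$, $b$, and $d$. The remaining ingredients — the conditional-entropy inequality, the Markov estimate, and the Hamming-ball count — are routine.
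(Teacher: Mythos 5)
Your proof is correct and follows essentially the same route as the paper's: both localize to a set of points lying in $D_s\cap s^{-1}A_{\sigma(s)}$ for all but a small fraction of the $s\in F$, convert the entropy hypothesis into an exponential count of $\cP^F$-names realized on that set, pass to thinned $\{0,1,2\}$-valued names and divide out the fibre size $2^{O(\sqrt{\delta})|F|}$, and conclude with Lemma~\ref{L-density}. The only cosmetic difference is that you obtain the name count by conditioning the entropy on $\{E,E^{\comp}\}$, whereas the paper counts the atoms of measure less than $e^{-d|F|/3}$ meeting its good set $W$; both give the same estimate and the same dependence of $\delta$, $c$, $M$ on $d$ alone.
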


\begin{proof}
Let $d>0$. Given a finite set $F\subseteq G$, denote by $\cY$ the set of all
$Y\in \cP^F$ such that $\mu(Y)<e^{-\frac{d}{3}|F|}$ and by $\cZ$ the set of
all $Z\in \cP^F$ such that $\mu(Z)\geq e^{-\frac{d}{3}|F|}$.
Put $B=\bigcup\cY$. Since the function $f(x)=-x\ln x$
for $x\in [0, 1]$ is concave downward and has maximal value
$e^{-1}$, we have
\begin{align*}
\sum_{Y\in \cY} -\mu(Y)\ln \mu(Y)  &\le -\mu(B)\ln\frac{\mu(B)}{|\cY |} \\
&= \mu(B)\ln |\cY |-\mu(B) \ln \mu(B) \\
&\le (\mu(B)\cdot \ln 2)|F|+e^{-1}.
\end{align*}
We also have
\[ \sum_{Z \in \cZ} -\mu(Z)\ln
\mu(Z)\le \sum_{Z\in \cZ}\mu(Z) \ln e^{\frac{d}{3}|F|}\le
\frac{d}{3}|F|. \]
Thus
\begin{align*}
d \le \frac{\Hmeas (\cP^F)}{|F|}
&= \frac{\sum_{Y\in \cY} -\mu(Y)\ln \mu(Y) + \sum_{Z \in\cZ} -\mu(Z)\ln
\mu(Z)}{|F|}\\
&\le \frac{(\mu(B)\cdot \ln 2)|F|+e^{-1}+\frac{d}{3}|F|}{|F|} \\
&= \mu(B)\cdot \ln 2+\frac{d}{3}+\frac{e^{-1}}{|F|}.
\end{align*}
Choose an $M\ge \frac{3e^{-1}}{d(2-2\ln 2)}$ such that
$\frac{d}{3}e^{\frac{d}{6}M}\ge 1$.
We will suppose henceforth that $|F|\ge M$, in which case $\mu(B)\ge \frac{2}{3}d$.

By Lemma~\ref{L-density}, there are $b, c > 0$ (depending on $d$)
such that for every nonempty finite set $K$ and $S\subseteq \{
0,1,2 \}^{K}$ with $|S| \geq e^{\frac{d}{12}|K|}$ and
$\max_{\sigma\in S} |\sigma^{-1} (0) | \leq b |K|$ there exists an
$I\subseteq K$ with $|I| \geq c|K|$ and $S|_I \supseteq \{1,
2\}^I$. We may assume that $2^b\le e^{\frac{d}{12}}$.

Set $\delta=\frac{db}{9}$.
Then $\mu (X\setminus (D_s\cap s^{-1}(A_1\cup A_2)))\le 3\delta=
\frac{db}{3}$ for every $s\in G$. Set
\[ W = \big\{ x\in X : \big| \big\{ s\in F: x  \in
D_s \cap s^{-1} (A_1 \cup A_2 )\big\} \big| \geq (1-b)|F| \big\} , \]
which has measure at least
\[ 1 - \frac{1}{b |F|} \sum_{s\in F} \mu \big( X\setminus (D_s \cap
s^{-1} ( A_1 \cup A_2 ))\big) \geq 1 - \frac{1}{b|F|} \cdot |F| \frac{db}{3}
= 1-\frac{d}{3} . \]
Then $\mu (W\cap B) \geq \frac{d}{3}$. Thus the
set $\cY'$ of all $Y\in \cY$ for which $\mu (W\cap Y) > 0$ has
cardinality at least $\frac{d}{3}e^{\frac{d}{3}|F|}\ge
e^{\frac{d}{6}|F|}$. For each $Y\in \cY'$ pick an $x_Y\in W\cap Y$.
Define a map $\varphi: \cY' \to \{0, 1,2\}^{F}$ by
\[ \varphi (Y)(s) = \left\{ \begin{array}{l@{\hspace*{8mm}}l}
0 &
\text{if } x_Y \notin D_s\cap s^{-1}(A_1\cup A_2), \\
1 &
\text{if } x_Y \in D_s\cap s^{-1} A_1 , \\
2 & \text{if } x_Y \in D_s\cap s^{-1} A_2 ,
\end{array} \right. \]
for $Y\in \cY'$ and $s\in F$. If $\varphi (Y_1 )=\varphi (Y_2 )$, then $Y_1$
and $Y_2$ coincide on a subset of $F$ with cardinality at least
$(1-b)|F|$.
Hence $|\varphi(\cY')|\ge
|\cY' |/2^{b|F|}\ge e^{\frac{d}{12}|F|}$. Therefore there
exists an $I\subseteq F$ such that $|I|\ge c|F|$ and
$\varphi(\cY' )|_I\supseteq \{1, 2\}^{I}$. Then $I$ is a
$\mu$-independence set for $(A_1, A_2)$ relative to $D$.
\end{proof}

\noindent We remark that the constants $\delta$, $c$, and $M$ specified
in the proof of Lemma~\ref{L-mind} do not depend on $(X,G)$ or $\mu$.

\begin{lemma}\label{L-pe to pd}
Let $\cP=\{P_1, P_2\}$ be a two-element Borel partition of $X$
such that $h_{\mu}(\cP)>0$. Then there
exists an $\varepsilon>0$ such that $\lwind_\mu (\oA )>0$ whenever
$\oA=(A_1, A_2)$ for Borel subsets $A_1\subseteq P_1$ and
$A_2\subseteq P_2$ with $\mu(P_1\setminus A_1), \mu (P_2\setminus
A_2)<\varepsilon$.
\end{lemma}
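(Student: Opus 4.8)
The statement says that a two-element partition $\cP = \{P_1, P_2\}$ with positive entropy $h_\mu(\cP) > 0$ has the feature that any sufficiently good Borel approximation $(A_1, A_2)$ with $A_i \subseteq P_i$ has positive \emph{lower} $\mu$-independence density. The key input has already been isolated: Lemma~\ref{L-mind} tells us that for every $d > 0$ there are $\delta, c, M > 0$ such that whenever $F$ is sufficiently invariant so that $\frac{1}{|F|}\Hmeas(\cP^F) \geq d$, and whenever $D \in \mathscr{B}'(\mu,\delta)$ and $A_i \subseteq P_i$ with $\mu(P_i \setminus A_i) < \delta$, then $(A_1, A_2)$ has an independence set $I \subseteq F$ relative to $D$ with $|I| \geq c|F|$. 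So the real work is just to harvest the hypothesis $h_\mu(\cP) > 0$ into a uniform lower bound on $\frac{1}{|F|}\Hmeas(\cP^F)$ for all sufficiently invariant $F$.

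First I would fix $d = h_\mu(\cP)/2 > 0$ (possible since $h_\mu(\cP) > 0$). Since $\cP$ is a finite partition and $G$ is amenable, the limit $h_\mu(\cP) = \lim_{(K,\delta_0)} \frac{1}{|F|}\Hmeas(\cP^F)$ exists (the four monotonicity/subadditivity conditions hold for $\varphi(F) = \Hmeas(\cP^F)$, as recalled in the text), so there exist a finite set $K_0 \subseteq G$ and $\delta_0 > 0$ such that $\frac{1}{|F|}\Hmeas(\cP^F) \geq d$ for every $F \in M(K_0, \delta_0)$. Now feed this $d$ into Lemma~\ref{L-mind} to obtain the constants $\delta, c, M$. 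Set $\varepsilon = \delta$. Then for any Borel sets $A_1 \subseteq P_1$, $A_2 \subseteq P_2$ with $\mu(P_i \setminus A_i) < \varepsilon$, and any $\delta' \leq \delta$ (in particular for the $D$'s we care about), the conclusion of Lemma~\ref{L-mind} applies to \emph{every} $F$ that is simultaneously $(K_0,\delta_0)$-invariant and has $|F| \geq M$: such $F$ satisfy $\frac{1}{|F|}\Hmeas(\cP^F) \geq d$, so for every $D \in \mathscr{B}(\mu, \delta)$ (viewed as the constant map, hence in $\mathscr{B}'(\mu,\delta)$) we get an independence set $I \subseteq F$ for $(A_1,A_2)$ relative to $D$ with $|I| \geq c|F|$, whence $\frac{1}{|F|}\varphi_{\oA,\delta}(F) \geq c$.

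Next I would translate this into the independence-density language. As $F$ becomes more and more invariant, eventually $F \in M(K_0,\delta_0)$ and $|F| \geq M$ both hold (the net of invariant sets is cofinal with respect to both refinement of $K_0$ and smallness of $\delta_0$, and one can always pass to large $F$, e.g.\ by the standard fact that if $M(K,\delta) \neq \emptyset$ it contains arbitrarily large sets, or simply by noting that in a net only the eventual behavior matters and Følner sets for a fixed $(K_0,\delta_0)$ already have unbounded cardinality unless $G$ is finite — the finite-$G$ case is trivial here since then entropy is zero). Consequently $\liminf_{(K,\delta_0)} \frac{1}{|F|}\varphi_{\oA,\delta}(F) \geq c > 0$, which is exactly $\lwind_\mu(\oA,\delta) \geq c$, and therefore $\lwind_\mu(\oA) = \sup_{\delta'' > 0}\lwind_\mu(\oA,\delta'') \geq c > 0$. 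This gives the desired $\varepsilon$ (namely $\varepsilon = \delta$), completing the proof.

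\emph{Main obstacle.} There is essentially no deep obstacle remaining: the heavy combinatorics sits in Lemma~\ref{L-density} and Lemma~\ref{L-mind}, both already available. The only point requiring a little care is the bookkeeping about the net of invariant sets — making sure that "sufficiently invariant" can be arranged to simultaneously satisfy $F \in M(K_0,\delta_0)$ and $|F| \geq M$, and that the limit infimum over the net therefore inherits the bound $c$. (One should also note the degenerate case where $G$ is finite: then $M(K,\delta)$ eventually consists only of bounded sets, but in that case $h_\mu(X) = 0$ for any finite partition, so the hypothesis $h_\mu(\cP) > 0$ is vacuous and there is nothing to prove.) Everything else is a direct chaining of the already-established lemmas.
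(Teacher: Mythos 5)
Your proposal is correct and is essentially the paper's argument: the paper's entire proof is ``Apply Lemma~\ref{L-mind},'' and your write-up simply supplies the routine bookkeeping (choosing $d=h_\mu(\cP)/2$, taking $\varepsilon=\delta$ from Lemma~\ref{L-mind}, and noting that sufficiently invariant F{\o}lner sets eventually satisfy both $\frac{1}{|F|}\Hmeas(\cP^F)\geq d$ and $|F|\geq M$) that the paper leaves implicit.
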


\begin{proof}
Apply Lemma~\ref{L-mind}.
\end{proof}

\begin{lemma}\label{L-supp}
Let $A$ be a Borel subset of $X$ with $\mu(A)>0$. Then there are
$d>0$ and $\delta>0$ such that for every finite subset
$F\subseteq G$ and $D\in \mathscr{B}(\mu, \delta)$
there is an $H\subseteq F$ with $|H|\ge d|F|$ and $D\cap
\big( \bigcap_{s\in H}s^{-1}A \big) \neq \emptyset$.
\end{lemma}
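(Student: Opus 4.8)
The plan is to reduce the statement to the single-tuple case $\oA = (A)$ of the independence-density machinery already in place, and in fact to the estimate produced by Lemma~\ref{L-mind}. The point is that positivity of $\mu(A)$ forces the orbit of $A$ to exhibit a fixed density of "hits" on any sufficiently large finite $F\subseteq G$, even after excising a small exceptional set $D^\comp$. First I would observe that for a $1$-tuple $\oA=(A)$, a set $J\subseteq G$ being an independence set for $\oA$ relative to $D$ simply means $D\cap\bigcap_{s\in F}s^{-1}A\neq\emptyset$ for every nonempty finite $F\subseteq J$; so what we must produce is exactly a large $H\subseteq F$ with this intersection property.

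The core estimate: fix $F$ and $D\in\mathscr{B}(\mu,\delta)$, and consider the set $W=\{x\in X:|\{s\in F: x\in D\cap s^{-1}A\}|\ge d|F|\}$ for a suitable $d>0$ to be chosen. Since the action is $\mu$-preserving, $\sum_{s\in F}\mu(D\cap s^{-1}A)\ge |F|(\mu(A)-\delta)$, so a Markov/averaging argument shows $\mu(W)>0$ once $d$ is taken a bit smaller than $\mu(A)-\delta$ (and $\delta<\mu(A)/2$, say). Pick any $x_0\in W$; then $H=\{s\in F: x_0\in D\cap s^{-1}A\}$ has $|H|\ge d|F|$, and for \emph{every} nonempty finite $F'\subseteq H$ we have $x_0\in D\cap\bigcap_{s\in F'}s^{-1}A$, so $D\cap\bigcap_{s\in H}s^{-1}A\ni x_0$ is nonempty. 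This $H$ is the desired set. In fact one does not even need $|F|$ large here, so no lower bound $M$ on $|F|$ is required, and the argument is considerably simpler than that of Lemma~\ref{L-mind}; alternatively, one could invoke Lemma~\ref{L-mind} directly by taking a partition $\cP=\{P_1,P_2\}$ refining $\{A,X\setminus A\}$ with $h_\mu(\cP)>0$, but the direct argument is cleaner and avoids an unnecessary hypothesis.

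The only subtlety — and the step I would be most careful about — is the quantitative averaging that produces $\mu(W)>0$: one needs $d$ chosen strictly less than $\mu(A)-\delta$ and then notes that if $\mu(W)=0$ then $\int_X |\{s\in F: x\in D\cap s^{-1}A\}|\,d\mu(x) < d|F|$, contradicting $\sum_{s\in F}\mu(D\cap s^{-1}A)\ge|F|(\mu(A)-\delta)>d|F|$. So setting, e.g., $\delta=\mu(A)/4$ and $d=\mu(A)/2$ works uniformly, and these constants depend only on $\mu(A)$, not on $F$, $D$, $(X,G)$, or even on how $\mu$ was chosen beyond the value $\mu(A)$. Writing this out is routine; the conceptual content is just that $\mu(A)>0$ buys a positive-density pigeonhole on every $F$.
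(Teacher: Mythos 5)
Your proof is correct and essentially the same as the paper's: both rest on the same first-moment/averaging pigeonhole, the only immaterial difference being that you fold $D$ into the count $|\{s\in F: x\in D\cap s^{-1}A\}|$, whereas the paper counts visits to $A$ alone, obtains a set $E$ with $\mu(E)\ge 1-\frac{1-\mu(A)}{1-d}>0$ independently of $D$, and then takes $\delta$ strictly below this bound so that $D\cap E\neq\emptyset$ and any point of $D\cap E$ furnishes $H$. Only your discarded aside is off: Lemma~\ref{L-mind} could not be invoked via a partition refining $\{A,X\setminus A\}$, since such a partition need not have positive entropy (e.g.\ in a zero-entropy system), so the direct argument is not merely cleaner but the right one.
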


\begin{proof}
Choose a $d>0$ less than $\mu(A)$ and
set $E=\{x\in X: |\{g\in F:gx\in A\}|\geq d|F|\}$. Then
$(1-d)|F|1_{X\setminus E}\le \sum_{g\in F}1_{g^{-1}(X\setminus A)}$ so that
\[ (1-d)|F|(1-\mu(E))=\int (1-d)|F|1_{X\setminus E} \, d\mu
\leq \int \sum_{g\in F}1_{g^{-1}(X\setminus A)} \, d\mu = |F|(1-\mu(A)) \]
and hence $\mu(E)\geq 1-\frac{1-\mu(A)}{1-d}>0$. We
can thus take $\delta$ to be any strictly positive number less than
$1-\frac{1-\mu(A)}{1-d}$.
\end{proof}

In order to determine the behaviour of measure IE-tuples under taking factors
and to establish the main results of the next two subsections, 
we need to consider several auxiliary entropy
quantities. Let $\cU$ be a finite Borel cover of $X$. For a subset $D$ of $X$
denote by $N_D(\cU)$ the minimal number of members of $\cU$ needed to
cover $D$. For $\delta>0$ we set
$N_{\delta}(\cU) = \min_{D\in \mathscr{B} (\mu , \delta )} N_D(\cU)$
and write $\lwh_{\comb , \mu}(\cU, \delta)$ for the limit infimum of
$\frac{1}{|F|}\ln N_{\delta}(\cU^F)$ as $F$ becomes more and more invariant
and $\uph_{\comb , \mu}(\cU, \delta)$ for the limit supremum of
$\frac{1}{|F|} \ln N_{\delta}(\cU^F)$ as $F$ becomes more and more invariant.
We then define
\begin{align*}
\lwh_{\comb , \mu}(\cU) &= \sup_{\delta>0}\lwh_{\comb , \mu}(\cU, \delta), \\
\uph_{\comb , \mu}(\cU) &= \sup_{\delta>0}\uph_{\comb , \mu}(\cU, \delta) .
\end{align*}
The metric versions of $\lwh_{\comb , \mu}(\cU, \delta)$ and 
$\uph_{\comb , \mu}(\cU, \delta)$ in the ergodic $\Zb$-system case appear in the 
entropy formulas of Katok from \cite{Katok}.
Writing $\Hmeas (\cU)$ for the infimum of $\Hmeas (\cP)$ over all Borel
parititions $\cP$ of $X$ refining $\cU$, we define
$h_{\mu}^-(\cU)$ to be the limit of $\frac{1}{|F|}\Hmeas (\cU^F)$ as $F$ becomes
more and more invariant. Finally, we define $h_{\mu}^+(\cU)$ to be the
infimum of $h_{\mu}(\cP)$ over all Borel parititions $\cP$ of $X$ refining $\cU$.
The quantities $h_{\mu}^-(\cU)$ and $h_{\mu}^+(\cU)$ were introduced by
Romagnoli in the case $G=\Zb$ \cite{Roma}. We have the trivial inequalities
$\lwh_{\comb , \mu}(\cU)\le \uph_{\comb , \mu}(\cU)$ and $h_{\mu}^-(\cU)\leq h_{\mu}^+(\cU)$.
Huang, Ye, and Zhang observed in \cite{HYZ} that results in \cite{Interplay, HMRY, Roma}
can be combined to deduce that $h_{\mu}^-(\cU) = h_{\mu}^+(\cU)$ 
for all open covers $\cU$ when $X$ is metrizable and 
$G=\Zb$.

\begin{question}\label{Q-open cover}
Is it always the case that $h_{\mu}^-(\cU) = h_{\mu}^+(\cU)$ for an open cover
$\cU$?
\end{question}


The following fact was established by Romagnoli \cite[Eqn.\ (8)]{Roma}.

\begin{lemma}\label{L-Hlifting}
Let $\pi:X\to Y$ be a factor of $X$. Then
\[ H_{\mu}(\pi^{-1}\cU) = H_{\pi_*(\mu)}(\cU) \]
for every finite Borel cover $\cU$ of $Y$.
\end{lemma}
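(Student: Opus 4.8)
The plan is to reduce both sides of the identity $H_\mu(\pi^{-1}\cU) = H_{\pi_*(\mu)}(\cU)$ to an infimum over the same indexing set by exploiting the fact that $\pi$ gives a measure-algebra embedding of $(Y,\sY,\pi_*(\mu))$ into $(X,\sX,\mu)$. Recall that $H_{\pi_*(\mu)}(\cU)$ is the infimum of $H_{\pi_*(\mu)}(\cQ)$ over all Borel partitions $\cQ$ of $Y$ refining $\cU$, and $H_\mu(\pi^{-1}\cU)$ is the infimum of $H_\mu(\cP)$ over Borel partitions $\cP$ of $X$ refining the cover $\pi^{-1}\cU = \{\pi^{-1}U : U\in\cU\}$.

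First I would establish the inequality $H_\mu(\pi^{-1}\cU) \le H_{\pi_*(\mu)}(\cU)$, which is the easy direction: given any Borel partition $\cQ$ of $Y$ refining $\cU$, the partition $\pi^{-1}\cQ := \{\pi^{-1}Q : Q\in\cQ\}$ is a Borel partition of $X$ refining $\pi^{-1}\cU$, and since $\pi$ is measure-preserving one has $\mu(\pi^{-1}Q) = \pi_*(\mu)(Q)$ for each $Q$, so $H_\mu(\pi^{-1}\cQ) = H_{\pi_*(\mu)}(\cQ)$; taking the infimum over such $\cQ$ gives the bound.

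The reverse inequality $H_\mu(\pi^{-1}\cU) \ge H_{\pi_*(\mu)}(\cU)$ is where the real work lies, and I expect this to be the main obstacle. The issue is that a partition $\cP$ of $X$ refining $\pi^{-1}\cU$ need not be of the pulled-back form $\pi^{-1}\cQ$; its members are arbitrary Borel subsets of $X$, not necessarily measurable with respect to the sub-$\sigma$-algebra $\pi^{-1}\sY$. The remedy is a conditional-expectation / refinement argument: given such a $\cP = \{P_1,\dots,P_n\}$ with each $P_j \subseteq \pi^{-1}U_{i(j)}$ for some choice function $j\mapsto i(j)$, I would produce a partition $\cQ$ of $Y$ refining $\cU$ with $H_{\pi_*(\mu)}(\cQ) \le H_\mu(\cP)$. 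One concrete approach: use that $\mu$ disintegrates over $\pi_*(\mu)$ as $\mu = \int_Y \mu_y\, d\pi_*(\mu)(y)$, and for each $y$ consider the index $\ell(y)\in\{1,\dots,n\}$ maximizing $\mu_y(P_\ell)$ (breaking ties by least index); since $\cP$ refines $\pi^{-1}\cU$, each $P_\ell$ with $\mu_y(P_\ell)>0$ forces $y\in U_{i(\ell)}$, so the sets $Q_i = \{y : i(\ell(y)) = i\}$ — or a suitable disjointification of $\{U_i\}$ subordinate to this assignment — form a partition $\cQ$ of $Y$ refining $\cU$. The concavity of $t\mapsto -t\ln t$ (equivalently, the standard fact that conditioning cannot increase entropy, or that the "max-coordinate" map is a coarsening on each fibre) then yields $H_{\pi_*(\mu)}(\cQ) \le H_\mu(\cP)$. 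Taking the infimum over $\cP$ completes the argument. Since this is a result of Romagnoli cited as \cite[Eqn.~(8)]{Roma}, the detailed fibrewise estimate can be quoted rather than reproduced, but the structure above is the natural route if one needs to see it directly.
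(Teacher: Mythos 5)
Your easy direction is fine, and the overall strategy for the hard direction (for each Borel partition $\cP$ of $X$ refining $\pi^{-1}\cU$, produce a partition $\cQ$ of $Y$ refining $\cU$ with $H_{\pi_*(\mu)}(\cQ)\le H_\mu(\cP)$, then take infima) is the right one. The gap is in the key quantitative step: the fibrewise ``argmax'' rule does not satisfy the inequality you assert, and concavity of $t\mapsto -t\ln t$ does not deliver it. Concretely, let $Y=[0,1]$ with Lebesgue measure, $X=Y\times[0,1]$ with $\mu$ the product measure and $\pi$ the first coordinate, $A=[0,0.18]$, $U_1=Y$, $U_2=A$, and $\cP=\{P_1,P_2\}$ with $P_2=A\times[0,0.55]$ and $P_1=X\setminus P_2$, so $P_i\subseteq\pi^{-1}U_i$. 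On every fibre over $A$ the conditional mass of $P_2$ is $0.55>0.45$, so your rule assigns all of $A$ to index $2$ and yields $\cQ=\{A^\comp,A\}$, with $H_{\pi_*(\mu)}(\cQ)=H(0.18,0.82)\approx 0.47$, whereas $H_\mu(\cP)=H(0.099,0.901)\approx 0.32$. The hard-decision rule inflates small masses (roughly doubling $-t\ln t$ for small $t$), so $H_{\pi_*(\mu)}(\cQ)\le H_\mu(\cP)$ simply fails for this construction; here the correct choice would have been to assign everything to index $1$.

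The reduction can be repaired, but concavity must be used in the opposite way: one minimizes over randomized assignments and passes to an extreme point. Set $\nu=\pi_*(\mu)$ and let $f_j$ be the conditional expectation of $\boldsymbol{1}_{P_j}$ with respect to $\pi^{-1}(\sB_Y)$, viewed as a function on $Y$ (this also avoids disintegration, so the argument works for non-metrizable $X$); since $P_j\subseteq\pi^{-1}U_{i(j)}$, $f_j$ vanishes $\nu$-a.e.\ off $U_{i(j)}$. Partition $Y$ into the finitely many sets $Y_S=\{y:\{j:f_j(y)>0\}=S\}$ and consider all ways $(m_{S,j})$ of splitting the mass $\nu(Y_S)$ among indices $j\in S$; this is a product of simplices, the function $(m_{S,j})\mapsto \sum_j \phi\big(\sum_S m_{S,j}\big)$ with $\phi(t)=-t\ln t$ is concave, and at the admissible point $m_{S,j}=\int_{Y_S}f_j\,d\nu$ its value is exactly $\sum_j\phi(\mu(P_j))=H_\mu(\cP)$. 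A concave function on a polytope attains its minimum at an extreme point, and an extreme point assigns each $Y_S$ entirely to a single $j\in S$; the resulting partition $\cQ$ of $Y$ satisfies $H_\nu(\cQ)\le H_\mu(\cP)$ and, after absorbing a null set into members subordinate to $\cU$, refines $\cU$. With that replacement your outline becomes a proof; note also that the paper itself does not prove this lemma but quotes it from Romagnoli (Eqn.\ (8) of that paper), so simply citing it, as you suggest, matches what the authors do.
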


\noindent One direct consequence of Lemma~\ref{L-Hlifting} is the following,
which in the case $G=\Zb$ is recorded as Proposition~6 in \cite{Roma}.

\begin{lemma}\label{L-hlifting}
Let $\pi:X\to Y$ be a factor of $X$. Then
\[ h_{\mu}^-(\pi^{-1}\cU) = h_{\pi_*(\mu)}^-(\cU) \]
for every finite Borel cover $\cU$ of $Y$.
\end{lemma}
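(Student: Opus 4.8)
The plan is to reduce the statement to Lemma~\ref{L-Hlifting} by exploiting the $G$-equivariance of $\pi$. The first step is to note that since $\pi$ intertwines the two actions, we have $s^{-1}(\pi^{-1}\cU) = \pi^{-1}(s^{-1}\cU)$ for every $s\in G$, and since $\pi^{-1}$ manifestly commutes with finite joins of covers this gives
\[ (\pi^{-1}\cU)^F = \bigvee_{s\in F} s^{-1}\pi^{-1}\cU = \pi^{-1}\Bigl( \bigvee_{s\in F} s^{-1}\cU \Bigr) = \pi^{-1}(\cU^F) \]
for every finite set $F\subseteq G$. Since $\cU^F$ is again a finite Borel cover of $Y$, Lemma~\ref{L-Hlifting} applies to it.

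Next I would apply Lemma~\ref{L-Hlifting} with $\cU^F$ in place of $\cU$, obtaining
\[ H_{\mu}\bigl( (\pi^{-1}\cU)^F \bigr) = H_{\mu}\bigl( \pi^{-1}(\cU^F) \bigr) = H_{\pi_*(\mu)}(\cU^F) \]
for every finite $F\subseteq G$. Dividing by $|F|$ and passing to the limit as $F$ becomes more and more invariant — the limit exists on each side by the defining property of $h^-_\mu$ for covers (the function $F\mapsto H_\mu(\cU^F)$ satisfies the four conditions recalled above, invariance being inherited from $\mu$-invariance of the action) — yields $h_{\mu}^-(\pi^{-1}\cU) = h_{\pi_*(\mu)}^-(\cU)$.

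There is essentially no obstacle here beyond the bookkeeping: the only point that merits attention is the identity $(\pi^{-1}\cU)^F = \pi^{-1}(\cU^F)$, which rests entirely on $\pi$ being $G$-equivariant. With that in hand the conclusion is a direct substitution into Lemma~\ref{L-Hlifting} followed by taking the same limit on both sides.
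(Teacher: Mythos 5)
Your proof is correct and is exactly the argument the paper has in mind: the paper simply notes that the lemma is a direct consequence of Lemma~\ref{L-Hlifting}, and your route via $(\pi^{-1}\cU)^F=\pi^{-1}(\cU^F)$ (by $G$-equivariance of $\pi$) followed by applying Lemma~\ref{L-Hlifting} to $\cU^F$ and passing to the limit is that consequence spelled out.
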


\begin{lemma}\label{L-local}
For a finite Borel cover $\cU$ of $X$ and $\delta > 0$ we have
\[ \delta\cdot \uph_{\comb , \mu}(\cU, \delta)\leq h_{\mu}^-(\cU)\leq
\lwh_{\comb , \mu}(\cU) . \]
\end{lemma}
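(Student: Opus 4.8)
The statement chains two inequalities, and I would establish them separately, the second being the essential one. For the left inequality $\delta\cdot\uph_{\comb,\mu}(\cU,\delta)\le h_\mu^-(\cU)$, the plan is to compare covering numbers with partition entropy. Fix $\delta>0$ and a finite set $F\subseteq G$. Pick $D\in\mathscr B(\mu,\delta)$ realizing $N_\delta(\cU^F)=N_D(\cU^F)$, say $D$ is covered by $m=N_D(\cU^F)$ members of $\cU^F$. From this cover of $D$ one builds a Borel partition $\cP$ refining $\cU^F$ in which all but one atom (the ``garbage'' atom carrying the complement $D^\comp$, of measure $\le\delta$) lie inside single members of $\cU^F$; this partition has at most $m+1$ atoms, so $\Hmeas(\cP)\le\ln(m+1)$. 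But since $\cP$ refines $\cU^F$, we also want a lower bound: the point is rather that $\Hmeas(\cU^F)\le\Hmeas(\cP)$ is the wrong direction, so instead I would argue that any partition $\cQ$ refining $\cU^F$ has an atom contained in $D^\comp$ together with atoms covering $D$, and use concavity of $-x\ln x$ (as in the proof of Lemma~\ref{L-mind}) to see that $\Hmeas(\cQ)\ge$ roughly $\mu(D)\ln(\text{number of atoms meeting }D)\ge(1-\delta)\ln N_D(\cU^F)$ minus a bounded error; hence $\Hmeas(\cU^F)\ge\delta\ln N_\delta(\cU^F)$ up to an additive constant independent of $F$. Dividing by $|F|$ and taking the limit as $F$ becomes more and more invariant (the limit defining $h_\mu^-(\cU)$ exists) yields $\delta\cdot\uph_{\comb,\mu}(\cU,\delta)\le h_\mu^-(\cU)$.

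For the right inequality $h_\mu^-(\cU)\le\lwh_{\comb,\mu}(\cU)$, the plan is to bound partition entropy by the logarithm of a covering number, exploiting that we are allowed to discard a set of small measure. Given $\delta>0$, choose for each invariant $F$ a set $D\in\mathscr B(\mu,\delta)$ with $N_D(\cU^F)=N_\delta(\cU^F)$, and a subcover of $D$ by $N_\delta(\cU^F)$ elements of $\cU^F$; refine it to a partition $\cP$ of $X$ that refines $\cU^F$, has $D$ covered by $N_\delta(\cU^F)$ atoms and one extra atom inside $D^\comp$ with $\mu(D^\comp)\le\delta$. Then $\Hmeas(\cU^F)\le\Hmeas(\cP)\le H(\mu(D^\comp))+\mu(D)\ln N_\delta(\cU^F)+H(\delta)\le\ln N_\delta(\cU^F)+\log 2+H(\delta)$ by the grouping formula for entropy, where $H(t)=-t\ln t-(1-t)\ln(1-t)$. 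Dividing by $|F|$ and passing to the limit infimum gives $h_\mu^-(\cU)\le\lwh_{\comb,\mu}(\cU,\delta)$ for every $\delta>0$, and then taking the supremum over $\delta$ — or rather noting the bound already holds with the $\delta$-free left side since $h_\mu^-(\cU)$ does not depend on $\delta$ — yields $h_\mu^-(\cU)\le\sup_{\delta>0}\lwh_{\comb,\mu}(\cU,\delta)=\lwh_{\comb,\mu}(\cU)$.

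The main obstacle I anticipate is the bookkeeping in the left-hand inequality: one must be careful that the error terms (the $+\log 2$, the $+H(\delta)$, the $e^{-1}$-type constants from $-x\ln x$) are genuinely additive constants independent of $F$, so that they vanish after division by $|F|$, and that the factor $\delta$ (rather than $1-\delta$) is what survives — this is because the entropy of a partition refining $\cU^F$ is only forced to be large on the part of the space one cannot hide, so the best one can say is $\Hmeas(\cU^F)\gtrsim(1-\delta')\ln N_{\delta'}(\cU^F)$ for the $\delta'$ one actually hid, and a short argument relating $N_{\delta'}$ for $\delta'<\delta$ back to $N_\delta$, or simply an honest tracking of which measure gets discarded, produces the stated constant $\delta$. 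The concavity estimate for $-x\ln x$ on the atoms meeting $D$, exactly as deployed in Lemma~\ref{L-mind}, is the technical heart; everything else is the standard dictionary between covers and partitions together with the subadditivity ensuring the relevant limits exist.
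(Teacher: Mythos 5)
Both halves of your argument contain genuine gaps. For the left inequality $\delta\cdot \uph_{\comb , \mu}(\cU, \delta)\leq h_{\mu}^-(\cU)$, the mechanism you propose does not work: concavity of $-x\ln x$ yields \emph{upper} bounds on entropy sums, never lower bounds, and the asserted estimate that $\Hmeas (\cQ )$ is at least roughly $\mu (D)\ln (\text{number of atoms of } \cQ \text{ meeting } D)$ is false --- $D$ can meet enormously many atoms of tiny total measure together with one atom of measure close to $1$, so that the count is huge while $\Hmeas (\cQ )$ stays small. For the same reason your intermediate claim $\Hmeas (\cU^F ) \geq (1-\delta )\ln N_\delta (\cU^F )$ (up to a constant) is false, and the closing hedge about ``honest tracking'' does not supply the missing idea. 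The logic also needs to be reversed: rather than fixing a minimizing $D$ and trying to force every refining partition to be large on it, one takes a refining partition $\cP$ with $\frac{1}{|F|}\Hmeas (\cP ) \leq h_\mu^- (\cU ) + 2\varepsilon$ and \emph{defines} $D$ as the union of its atoms of measure at least $e^{-|F|(h_\mu^-(\cU)+2\varepsilon)/\delta}$. Markov's inequality applied to the information function gives $\mu (D^\comp ) \leq \delta$, there are at most $e^{|F|(h_\mu^-(\cU)+2\varepsilon)/\delta}$ such atoms, and each lies inside a member of $\cU^F$ since $\cP$ refines $\cU^F$; hence $N_\delta (\cU^F ) \leq e^{|F|(h_\mu^-(\cU)+2\varepsilon)/\delta}$, which is exactly where the factor $\delta$ comes from. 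This is the paper's argument, and it is not recoverable from your concavity step.

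For the right inequality $h_\mu^- (\cU ) \leq \lwh_{\comb ,\mu} (\cU )$, the partition with a single ``garbage'' atom on $D^\comp$ need not refine $\cU^F$: that atom is in general not contained in any one member of $\cU^F$, so $\Hmeas (\cU^F ) \leq \Hmeas (\cP )$ is unavailable, and the resulting bound $\Hmeas (\cU^F ) \leq \ln N_\delta (\cU^F ) + \ln 2 + H(\delta )$ is in fact false. A non-ergodic example shows this: glue a zero-entropy system of measure $1-\delta_0$ to a positive-entropy system of measure $\delta_0 < \delta$; then for a suitable cover $\lwh_{\comb ,\mu} (\cU ,\delta ) = 0$ while $h_\mu^- (\cU ) > 0$, so in particular your parenthetical remark that the conclusion already holds for the fixed $\delta$ cannot be correct. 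The repair is to partition $D^\comp$ into at most $N_{D^\comp} (\cU^F ) \leq |\cU |^{|F|}$ pieces, each subordinate to $\cU^F$; by concavity their entropy contribution is at most $-\delta\ln\delta + \delta |F| \ln |\cU |$, which after dividing by $|F|$ leaves the non-negligible term $\delta\ln |\cU |$. One therefore gets $h_\mu^- (\cU ) \leq \lwh_{\comb ,\mu} (\cU ,\delta ) + \delta\ln |\cU | + o(1)$ and must let $\delta\to 0$ (using $\lwh_{\comb ,\mu} (\cU ,\delta ) \leq \lwh_{\comb ,\mu} (\cU )$) to conclude --- exactly as in the paper's proof.
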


\begin{proof} Let $\varepsilon>0$ and $\delta>0$. When a finite subset $F$ of
$G$ is sufficiently invariant, we have $ \frac{1}{|F|}H_{\mu}(\cU^F)\leq
h_{\mu}^-(\cU)+\varepsilon$. Then we can find a finite Borel
partition $\cP\succeq \cU^F$ with $\frac{1}{|F|}H_{\mu}(\cP)\leq
h_{\mu}^-(\cU)+2\varepsilon$. Consider the set $\cY$ consisting of
members of $\cP$ with $\mu$-measure at least
$e^{-|F|(h_{\mu}^-(\cU)+2\varepsilon)/\delta}$ and set $D=\bigcup \cY$.
Then $\mu(D^c)\leq \delta$. Thus $D\in \mathscr{B} (\mu ,\delta )$ and hence
$N_{\delta}(\cU^F)\leq |\cY|\leq e^{|F|(h_{\mu}^-(\cU)+2\varepsilon)/\delta}$.
Consequently, $\uph_{\comb , \mu}(\cU, \delta)\leq
(h_{\mu}^-(\cU)+2\varepsilon)/\delta$. Letting $\varepsilon\to 0$
we obtain $\delta\cdot \uph_{\comb , \mu}(\cU, \delta)\leq h_{\mu}^-(\cU)$.

For the second inequality, let $\varepsilon > 0$ and $\delta \in (0,e^{-1} )$.
Take a finite subset $F$ of $G$ sufficiently invariant so that
$\frac{1}{|F|}\ln N_{\delta}(\cU^F) < \lwh_{\comb , \mu}(\cU, \delta)+\varepsilon$.
Then we can find a $D\in\mathscr{B} (\mu, \delta)$ with
$\frac{1}{|F|}\ln N_D(\cU^F) < \lwh_{\comb , \mu}(\cU, \delta)+\varepsilon$.
Take a Borel partition $\cY$ of $D$ finer than the
restriction of $\cU^F$ to $D$ with cardinality $N_D(\cU^F)$
and a Borel partition $\cZ$ of $D^c$ finer than the
restriction of $\cU^F$ to $D^c$ with cardinality $N_{D^c}(\cU^F)$.
Since the function $x\mapsto -x\ln x$ is concave on $[0,1]$ and increasing on
$[0, e^{-1}]$ and decreasing on $[e^{-1},1]$, we have
\begin{align*}
-\sum_{P\in\cY}\mu(P)\ln \mu(P) &\leq -\mu(D)\ln \frac{\mu(D)}{|\cY |} \\
&\leq -(1-\delta)\ln (1-\delta)+\ln N_D(\cU^F) \\
&\leq -(1-\delta)\ln (1-\delta)+|F|(\lwh_{\comb , \mu}(\cU,\delta)+\varepsilon),
\end{align*}
and
\begin{align*}
-\sum_{P\in\cZ}\mu(P)\ln \mu(P) &\leq -\mu(D^c)\ln\frac{\mu(D^c)}{|\cZ |} \\
&\leq -\delta\ln \delta+\delta \ln N_{D^c}(\cU^F) \\
&\leq -\delta\ln \delta+\delta |F|\ln |\cU|.
\end{align*}
Thus $\frac{1}{|F|}H_{\mu}(\cU^F)\leq -(1-\delta)\ln
(1-\delta)-\delta\ln \delta+\lwh_{\comb , \mu}(\cU,
\delta)+\varepsilon+\delta\ln |\cU|$ and hence
\begin{align*}
h_{\mu}^-(\cU)\leq -(1-\delta)\ln (1-\delta)-\delta\ln
\delta+\lwh_{\comb , \mu}(\cU, \delta)+\varepsilon+\delta\ln |\cU|.
\end{align*}
Letting $\varepsilon\to 0$ and $\delta\to 0$ we get
$h_{\mu}^-(\cU)\leq \lwh_{\comb , \mu}(\cU)$.
\end{proof}

Let $k\geq 2$ and let $Z$ be a nonempty finite set. We write $\cW$ for
the cover of $\{0, 1, \dots , k\}^Z = \prod_{z\in Z} \{ 0, 1, \dots ,k \}$
consisting of subsets of the form $\prod_{z\in Z}\{i_z\}^\comp$, where
$1\leq i_z\leq k$ for each $z\in Z$. For a set
$S\subseteq \{0, 1,\dots , k\}^Z$ we denote by $F_S$ the
minimal number of sets in $\cW$ one needs to cover $S$. The
following lemma provides a converse to \cite[Lemma 3.3]{Ind}.

\begin{lemma}\label{L-converse}
Let $k\geq 2$. For every finite set $Z$ and $S\subseteq \{ 0, 1, \dots ,k \}^{Z}$,
if $S|_W \supseteq \{ 1, \dots , k \}^W$ for some nonempty set
$W\subseteq Z$, then $F_S \geq \big( \frac{k}{k-1}\big)^{|W|}$.
\end{lemma}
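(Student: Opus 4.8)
The plan is to reduce the statement to a counting argument carried out entirely on the coordinates in $W$. Suppose $\cW_0 \subseteq \cW$ is a subcollection of size $F_S$ that covers $S$. Each member $C \in \cW_0$ has the form $C = \prod_{z \in Z} \{i_z\}^\comp$ for some choice of forbidden values $i_z \in \{1,\dots,k\}$, and a point $\sigma \in \{0,1,\dots,k\}^Z$ lies in $C$ precisely when $\sigma(z) \neq i_z$ for every $z \in Z$. The key observation is that this constraint is, in particular, a constraint on the restriction $\sigma|_W$: if $\sigma \in C$ then $\sigma(w) \neq i_w$ for every $w \in W$. Define the set of $W$-patterns \emph{admitted} by $C$ to be
\[
P(C) = \big\{ \tau \in \{1,\dots,k\}^W : \tau(w) \neq i_w \text{ for all } w \in W \big\} .
\]
Since for each $w \in W$ the value $\tau(w)$ may be chosen freely among the $k-1$ elements of $\{1,\dots,k\} \setminus \{i_w\}$ (here we use $i_w \in \{1,\dots,k\}$), we have $|P(C)| = (k-1)^{|W|}$.

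Next I would use the hypothesis $S|_W \supseteq \{1,\dots,k\}^W$ to show that $\bigcup_{C \in \cW_0} P(C) = \{1,\dots,k\}^W$. Indeed, fix any $\tau \in \{1,\dots,k\}^W$; by the hypothesis there exists $\sigma \in S$ with $\sigma|_W = \tau$, and since $\cW_0$ covers $S$ there is some $C \in \cW_0$ with $\sigma \in C$, whence $\tau = \sigma|_W \in P(C)$. Therefore
\[
k^{|W|} = \big| \{1,\dots,k\}^W \big| \le \sum_{C \in \cW_0} |P(C)| = F_S \cdot (k-1)^{|W|},
\]
which rearranges to $F_S \ge \big( \tfrac{k}{k-1} \big)^{|W|}$, as desired.

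There is no genuine obstacle here; the argument is a direct pigeonhole estimate once one isolates the correct quantity to count, namely the number of $W$-restrictions a single member of $\cW$ can possibly realize on $S$. The only point requiring a small amount of care is the bookkeeping on the forbidden coordinate values: one must note that the forbidden value $i_w$ is always drawn from $\{1,\dots,k\}$ (not from $\{0,1,\dots,k\}$), so that exactly $k-1$ of the $k$ legal pattern-values at each coordinate of $W$ survive, giving the factor $(k-1)^{|W|}$ rather than $k^{|W|}$ or $(k-1)^{|Z|}$. I would also remark that, since $F_S$ is an integer, the bound may be strengthened to $F_S \ge \lceil (k/(k-1))^{|W|} \rceil$, though the stated inequality is all that is needed.
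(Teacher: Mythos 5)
Your proof is correct. It does, however, take a different route from the paper's: the paper first replaces $S$ by $S|_W$ (so that one may assume $W=Z$) and then argues by induction on $|Z|$, peeling off one coordinate $z$, partitioning $S$ into the fibers $S_1,\dots,S_k$ according to the value at $z$, and using the observation that any single member of $\cW$ can meet at most $k-1$ of these fibers to propagate the bound $\big(\frac{k}{k-1}\big)^{|Z|}$ from the inductive hypothesis. You instead make the counting global and one-shot: each member $C=\prod_{z\in Z}\{i_z\}^\comp$ of a covering subfamily admits exactly $(k-1)^{|W|}$ patterns from $\{1,\dots,k\}^W$ (since the forbidden value $i_w$ lies in $\{1,\dots,k\}$), while the hypothesis $S|_W\supseteq\{1,\dots,k\}^W$ together with the covering property forces the admitted patterns to exhaust all $k^{|W|}$ of them, giving $k^{|W|}\le F_S\,(k-1)^{|W|}$ directly. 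Both arguments exploit the same structural fact — that a member of $\cW$ excludes exactly one nonzero value per coordinate — but your union-bound formulation avoids the induction and the reduction to $W=Z$ altogether, and is arguably shorter and more transparent; the paper's inductive version yields exactly the same bound and offers no extra generality here.
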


\begin{proof}
Replacing $S$ by $S|_W$ we may assume that $W=Z$. We
prove the assertion by induction on $|Z|$. The case $|Z|=1$ is
trivial. Suppose that the assertion holds for $|Z|=n$.
Consider the case $|Z|=n+1$. Take $z\in Z$ and set $Y=Z\setminus\{z\}$.
For each $1\leq j\leq k$ write $S_j$ for the set of all elements of $S$
taking value $j$ at $z$. Then $S_j |_Y\supseteq \{ 1, \dots,k\}^Y$, and so
$F_{S_j} \geq \big( \frac{k}{k-1} \big)^{|Y|}$. Now suppose that some
$\cV\subseteq\cW$ covers $S$. Write $\cV_j$ for the set of all
elements of $\cV$ that have nonempty intersection with $S_j$. Then $|\cV_j|\geq
F_{S_j} \geq \big( \frac{k}{k-1} \big)^{|Y|}$. Note that each element of $\cV$ is
contained in at most $k-1$ many of the sets $\cV_1, \dots, \cV_k$. Thus
$(k-1)|\cV |\geq \sum^k_{j=1}|\cV_j|\geq k\big( \frac{k}{k-1} \big)^{|Y|}$, and
hence $|\cV |\geq \big( \frac{k}{k-1} \big)^{|Z|}$, completing the induction.
\end{proof}

\begin{lemma}\label{L-positive}
For a finite Borel cover $\cU$ of $X$, the three quantities
$h_{\mu}^-(\cU)$, $\lwh_{\comb , \mu}(\cU)$, and
$\uph_{\comb , \mu}(\cU)$ are either all zero or all nonzero. If the complements
in $X$ of the members of $\cU$ are pairwise disjoint and $\oA$ is a tuple
consisting of these complements, then we may also add $\lwind_{\mu}(\oA)$ and
$\upind_{\mu}(\oA)$ to the list.
\end{lemma}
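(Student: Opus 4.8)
The plan is to prove the equivalences by chaining together the estimates already assembled in this subsection. The backbone is Lemma~\ref{L-local}, which gives $\delta \cdot \uph_{\comb,\mu}(\cU,\delta) \le h_\mu^-(\cU) \le \lwh_{\comb,\mu}(\cU)$ for each $\delta > 0$; combined with the trivial inequality $\lwh_{\comb,\mu}(\cU) \le \uph_{\comb,\mu}(\cU)$ and the definitions of the sup-over-$\delta$ quantities, this immediately forces $h_\mu^-(\cU)$, $\lwh_{\comb,\mu}(\cU)$, and $\uph_{\comb,\mu}(\cU)$ to vanish simultaneously: if $h_\mu^-(\cU) > 0$ then $\lwh_{\comb,\mu}(\cU) \ge h_\mu^-(\cU) > 0$ hence $\uph_{\comb,\mu}(\cU) > 0$, while if $h_\mu^-(\cU) = 0$ then $\delta\cdot\uph_{\comb,\mu}(\cU,\delta) = 0$ for every $\delta$, so $\uph_{\comb,\mu}(\cU) = 0$ and a fortiori $\lwh_{\comb,\mu}(\cU) = 0$. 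So the first assertion is essentially a bookkeeping consequence of Lemma~\ref{L-local}.

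For the second assertion, assume the complements $A_i = X \setminus U_i$ are pairwise disjoint, so $\oA = (A_1,\dots,A_k)$ and $\cU = \{U_1,\dots,U_k\}$. I would show that $\upind_\mu(\oA) > 0$ iff $\uph_{\comb,\mu}(\cU) > 0$, and likewise for the lower densities, by relating independence sets for $\oA$ relative to $D$ with the covering number $N_D(\cU^F)$. The key combinatorial link is the correspondence, for a fixed finite $F \subseteq G$ and $D \in \mathscr{B}(\mu,\delta)$, between (a) the "trace" $S_D = \{\,(\text{pattern of which }A_i\text{ the point }s^{-1}x\text{ lands in})_{s\in F} : x \in D\,\} \subseteq \{0,1,\dots,k\}^F$, where coordinate $0$ records landing in no $A_i$, and (b) the restriction of the cover $\cU^F$ to $D$: covering $D$ by members of $\cU^F$ is exactly covering $S_D$ by members of the cover $\cW$ of Lemma~\ref{L-converse}, so $N_D(\cU^F) = F_{S_D}$. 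Now if $I \subseteq F$ is an independence set for $\oA$ relative to $D$ with $|I| \ge c|F|$, then $S_D|_I \supseteq \{1,\dots,k\}^I$, and Lemma~\ref{L-converse} gives $N_D(\cU^F) = F_{S_D} \ge \big(\tfrac{k}{k-1}\big)^{c|F|}$; taking the min over $D$ and the appropriate limit in $F$ yields $\uph_{\comb,\mu}(\cU,\delta) \ge c\ln\tfrac{k}{k-1} > 0$, hence $\upind_\mu(\oA) > 0 \implies \uph_{\comb,\mu}(\cU) > 0$, and the same argument with limit infima handles the lower densities. Conversely, if $N_D(\cU^F) = F_{S_D}$ is large, then by \cite[Lemma~3.3]{Ind} (the converse direction to Lemma~\ref{L-converse}, which a growth bound $F_{S_D} \ge e^{\beta|F|}$ feeds into) there is a large $I \subseteq F$ with $S_D|_I \supseteq \{1,\dots,k\}^I$, i.e.\ a large independence set for $\oA$ relative to $D$; pushing this through the min over $D \in \mathscr{B}(\mu,\delta)$ and the limit in $F$ gives $\uph_{\comb,\mu}(\cU) > 0 \implies \upind_\mu(\oA) > 0$, and similarly for the lower versions. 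Chaining with the first assertion closes the loop.

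The main obstacle I anticipate is the bookkeeping in the converse direction: \cite[Lemma~3.3]{Ind} is stated for a single set $S$ with a fixed density/growth hypothesis, and one must apply it to the family $\{S_D\}$ uniformly while the quantifiers (min over $D$, then $\lim_{(K,\delta)}\sup$ or $\inf$ over $F$) are threaded correctly — in particular one has to be careful that the passage from "$N_\delta(\cU^F) = \min_D N_D(\cU^F)$ is exponentially large" to "for the minimizing $D$, $\oA$ has a large independence set relative to $D$" respects that the same $D$ that minimizes the covering number is the one relative to which one extracts the independence set, and that the exponential rate transfers (up to constants depending only on $k$) between $\ln N_\delta(\cU^F)$ and $\varphi_{\oA,\delta}(F)$. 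Once the dictionary $N_D(\cU^F) = F_{S_D}$ is set up cleanly, the rest is routine limit manipulation of the kind already performed in Lemma~\ref{L-local} and Lemma~\ref{L-split}.
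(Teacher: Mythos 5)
Your proposal is correct and follows essentially the same route as the paper: the first assertion is exactly the chaining of the inequalities in Lemma~\ref{L-local}, and the second is the equivalence supplied by Lemma~3.3 of \cite{Ind} together with Lemma~\ref{L-converse}, mediated by the (implicit in the paper) dictionary identifying $N_D(\cU^F)$ with the covering number $F_{S_D}$ of the trace set in $\{0,1,\dots,k\}^F$; your quantifier worry evaporates since both implications hold for every $D\in\mathscr{B}(\mu,\delta)$ and hence pass through the minimum. (Only a cosmetic slip: with the paper's conventions the trace of $x$ at $s$ should record which $A_i$ contains $sx$, not $s^{-1}x$.)
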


\begin{proof}
The first assertion follows from Lemma~\ref{L-local}. If $\oA$ is a tuple as in the
lemma statement, then Lemma~3.3 of \cite{Ind} and Lemma~\ref{L-converse}
yield the equivalence of $\lwh_{\comb , \mu}(\cU)>0$ and $\lwind_{\mu}(\oA)>0$ as well as
the equivalence of $\uph_{\comb ,\mu}(\cU)>0$ and $\upind_{\mu}(\oA)>0$.
\end{proof}

\begin{proposition}\label{P-IE basic}
The following hold:
\begin{enumerate}
\item Let $\oA = (A_1 , \dots , A_k )$ be a tuple of closed subsets of $X$ which
has positive upper $\mu$-independence density. Then there exists a $\mu$-IE-tuple
$(x_1 , \dots , x_k )$ with $x_j \in A_j$ for $j=1, \dots ,k$.

\item $\IE^\mu_2 (X) \setminus \Delta_2 (X)$ is nonempty if and only if
$h_\mu (X) > 0$.

\item $\IE^\mu_1(X)={\rm supp}(\mu)$.

\item $\IE^\mu_k (X)$ is a closed $G$-invariant subset of $X^k$.

\item Let $\pi : X\rightarrow Y$ be a topological $G$-factor map.
Then $\pi^k (\IE^\mu_k (X))=\IE^{\pi_* (\mu )}_k (Y)$.
\end{enumerate}
\end{proposition}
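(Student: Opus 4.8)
The plan is to prove the five assertions more or less in the order given, as each later item leans on the earlier ones and on the lemmas just established. For (1), I would argue exactly as in the topological case (cf.\ Proposition~3.9 of \cite{Ind}): run a compactness/Zorn argument on shrinking the closed sets $A_j$. Concretely, order the tuples $(A_1',\dots,A_k')$ of nonempty closed sets with $A_j'\subseteq A_j$ and positive upper $\mu$-independence density by coordinatewise inclusion; a decreasing chain has a nonempty intersection which still has positive upper $\mu$-independence density because $\varphi_{\oA,\delta}(F)$ only depends on finitely much data and one passes to a limit over the (finitely many) relevant $\sigma$ and a weak-$*$ limit of the optimal $D$'s --- here is where Lemma~\ref{L-split} does the real work, since it lets me cut a set in two and keep positivity in at least one half. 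A minimal element of the chain must then be a singleton in each coordinate, say $(x_1,\dots,x_k)$, and the positivity of upper $\mu$-independence density for every product neighbourhood of this tuple is exactly the statement that it is a $\mu$-IE-tuple.

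For (2): the ``if'' direction follows from Lemma~\ref{L-pe to pd} (or Lemma~\ref{L-mind}) --- if $h_\mu(X)>0$ there is a two-element Borel partition $\cP=\{P_1,P_2\}$ with $h_\mu(\cP)>0$, hence closed sets $A_1\subseteq P_1$, $A_2\subseteq P_2$ of nearly full measure (regularity of $\mu$) with $\lwind_\mu(A_1,A_2)>0$, so by (1) there is a $\mu$-IE-pair $(x_1,x_2)$ with $x_1\in A_1$, $x_2\in A_2$; since $A_1\cap A_2=\emptyset$ this pair is off the diagonal. The ``only if'' direction: if $(x_1,x_2)$ is a non-diagonal $\mu$-IE-pair, pick disjoint closed neighbourhoods $U_1\ni x_1$, $U_2\ni x_2$ and a finite Borel cover $\cU=\{U_1^\comp,U_2^\comp\}$; then $\upind_\mu(\oA)>0$ for $\oA=(U_1^\comp{}^\comp,\dots)$... more cleanly, apply Lemma~\ref{L-positive} to the cover whose complements are $\overline{U_1},\overline{U_2}$ to get $h_\mu^-(\cU)>0$, whence $h_\mu(X)\ge h_\mu^+(\cU)\ge h_\mu^-(\cU)>0$. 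Assertion (3) is the measure analogue of the fact that $\IE_1$ is the whole space topologically: $x\in\IE_1^\mu(X)$ iff every neighbourhood $A$ of $x$ has $\upind_\mu(A)>0$, and Lemma~\ref{L-supp} shows $\upind_\mu(A)>0$ (indeed $\lwind_\mu(A)>0$) whenever $\mu(A)>0$, while if $\mu(A)=0$ then $A$ admits no independence set of positive density relative to the full-measure set $X\setminus A$; so $\IE_1^\mu(X)=\{x:\mu(A)>0\text{ for every nbhd }A\text{ of }x\}=\supp(\mu)$.

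Assertion (4): $G$-invariance is immediate since $\mu$ is $G$-invariant and the functionals $\varphi_{\oA,\delta}$ are translation invariant, so a product neighbourhood of $s\ox$ is $s$ applied to one of $\ox$ and independence densities are unchanged. Closedness: if $\ox^{(n)}\to\ox$ with each $\ox^{(n)}$ a $\mu$-IE-tuple, then any product neighbourhood of $\ox$ contains $\ox^{(n)}$ for large $n$, hence contains a product neighbourhood of that $\ox^{(n)}$, which has positive upper $\mu$-independence density; monotonicity of $\upind_\mu$ in the sets finishes it. Finally (5), the factor formula: the inclusion $\pi^k(\IE_k^\mu(X))\subseteq\IE_k^{\pi_*\mu}(Y)$ is easy because pulling back a product neighbourhood in $Y^k$ gives one in $X^k$ and $\varphi$ for the pullback tuple dominates $\varphi$ for the downstairs tuple (preimages of independence sets are independence sets, and $\pi$-preimages of sets of measure $\ge 1-\delta$ have the same property under $\pi_*\mu$). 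The reverse inclusion is the subtle point: given a $\pi_*\mu$-IE-tuple $\oy=(y_1,\dots,y_k)$, I want a $\mu$-IE-tuple above it. I would take closed product neighbourhoods $V_1\times\cdots\times V_k$ of $\oy$ shrinking to $\oy$, and to each associate the closed tuple $\oA=(\pi^{-1}(V_1),\dots,\pi^{-1}(V_k))$ in $X$; the key is that this $\oA$ has positive upper $\mu$-independence density. This is where Lemma~\ref{L-Hlifting}/Lemma~\ref{L-hlifting} and Lemma~\ref{L-positive} enter: passing through the cover picture, $h_{\pi_*\mu}^-(\cV)>0$ for the cover $\cV$ with complements $\overline{V_j}$ (by (2)-type reasoning upstairs on $Y$, or directly since $\oy\in\IE_k^{\pi_*\mu}$ forces $\upind_{\pi_*\mu}$ of the complement tuple to be positive, hence $\uph_{\comb,\pi_*\mu}(\cV)>0$ by Lemma~\ref{L-positive}), hence $h_\mu^-(\pi^{-1}\cV)=h_{\pi_*\mu}^-(\cV)>0$ by Lemma~\ref{L-hlifting}, hence $\upind_\mu(\oA)>0$ again by Lemma~\ref{L-positive}. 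Then apply (1) to get a $\mu$-IE-tuple $\ox$ with $x_j\in\pi^{-1}(V_j)$, i.e.\ $\pi(x_j)\in V_j$; running over a neighbourhood base at $\oy$ and using closedness from (4) (a standard diagonal/compactness extraction), the resulting limit $\ox$ satisfies $\pi^k(\ox)=\oy$. I expect this reverse inclusion in (5) --- specifically the transfer of positive independence density through the factor map via the cover-entropy quantities --- to be the main obstacle, since it is the one place the argument cannot stay purely within the definition of $\varphi_{\oA,\delta}$ and must route through the equivalences of Lemmas~\ref{L-local}, \ref{L-positive}, and \ref{L-hlifting}.
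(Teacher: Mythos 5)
Your items (2)--(4) and the overall routing of (5) follow the paper's intended path (Lemma~\ref{L-pe to pd} plus (1) and Lemma~\ref{L-positive} for (2), Lemma~\ref{L-supp} for (3), and Lemmas~\ref{L-hlifting} and \ref{L-positive} to transfer positive independence density through $\pi$ in (5)). The problem is your proof of (1), where the paper's ``compactness argument'' cannot be implemented by Zorn's lemma in the way you describe. Your chain condition is false: a decreasing chain of closed tuples, each of positive upper $\mu$-independence density, need not have an intersection of positive upper $\mu$-independence density, because the witnessing parameters $\delta$ and the density constants may degenerate along the chain. Concretely, take $k=1$, $X=\Tb$ with an irrational rotation and $\mu$ Lebesgue, and $A^{(n)}$ a closed arc of length $1/n$ about $0$: each $A^{(n)}$ has positive measure, hence positive (even lower) density by Lemma~\ref{L-supp}, but the intersection $\{0\}$ has density zero, since for any finite $F\subseteq G$ the set $D=X\setminus\bigcup_{s\in F}s^{-1}\{0\}$ lies in $\mathscr{B}(\mu,\delta)$ and admits no independence set at all. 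So chains in your poset need not have lower bounds, and the appeal to ``weak-$*$ limits of the optimal $D$'s'' does not repair this (the minimizing $D$ is not controlled in the limit, and $D$ is only Borel, so no compactness is available for the relative intersections). The correct compactness argument avoids this entirely: using Lemma~\ref{L-split} iteratively along finer and finer finite closed covers of $X$, one produces a decreasing family of closed tuples of positive upper density whose coordinates shrink to single points; the resulting point tuple $(x_1,\dots,x_k)$ is a $\mu$-IE-tuple not because the intersection has positive density, but because every product neighbourhood of it contains some member of the family coordinatewise, and positivity then follows from monotonicity of $\upind_\mu$.

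A smaller omission occurs in your reverse inclusion for (5): to invoke Lemma~\ref{L-positive} downstairs you need the closed neighbourhoods $V_1,\dots,V_k$ of $y_1,\dots,y_k$ to be pairwise disjoint, so that $\{V_1^\comp,\dots,V_k^\comp\}$ is genuinely a cover of $Y$; this is impossible when $\oy$ has repeated coordinates (in particular for diagonal tuples). One must first reduce to the tuple of distinct entries, intersecting the neighbourhoods attached to equal coordinates (independence sets for the reduced tuple are independence sets for the original one), with the constant-tuple case handled by (3); this is precisely why the paper's proof of (5) cites (3) alongside (1), (4), and Lemmas~\ref{L-hlifting} and \ref{L-positive}.
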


\begin{proof}
(1) Apply Lemma~\ref{L-split} and a compactness argument.

(2) As is well known and easy to show,
$h_\mu (X) > 0$ if and only if there is a two-element
Borel partition of $X$ with positive entropy. We can thus apply (1) and
Lemma~\ref{L-pe to pd} to obtain the ``if'' part. The ``only if'' part
follows from Lemma~\ref{L-positive}.

(3) This follows from Lemma~\ref{L-supp}.

(4) Trivial.

(5) This follows from (1), (3), (4), and Lemmas~\ref{L-hlifting} and
\ref{L-positive}.
\end{proof}


\subsection{IE-tuples and measure IE-tuples}\label{SS-realize} 

Here we will show that the set of IE-tuples of length $k$ is equal
to the closure of the union of the sets $\IE_\mu^k (X)$ over all $G$-invariant Borel 
probability measures $\mu$ on $X$, and furthermore that when $X$ is metrizable
there exists a $G$-invariant Borel probability measure $\mu$ on $X$ 
such that the sets of $\mu$-IE-tuples and IE-tuples coincide.

We will need a version of the Rokhlin tower lemma.
Following \cite{EMAGA}, for a finite set $F\subseteq G$ and 
a Borel subset $V$ of $X$ 
we say that $F\times V$ maps
to an {\it $\varepsilon$-quasi-tower} if there exists a measurable
subset $A\subseteq F\times V$ such that the map $A\to X$ sending
$(s, x)$ to $sx$ is one-to-one and for each $x\in V$ the cardinality of
$\{s\in F : (s, x)\in A\}$ is at least $(1-\varepsilon)|F|$.
The case $\delta=0$ of the following theorem is a direct consequence 
of Theorem~5 on page~59 of \cite{OW}. The general case $\delta>0$ follows from
the proof given there.
Note that although the acting groups are generally assumed to be
countable in \cite{OW}, this assumption is not necessary here.

\begin{theorem}\label{T-Rokhlin}
Let $1>\varepsilon>0$ and $\frac{\varepsilon^2}{4}>\delta>0$. 
Then whenever the action of $G$ is free with respect to $\mu$,
$F_1\subseteq F_2\subseteq \dots \subseteq F_k$ are nonempty
finite subsets of $G$ such that $F_{j+1}$ is
$(F_jF_j^{-1}, \eta_j)$-invariant and $\eta_j|F_j|<\frac{\varepsilon^2}{4}$
for all $1\le j<k$, $(1-\frac{\varepsilon}{2})^k<\varepsilon$, and
$D_1, \dots, D_k$ are Borel subsets of $X$ with $\mu$-measure at least
$1-\delta$, one can find Borel subsets $V_1, \dots, V_k$ such that
\begin{enumerate}
\item each $F_j\times V_j$ maps to an $\varepsilon$-quasi-tower,

\item $F_iV_i\cap F_jV_j=\emptyset$ for $i\neq j$,

\item $\mu \big( \bigcup^k_{j=1}F_jV_j \big) > 1-\varepsilon$,

\item $V_j\subseteq D_j$ for each $j$.
\end{enumerate}
\end{theorem}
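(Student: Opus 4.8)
\textbf{Proof proposal for Theorem~\ref{T-Rokhlin}.}

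The plan is to reduce the statement to the classical Ornstein--Weiss Rokhlin tower construction, which as stated in Theorem~5 on page~59 of \cite{OW} produces, for a free action of an amenable group and a sufficiently rapidly growing F{\o}lner sequence $F_1 \subseteq \dots \subseteq F_k$ satisfying the invariance hypotheses above, Borel sets $V_1', \dots, V_k'$ with properties (1), (2), and (3) (the $\delta = 0$ case, with no constraint $V_j \subseteq D_j$). The task is therefore to upgrade that output so as to additionally arrange (4), at the cost of shrinking $\varepsilon$ slightly; this is exactly the ``general case'' alluded to in the remark preceding the theorem. First I would recall precisely how the Ornstein--Weiss proof proceeds: one builds the towers inductively from the top level $F_k$ down to $F_1$, at each stage choosing a maximal (with respect to a suitable quasi-disjointness condition) collection of ``bases'' from the part of $X$ not yet covered by the already-constructed towers, using freeness and the F{\o}lner condition to guarantee that after finitely many levels the uncovered part has measure less than $(1-\tfrac{\varepsilon}{2})^k < \varepsilon$.

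The key modification is this: at the stage where one selects the base set $V_j$ for the $j$-th tower, instead of allowing the base to be any measurable subset of the currently-uncovered region $R_j$, one restricts the selection to $R_j \cap D_j$. Since each $\mu(D_j) \geq 1 - \delta$ with $\delta < \tfrac{\varepsilon^2}{4}$, discarding $R_j \setminus D_j$ costs at most $\delta$ in measure at each of the $k$ stages, so the total measure lost relative to the unconstrained construction is at most $k\delta$. One then runs the induction with a correspondingly stronger stopping bound — e.g.\ demanding at each level that the uncovered portion shrink by a factor $(1-\tfrac{\varepsilon}{2})$ after \emph{excising} $D_j^\comp$ — and the hypotheses $\tfrac{\varepsilon^2}{4} > \delta$ and $(1-\tfrac{\varepsilon}{2})^k < \varepsilon$ are precisely what make the bookkeeping close: the $\varepsilon$-quasi-tower property (1) is insensitive to which base we pick (it only concerns the fraction of $s \in F_j$ with $sx$ landing in the admissible return region), disjointness (2) is preserved automatically since we are selecting bases from progressively smaller uncovered sets, and (3) survives because the accumulated loss from intersecting with the $D_j$ plus the usual Ornstein--Weiss residual stays below $\varepsilon$. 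Property (4) holds by construction.

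The main obstacle, and the place requiring genuine care rather than bookkeeping, is the interplay between the quasi-tiling argument and the $D_j$-restriction in the inductive step: one must verify that intersecting the candidate base region with $D_j$ does not destroy the measure-filling estimate that drives the Ornstein--Weiss induction, i.e.\ that a maximal quasi-disjoint family of $F_j$-translates of pieces of $R_j \cap D_j$ still covers a definite fraction of $R_j$. This works because the covering efficiency in the Ornstein--Weiss lemma is controlled by the F{\o}lner ratio and freeness, uniformly over which measurable subset of positive measure one starts the selection from, so the presence of the set $D_j$ of measure $\geq 1-\delta$ only perturbs the relevant integrals by $O(\delta)$; choosing $\delta < \tfrac{\varepsilon^2}{4}$ absorbs this. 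I would also note explicitly, as the paper does, that countability of $G$ plays no role — the tower construction only manipulates finitely many finite subsets of $G$ and countably many Borel sets at a time — so the proof of \cite[Thm.~5, p.~59]{OW} applies verbatim after the indicated modification.
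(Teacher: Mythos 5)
Your proposal takes essentially the same route as the paper, whose entire proof consists of citing Theorem~5 on page~59 of \cite{OW} for the case $\delta=0$ and asserting that the case $\delta>0$ follows from the proof given there (with the remark that countability of $G$ is not needed); your modification of the Ornstein--Weiss construction, namely restricting the choice of base at the $j$-th stage to $D_j$, is exactly the intended adaptation. One caution: the additive accounting ``total loss at most $k\delta$'' would not close by itself, since the hypothesis $(1-\frac{\varepsilon}{2})^k<\varepsilon$ puts no upper bound on $k$, but the alternative bookkeeping you then give --- keeping the factor-$(1-\frac{\varepsilon}{2})$ shrinkage of the uncovered region at each stage after excising $D_j^{\comp}$, the perturbation being absorbed because the relevant candidate-base sets have measure on the scale of $\varepsilon^2$ while $\mu(D_j^{\comp})<\frac{\varepsilon^2}{4}$ --- is the correct version and is what the hypothesis on $\delta$ is for.
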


For the definitions of the quantities $\hmeas_\mu^+ (\cU )$ and 
$\lwh_{\comb , \mu}(\cU)$ see the discussion after Lemma~\ref{L-supp}.

\begin{lemma} \label{L-compare free}
Suppose that 
$G$ is infinite and the action of $G$ is free
with respect to $\mu$.
Let $\cU$ be a finite Borel cover of $X$.
Then $h_{\mu}^+(\cU)\le \lwh_{\comb , \mu}(\cU)$.
\end{lemma}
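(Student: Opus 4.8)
The plan is to pass from the definition of $h_\mu^+(\cU)$ as an infimum of partition entropies over refinements of $\cU$, to the combinatorial counting quantity $\lwh_{\comb,\mu}(\cU)$, using the Rokhlin-type tower Theorem~\ref{T-Rokhlin} to transfer a counting estimate over one large F{\o}lner set to an entropy estimate over a more invariant one. Concretely, fix $\varepsilon>0$. We want a Borel partition $\cP\succeq\cU$ with $h_\mu(\cP)\le \lwh_{\comb,\mu}(\cU)+\varepsilon$ (modulo error terms that vanish as $\varepsilon\to 0$), which suffices since $h_\mu^+(\cU)$ is an infimum over such $\cP$.

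First I would choose $\delta>0$ small (to be specified), and a finite set $F\subseteq G$ sufficiently invariant that $\frac{1}{|F|}\ln N_\delta(\cU^F)<\lwh_{\comb,\mu}(\cU,\delta)+\varepsilon\le\lwh_{\comb,\mu}(\cU)+\varepsilon$; pick $D\in\mathscr{B}(\mu,\delta)$ realizing $N_D(\cU^F)=N_\delta(\cU^F)$, and a Borel partition $\cR$ of $D$ refining $\cU^F|_D$ with $|\cR|=N_D(\cU^F)$. Next, apply Theorem~\ref{T-Rokhlin} with a single level ($k$ large enough that $(1-\varepsilon/2)^k<\varepsilon$; here one uses a tower built from a long chain $F=F_1\subseteq\dots\subseteq F_k$ of increasingly invariant sets, with $D_j$ taken to be $D$, or rather appropriate translates so that the pieces of the partition $\cR$ are available on each column). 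This produces bases $V_1,\dots,V_k$ with the columns $F_jV_j$ disjoint and filling up all but $\varepsilon$ of $X$. On each column $F_jV_j$ I would build a partition refining $\cU^{F_j}$ by pulling back $\cR$ along the quasi-tower structure: on the set $A_j\subseteq F_j\times V_j$ the map $(s,x)\mapsto sx$ is injective, so a point $y=sx$ in the column gets labelled by which atom of $\cR$ the "profile" $(\cU$-membership of $s'x$ for $s'\in F_j)$ lies in, which over the sub-tower $A_j$ costs at most $\ln|\cR|$ of entropy per column, i.e.\ $\le \ln N_\delta(\cU^F)$, while the $\le\varepsilon|F_j|$ exceptional coordinates in each column plus the leftover set of measure $<\varepsilon$ contribute a bounded correction times $\ln|\cU|$. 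Joining these column-partitions and then taking the common refinement over a further F{\o}lner set $F'$ absorbing all the $F_j$, one gets $\frac{1}{|F'|}H_\mu(\cP^{F'})\le \frac{1}{|F|}\ln N_\delta(\cU^F)+o_\varepsilon(1)\le \lwh_{\comb,\mu}(\cU)+\varepsilon+o_\varepsilon(1)$, where the partition $\cP$ is chosen to refine $\cU$ (the $\cR$-labelling does refine $\cU$ at the base coordinate). Letting $\varepsilon\to 0$ gives $h_\mu^+(\cU)\le\lwh_{\comb,\mu}(\cU)$.

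The main obstacle I anticipate is bookkeeping the entropy on the $\varepsilon$-quasi-tower rather than an honest tower: because only a $(1-\varepsilon)$-fraction of each column is governed by the injective map $A_j\to X$, the "name" of a point in terms of $\cR$ is only partially determined, and one must check that the ambiguity on the remaining $\le\varepsilon|F_j|$ coordinates, together with the need to specify the index $j$ and the position within the column, is swallowed by terms of the form $c(\varepsilon)\ln|\cU|$ with $c(\varepsilon)\to0$. A secondary technical point is the passage to a single F{\o}lner set $F'$ carrying all the column-partitions simultaneously and estimating $\frac1{|F'|}H_\mu(\bigvee_{s\in F'}s^{-1}\cP)$; this is the standard Shannon-inequality plus approximate-invariance argument (as in \cite{OW}), and here one exploits exactly conditions (1)–(4) of Theorem~\ref{T-Rokhlin} to ensure the columns tile efficiently. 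Everything else—concavity of $-x\ln x$, subadditivity of $H_\mu$, the existence of the limit defining $h_\mu^+$—is routine and quotable from the entropy discussion preceding Lemma~\ref{L-supp}.
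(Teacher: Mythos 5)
Your overall strategy is the paper's: build a partition refining $\cU$ from a quasi-tower supplied by Theorem~\ref{T-Rokhlin}, with the $\cU$-labels along each column dictated by a small partition of the base, and then bound the number of $\cP$-names over a much more invariant set. But there is a genuine gap in the reduction to ``a single level'': you take one F{\o}lner set $F=F_1$, one set $D\in\mathscr{B}(\mu,\delta)$ and one partition $\cR$ of $D$ refining $\cU^{F}|_D$, and propose to use this same data on every column. The columns of the quasi-tower, however, have the $k$ different shapes $F_1\subseteq\dots\subseteq F_k$, and for a general amenable group you cannot dispense with the multiplicity of shapes (a single F{\o}lner set only yields an $\varepsilon$-disjoint family covering a small fraction; that is precisely why Ornstein--Weiss quasi-tiling needs the whole chain, with $(1-\varepsilon/2)^k<\varepsilon$). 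An atom of $\cR$ records only the $\cU^{F_1}$-profile of the base point, so on a column of shape $F_j$ with $j\ge 2$ it determines the $\cU$-label of $sx$ only for $s\in F_1$, i.e.\ on a vanishingly small part of the column; the remaining coordinates would each cost a factor $|\cU|$, and your entropy bound degenerates to roughly $\ln|\cU|$ rather than $\lwh_{\comb,\mu}(\cU)+\varepsilon$. The parenthetical fix ``appropriate translates of $\cR$'' does not help, because one cannot almost-tile the larger tiles $F_j$ by translates of $F_1$ in a general amenable group.

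What the paper does instead, and what your argument is missing, is to exploit the liminf in the definition of $\lwh_{\comb,\mu}$ to choose, at \emph{each} invariance level of the chain, a set $F_j$ satisfying both the Rokhlin hypotheses and $\frac{1}{|F_j|}\ln N_\delta(\cU^{F_j})<\lwh_{\comb,\mu}(\cU,\delta)+\varepsilon$, and then to pick for each $j$ its own $D_j\in\mathscr{B}(\mu,\delta)$ and partition $\cP_j$ of $D_j$ refining $\cU^{F_j}|_{D_j}$ of cardinality $N_{D_j}(\cU^{F_j})$; the clause $V_j\subseteq D_j$ in Theorem~\ref{T-Rokhlin} exists exactly so that this per-shape data can be used on the bases. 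With that correction the labelling $U_{P,s}$, $s\in F_j$, $P\in\cP_j$, determines the $\cU$-membership along the whole injective part of each column, and the name-counting you sketch goes through. Be aware also that the ``routine'' final step you defer (counting the atoms of $\cP^{F}$ meeting the good set, enumerating the possible tile configurations $C_{j,H}$, and absorbing the Stirling-type errors by letting $\min_j|F_j|\to\infty$ and $\varepsilon\to 0$) is where most of the actual work in the paper's proof lies.
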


\begin{proof}
Let $1>\varepsilon>0$ and $\frac{\varepsilon^2}{4}>\delta>0$. 
Then we can find
nonempty finite subsets $F_1\subseteq F_2\subseteq
\cdots\subseteq F_k$ of $G$ satisfying the conditions 
of Theorem~\ref{T-Rokhlin}
and 
$\frac{1}{|F_j|}\ln N_{\delta}(\cU^{F_j})<\lwh_{\comb , \mu}(\cU,
\delta)+\varepsilon$ for $j=1, \dots , k$. For each $j=1, \dots , k$ take
a $D_j\in \sB (\mu, \delta)$ such that $\frac{1}{|F_j|}\ln
N_{D_j}(\cU^{F_j})<\lwh_{\comb , \mu}(\cU, \delta)+\varepsilon$. Then we
can find Borel sets $V_1, \dots, V_k \subseteq X$ satisfying the conclusion
of Theorem~\ref{T-Rokhlin}.

For $j=1, \dots , k$ pick a Borel partition $\cP_j$ of $D_j$ which is
finer than the restriction of $\cU^{F_j}$ to $D_j$ and has cardinality
$N_{D_j}(\cU^{F_j})$.
For each $P\in \cP_j$ fix a $U_{P, s}\in \cU$ for each $s\in F_j$ such that
$P\subseteq \bigcap_{s\in F_j}s^{-1}U_{P, s}$. Since $F_j\times V_j$
maps to an $\varepsilon$-quasi-tower, we can find a measurable
subset $A_j$ of $F_j\times V_j$ such that $T|_{A_j}: A_j\rightarrow
X$ is one-to-one, where $T:G\times X \to X$ is the map $(s, x)\mapsto sx$,
and $|\{s\in F_j: (s, x)\in A_j\}|\ge (1-\varepsilon)|F_j|$
for each $x\in V_j$. Define a Borel partition $\cY = \{Y_U: U\in
\cU\}$ of $\bigcup_j T(A_j)$ finer than the restriction of $\cU$ to
$\bigcup_j T(A_j)$ by stipulating that, for each $(s, x)\in A_j$ with $x\in P\in
\cP_j$, $sx\in Y_U$ exactly when $U=U_{P, s}$. Take a Borel partition
$\mathcal{Z}=\{Z_U: U\in \cU\}$ of $( \bigcup_j T(A_j) )^\comp$
with $Z_U\subseteq U$ for each $U\in \cU$. Set $P_U=Y_U\cup Z_U$ for each
$U\in \cU$. Then $\cP=\{P_U:U\in \cU\}$ is a Borel partition of $X$ finer
than $\cU$. Note that $\mu(T(A_j))\ge (1-\varepsilon)\mu(F_jV_j)$
for each $j$. Thus $\mu(\bigcup_j T(A_j))> (1-\varepsilon)^2$.

Next we estimate $h_{\mu}(\cP)$.
Suppose that $F$ is a finite subset of $G$ which is\linebreak
$((\bigcup_jF_j)(\bigcup_jF_j)^{-1}, \sqrt{\varepsilon})$-invariant.
Set $F_x=\big\{ s\in F: sx\in \bigcup_jT(A_j) \big\}$ for each $x\in X$ and put
$W=\{x\in X: |F_x|\ge (1-\sqrt{\varepsilon})|F|\}$. It is easy to
see that $\mu(W^\comp )\le \mu((\bigcup_j
T(A_j))^\comp )/\sqrt{\varepsilon}<2\sqrt{\varepsilon}$. Replacing $W$ by
$W\setminus \bigcup_{s\in F^{-1}F\setminus \{e_G\}}\{x\in X: sx=x\}$ we may assume
that $s_1x\neq s_2x$ for all $x\in W$ and all distinct $s_1, s_2\in F$. Let us estimate
the number $M$ of atoms of $\cP^F$ which have nonempty intersection with
$W$. Write $\mathfrak{H}_j$ for the collection of all subsets of $F_j$ with
cardinality at least $(1-\varepsilon)|F_j|$. For each $x\in W$,
setting $F'_x=F_x\cap \big\{ s\in F: (\bigcup_jF_j)(\bigcup_jF_j)^{-1}s\subseteq F \big\}$,
we have $|F'_x|\ge (1-2\sqrt{\varepsilon})|F|$. Note that if $(s, y)\in A_j$ for some
$1\le j\le k$ and $sy=s'x$ for some $s'\in F'_x$, setting $c=s^{-1}s'$ and
$H=\{h\in F_j:(h, y)\in A_j\}$, we have $y=cx$, $Hc\subseteq F_x$ and $H\in \mathfrak{H}_j$.
Thus for each $w\in W$ we
can find a finite set $C_{j, H} \subseteq G$ for every $H\in
\mathfrak{H}_j$ such that the following hold:
\begin{enumerate}
\item $Hc\cap H'c'=\emptyset$ for all $c\in C_{j, H}, c'\in C_{j', H'}$
unless $H=H'$, $c=c'$, and $j=j'$,

\item $\bigcup_{j, H}HC_{j, H}\subseteq F$ and $\big| \bigcup_{j, H}HC_{j,
H} \big| \ge (1-2\sqrt{\varepsilon})|F|$,

\item $cx\in V_j$ and $H=\{h\in F_j: (h, cx)\in A_j\}$ for each $c\in C_{j, H}$.
\end{enumerate}
Note that the atom of $\cP$ to which $hcx$ for $h\in H$ belongs is
determined by $h$ and the atom of $\cP_j$ to which $cx$ belongs. Thus,
for each fixed choice of sets $C_{j, H}$ satisfying (1) and (2) above,
the number of atoms of $\cP^F$ containing some $x\in W$ with such a
choice of $C_{j, H}$ is at most
\begin{align*}
|\cU|^{2\sqrt{\varepsilon}|F|} \cdot \prod_{j}|\cP_j|^{\sum_{H\in
\mathfrak{H}_j} |C_{j, H}|} &\le |\cU|^{2\sqrt{\varepsilon}|F|}
\cdot \prod_{j} \exp \!\big[ (\lwh_{\comb , \mu}(\cU,
\delta)+\varepsilon)|F_j|{\textstyle\sum_{H\in\mathfrak{H}_j}} |C_{j, H}| \big] \\
&= |\cU|^{2\sqrt{\varepsilon}|F|} \cdot \exp \!\big[ (\lwh_{\comb , \mu}(\cU,
\delta)+\varepsilon){\textstyle\sum_j \big( |F_j|\sum_{H\in \mathfrak{H}_j}} |C_{j,
H}| \big) \big]\\
&\le |\cU|^{2\sqrt{\varepsilon}|F|} \cdot \exp \!\bigg[
\frac{(\lwh_{\comb , \mu}(\cU,\delta)+\varepsilon)|F|}{1-\varepsilon} \bigg] .
\end{align*}
By Stirling's formula, the number of subsets of an $n$-element set
with cardinality at least $(1-\varepsilon)n$ is at most
$e^{f(\varepsilon)n}$ for all $n\ge 0$ with $f(\varepsilon)\to 0$ as
$\varepsilon\to 0$. Fix an element $g_{j, H}\in H$ for each $j$ and $H\in
\mathfrak{H}_j$. Then $C_{j, H}$ is determined by the set $g_{j, H}C_{j,
H}$ in $F$. Thus, for a fixed $Q\subseteq F$, writing $a=\min_j |F_j|$
and summing as appropriate over nonnegative integers $t_{j, H}$, $t_j$, or $t$
subject to the indicated constraints,
the number of choices of sets $C_{j, H}$ satisfying (1) and (2) and
$\bigcup_{j,H}HC_{j, H}=Q$ is at most
\begin{align*}
\lefteqn{\sum_{\sum_{j, H}t_{j, H}|H|=|Q|}\frac{|F|!}{\big( |F|-\sum_{j,
H}t_{j, H} \big) !\prod_{j, H}t_{j,H}!}} \hspace*{20mm} \\
\hspace*{15mm} &\le \sum_{(1-\varepsilon)\sum_{j}t_{j}|F_j|\le
|Q|}\frac{|F|!}{\big( |F|-\sum_{j}t_{j}\big) !\prod_{j}t_{j}!}\cdot
\prod_j\sum_{\sum_{H\in \mathfrak{H}_j}t_{j,
H}=t_j}\frac{|t_j|!}{\prod_{H\in \mathfrak{H}_j}t_{j, H}!}\\
&= \sum_{(1-\varepsilon)\sum_{j}t_{j}|F_j|\le
|Q|}\frac{|F|!}{\big( |F|-\sum_{j}t_{j}\big) !\prod_{j}t_{j}!}\cdot \prod_j
|\mathfrak{H}_j|^{t_j}\\
&\le \sum_{(1-\varepsilon)\sum_{j}t_{j}|F_j|\le
|Q|}\frac{|F|!}{\big( |F|-\sum_{j}t_{j} \big)!\prod_{j}t_{j}!}\cdot \prod_j
e^{f(\varepsilon)t_j|F_j|}\\
&\le \sum_{(1-\varepsilon)\sum_{j}t_{j}|F_j|\le
|Q|}\frac{|F|!}{\big( |F|-\sum_{j}t_{j}\big) !\prod_{j}t_{j}!}\cdot
e^{f(\varepsilon)|F|/(1-\varepsilon)}\\
&\le \sum_{(1-\varepsilon)at\le |Q|}\frac{|F|!}{(|F|-t)!t!}\cdot
\sum_{\sum_jt_j=t}\frac{t!}{\prod_j t_j!}\cdot
e^{f(\varepsilon)|F|/(1-\varepsilon)}\\
&= \sum_{(1-\varepsilon)at\le |Q|}\frac{|F|!}{(|F|-t)!t!}\cdot
k^t\cdot e^{f(\varepsilon)|F|/(1-\varepsilon)}\\
&\le \sum_{(1-\varepsilon)at\le |Q|}\frac{|F|!}{(|F|-t)!t!}\cdot
k^{|F|/((1-\varepsilon)a)}\cdot
e^{f(\varepsilon)|F|/(1-\varepsilon)}\\
&\le e^{f(1/((1-\varepsilon )a)) |F|}\cdot
k^{|F|/((1-\varepsilon)a)}\cdot
e^{f(\varepsilon)|F|/(1-\varepsilon)} .
\end{align*}
The number of choices of $Q\subseteq F$ with
$|Q|\ge (1-2\sqrt{\varepsilon})|F|$ is at most
$e^{f(2\sqrt{\varepsilon})|F|}$. Therefore, $M$ is at most
\begin{gather*}
|\cU|^{2\sqrt{\varepsilon}|F|} \cdot \exp \bigg[
\frac{(\lwh_{\comb , \mu}(\cU,\delta)+\varepsilon) |F|}{1-\varepsilon} \bigg] \cdot
\exp \big[ f\big( 1/((1-\varepsilon)a)\big) |F| \big] \hspace*{30mm} \\
\hspace*{40mm} \cdot\,
k^{|F|/((1-\varepsilon)a)}\cdot
\exp \bigg[ \frac{f(\varepsilon)|F|}{1-\varepsilon} \bigg] \cdot
\exp \big[ f(2\sqrt{\varepsilon})|F| \big] .
\end{gather*}
Since the function $x\mapsto -x\ln x$ is concave on $[0, 1]$, we have
\[ \sum_{P\in \cP^F} -\mu(P\cap W)\ln \mu(P\cap W)\le -\mu(W)\ln \frac{\mu(W)}{M} \le -\mu(W)\ln
\mu(W)+\ln M \]
and
\begin{align*}
\sum_{P\in \cP^F}-\mu(P\cap W^\comp )\ln \mu(P\cap W^\comp )
&\le -\mu(W^\comp) \ln \frac{\mu(W^\comp)}{|\cP|^{|F|}-M} \\
&\le -\mu(W^\comp )\ln\mu(W^\comp )+\mu(W^\comp )|F|\ln |\cU| .
\end{align*}
Set $\mathcal{Q}=\{W, W^\comp \}$. Since the function $x\mapsto -x\ln x$ on $[0, 1]$
has maximal value $e^{-1}$, we get
\begin{align*}
\Hmeas (\cP^F) \le \Hmeas (\cP^F\vee \mathcal{Q}) &= \sum_{P\in \cP^F} -\mu(P\cap
W)\ln \mu(P\cap W)+\sum_{P\in \cP^F}-\mu(P\cap W^\comp )\ln \mu(P\cap W^\comp)\\
&\le  -\mu(W)\ln \mu(W)+\ln M-\mu(W^\comp )\ln \mu(W^\comp )+\mu(W^\comp )|F|\ln
|\cU|\\
&\le 2e^{-1}+\ln M+2\sqrt{\varepsilon}|F|\ln |\cU|.
\end{align*}
Since $G$ is infinite, $|F|\to \infty$ as $F$ becomes more and more
invariant. Therefore
\begin{align*}
h_{\mu}^+(\cU) \le h_{\mu}(\cP)
&\le 4\sqrt{\varepsilon}\ln |\cU|+
\frac{\lwh_{\comb , \mu}(\cU,\delta)+\varepsilon}{1-\varepsilon}+
f\big( 1/ ((1-\varepsilon)a)\big) \\
&\hspace*{35mm} \ + \frac{\ln k}{(1-\varepsilon)a}+ \frac{f(\varepsilon)}{1-\varepsilon}+
f(2\sqrt{\varepsilon}).
\end{align*}
Since we may choose $F_1, \dots, F_k$ to be as close as we wish
to being invariant, we may let $a\to \infty$. Thus
\begin{align*}
h_{\mu}^+(\cU)
&\le 4\sqrt{\varepsilon}\ln |\cU|+\frac{\lwh_{\comb , \mu}(\cU,\delta)+\varepsilon}{1-\varepsilon}
+ \frac{f(\varepsilon)}{1-\varepsilon}+ f(2\sqrt{\varepsilon})\\
&\le 4\sqrt{\varepsilon}\ln |\cU|+\frac{\lwh_{\comb ,\mu}(\cU)+\varepsilon}{1-\varepsilon} +
\frac{f(\varepsilon)}{1-\varepsilon}+ f(2\sqrt{\varepsilon}).
\end{align*}
Letting $\varepsilon\to 0$ we get $h_{\mu}^+(\cU)\le \lwh_{\comb ,
\mu}(\cU)$, as desired.
\end{proof}

\begin{lemma}\label{L-partition}
Let $\mu$ be a Borel probability measure on $X$. Let $C_1 ,\dots ,C_k$ be 
closed 
subsets of $X$. Then for every $k$-element 
Borel partition $\cP = \{ P_1 , \dots , P_k \}$ with $P_i \cap C_i = \emptyset$ 
for $i=1, \dots ,k$ and every $\delta > 0$ there is a $k$-element Borel partition
$\cQ = \{ Q_1 , \dots , Q_k \}$ such that $Q_i \cap C_i = \emptyset$ and 
$\mu (\partial Q_i ) = 0$ for $i=1,\dots ,k$ and $H_\mu (\cQ | \cP ) < \delta$.
\end{lemma}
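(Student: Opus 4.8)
The plan is to approximate $\cP$ by a disjoint family of open $\mu$-continuity sets avoiding the respective $C_i$, placed one layer at a time inside a shrinking closed ``residual'' set, and to drive the residual down to a closed $\mu$-null set which can then be redistributed among the cells without harming the boundary condition. We may assume $k\ge 2$, the case $k=1$ being trivial.

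Two routine ingredients come first. (i) \emph{An entropy estimate.} If $\cP=\{P_i\}$ and $\cQ=\{Q_i\}$ are Borel partitions of the same cardinality $k\ge 2$ and $\varepsilon=\sum_{i=1}^k\mu(P_i\setminus Q_i)$, then conditioning on the atoms of $\cP$ and combining Fano's inequality with the concavity of $t\mapsto-t\ln t$ gives $H_\mu(\cQ|\cP)\le -\varepsilon\ln\varepsilon-(1-\varepsilon)\ln(1-\varepsilon)+\varepsilon\ln(k-1)$, which tends to $0$ as $\varepsilon\to 0$. Hence it suffices to produce $\cQ$ with $Q_i\cap C_i=\emptyset$, $\mu(\partial Q_i)=0$, and $\sum_i\mu(P_i\setminus Q_i)$ arbitrarily small. (ii) \emph{A thickening step.} Given pairwise disjoint compact sets $K_1,\dots,K_k$ with each $K_i$ disjoint from $C_i$ and from a further closed set $C_0$, and given open sets $W_i\supseteq K_i$, there are pairwise disjoint open $\mu$-continuity sets $O_i$ with $K_i\subseteq O_i\subseteq W_i\cap(X\setminus(C_i\cup C_0))$: by normality choose disjoint open $U_i\supseteq K_i$; by Urysohn's lemma choose $f_i\in C(X,[0,1])$ equal to $1$ on $K_i$ and to $0$ off $W_i\cap U_i\cap(X\setminus(C_i\cup C_0))$; and take $O_i=\{f_i>s_i\}$ for $s_i\in(0,1)$ with $\mu(\{f_i=s_i\})=0$ (possible since the pushforward of $\mu$ under $f_i$ has only countably many atoms), so that $\partial O_i\subseteq\{f_i=s_i\}$ is $\mu$-null.

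Now the exhaustion. Set $R_0=X$ and $\cO_0=\emptyset$. Suppose inductively that $R_{m-1}=X\setminus\cO_{m-1}$ with $\cO_{m-1}$ a finite disjoint union of open $\mu$-continuity sets (so $\mu(\partial\cO_{m-1})=0$), and fix $\eta_m>0$ (with $\eta_1$ chosen at the end via (i), and $\eta_m=\mu(R_{m-1})/2k$ for $m\ge 2$, the process terminating if $\mu(R_{m-1})=0$). Since $\mu(P_i\setminus\overline{\cO_{m-1}})=\mu(P_i\cap R_{m-1})$ — the two sets differing only by the $\mu$-null set $\partial\cO_{m-1}$ — inner regularity gives a compact $K_i^{(m)}\subseteq P_i\setminus\overline{\cO_{m-1}}$ with $\mu((P_i\cap R_{m-1})\setminus K_i^{(m)})<\eta_m$; the $K_i^{(m)}$ are pairwise disjoint and each is disjoint from $C_i$ and from $\overline{\cO_{m-1}}$. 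Using outer regularity pick open $W_i^{(m)}\supseteq P_i\cap R_{m-1}$ with $\mu(W_i^{(m)}\setminus(P_i\cap R_{m-1}))<\eta_m$, and apply (ii) with $C_0=\overline{\cO_{m-1}}$ to obtain pairwise disjoint open $\mu$-continuity sets $O_i^{(m)}$ with $K_i^{(m)}\subseteq O_i^{(m)}\subseteq W_i^{(m)}\cap(X\setminus(C_i\cup\overline{\cO_{m-1}}))$; in particular $O_i^{(m)}\subseteq R_{m-1}$ and $O_i^{(m)}$ is disjoint from every $O_j^{(l)}$ with $l<m$. Put $\cO_m=\cO_{m-1}\sqcup\bigsqcup_i O_i^{(m)}$ and $R_m=X\setminus\cO_m$. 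From $\mu(O_i^{(m)})\ge\mu(K_i^{(m)})\ge\mu(P_i\cap R_{m-1})-\eta_m$ and $\sum_i\mu(P_i\cap R_{m-1})=\mu(R_{m-1})$ we get $\mu(R_m)<k\eta_m$, hence $\mu(R_m)\le\mu(R_{m-1})/2$ for $m\ge 2$; so $\mu(R_m)\to 0$ and $N:=\bigcap_{m\ge0}R_m$ is a closed set with $\mu(N)=0$.

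Finally set $Q_i=\bigl(\bigcup_{m\ge1}O_i^{(m)}\bigr)\cup(N\cap P_i)$ for $i=1,\dots,k$. Since the $O_j^{(m)}$ are pairwise disjoint across all $j,m$ and disjoint from $N$, while $\{N\cap P_i\}_i$ partitions $N$ and $\bigcup_{i,m}O_i^{(m)}=X\setminus N$, the collection $\cQ=\{Q_1,\dots,Q_k\}$ is a Borel partition. It satisfies $Q_i\cap C_i=\emptyset$ because $O_i^{(m)}\cap C_i=\emptyset$ for every $m$ and $P_i\cap C_i=\emptyset$. For the boundary: if $x\notin N$, let $m$ be least with $x\notin R_m$; then $x$ lies in a unique $O_j^{(m)}$, and if $j=i$ the open neighbourhood $O_i^{(m)}$ of $x$ is contained in $Q_i$, while if $j\ne i$ the open neighbourhood $O_j^{(m)}$ of $x$ is disjoint from every $O_i^{(l)}$ and from $N$, hence from $Q_i$; either way $x\notin\partial Q_i$. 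Thus $\partial Q_i\subseteq N$ and $\mu(\partial Q_i)=0$. Lastly $K_i^{(1)}\subseteq O_i^{(1)}\subseteq Q_i$ gives $\mu(P_i\setminus Q_i)\le\mu(P_i\setminus K_i^{(1)})<\eta_1$, so $\sum_i\mu(P_i\setminus Q_i)<k\eta_1$; choosing $\eta_1$ small enough in view of (i) makes $H_\mu(\cQ|\cP)<\delta$. The delicate point — and the reason for the exhaustion rather than a single approximation — is that $\mu(\partial Q_i)$ must be exactly $0$: one step leaves a residual of small but positive measure whose boundary cannot be discarded, whereas after exhausting to the closed $\mu$-null set $N$ one has $\overline{N\cap P_i}\subseteq N$, so the leftover pieces contribute nothing.
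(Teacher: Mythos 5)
Your argument is correct, but it follows a genuinely different route from the paper. The paper does a one-step approximation: for $i=1,\dots ,k-1$ it picks a compact $K_i\subseteq P_i$ and an open $U_i\supseteq P_i$ with $U_i\cap C_i=\emptyset$ and small excess measure, notes that $U_1,\dots ,U_{k-1}$ cover $C_k$ and chooses a closed cover $D_1,\dots ,D_{k-1}$ of $C_k$ with $D_i\subseteq U_i$, covers each $K_i\cup D_i$ by a finite union $B_i\subseteq U_i$ of open $\mu$-continuity sets (the same Urysohn/level-set device you use in your thickening step), and then takes the disjointification $Q_1=B_1$, $Q_i=B_i\setminus (B_1\cup\cdots\cup B_{i-1})$, $Q_k=X\setminus (B_1\cup\cdots\cup B_{k-1})$. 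The residual is thus dumped entirely into the last cell, whose boundary is automatically contained in $\bigcup_i\partial B_i$ and hence null, and $Q_k\cap C_k=\emptyset$ precisely because the $B_i$ were forced to cover $C_k$. Your exhaustion scheme replaces this asymmetric covering trick by a symmetric layer-by-layer construction that drives the residual to a closed null set $N$ and distributes $N$ according to $\cP$; all the steps check out (the Fano-type bound in (i), the Urysohn thickening in (ii), the geometric decay $\mu (R_m)\le\mu (R_{m-1})/2$, the inclusion $\partial Q_i\subseteq N$, and the first-layer estimate $\mu (P_i\setminus Q_i)<\eta_1$), so the proof is complete, though longer than necessary; the sets $W_i^{(m)}$ play no role in your final accounting and could be dropped. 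One remark in your closing sentence is off: a single approximation step \emph{can} give exactly null boundaries, since the leftover cell is the complement of a finite union of $\mu$-continuity sets; the only genuine obstacle is keeping that cell away from $C_k$, which the paper's closed-cover trick handles, so the infinite exhaustion is a matter of choice rather than necessity.
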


\begin{proof}
Let $\cP = \{ P_1 , \dots , P_k \}$ be a $k$-element Borel partition with 
$P_i \cap C_i = \emptyset$ for $i=1, \dots ,k$. Let $\delta > 0$.
By the regularity of $\mu$, for $i=1, \dots ,k-1$ we can find a compact set
$K_i \subseteq P_i$ such that $\mu (P_i \setminus K_i ) < \varepsilon$
and an open set $U_i \supseteq P_i$ such that $\mu (U_i \setminus P_i ) < \varepsilon$
and $U_i \cap C_i = \emptyset$. 
Then $U_1, \dots, U_{k-1}$ cover $C_k$. Thus we can find
a closed cover $D_1, \dots, D_{k-1}$ of $C_k$ such that
$D_i\subseteq U_i$ for $i=1, \dots, k-1$.
For each $x\in K_i\cup D_i$ there exists an open neighbourhood  
$V$ of $x$ contained in $U_i$ whose boundary 
has zero measure, for if we take a function $f\in C(X)$ with image in $[0,1]$ 
which is $0$ at $x$ and $1$ on $U_i^\comp$ then only countably many 
of the open sets $\{ y\in X : f(y) < t \}$ for $t\in (0,1)$ can have boundary
with positive measure. By compactness there is a finite union $B_i$ of such
$V$ which covers $K_i\cup D_i$, and $\mu (\partial (B_i )) = 0$.
Then $\mu (B_i \Delta P_i ) < 2\varepsilon$ for 
$i=1, \dots, k-1$.
Now define the partition
$\cQ = \{ Q_1 , \dots , Q_k \}$ by $Q_1 = B_1$, $Q_2 = B_2 \setminus B_1$,
$Q_3 = B_3 \setminus (B_1 \cup B_2 )$, \dots ,
$Q_k = X \setminus (B_1 \cup\cdots\cup B_{k-1} )$. 
Then $Q_i \cap C_i = \emptyset$ and
$\mu (\partial Q_i ) = 0$ for $i=1,\dots ,k$ and 
$H_\mu (\cQ | \cP ) < \delta (\varepsilon )$
where $\delta (\varepsilon ) \to 0$ as $\varepsilon\to 0$, yielding the lemma.
\end{proof}

\begin{lemma}\label{L-measure}
Let $\ox = (x_1 , \dots , x_k )$ be an $IE$-tuple consisting of distinct points
and let $U_1 , \dots ,U_k$ be pairwise disjoint open neighbourhoods of $x_1 , \dots ,x_k$,
respectively. Then there exist a $G$-invariant Borel probability measure $\mu$ on $X$ 
and a $\mu$-IE-tuple $(x_1' , \dots ,x_k' )$ such that $x_i' \in U_i$ for each
$i=1,\dots ,k$. 
\end{lemma}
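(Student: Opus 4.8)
The plan is to start from the combinatorial data that comes with the IE-tuple $\ox$ and manufacture an invariant measure that sees this independence. By definition of an IE-tuple, the orbit of the tuple $(U_1,\dots,U_k)$ has an independent subcollection of positive density; equivalently (Lemma~3.3 of \cite{Ind} together with Lemma~\ref{L-converse} and the discussion after Lemma~\ref{L-supp}), if we set $C_i = X\setminus U_i$ and let $\cU = \{U_1^\comp,\dots,U_k^\comp\}^\comp$-style cover, the combinatorial entropy quantity $\lwh_{\comb,\nu}$ or the independence density is positive \emph{for the purely topological counting}, i.e.\ with no measure constraint. The first step is therefore to reinterpret positivity of topological independence density for $(U_1,\dots,U_k)$ as positivity of $h_{\topol}^+(\cU)$ for the finite open cover $\cU$ whose members are the complements of suitable shrinkings of the $U_i$; one uses the closed sets $C_i$ and a cover $\cU$ with $U_i \supseteq \cU_i^\comp \supseteq C_i$. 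Concretely, I would fix closed neighbourhoods $C_i' \subseteq U_i$ of $x_i$ (still pairwise disjoint) and work with the cover $\cU = \{X\setminus C_1',\dots,X\setminus C_k'\}$.

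The second and main step is to produce the measure. Take a sequence of F{\o}lner sets $F_n$ witnessing positive topological independence density: there are independent sets $J_n$ for $(U_1,\dots,U_k)$ (in the sense with no $D$) with $|F_n\cap J_n|\ge q|F_n|$ for a fixed $q>0$. For each $n$ and each $\sigma\in\{1,\dots,k\}^{F_n\cap J_n}$ pick a point $y_{n,\sigma}\in\bigcap_{s\in F_n\cap J_n}s^{-1}U_{\sigma(s)}$, and form the empirical measure $\mu_{n,\sigma} = \frac{1}{|F_n|}\sum_{s\in F_n}\delta_{s y_{n,\sigma}}$; then average over $\sigma$ to get $\nu_n$. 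By F{\o}lner-ness these are asymptotically invariant, so any weak$^*$ cluster point $\mu$ is $G$-invariant. The design guarantees that $\mu$ assigns, in an entropy-theoretic sense, enough mass to each $U_i$ along the orbit that $h_\mu^+(\cU')>0$ for a slightly coarser open cover $\cU'$; the cleanest way to see this is to transfer the counting estimate through Lemma~\ref{L-local} / Lemma~\ref{L-positive}, noting that the number of $\cU'^{F_n}$-atoms needed to cover a set of measure close to $1$ under $\nu_n$ is at least $k^{q|F_n|}$ because the $y_{n,\sigma}$ realize all of $\{1,\dots,k\}^{F_n\cap J_n}$ with roughly uniform weight. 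Hence $h_\mu^+(\cU')>0$, and by Lemma~\ref{L-positive} the tuple of complements of the members of $\cU'$ — which sits inside $(U_1,\dots,U_k)$ — has positive upper $\mu$-independence density. Part (1) of Proposition~\ref{P-IE basic} then yields a $\mu$-IE-tuple $(x_1',\dots,x_k')$ with $x_i'$ in the closed set $C_i'\subseteq U_i$, which is what we want.

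A technical wrinkle is the passage from a cover to a partition with controlled boundaries, so that weak$^*$ convergence of $\nu_n$ actually controls the relevant measures: this is precisely what Lemma~\ref{L-partition} is for. I would use it to replace, at the cluster-point stage, the partition adapted to $\cU$ by one whose atoms have $\mu$-null boundary and still avoid the respective $C_i'$, so that $\nu_n$-values converge to $\mu$-values on the cells. The main obstacle I anticipate is exactly this quantitative bookkeeping: ensuring that the lower bound $k^{q|F_n|}$ on the covering number survives both (a) the averaging over $\sigma$ — one must discard a $\delta$-fraction of bad $\sigma$'s or bad points, using a Lemma~\ref{L-density}-type combinatorial extraction to recover a fully independent sub-block — and (b) the weak$^*$ limit, where only boundary-null cells are well-behaved. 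Everything else (invariance of $\mu$, the compactness argument in Proposition~\ref{P-IE basic}(1), and the bridge Lemmas~\ref{L-local}, \ref{L-positive}) is routine once this density is shown to persist.
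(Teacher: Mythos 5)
Your construction of the measure is essentially the paper's: pick closed neighbourhoods $C_i\subseteq U_i$, use the IE-property to get independence sets $I_n\subseteq F_n$ of density $d$, choose points realizing every $\sigma\in\{1,\dots,k\}^{I_n}$, average point masses over $\sigma$ and over the F{\o}lner set, and pass to a weak$^*$ limit; the boundary-null partition device (Lemma~\ref{L-partition}) and the entropy count showing each atom of $\cP^{I_n}$ carries mass at most $\bigl(\tfrac{k-1}{k}\bigr)^{|I_n|}$ are also exactly right, and they yield $h_\mu^+(\cU)>0$ for $\cU=\{U_1^\comp,\dots,U_k^\comp\}$.

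The genuine gap is the very last bridge: you write ``Hence $h_\mu^+(\cU')>0$, and by Lemma~\ref{L-positive} the tuple of complements \dots has positive upper $\mu$-independence density.'' Lemma~\ref{L-positive} ties the independence densities to $h_\mu^-(\cU)$, $\lwh_{\comb,\mu}(\cU)$ and $\uph_{\comb,\mu}(\cU)$ --- it says nothing about $h_\mu^+(\cU)$, and since only $h_\mu^-\le h_\mu^+$ holds trivially, positivity of $h_\mu^+$ does not transfer. Whether $h_\mu^-(\cU)=h_\mu^+(\cU)$ in general is precisely the paper's open Question~\ref{Q-open cover}. The inequality you actually need, $h_\mu^+(\cU)\le \lwh_{\comb,\mu}(\cU)$, is Lemma~\ref{L-compare free}, whose proof is a substantial Ornstein--Weiss quasi-tower (Rokhlin) argument and requires the action to be free with respect to $\mu$ (and $G$ infinite). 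The paper therefore splits into cases: if the action is topologically free it applies Lemma~\ref{L-compare free} directly, and otherwise it passes to the free extension $X\times Y$ with $Y$ the universal minimal $G$-system, lifts the IE-tuple using Proposition~3.9(4) of \cite{Ind}, runs the free case there, and pushes the resulting $\mu$-IE-tuple back down via Proposition~\ref{P-IE basic}(5). Your alternative suggestion of directly lower-bounding the covering numbers $N_\delta(\cU^{F_n})$ for the empirical measures and transferring to $\mu$ also does not work: those covering quantities are taken relative to arbitrary Borel sets of large $\mu$-measure and are not controlled by weak$^*$ convergence (the boundary-null trick only controls entropies of fixed finite joins), so some version of the freeness/quasi-tower step is unavoidable in this approach.
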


\begin{proof}
The case $k=1$ follows from \cite[Prop. 3.12]{Ind} and 
Proposition~\ref{P-IE basic}(3). 
So we may assume $k\ge 2$. 

Let $\{ F_n \}_n$ be a F{\o}lner net in $G$. For each $i=1,\dots ,k$ choose a
closed neighbourhood $C_i$ of $x_i$ contained in $U_i$. 
Since $\ox$ is an IE-tuple there is a $d>0$ such that for each $n$ we can find 
an independence set $I_n \subseteq F_n$ for the tuple $\oC = (C_1 , \dots , C_k )$
such that $|I_n | \geq d|F_n |$. For each $n$ pick an
$x_\sigma \in \bigcap_{s\in I_n} s^{-1} C_{\sigma (s)}$ for every 
$\sigma\in\{ 1,\dots ,k \}^{I_n}$ and define on $X$ the following averages of point masses: 
\[ \nu_n = \frac{1}{k^{|I_n |}} \sum_{\sigma\in\{ 1,\dots ,k \}^{I_n}} \delta_{x_\sigma} , 
\hspace*{8mm} \mu_n = \frac{1}{|F_n |} \sum_{s\in F_n} s\mu_n . \]
Take a weak$^*$ limit point $\mu$ of the net $\{ \mu_n \}_n$. 
By passing to a cofinal subset of the net we may assume that  
$\mu_n$ converges to $\mu$. 

Let $\cP = \{ P_1 , \dots , P_k \}$ be a Borel partition of $X$ such that 
$P_i \cap U_i = \emptyset$ and 
$\mu (\partial P_i ) = 0$ for each $i=1,\dots ,k$. 
Let $E$ be a nonempty finite subset of $G$. We will use subadditivity and concavity as in the
proof of the variational principle in Section~5.2 of \cite{JMO}. 
The function $A\mapsto H_{\nu_{n}} (\cP^A )$ on finite subsets of $G$ is subadditive 
in the sense that if $\boldsymbol{1}_A = \sum\lambda_B \boldsymbol{1}_B$ is a finite 
decomposition of the indicator of a finite set $A\subseteq G$ over a collection of  
sets $B\subseteq A$ with each $\lambda_B$ positive, then 
$H_{\nu_{n}} (\cP^A ) \leq \sum \lambda_B H_{\nu_{n}} (\cP^B )$ 
(see Section~3.1 of
\cite{JMO}).
Observe that $\varepsilon (n) := | E^{-1} F_n \setminus F_n |/ |F_n |$ is bounded
above by $| E^{-1} F_n \Delta F_n |/ |F_n |$ and hence by the F{\o}lner property
tends to zero along the net.
Applying the subadditivity of $H_{\nu_n} (\cdot )$ to the decomposition 
$\boldsymbol{1}_{F_n} = \frac{1}{|E|} \sum_{s\in E^{-1} F_n} \boldsymbol{1}_{Es \cap F_n}$, 
we have 
\begin{align*}
H_{\nu_{n}} (\cP^{F_n} ) 
&\leq \frac{1}{|E|} \sum_{s\in F_n} H_{\nu_{n}} (\cP^{Es} ) 
+ \frac{1}{|E|} \sum_{s\in E^{-1} F_n \setminus F_n} H_{\nu_{n}} (\cP^{Es} ) \\
&\leq \frac{1}{|E|} \sum_{s\in F_n} H_{\nu_{n}} (\cP^{Es} ) + 
\varepsilon (n) |F_n | \ln k .
\end{align*}
Since $P_i \cap C_i = \emptyset$ for each $i$, every atom of $\cP^{I_n}$ 
contains at most $(k-1)^{|I_n|}$ points from the set 
$\{ x_\sigma : \sigma\in \{ 1,\dots ,k \}^{I_n} \}$ and hence has $\nu_{n}$-measure at
most $(\frac{k-1}{k} )^{|I_n |}$, so that 
\begin{align*}
H_{\nu_{n}} (\cP^{I_n} ) &= \sum_{W\in\cP^{I_n}} -\nu_{n} (W) \ln \nu_{n} (W) \\
&\geq \sum_{W\in\cP^{I_n}} \nu_{n} (W) \ln \bigg( \frac{k}{k-1} \bigg)^{|I_n |} \\
&= |I_n | \ln \bigg( \frac{k}{k-1} \bigg)
\end{align*}
and thus
\begin{align*} 
\frac{1}{|F_n |} H_{\nu_{n}} (\cP^{F_n} ) \geq \frac{1}{|F_n |} H_{\nu_{n}} (\cP^{I_n} ) 
\geq d \ln \bigg( \frac{k}{k-1} \bigg) . 
\end{align*}
It follows using the concavity of the function $x\mapsto -x\ln x$ that
\begin{align*}
\frac{1}{|E|} H_{\mu_{n}} (\cP^E ) 
\geq \frac{1}{|F_n |} \sum_{s\in F_n} \frac{1}{|E|} H_{\nu_{n}} (\cP^{Es} ) 
\geq d\ln \bigg( \frac{k}{k-1} \bigg) - \varepsilon (n) \ln k .
\end{align*}
Since the boundary of each $P_i$ has zero $\mu$-measure,
the boundary of each atom of $\cP^E$ has zero $\mu$-measure, and so by \cite[Thm. 17.20]{CDST}
the entropy of $\cP^E$ is a continuous function of the measure with respect to the
weak$^*$ topology, whence
\[ \frac{1}{|E|} H_{\mu} (\cP^E ) = \lim_n \frac{1}{|E|} H_{\mu_{n}} (\cP^E ) \geq 
d\ln (k/(k-1)) . \]
Since this holds for every nonempty finite set $E\subseteq G$, we obtain 
$h_\mu (\cP ) \geq d\ln (k/(k-1))$.  

Now let $\cP = \{ P_1 , \dots , P_k \}$ be any $k$-element Borel partition of $X$ such that
$P_i \cap U_i = \emptyset$ for each $i=1, \dots ,k$. By Lemma~\ref{L-partition}, for
every $\delta > 0$ there is a $k$-element Borel partition
$\cQ = \{ Q_1 , \dots , Q_k \}$ such that $Q_i \cap C_i = \emptyset$ and 
$\mu (\partial Q_i ) = 0$ for $i=1,\dots ,k$ and $H_\mu (\cQ | \cP ) < \delta$,
so that $h_{\mu} (\cP ) \geq h_{\mu} (\cQ ) - \delta \geq d\ln (k/(k-1)) - \delta$
by the previous paragraph. Thus $h_{\mu} (\cP ) \geq d\ln (k/(k-1))$.
This inequality holds moreover for any finite Borel partition $\cP$ that refines
$\cU:=\{ U_1^\comp , \dots , U_k^\comp \}$ as a cover since we may assume that $\cP$
is of the above form by coarsening it if necessary. 
Therefore $h_\mu^+ (\cU ) > 0$.

Suppose that the action of $G$ on $X$ is (topologically) free, i.e., 
for all $x\in X$ and $s\in G$, $sx = x$ implies $s=e$.
Then it is free with respect to $\mu$, and
hence $\lwh_{\comb ,\mu} (\cU ) > 0$ by Lemma~\ref{L-compare free}. 
Therefore by Lemma~\ref{L-positive} and Proposition~\ref{P-IE basic}(1)
there is 
a $\mu$-IE-tuple $(x_1' , \dots ,x_k' )$ contained
in $U_1\times \cdots \times U_k$.

Now suppose that the action of $G$ on $X$ is not free. Take a 
free action of $G$ on a compact Hausdorff space $(Y,G)$, e.g., the universal minimal
$G$-system \cite{Ellis}.
Then the product system $(X\times Y,G)$
is an extension of $(X,G)$ which is free. 
By Proposition~3.9(4) of \cite{Ind} we can find 
a lift $\tilde{\ox}$ of the tuple $\ox$ under this
extension such that $\tilde{\ox}$ is an IE-tuple.
By the previous paragraph
there are a $G$-invariant Borel probability measure $\mu$ on $X\times Y$ 
and a $\mu$-IE-tuple $\tilde{\ox}'$ 
contained in the inverse image of $U_1\times \cdots \times U_k$.
 It then follows by Proposition~\ref{P-IE basic}(5) that the image
$\ox'$ of $\tilde{\ox}'$ is 
a $\nu$-IE-tuple contained in $U_1\times \cdots \times U_k$
for the measure $\nu$ on $X$ induced from $\mu$, completing the proof. 
\end{proof}

\noindent From Lemma~\ref{L-measure} we obtain:

\begin{theorem}\label{T-mu-IE IE closure}
For each $k\geq 1$ the set of IE-tuples of length $k$ is equal to the closure of 
of the union of the sets $\IE_\mu^k (X)$ over all $G$-invariant Borel probability 
measures $\mu$ on $X$.
\end{theorem}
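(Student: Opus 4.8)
The plan is to establish the two inclusions separately. One direction is essentially immediate: by Proposition~\ref{P-IE basic}, any $\mu$-IE-tuple is an IE-tuple, so the union $\bigcup_\mu \IE_\mu^k(X)$ is contained in the set of IE-tuples, and since the set of IE-tuples is closed (this is part of the structure established in \cite{Ind}, analogous to Proposition~\ref{P-IE basic}(4)), the closure of the union is also contained in the set of IE-tuples.

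For the reverse inclusion, I would take an arbitrary IE-tuple $\ox = (x_1,\dots,x_k)$ and aim to approximate it by $\mu$-IE-tuples for suitable invariant measures $\mu$. The key tool is Lemma~\ref{L-measure}, but that lemma requires the points of $\ox$ to be distinct and supplies pairwise disjoint open neighbourhoods. So the first step is to handle the diagonal. If some coordinates of $\ox$ coincide, I would pass to the tuple of distinct entries and invoke Proposition~3.9 of \cite{Ind} (or the analogous properties of IE-tuples there) to see that a tuple with repeated entries is an IE-tuple precisely when the ``reduced'' tuple of distinct entries is, and that $\mu$-IE-tuples behave the same way by the independence-density definitions — if $(y_1,\dots,y_m)$ is a $\mu$-IE-tuple with distinct entries, then any tuple obtained by repeating entries is also a $\mu$-IE-tuple, since an independence set for a tuple of neighbourhoods remains an independence set after duplicating coordinates. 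Thus it suffices to realize, for each IE-tuple with distinct entries and each choice of product neighbourhood, a nearby $\mu$-IE-tuple inside that neighbourhood.

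Granting that reduction, fix an IE-tuple $\ox$ with distinct entries and a basic open neighbourhood $U_1 \times \cdots \times U_k$ of $\ox$; by shrinking we may take the $U_i$ pairwise disjoint. Lemma~\ref{L-measure} then produces a $G$-invariant Borel probability measure $\mu$ and a $\mu$-IE-tuple $(x_1',\dots,x_k')$ with $x_i' \in U_i$ for each $i$. Hence every open neighbourhood of $\ox$ contains a point of $\bigcup_\mu \IE_\mu^k(X)$, which shows $\ox$ lies in the closure of that union. Running this over all IE-tuples (distinct-entry ones directly, general ones via the diagonal reduction) gives the containment of the set of IE-tuples in the closure of $\bigcup_\mu \IE_\mu^k(X)$, completing the proof.

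The bulk of the work is already encapsulated in Lemma~\ref{L-measure}, so the remaining obstacle here is mostly bookkeeping: one must be careful that the diagonal reduction genuinely preserves both the IE-tuple property and the $\mu$-IE-tuple property in the right directions, and that the neighbourhoods can be taken pairwise disjoint without losing the IE property of the reduced tuple. I expect no genuine difficulty, but it is worth stating the diagonal argument explicitly rather than treating it as automatic, since the $\mu$-IE condition is phrased in terms of product neighbourhoods and independence density rather than in a manifestly coordinate-insensitive way.
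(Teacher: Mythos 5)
Your proposal is correct and follows essentially the same route as the paper, which derives the theorem directly from Lemma~\ref{L-measure} together with the easy inclusion coming from the fact that $\mu$-IE-tuples are IE-tuples and the set of IE-tuples is closed. Your explicit treatment of tuples with repeated entries (reducing to the distinct-entry tuple, intersecting the neighbourhoods of a repeated point, and noting that duplicating coordinates preserves both the IE and $\mu$-IE properties) is exactly the bookkeeping the paper leaves implicit, and it is carried out correctly.
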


\begin{lemma}\label{L-point}
Suppose that $X$ is metrizable. 
Let $\ox = (x_1 , \dots , x_k )$ be an IE-tuple.
Then there is a $G$-invariant Borel probability measure $\mu$ on $X$ such that 
$\ox$ is a $\mu$-IE-tuple.
\end{lemma}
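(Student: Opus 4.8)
The plan is to realize $\ox$ as a limit of $\mu$-IE-tuples for a suitably averaged $G$-invariant measure $\mu$, invoking Lemma~\ref{L-measure} and the closedness of $\IE^\mu_k(X)$ from Proposition~\ref{P-IE basic}(4). First I would reduce to the case that the coordinates $x_1,\dots,x_k$ are pairwise distinct. Let $\oy=(y_1,\dots,y_m)$ be the tuple of distinct values among $x_1,\dots,x_k$, and for each $i$ let $j(i)$ be the index with $x_i=y_{j(i)}$. Given a product neighbourhood $V_1\times\cdots\times V_m$ of $\oy$, the tuple $(V_{j(1)},\dots,V_{j(k)})$ is a product neighbourhood of $\ox$, and since $j\colon\{1,\dots,k\}\to\{1,\dots,m\}$ is onto, any independence set for the latter is one for the former; hence $\oy$ is an IE-tuple. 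Conversely, if $\mu$ is a $G$-invariant Borel probability measure for which $\oy$ is a $\mu$-IE-tuple and $U_1\times\cdots\times U_k$ is a product neighbourhood of $\ox$, then $V_j:=\bigcap\{U_i:j(i)=j\}$ defines a product neighbourhood $V_1\times\cdots\times V_m$ of $\oy$ with $V_{j(i)}\subseteq U_i$, so that every $\mu$-independence set for $(V_1,\dots,V_m)$ relative to any $D\in\mathscr{B}(\mu,\delta)$ is also one for $(U_1,\dots,U_k)$ relative to $D$; thus $\varphi_{(U_1,\dots,U_k),\delta}(F)\ge\varphi_{(V_1,\dots,V_m),\delta}(F)$ for every finite $F\subseteq G$, whence $\upind_\mu(U_1,\dots,U_k)\ge\upind_\mu(V_1,\dots,V_m)>0$ and $\ox$ is a $\mu$-IE-tuple. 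So I may assume the $x_i$ are distinct.

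Since $X$ is metrizable (hence first countable), I would next fix, for each $i$, a decreasing neighbourhood base $\{U_i^{(n)}\}_{n\ge1}$ at $x_i$, arranged so that $U_1^{(n)},\dots,U_k^{(n)}$ are pairwise disjoint for every $n$ — possible because the $x_i$ are distinct points of a Hausdorff space. For each $n$, Lemma~\ref{L-measure} yields a $G$-invariant Borel probability measure $\mu_n$ and a $\mu_n$-IE-tuple $\ox^{(n)}=(x_1^{(n)},\dots,x_k^{(n)})$ with $x_i^{(n)}\in U_i^{(n)}$ for each $i$; since $\{U_i^{(n)}\}_n$ is a neighbourhood base at $x_i$, we get $x_i^{(n)}\to x_i$ and hence $\ox^{(n)}\to\ox$ in $X^k$. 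Put $\mu=\sum_{n\ge1}2^{-n}\mu_n$, which is again a $G$-invariant Borel probability measure.

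It remains to transfer positivity of independence density from the $\mu_n$ to $\mu$. Because $\mu\ge2^{-n}\mu_n$ as measures, a Borel set $D$ with $\mu(D)\ge1-\delta$ satisfies $2^{-n}\mu_n(D)\ge\mu(D)-(1-2^{-n})\ge2^{-n}-\delta$, i.e.\ $\mu_n(D)\ge1-2^n\delta$; thus $\mathscr{B}(\mu,\delta)\subseteq\mathscr{B}(\mu_n,2^n\delta)$, so that the defining minimum of $\varphi_{\oA,\delta}$ for $\mu$, being taken over a smaller collection of sets $D$, is at least the defining minimum of $\varphi_{\oA,2^n\delta}$ for $\mu_n$; writing $\varphi^{\nu}$ for the quantity formed with respect to a measure $\nu$, this says $\varphi^{\mu}_{\oA,\delta}(F)\ge\varphi^{\mu_n}_{\oA,2^n\delta}(F)$ for every tuple $\oA$ of subsets of $X$ and every finite $F\subseteq G$. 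Passing to the limit supremum over $F$ and then to the supremum over $\delta>0$ yields $\upind_\mu(\oA)\ge\upind_{\mu_n}(\oA)$, so every $\mu_n$-IE-tuple is a $\mu$-IE-tuple and in particular $\ox^{(n)}\in\IE^\mu_k(X)$ for all $n$. Since $\IE^\mu_k(X)$ is closed by Proposition~\ref{P-IE basic}(4) and $\ox^{(n)}\to\ox$, we conclude $\ox\in\IE^\mu_k(X)$, as required.

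I expect the only step needing genuine care to be the reduction to distinct coordinates, where the neighbourhood-intersection bookkeeping must be checked in both directions; the averaging and monotonicity argument is a short computation, and convergence of $\sum_{n\ge1}2^{-n}\mu_n$ to a $G$-invariant Borel probability measure together with the extraction of the shrinking disjoint neighbourhood bases are routine for compact metrizable $X$.
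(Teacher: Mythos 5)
Your proof is correct and follows essentially the same route as the paper's: shrinking pairwise disjoint neighbourhood bases, Lemma~\ref{L-measure} applied at each stage, the averaged measure $\mu=\sum_n 2^{-n}\mu_n$, and closedness of $\IE^\mu_k(X)$ from Proposition~\ref{P-IE basic}(4). The only difference is that you spell out two steps the paper leaves implicit, namely the reduction to distinct coordinates and the comparison $\mathscr{B}(\mu,\delta)\subseteq\mathscr{B}(\mu_n,2^n\delta)$ showing that every $\mu_n$-IE-tuple is a $\mu$-IE-tuple; both are done correctly.
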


\begin{proof}
We may assume that $\ox$ consists of distinct points. 
Since $X$ is metrizable, we can find for each $m\in\Nb$ pairwise disjoint open neighbourhoods 
$U_{m,1} , \dots ,U_{m,k}$ of $x_1 , \dots ,x_k$, respectively, so that for each
$i=1,\dots ,k$ the family $\{ U_{m,i} : m\in\Nb \}$ forms a neighbourhood basis for $x_i$. 
For each $m$ take a measure $\mu_m$ and a $\mu$-IE-tuple 
$\ox_m$ as given by Lemma~\ref{L-measure} with respect to
$U_{m,1} , \dots ,U_{m,k}$ and define the $G$-invariant Borel probability measure 
$\mu = \sum_{m=1}^\infty 2^{-m} \mu_m$. Then $\ox_m$ is 
a $\mu$-IE-tuple for each $m$, and so $\ox$ is a $\mu$-IE-tuple
by Proposition~\ref{P-IE basic}(4).
\end{proof}

\begin{theorem}\label{T-mu-IE IE}
Suppose that $X$ is metrizable. Then there is a $G$-invariant Borel 
probability measure $\mu$ on $X$ such that the sets of $\mu$-IE-tuples and IE-tuples coincide.
\end{theorem}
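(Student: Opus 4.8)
The plan is to leverage the countability of a basis for the topology together with Lemma~\ref{L-point} and a standard diagonalization over measures. Since $X$ is metrizable and compact, it is second countable; fix a countable basis $\{ V_n \}_{n\in\Nb}$ for the topology. For each $k\geq 1$ the set $\IE_k (X)$ of IE-tuples of length $k$ is a closed subset of $X^k$, hence also second countable, so we may choose a countable dense subset $\{ \ox^{(k,j)} : j\in\Nb \}$ of $\IE_k (X)$. First I would apply Lemma~\ref{L-point} to each $\ox^{(k,j)}$ to obtain a $G$-invariant Borel probability measure $\mu_{k,j}$ on $X$ such that $\ox^{(k,j)}$ is a $\mu_{k,j}$-IE-tuple. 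Then set
\[ \mu = \sum_{k=1}^\infty \sum_{j=1}^\infty 2^{-(k+j)} \mu_{k,j} , \]
which is again a $G$-invariant Borel probability measure on $X$.

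Next I would check that $\IE_k^\mu (X) = \IE_k (X)$ for every $k$. The inclusion $\IE_k^\mu (X) \subseteq \IE_k (X)$ is immediate since every $\mu$-IE-tuple is an IE-tuple (as noted after the definition of $\mu$-IE-tuple). For the reverse inclusion, observe that for each fixed $(k,j)$ the measure $\mu_{k,j}$ is absolutely continuous with respect to $\mu$ in the sense that $\mu(B) = 0$ implies $\mu_{k,j}(B) = 0$; more to the point, $\mu \geq 2^{-(k+j)} \mu_{k,j}$, so any Borel set of small $\mu$-measure also has small $\mu_{k,j}$-measure, up to the fixed scaling factor. This should give $\upind_\mu(\oA) \geq 2^{-(k+j)}\, \upind_{\mu_{k,j}}(\oA)$ for every tuple $\oA$ of subsets of $X$ — the point is that if $D \in \mathscr{B}(\mu,\delta)$ then $D \in \mathscr{B}(\mu_{k,j}, 2^{k+j}\delta)$, and the definitions of $\varphi_{\oA,\delta}$ and $\upind$ then yield the claimed inequality after adjusting $\delta$. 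Hence every $\mu_{k,j}$-IE-tuple is a $\mu$-IE-tuple. In particular each $\ox^{(k,j)}$ lies in $\IE_k^\mu(X)$, so $\IE_k^\mu(X)$ contains the dense subset $\{ \ox^{(k,j)} : j\in\Nb \}$ of $\IE_k(X)$. Since $\IE_k^\mu(X)$ is closed by Proposition~\ref{P-IE basic}(4), we conclude $\IE_k(X) \subseteq \IE_k^\mu(X)$, and therefore equality holds for all $k$.

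The main obstacle I anticipate is verifying cleanly that a convex combination dominated by a multiple of $\mu_{k,j}$ does not destroy positive independence density — i.e., the monotonicity-type estimate $\upind_\mu(\oA) \geq c\cdot \upind_{\nu}(\oA)$ whenever $\mu \geq c\nu$. This is where one has to be careful with the role of the auxiliary sets $D$ of nearly full measure: passing from $\nu$ to $\mu$ shrinks the available margin by the factor $c$, so one must track the $\delta$'s through Definition~\ref{D-indset} and the definition of $\varphi_{\oA,\delta}$. An alternative, perhaps cleaner, route to the same inclusion avoids this estimate altogether: since Lemma~\ref{L-point} produces, for a fixed IE-tuple $\ox$, a single measure making $\ox$ a $\mu$-IE-tuple, and since the construction in its proof builds $\mu$ as a sum $\sum 2^{-m}\mu_m$ over a neighbourhood basis, one can simply cite Proposition~\ref{P-IE basic}(4) in the same form used there — namely that being a $\mu$-IE-tuple is preserved under the passage from a summand $\mu_m$ (or $\mu_{k,j}$) to the sum — exactly as invoked at the end of the proof of Lemma~\ref{L-point}. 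Using that fact verbatim sidesteps the need to re-derive the density inequality and keeps the argument parallel to Lemma~\ref{L-point}.
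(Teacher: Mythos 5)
Your proposal is correct and takes essentially the same route as the paper: choose a countable dense subset of the IE-tuples of each length, apply Lemma~\ref{L-point} to each point, form a weighted sum of the resulting measures, note that an IE-tuple for a summand remains one for the sum (your $\mathscr{B}(\mu,\delta)\subseteq\mathscr{B}(\mu_{k,j},2^{k+j}\delta)$ scaling argument, which the paper leaves implicit and which in fact gives $\upind_\mu(\oA)\geq\upind_{\mu_{k,j}}(\oA)$ with no constant loss), and finish with closedness of $\IE^\mu_k(X)$ from Proposition~\ref{P-IE basic}(4) together with the trivial inclusion of $\mu$-IE-tuples in IE-tuples. Only a minor caveat: your ``alternative route'' attributes the summand-to-sum step itself to Proposition~\ref{P-IE basic}(4), which only supplies closedness and $G$-invariance, so the explicit $\delta$-scaling estimate in your main argument is the justification actually needed.
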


\begin{proof}
For each $k\geq 1$ take a countable dense subset $\{ \ox_{k,i} \}_{i\in I_k}$ of the set of IE-tuples 
of length $k$. By Lemma~\ref{L-point}, for every $k\geq 1$ and $i\in I_k$ there is a 
$G$-invariant Borel probability measure $\mu_{k,i}$ on $X$ such that $\ox_{k,i}$
is a $\mu_{k,i}$-IE-tuple. Set $\mu = \sum_{k=1}^\infty \sum_{i\in I_k} \lambda_{k,i} \mu_{k,i}$
for some $\lambda_{k,i} > 0$ with $\sum_{k=1}^\infty \sum_{i\in I_k} \lambda_{k,i} = 1$.
Then $\mu$ is a $G$-invariant Borel probability measure, and $\ox_{k,i}$
is a $\mu$-IE-tuple for every $k\geq 1$ and $i\in I_k$. Since the set of 
$\mu$-IE-tuples of a given length is closed by Proposition~\ref{P-IE basic}(4)
and $\mu$-IE-tuples are always IE-tuples, we obtain the desired conclusion.
\end{proof}

In the case $G=\Zb$, the conclusion of Theorem~\ref{T-mu-IE IE} for $\mu$-entropy pairs 
and topological entropy pairs was established in \cite{BGH} and then more generally 
for $\mu$-entropy tuples and topological entropy tuples in \cite{LVRA}.


\subsection{The relation between $\mu$-IE-tuples and $\mu$-entropy tuples}

For $G=\Zb$ the notion of a $\mu$-entropy pair was introduced in \cite{BHMMR}
and generalized to $\mu$-entropy tuples in \cite{LVRA}.
We will accordingly say for $k\geq 2$ that a nondiagonal tuple
$(x_1 , \dots , x_k ) \in X^k$ is a
{\it $\mu$-entropy tuple} if whenever $U_1 , \dots , U_l$
are pairwise disjoint Borel neighbourhoods of the distinct points in
the list $x_1 , \dots , x_k$, every Borel partition of $X$ refining
$\{ U_1^\comp , \dots , U_l^\comp \}$ has positive measure entropy.
In this subsection we aim to show that nondiagonal $\mu$-IE-tuples are the same
as $\mu$-entropy tuples.

Our first task is to establish Lemma~\ref{L-positive inf}. For this we will
use the orbit equivalence technique of Rudolph and Weiss \cite{EMAGA}, which will enable
us to apply a result of Huang and Ye for $\Zb$-actions \cite{LVRA}.
In order to invoke Theorem~2.6 of \cite{EMAGA}, whose hypotheses include ergodicity,
we will need the ergodic decomposition of entropy, which asserts that if
$(Y,\sY , \nu )$ is a Lebesgue space equipped with an action of a countable
discrete amenable group $H$ and $\nu = \int_Z \nu_z \, d\omega (z)$ is the
corresponding ergodic decomposition, then for every finite measurable partition $\cP$
of $Y$ we have $\hmeas_\nu (\cP ) = \int_Z \hmeas_{\nu_z} (\cP )\, d\omega (z)$.
The standard proof of this for $G=\Zb$ using symbolic representations (see for example
Section~15.3 of \cite{ETJ}) also works in the general case.
Given a tuple $\oA = ( A_1 , \dots , A_k )$ of Borel
subsets of $X$ with $\bigcap_{i=1}^k A_i = \emptyset$, we say that a
finite Borel partition $\cP$ of $X$ is {\it $\oA$-admissible} if it refines
$\{ A_1^\comp , \dots , A_k^\comp \}$ as a cover of $X$. For the definitions of the 
quantities $\hmeas_\mu^+ (\cU )$ and $\lwh_{\comb , \mu}(\cU)$ see the discussion 
after Lemma~\ref{L-supp}. As the proof below
involves several different systems, we will explicitly indicate the
action in our notation for the various entropy quantities.

\begin{lemma}\label{L-positive inf}
Suppose that $X$ is metrizable and $G$ is countably infinite.
Let $\oA = ( A_1 , \dots , A_k )$ be a tuple of pairwise disjoint Borel
subsets of $X$. Denote by $\cU$ the Borel cover
$\{ A_1^\comp , \dots , A_k^\comp \}$ of $X$. Suppose that $\hmeas_\mu (\cP ) > 0$
for every $\oA$-admissible finite Borel partition $\cP$ of $X$.
Then $\lwh_{\comb , \mu} (\cU) > 0$.
\end{lemma}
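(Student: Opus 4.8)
The plan is to transport the statement to an integer action, where it follows from a theorem of Huang and Ye, by means of the orbit equivalence technique of Rudolph and Weiss combined with the ergodic decomposition of entropy.

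\emph{Step 1: reduction to a free action.} I would first reduce to the case in which $G$ acts freely with respect to $\mu$. If this fails, replace $(X,\mu)$ by $(X\times Y,\mu\times\nu)$, where $(Y,G)$ is the Bernoulli shift $(\{0,1\}^G,G)$ and $\nu$ the $(\frac12,\frac12)$-product measure. Since $G$ is infinite this shift is essentially free, so the diagonal $G$-action on $X\times Y$ is free with respect to $\mu\times\nu$, and $X\times Y$ is again compact metrizable. Writing $\pi\colon X\times Y\to X$ for the projection, $\oA'$ for $(\pi^{-1}A_1,\dots,\pi^{-1}A_k)$, and $\cU'$ for $\pi^{-1}\cU$, Lemma~\ref{L-hlifting} gives $h_{\mu\times\nu}^-(\cU')=h_\mu^-(\cU)$, so by Lemma~\ref{L-positive} it suffices to prove $\lwh_{\comb,\mu\times\nu}(\cU')>0$. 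The hypothesis is inherited: the Bernoulli shift over an amenable group has completely positive entropy, so the relative Pinsker $\sigma$-algebra of $X\times Y$ over its $X$-factor equals $\pi^{-1}\sB$; hence any $\oA'$-admissible partition $\cP$ of $X\times Y$ with $h_{\mu\times\nu}(\cP)=0$ is, modulo $\mu\times\nu$, of the form $\pi^{-1}\cQ$ for a Borel partition $\cQ$ of $X$ of zero $\mu$-entropy that refines $\cU$, and a null modification turns $\cQ$ into an $\oA$-admissible partition of zero $\mu$-entropy, contradicting the hypothesis.

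\emph{Step 2: passage to $\Zb$.} Assume now that $G$ acts freely with respect to $\mu$. By the Connes--Feldman--Weiss and Dye theorems the orbit equivalence relation of the (possibly non-ergodic) $G$-action is generated by a single $\mu$-preserving transformation $T$, i.e.\ by a free $\Zb$-action on $(X,\mu)$ with the same orbits; moreover, by the construction of Rudolph and Weiss, the rearrangement cocycle between the two actions may be taken measurable with respect to a sub-$\sigma$-algebra $\mathcal F$ which is invariant under both actions and of zero entropy for both. Applying Theorem~2.6 of \cite{EMAGA} on each ergodic component and integrating against the ergodic decomposition of entropy yields equality of the $G$- and $T$-conditional entropies over $\mathcal F$ for every finite Borel partition of $X$; since $\mathcal F$ has zero entropy, the $G$-action and the $T$-action therefore assign the same entropy to every finite Borel partition. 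In particular, by the standing hypothesis, every $\oA$-admissible partition of $X$ has positive entropy for $T$.

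\emph{Step 3: conclusion, and main obstacle.} By the theorem of Huang and Ye for $\Zb$-systems from \cite{LVRA}, whose proof passes to powers of $T$, the positivity of the entropy of every $\oA$-admissible partition forces $h_\mu^-(\cU)>0$, hence $h_\mu^+(\cU)>0$, for the $T$-action. But $h_\mu^+(\cU)$ is an infimum of entropies of partitions refining $\cU$, and by Step 2 each such entropy is the same for $T$ as for $G$; hence $h_\mu^+(\cU)>0$ for the $G$-action. As $G$ is infinite and the (reduced) action is free with respect to $\mu$, Lemma~\ref{L-compare free} now gives $\lwh_{\comb,\mu}(\cU)\ge h_\mu^+(\cU)>0$. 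The crux is Step 2: producing the orbit-equivalent $\Zb$-action together with a zero-entropy invariant $\sigma$-algebra $\mathcal F$ carrying the rearrangement cocycle, and then descending from the ergodic hypotheses of Theorem~2.6 of \cite{EMAGA} to an arbitrary invariant measure, which is precisely where the ergodic decomposition of entropy is needed; a secondary point requiring care is the inheritance of the hypothesis in Step 1, which rests on the disjointness of completely positive entropy systems from zero entropy systems.
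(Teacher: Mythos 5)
Your overall strategy (Rudolph--Weiss orbit equivalence to an integer action, the Huang--Ye theorem for $\Zb$-systems, the ergodic decomposition of entropy, and Lemma~\ref{L-compare free} at the end) is the same as the paper's, but Step 2 contains a genuine gap, and it is exactly at the point you yourself flag as the crux. You assert that, on the given (now free) system, the rearrangement cocycle between the $G$-action and an orbit-equivalent $\Zb$-action ``may be taken measurable with respect to a sub-$\sigma$-algebra $\mathcal F$ which is invariant under both actions and of zero entropy for both.'' Rudolph and Weiss do not provide this inside an arbitrary given system, and in general no such $\mathcal F$ exists: if the free system has completely positive entropy (a case which is not vacuous here --- the hypothesis of the lemma then holds trivially but the conclusion still has to be proved), every $G$-invariant zero-entropy $\sigma$-algebra is trivial, and a cocycle measurable over the trivial algebra is constant, which would force the integer action to be generated by a single group element and hence not to have the full $G$-orbits unless $G$ is cyclic. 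The Rudolph--Weiss technique produces a cocycle-carrying $\sigma$-algebra only after adjoining an auxiliary system on which the rearrangement takes place. This is what the paper does: it takes a free weakly mixing $G$-system $(Y,\sY,\nu)$, applies Connes--Feldman--Weiss to $S$ alone to get a zero-entropy integer generator $\hat R$ of the $S$-orbits, and rearranges $T\times S$ on $X\times Y$ by a $\sY$-measurable cocycle. Crucially, $\sY$ then has zero entropy only for the integer action $R$, not for $T\times S$ (a Bernoulli $Y$ carries full $G$-entropy), so one does \emph{not} get your claimed equality of entropies of all partitions for the two actions; one only gets $\hmeas_{\mu\times\nu}(T\times S,\cQ)\geq \hmeas_{\mu\times\nu}(T\times S,\cQ\,|\,\sY)=\hmeas_{\mu\times\nu}(R,\cQ\,|\,\sY)=\hmeas_{\mu\times\nu}(R,\cQ)$. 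Consequently the transfer of the hypothesis to the $\Zb$-action (that every admissible partition has positive $R$-entropy, which is what Huang--Ye needs) does not come for free: the paper proves it by a separate contradiction argument using the relative Pinsker algebra $\sR=\sP_{T_z}\otimes\sY$ of the ergodic components (Theorem~4.10 of \cite{EMAGA}), conditional expectations onto $\sB$, and the ergodic decomposition of entropy. Your outline contains no substitute for this step; note also that Theorem~2.6 of \cite{EMAGA} requires ergodicity and freeness of the component actions, and the positivity hypothesis for admissible partitions does not automatically descend to ergodic components, which is precisely why that relative-Pinsker argument (and the weak mixing of the auxiliary action, to keep the ergodic decomposition of the product aligned with that of $X$) is needed.

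Two smaller points. If you tried to repair Step 2 by choosing the auxiliary $G$-system to be free, weakly mixing \emph{and} of zero $G$-entropy (so that $\mathcal F$ has zero entropy for both actions), you would need to justify the existence of such an action for an arbitrary countably infinite amenable group, and you would still have to run the componentwise ergodicity bookkeeping; the paper's conditional-entropy route avoids both. Finally, in Step 1 the inheritance of the hypothesis does not follow from disjointness of CPE systems from zero-entropy systems (independence of the zero-entropy factor from $\sY$ does not place it inside $\pi^{-1}\sB$); what you actually need is the Pinsker product formula $\mathfrak P_{X\times Y}=\mathfrak P_X\otimes\mathfrak P_Y$ (as in \cite{Dan}) or the relative CPE of Bernoulli extensions --- citable, but a stronger input than the one you name, and one the paper's own proof of this lemma does not require.
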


\begin{proof}
Denote by $T$ the action of $G$ on $X$. Take a free weakly mixing action
$S$ of $G$ on a Lebesgue space $(Y,\sY , \nu )$ (for example a Bernoulli action).
We will consider the product
action $T\times S$ on $(X\times Y, \sB\otimes\sY , \mu\times\nu )$ and view $\sB$
and $\sY$ as sub-$\sigma$-algebras of $\sB\otimes\sY$ when convenient.
Since $S$ is free and ergodic, by the Connes-Feldman-Weiss theorem \cite{CFW} there
is an integer action $\hat{R}$ on $(Y,\sY , \nu )$ with the same orbits as
$S$ and we may choose $\hat{R}$ to have zero measure entropy. Now we define an
integer action $R$ on $(X\times Y, \sB\otimes\sY , \mu\times\nu )$ with the same
orbits as $T\times S$ by setting $R(x,y) = (T\times S)_{s(y)} (x,y)$ where
$s(y)$ is the element of $G$ determined by $\hat{R} y = S_{s(y)} y$.

Let $\pi : (X,\sB , \mu )\to (Z,\sZ , \omega )$ be the dynamical factor defined by
the $\sigma$-algebra $\sI_T$ of $T$-invariant sets in $\sB$. We write the disintegration
of $\mu$ over $\omega$ as $\mu = \int_Z \mu_z \, d\omega (z)$ and for every $z\in Z$
put $X_z = \pi^{-1} (z)$ and $\sB_z = \sB \cap X_z$ and denote by $T_z$
the restriction of $T$ to $(X_z , \sB_z , \mu_z )$. Since $S$ is weakly mixing,
the $\sigma$-algebra $\sI_{T\times S}$ of $(T\times S)$-invariant sets in $\sB\otimes\sY$
coincides with $\sI_T$, viewing the latter as a sub-$\sigma$-algebra of $\sB\otimes\sY$.
The dynamical factor $(X\times Y,\sB\otimes\sY , \mu\times\nu )
\to (Z,\sZ , \omega )$ defined by $\sI_{T\times S}$ is the product of $\pi$ and the
trivial factor and gives the ergodic decomposition of $T\times S$
with $\omega$-a.e.\ ergodic components
$(X_z \times Y, \sB_z \otimes\sY , \mu_z \times\nu )$ with action
$T_z \times S$ for $z\in Z$.
The orbit equivalence of $R$ with $T\times S$ respects the ergodic decomposition
and so for $R$ we have $\omega$-a.e.\ ergodic components
$(X_z \times Y, \sB_z \otimes\sY , \mu_z \times\nu )$ with action
$R_z$ for $z\in Z$. Note that for each $z\in Z$ the action $R_z$ is free and
the orbit change from $T_z \times S$ to $R_z$ is $\sY$-measurable in the sense
of Definition~2.5 in \cite{EMAGA}.

Write $\oB$ for the tuple $(A_1 \times Y, \dots , A_k \times Y )$
of pairwise disjoint $\sB$-measurable subsets of $X\times Y$.
Let $\cQ = \{ Q_1 , \dots Q_r \}$ be a $\oB$-admissible finite measurable partition
of $X\times Y$. We will show that there exists a set of
$z\in Z$ of nonzero measure for which
$\hmeas_{\mu_z \times\nu} (T_z \times S,\cQ_z | \sY ) > 0$, where
$\cQ_z = \{ Q_j \cap (X_z \times Y) : j=1, \dots ,r \}$. Suppose to the contrary that
$\hmeas_{\mu_z \times\nu} (T_z \times S,\cQ_z | \sY ) = 0$ for $\omega$-a.e.\ $z\in Z$.
Consider the conditional expectations
$E^{\sB} = \id_{L^1 (X,\mu )} \otimes\nu
: L^1 (X \times Y,\mu\times\nu ) = L^1 (X,\mu )\widehat{\otimes} L^1 (Y,\nu )
\to L^1 (X ,\mu )$ and
$E^{\sB_z} = \id_{L^1 (X_z ,\mu_z )} \otimes\nu
: L^1 (X_z \times Y,\mu_z \times\nu ) = L^1 (X_z ,\mu_z )\widehat{\otimes} L^1 (Y,\nu )
\to L^1 (X_z ,\mu_z )$ for $z\in Z$. As is easy to check using approximations in the
algebraic tensor product, for every $f\in L^1 (X \times Y,\mu\times\nu ) =
L^1 (X,\mu )\widehat{\otimes} L^1 (Y,\nu )$ there is a
full-measure set of $z\in Z$ for which
$E^{\sB_z} (f|_{X_z} )(x) = E^{\sB} (f)(x)$ for $\mu_z$-a.e.\ $x\in X_z$.
For each $j=1, \dots ,r$
set $C_j = \{ x\in X : E^{\sB} (\boldsymbol{1}_{Q_j} )(x) > 0 \}$, which is defined up to
a set of $\mu$-measure zero and hence can be assumed to satisfy the condition that for
every $i=1,\dots ,r$ it is disjoint from $A_i \times Y$ if and only if $Q_j$ is.
Then $\{ C_j : j=1, \dots ,r \}$ is an $\oA$-admissible Borel cover of $X$. Putting
$\cP = \{ C_j \setminus\bigcup_{d=1}^{j-1} C_d : j=1, \dots ,r \}$ we obtain an
$\oA$-admissible measurable partition of $X$.

Now let $z\in Z$.
Denote by $\sR$ the
relative Pinsker $\sigma$-algebra of $T_z \times S$ with respect to $\sY$,
i.e., the $\sigma$-algebra generated by all measurable partitions $\cR$ of
$X_z \times Y$ such that $\hmeas_{\mu\times\nu} (T_z \times S, \cR | \sY ) = 0$.
In the $\omega$-a.e.\ situation that $R_z$ is ergodic we have
$\sR = \sP_{T_z} \otimes\sY$ by Theorem~4.10 of \cite{EMAGA}. From the
discussion in the previous paragraph we see that if $z$ is assumed to belong to a
certain set of full measure then for each $j=1,\dots ,r$ the sets
$C_j \cap X_z$ and $\{ x\in X_z : E^{\sB_z} (\boldsymbol{1}_{Q_j \cap X_z} )(x) > 0 \}$
coincide up to a set of $\mu_z$-measure zero. In this case, setting
$\cP_z = \{ P \cap X_z : P\in \cP \}$ we obtain a partition of $X_z$ which is
$\sP_{T_z}$-measurable and hence satisfies
$\hmeas_{\mu_z} (T_z ,\cP_z ) = 0$. It follows using the ergodic decomposition of entropy 
that $\hmeas_\mu (T,\cP ) = \int_Z \hmeas_\mu (T_z ,\cP_z )\, d\omega (z) = 0$,
contradicting our hypothesis. Therefore we must have
$\hmeas_{\mu_z \times\nu} (T_z \times S,\cQ_z | \sY ) > 0$ for all $z$ in a
set $W \subseteq Z$ of nonzero measure.

For every $z$ in a subset of $W$ with the same measure as $W$ the action
$R_z$ is ergodic and free, in which case we can apply Theorem~2.6 of
\cite{EMAGA} along with the fact that $\hat{R}$ has zero entropy to obtain
\[ \hmeas_{\mu_z \times\nu} (R_z ,\cQ_z ) =
\hmeas_{\mu_z \times\nu} (R_z ,\cQ_z | \sY ) =
\hmeas_{\mu_z \times\nu} (T_z \times S,\cQ_z | \sY ) > 0 . \]
The ergodic decomposition of entropy then yields
\[ \hmeas_{\mu\times\nu} (R,\cQ ) = \int_Z \hmeas_{\mu_z \times\nu} (R_z ,\cQ_z )\,
d\omega (z) > 0 . \]
It follows by Theorem~4.6 of \cite{LVRA} that the infimum $c$ of
$\hmeas_{\mu\times\nu} (R,\cQ )$ over all $\oB$-admissible finite measurable
partitions $\cQ$ of $X$ is nonzero.

Denote by $\cV$ the measurable cover
$\{ A_1^\comp \times Y , \dots , A_k^\comp \times Y \}$ of $X\times Y$.
Suppose we are given a $\oB$-admissible finite measurable partition $\cQ$ of $X\times Y$.
Applying the ergodic decomposition of entropy,
Theorem~2.6 of \cite{EMAGA}, and the fact that $\hat{R}$ has zero entropy we get
\begin{align*}
\hmeas_{\mu\times\nu} (T\times S,\cQ )
&= \int_Z \hmeas_{\mu_z \times\nu} (T_z \times S,\cQ_z )\, d\omega (z) \\
&\geq \int_Z \hmeas_{\mu_z \times\nu} (T_z \times S,\cQ_z | \sY )\, d\omega (z) \\
&= \int_Z \hmeas_{\mu_z \times\nu} (R_z ,\cQ_z | \sY )\, d\omega (z) \\
&= \int_Z \hmeas_{\mu_z \times\nu} (R_z ,\cQ_z )\, d\omega (z) \\
&= \hmeas_{\mu\times\nu} (R,\cQ ) \\
&\geq c .
\end{align*}
Therefore $\hmeas^+_{\mu\times\nu} (T\times S,\cV ) \geq c > 0$, and since $T\times S$ is free
it follows by Lemma~\ref{L-compare free} that $\lwh_{\comb ,\mu\times\nu} (T\times S,\cV ) > 0$.
As we clearly have
$\lwh_{\comb ,\mu} (T,\cU ) \geq \lwh_{\comb ,\mu\times\nu} (T\times S,\cV )$, this
establishes the lemma.
\end{proof}

We remark that, in the last paragraph of the above proof,
if $\cQ$ is of the form $\{ P\times Y : P\in\cP \}$
for some finite $\oA$-admissible Borel partition $\cP$ of $X$, then
$\hmeas_\mu (T,\cP ) = \hmeas_{\mu\times\nu} (T\times S,\cQ )$, in which case the
display shows that $\hmeas^+_\mu (T,\cU ) \geq c > 0$.

In order to reduce the general case of discrete amenable groups to
the case of countable ones, we shall need Lemma~\ref{L-min} below.
For this we need the machinery of quasi-tiling developed by
Ornstein and Weiss. The following lemma is contained in the proof
of Theorem 6 in \cite{OW}.

\begin{lemma}\label{quasi-tiling}
Given $1>\varepsilon>0$, if $F_1\subseteq F_2\subseteq \dots
\subseteq F_k$ are nonempty finite subsets of $G$ such that
$F_{i+1}$ is $(F_iF^{-1}_i, \eta_i)$-invariant,
$\eta_i|F_iF^{-1}_i|\le \frac{\varepsilon^2}{4}$ for $i=1, 2,\dots, k-1$, 
and $(1-\frac{\varepsilon}{2})^k<\varepsilon$, then for any
$(F_k, \frac{\varepsilon^2}{4})$-invariant finite nonempty subset
$F$ of $G$ there are translates $\{F_ic_{ij}\}_{i, j}$ contained
in $F$ and subsets $E_{ij}\subseteq F_ic_{ij}$ such that
$E_{ij}\cap E_{i'j'}=\emptyset$ for all $(i, j)\neq (i', j')$, 
$|E_{ij}|/|F_ic_{ij}|\ge 1-\varepsilon$ for all $(i, j)$, and
$|\bigcup_{ij}F_ic_{ij}|/|F|\ge 1-\varepsilon$.
\end{lemma}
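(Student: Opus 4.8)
The statement is the Ornstein--Weiss quasi-tiling lemma equipped with explicit invariance constants, and the plan is to extract it from the proof of Theorem~6 in \cite{OW}. One point worth flagging at the outset is that this causes no difficulty over a general discrete amenable $G$: the quasi-tiling construction lives entirely among finite subsets of $G$ and is combinatorial, so the countability hypothesis in \cite{OW} is never used --- just as was noted after Theorem~\ref{T-Rokhlin}.

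If one wants to see the argument in outline, here is how it goes. One builds the family of translates greedily, running through the shapes from the largest $F_k$ to the smallest $F_1$. Having processed $F_k,\dots,F_{i+1}$ and produced a union $U_{i+1}$ of translates contained in $F$, with uncovered remainder $R_{i+1}=F\setminus U_{i+1}$, one greedily adds translates $F_i c$ with $F_i c\subseteq R_{i+1}$ subject only to $|F_i c\cap V|\le\varepsilon|F_i|$, where $V$ is the union of the translates of $F_i$ selected so far at this stage, continuing until no admissible translate remains; one then sets $U_i=U_{i+1}\cup V$ and passes to $F_{i-1}$. Taking $E_{ij}=F_i c_{ij}\setminus(\text{the earlier stage-}i\text{ translates})$ gives $|E_{ij}|\ge(1-\varepsilon)|F_i c_{ij}|$; these sets are pairwise disjoint within a stage by construction, and across stages because translates from different stages are fully disjoint (each stage-$i$ translate lies in $R_{i+1}$), so $\{E_{ij}\}$ witnesses the $\varepsilon$-disjointness in the conclusion. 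Since every chosen translate lies in $F$, it remains to prove $|U_1|\ge(1-\varepsilon)|F|$, i.e.\ that $|R_1|$ is small.

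The crux is the geometric decay of $|R_i|$, which rests on a single-shape covering bound: when stage $i$ terminates, every translate $F_i c\subseteq R_{i+1}$ meets $V$ in more than $\varepsilon|F_i|$ points, so summing $|F_i c\setminus V|<(1-\varepsilon)|F_i|$ over all such $c$ and using $\sum_c|F_i c\cap V|\le|F_i|\,|V|$ (each element of $G$ lies in at most $|F_i|$ translates of $F_i$) forces $|V|>\varepsilon N_i$, where $N_i=|\{c:F_i c\subseteq R_{i+1}\}|$ is the number of admissible positions. Both invariance hypotheses enter here to show $N_i$ is not much smaller than $|R_{i+1}|$: the assumption that $F$ is $(F_k,\tfrac{\varepsilon^2}{4})$-invariant, hence $(F_i,\tfrac{\varepsilon^2}{4})$-invariant, controls the $F_i$-interior of $F$, while the conditions $\eta_i|F_iF_i^{-1}|\le\tfrac{\varepsilon^2}{4}$, propagated along the chain ``$F_{i+1}$ is $(F_iF_i^{-1},\eta_i)$-invariant'' so that every shape $F_j$ with $j>i$, and hence every tile in $U_{i+1}$, is highly invariant relative to $F_i$, ensure that deleting the at most $|F|/((1-\varepsilon)|F_{i+1}|)$ tiles that make up $U_{i+1}$ destroys only an $O(\varepsilon^2)$ fraction of $|F|$ worth of admissible positions. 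Thus $|R_i|<(1-\varepsilon)|R_{i+1}|+O(\varepsilon^2)|F|$ at every stage, and after the $k$ stages one finds $|R_1|<(1-\varepsilon)^k|F|+O(\varepsilon)|F|$; since $(1-\varepsilon)^k\le(1-\tfrac{\varepsilon}{2})^k<\varepsilon$ and the $O(\varepsilon)|F|$ correction is absorbed by shrinking the working parameter, as in \cite{OW}, this yields $|R_1|<\varepsilon|F|$.

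The one genuinely delicate point is exactly this error accounting. Because $(1-\tfrac{\varepsilon}{2})^k<\varepsilon$ forces $k$ to be of order $\varepsilon^{-1}\log(1/\varepsilon)$, one has to be sure that the per-stage corrections, summed over all $k$ stages, still total below $\varepsilon|F|$ --- equivalently, that after any number of tiles are removed the remaining, rather jagged, region still looks invariant enough to the next and smaller shape to admit many of its translates. That balance is precisely what the strong requirement $\eta_i|F_iF_i^{-1}|\le\tfrac{\varepsilon^2}{4}$ secures, by making each correction quadratically small in $\varepsilon$ while the stage count grows only slowly; the verification that the bookkeeping closes is carried out in detail in \cite{OW}, and for brevity we would cite it rather than reproduce it.
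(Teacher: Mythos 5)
Your proposal matches the paper's treatment: the paper offers no argument of its own for this lemma and simply notes that it is contained in the proof of Theorem~6 of \cite{OW} (with the observation that countability of $G$ is not needed), which is exactly what you do, supplemented by a sketch of the Ornstein--Weiss construction that is in its main lines the standard one (greedy $\varepsilon$-disjoint selection from the largest shape down, the maximality/counting covering bound, invariance of $F$ and of the larger shapes controlling the admissible positions).

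One caution about the sketch itself: the way you close the estimate --- summing per-stage additive errors of order $\varepsilon^2|F|$ over the $k\sim\varepsilon^{-1}\log(1/\varepsilon)$ stages and then ``shrinking the working parameter'' --- is not available here, since $\varepsilon$, the shapes, and their invariance constants are fixed by the statement. The bookkeeping in \cite{OW} closes differently: one shows the conditional decay $|R_i|\le(1-\varepsilon/2)\,|R_{i+1}|$ valid as long as $|R_{i+1}|\ge\varepsilon|F|$ (the additive $O(\varepsilon^2)|F|$ loss is absorbed multiplicatively because the remainder is still at least $\varepsilon|F|$, which is exactly what the factor $\varepsilon/2$ in the hypothesis $(1-\varepsilon/2)^k<\varepsilon$ is for), and one stops early if the remainder ever drops below $\varepsilon|F|$. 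With that replacement the sketch is the correct argument; since you defer this point to \cite{OW} anyway, the proposal stands as written.
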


The following lemma is a direct consequence of
Lemma~\ref{quasi-tiling}. For any $\varphi$ satisfying the
conditions below, by Proposition 3.22 in \cite{Ind},
$\frac{\varphi(F)}{|F|}$ converges as $F$ becomes more and more
invariant. Note that every subgroup of $G$ is amenable
\cite[Prop.\ 1.12]{Pat}.

\begin{lemma}\label{L-min}
If $\varphi$ is a real-valued function which is defined on the set
of finite subsets of $G$ and satisfies
\begin{enumerate}
\item $0\le \varphi(A)<+\infty$ and $\varphi(\emptyset)=0$,

\item $\varphi(A)\le \varphi(B)$ for all $A\subseteq B$,

\item $\varphi(As)=\varphi(A)$ for all finite $A\subseteq G$ and
$s\in G$,

\item $\varphi(A\cup B)\le \varphi(A)+\varphi(B)$ if $A\cap
B=\emptyset$,
\end{enumerate}
then the limit of $\frac{\varphi(F)}{|F|}$ as $F$ becomes more and
more invariant in $G$ is the minimum of the corresponding limits
of $\frac{\varphi(F)}{|F|}$ as $F$ becomes more and more invariant
in $H$ for $H$ running over the countable subgroups of $G$.
\end{lemma}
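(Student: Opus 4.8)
The plan is to prove this by a sandwiching argument, showing that the limit of $\varphi(F)/|F|$ over $G$ equals the infimum (hence minimum) of the analogous limits over countable subgroups $H \leq G$. Call the $G$-limit $\alpha$ and, for each countable subgroup $H$, write $\alpha_H$ for the limit of $\varphi(F)/|F|$ as $F$ becomes more and more invariant within $H$; both limits exist by Proposition~3.22 of \cite{Ind}, and each $H$ is amenable by \cite[Prop.\ 1.12]{Pat}. The easy inequality is $\alpha \leq \alpha_H$ for every $H$: if $F \subseteq H$ is $(K,\delta)$-invariant for $K \subseteq H$ then it is also $(K,\delta)$-invariant in $G$, so the net of $F$'s that are more and more invariant in $H$ is cofinal in (a subnet of) the $G$-net in the sense relevant here, and taking limits gives $\alpha \leq \alpha_H$. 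Thus $\alpha \leq \inf_H \alpha_H$, and it remains to produce a single countable $H$ with $\alpha_H \leq \alpha$, which will then force equality and show the infimum is attained.

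For the reverse direction I would fix $\varepsilon > 0$ and build $H$ by an exhaustion. First choose a tower $F_1 \subseteq F_2 \subseteq \cdots \subseteq F_k$ of finite subsets of $G$ satisfying the quasi-tiling hypotheses of Lemma~\ref{quasi-tiling} for this $\varepsilon$, chosen moreover so that $\varphi(F_i)/|F_i| < \alpha + \varepsilon$ for each $i$ (possible since the $F_i$ can be taken as invariant as we like). Now let $H$ be any countable subgroup of $G$ containing $F_k \cup F_k^{-1}$ together with enough elements to carry out the F{\o}lner arguments inside $H$; concretely, build $H$ as the union of an increasing sequence of finitely generated subgroups, at each stage adjoining elements witnessing the approximate invariance needed, so that $H$ contains arbitrarily $(F_k,\varepsilon^2/4)$-invariant finite sets. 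Given such an $F \subseteq H$, Lemma~\ref{quasi-tiling} supplies translates $F_i c_{ij} \subseteq F$ and disjoint subsets $E_{ij} \subseteq F_i c_{ij}$ with $|E_{ij}|/|F_i c_{ij}| \geq 1-\varepsilon$ and $|\bigcup_{ij} F_i c_{ij}|/|F| \geq 1-\varepsilon$. Using monotonicity~(2), right-invariance~(3), and subadditivity~(4) of $\varphi$ — exactly as in the proof of the F{\o}lner-type convergence lemma in Section~3 of \cite{Ind} — one estimates
\[
\varphi(F) \leq \sum_{i,j} \varphi(F_i c_{ij}) + \varphi\Big(F \setminus \bigcup_{i,j} F_i c_{ij}\Big)
\leq \sum_{i,j} (\alpha+\varepsilon)|F_i| + \varepsilon |F| \cdot \sup_F \tfrac{\varphi(F)}{|F|},
\]
and since $\sum_{i,j}|F_i| \leq \sum_{i,j}|F_i c_{ij}| \leq |F|$, dividing by $|F|$ and letting $F$ become more and more invariant in $H$ gives $\alpha_H \leq \alpha + \varepsilon + \varepsilon C$ for a constant $C$ bounding $\varphi(F)/|F|$ (which is bounded since $\varphi(\{e\}) < \infty$ and subadditivity gives $\varphi(F) \leq |F|\varphi(\{e\})$ after translation). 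As $\varepsilon \to 0$ we would like $\alpha_H \to \alpha$; the slightly delicate point is that $H$ depends on $\varepsilon$, so I would instead do this with a single $H$: take a decreasing sequence $\varepsilon_n \to 0$, at stage $n$ enlarge $H$ to absorb the tower and invariance witnesses for $\varepsilon_n$, and let $H$ be the (countable) union. Then for each $n$ the above estimate applies with $\varepsilon = \varepsilon_n$ inside this fixed $H$, giving $\alpha_H \leq \alpha + \varepsilon_n(1+C)$ for all $n$, hence $\alpha_H \leq \alpha$.

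Combining, $\alpha_H = \alpha$ for this particular countable subgroup $H$, while $\alpha \leq \alpha_{H'}$ for every countable subgroup $H'$; therefore $\alpha = \min_{H'} \alpha_{H'}$, which is the assertion. The main obstacle is the bookkeeping in the second paragraph: ensuring that a \emph{single} countable $H$ simultaneously contains, for every $n$, sets that are as invariant in $H$ as required by the quasi-tiling lemma at scale $\varepsilon_n$ — i.e., that the F{\o}lner condition is not lost when passing to the subgroup. This is handled by the standard diagonal construction (each finite approximate-invariance requirement involves only finitely many group elements, so can be met by adjoining countably many elements over all stages), but it is the step that needs to be carried out carefully rather than waved at; everything else is a routine application of properties (1)–(4) of $\varphi$ exactly as in \cite{Ind}.
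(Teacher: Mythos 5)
The genuine gap is in what you call the easy inequality $\alpha\le\alpha_H$. Your justification --- that an $F\subseteq H$ which is $(K,\delta)$-invariant for $K\subseteq H$ is ``also $(K,\delta)$-invariant in $G$,'' so the $H$-F{\o}lner net is cofinal in (a subnet of) the $G$-net --- does not work. The $G$-limit tests $F$ against stages $(K',\delta')$ with $K'$ an arbitrary finite subset of $G$, and as soon as $K'$ contains some $s\in G\setminus H$ we have $sF\subseteq sH$, which is disjoint from $H\supseteq F$; hence no $x\in F$ satisfies $K'x\subseteq F$, so $F\notin M(K',\delta')$ for any $\delta'<1$. Thus finite subsets of $H$, however invariant inside $H$, are never even slightly invariant with respect to elements outside $H$, and the $H$-net is not cofinal in the $G$-net; the formal containment of the $(K,\delta)$-invariant subsets of $H$ in $M(K,\delta)$ for $K\subseteq H$ only yields estimates in the useless direction. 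In fact $\alpha\le\alpha_H$ needs the same quasi-tiling input as your second paragraph, with the roles of $G$ and $H$ exchanged: inside $H$ choose a nested tower $F_1\subseteq\cdots\subseteq F_k$ with $\varphi(F_i)/|F_i|\le\alpha_H+\varepsilon$ and with $F_{i+1}$ being $(F_iF_i^{-1},\eta_i)$-invariant (these hypotheses involve only finite subsets of $H$, so the F{\o}lner property of the amenable subgroup $H$ supplies them); Lemma~\ref{quasi-tiling} then $\varepsilon$-quasi-tiles every $(F_k,\varepsilon^2/4)$-invariant finite $F\subseteq G$ --- in particular every sufficiently invariant set in the $G$-net --- by translates $F_ic_{ij}$, and your subadditivity estimate gives $\varphi(F)/|F|\le(\alpha_H+\varepsilon)/(1-\varepsilon)+\varepsilon\,\varphi(\{e\})$, whence $\alpha\le\alpha_H$ on letting $\varepsilon\to 0$. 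This is the sense in which the lemma is a ``direct consequence'' of the quasi-tiling lemma: both inequalities are tiling arguments, and neither is a cofinality triviality.

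Two smaller points in your construction of $H$ (whose overall shape is the intended one). First, $\sum_{i,j}|F_ic_{ij}|\le|F|$ is not correct as stated: only the sets $E_{ij}$ are pairwise disjoint, while the translates $F_ic_{ij}$ may overlap, so the right bound is $\sum_{i,j}|F_ic_{ij}|\le(1-\varepsilon)^{-1}\sum_{i,j}|E_{ij}|\le(1-\varepsilon)^{-1}|F|$; this changes your final estimate to $\alpha_H\le(\alpha+\varepsilon)/(1-\varepsilon)+\varepsilon C$, which is harmless. Second, the step you flag as delicate is automatic: every subgroup of $G$ is amenable \cite[Prop.\ 1.12]{Pat}, so any countable subgroup containing the countably many tiles $F^{(n)}_1,\dots,F^{(n)}_{k_n}$ already contains finite subsets as $(F^{(n)}_{k_n},\varepsilon_n^2/4)$-invariant as you wish; no extra ``invariance witnesses'' need to be adjoined, and the translating elements $c_{ij}$ need not lie in $H$ at all, since right invariance of $\varphi$ holds for every $s\in G$. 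With the corrected argument for $\alpha\le\alpha_H$ and the disjointness bookkeeping fixed, your proof goes through.
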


\begin{theorem}\label{T-IE E}
For every $k\geq 2$, a nondiagonal tuple in $X^k$ is a $\mu$-IE-tuple if and only if it
is a $\mu$-entropy tuple.
\end{theorem}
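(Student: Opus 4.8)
The plan is to prove the two implications separately, reducing both to the combinatorial cover quantities $\lwh_{\comb,\mu}(\cU)$ and $\uph_{\comb,\mu}(\cU)$ and their relation to $\mu$-independence density via Lemma~\ref{L-positive}. For the easier direction, suppose $\ox = (x_1,\dots,x_k)$ is a nondiagonal $\mu$-IE-tuple; we want to show it is a $\mu$-entropy tuple. Let $U_1,\dots,U_l$ be pairwise disjoint Borel neighbourhoods of the distinct points among $x_1,\dots,x_k$, and let $\cP$ be a Borel partition refining $\{U_1^\comp,\dots,U_l^\comp\}$. Regrouping the $U_i$ so that each point has its own neighbourhood, one reduces to the case where $\ox$ has distinct coordinates. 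Passing to a product neighbourhood inside $U_1\times\cdots\times U_l$ and shrinking to closed sets $A_i$ with the complements pairwise disjoint (their union being a refinement of the $U_i^\comp$), positivity of $\upind_\mu(\oA)$ together with Lemma~\ref{L-positive} gives $\uph_{\comb,\mu}(\cU)>0$, hence $h_\mu^-(\cU)>0$ by Lemma~\ref{L-positive} again (or Lemma~\ref{L-local}), hence $h_\mu^+(\cU)>0$ since $h_\mu^-(\cU)\le h_\mu^+(\cU)$; as $\cP$ refines $\cU$ this forces $h_\mu(\cP)>0$. So $\ox$ is a $\mu$-entropy tuple.

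The substantive direction is the converse: a nondiagonal $\mu$-entropy tuple $\ox$ is a $\mu$-IE-tuple. Let $U_1\times\cdots\times U_k$ be a product neighbourhood of $\ox$; after merging coordinates with coinciding points and shrinking, we may assume the $U_i$ are pairwise disjoint. Choose closed sets $A_i\subseteq U_i$ with $x_i$ in the interior; then every $\oA$-admissible partition refines $\{U_1^\comp,\dots,U_k^\comp\}$ (up to the coarsening argument used in Lemma~\ref{L-measure}), so by the $\mu$-entropy-tuple hypothesis $h_\mu(\cP)>0$ for every $\oA$-admissible finite Borel partition $\cP$. The goal is to conclude $\lwh_{\comb,\mu}(\cU)>0$ for $\cU=\{A_1^\comp,\dots,A_k^\comp\}$, which by Lemma~\ref{L-positive} gives $\lwind_\mu(\oA)>0$ and hence $\upind_\mu(\oA)>0$, so that by Proposition~\ref{P-IE basic}(1) there is a $\mu$-IE-tuple in $\overline{A_1}\times\cdots\times\overline{A_k}\subseteq U_1\times\cdots\times U_k$; since $\ox$ is a $\mu$-entropy tuple and these are closed conditions, running this over a neighbourhood basis of $\ox$ and using that $\IE^\mu_k(X)$ is closed (Proposition~\ref{P-IE basic}(4)) will place $\ox$ itself in $\IE^\mu_k(X)$.

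The heart of the matter is thus the implication ``$h_\mu(\cP)>0$ for every $\oA$-admissible $\cP$'' $\Longrightarrow$ ``$\lwh_{\comb,\mu}(\cU)>0$'', which is exactly Lemma~\ref{L-positive inf} — but that lemma is stated only for $X$ metrizable and $G$ countably infinite. So the remaining work is a two-stage reduction. First, to reduce from general amenable $G$ to countable $G$: apply Lemma~\ref{L-min} to the function $\varphi(F)=\ln N_\delta(\cU^F)$ (which satisfies conditions (1)--(4) — monotonicity, translation invariance, and subadditivity are routine), so that $\lwh_{\comb,\mu}(\cU,\delta)$ equals the minimum over countable subgroups $H\le G$ of the corresponding quantity for the restricted $H$-action; and simultaneously check that the $\mu$-entropy-tuple hypothesis descends to some such $H$ — here one uses that $h_\mu(\cP)$ for the $G$-action dominates (indeed, for a fixed $\cP$, can be computed as an infimum over) $h_\mu$ for $H$-actions, so that $\oA$-admissibility and positive $H$-entropy can be arranged on a countable subgroup. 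Second, to reduce from general (compact Hausdorff) $X$ to metrizable $X$: pass to the factor of $X$ generated by a countable subalgebra of $C(X)$ separating enough points to contain the $A_i$ as pullbacks of closed sets with pairwise disjoint interiors; use Lemma~\ref{L-hlifting} (and the analogue $H_\mu(\pi^{-1}\cU)=H_{\pi_*\mu}(\cU)$, Lemma~\ref{L-Hlifting}) to transport both the entropy-tuple hypothesis and the combinatorial cover quantities between $X$ and the metrizable factor. The main obstacle is bookkeeping: making sure the admissibility conditions ($P_i\cap C_i=\emptyset$, pairwise disjointness) survive both reductions, and that ``positive entropy for every admissible partition'' — a quantifier over partitions — is correctly handled when restricting to a subgroup, where the relevant infimum is over partitions and Theorem~4.6 of \cite{LVRA} (uniform positivity of the infimum) as invoked inside Lemma~\ref{L-positive inf} must still apply. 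Once both reductions are in place, Lemma~\ref{L-positive inf} closes the argument.
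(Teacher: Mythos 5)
Your architecture coincides with the paper's up to the reductions: the easy direction via Lemma~\ref{L-positive}, and the core case ($X$ metrizable, $G$ countably infinite) via Lemmas~\ref{L-positive inf} and \ref{L-positive}. The genuine gap is in your reduction from general $G$ to countable $G$. You apply Lemma~\ref{L-min} to $\varphi(F)=\ln N_\delta(\cU^F)$ and assert that conditions (1)--(4) are routine, but the subadditivity condition (4) fails for this function: $N_\delta$ is a minimum over sets $D$ with $\mu(D)\geq 1-\delta$, and if $D_A$ and $D_B$ are optimal for disjoint $A$ and $B$, the product bound $N_{D_A\cap D_B}(\cU^{A\cup B})\leq N_{D_A}(\cU^A)\,N_{D_B}(\cU^B)$ only controls $N_{2\delta}(\cU^{A\cup B})$, since $\mu(D_A\cap D_B)\geq 1-2\delta$. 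This is exactly the phenomenon the paper flags for $\varphi_{\oA,\delta}$ and $\varphi'_{\oA,\delta}$, and it is why $\lwh_{\comb,\mu}(\cU,\delta)$ and $\uph_{\comb,\mu}(\cU,\delta)$ are only defined as a limit infimum and limit supremum: the limit whose existence Lemma~\ref{L-min} presupposes may not exist. So your claimed identity ``$\lwh_{\comb,\mu}(\cU,\delta)$ for $G$ equals the minimum over countable subgroups $H$ of the corresponding $H$-quantity'' is unsupported, and with it the whole uncountable-$G$ step.

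The gap is repairable, and the paper's own route shows how: apply Lemma~\ref{L-min} only to entropy-type quantities. The function $F\mapsto \Hmeas_\mu(\cP^F)$ does satisfy (1)--(4), so positivity of $h_\mu(\cP)$ for the $G$-action passes to every countable subgroup, i.e.\ the entropy-tuple hypothesis descends (note you need it for \emph{every} countable $H$, not ``some''; also your phrase that the $G$-entropy ``dominates'' the $H$-entropies has the inequality backwards, though your parenthetical infimum statement is the usable one). The paper then finishes by verifying directly from the definition of independence density that a tuple which is a $\mu$-IE-tuple for $(X,H)$ for every countable $H$ is a $\mu$-IE-tuple for $(X,G)$ --- no $N_\delta$ quantity is needed. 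Alternatively, within your scheme you could apply Lemma~\ref{L-min} to $F\mapsto \Hmeas_\mu(\cU^F)$, which is genuinely subadditive (join partitions refining $\cU^A$ and $\cU^B$), to get $h^-_\mu(\cU)>0$ for the $G$-action from its positivity for countably infinite subgroups, and then invoke Lemma~\ref{L-positive} for the $G$-action to obtain $\upind_\mu(\oA)>0$. Two smaller points: your metrizability reduction (a single separable $G$-invariant factor, transporting $N_\delta$ upward) differs from the paper, which writes $X$ as a projective limit of metrizable factors and uses Proposition~\ref{P-IE basic}(5) together with the fact that images of entropy tuples are entropy tuples; your version is workable provided you justify $N_\delta^X(\pi^{-1}\cV^F)\geq N_\delta^Y(\cV^F)$ (a covering of $D\subseteq X$ by pullbacks covers a subset of $Y$ of measure at least $\mu(D)$). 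Finally, you omit the case of finite $G$, where Lemma~\ref{L-positive inf} does not apply and one checks directly, as the paper does, that both notions reduce to nondiagonal tuples in $\supp(\mu)^k$.
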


\begin{proof}
The fact that a nondiagonal $\mu$-IE-tuple is a $\mu$-entropy
tuple follows from Lemma~\ref{L-positive}. In the case that $X$ is
metrizable and $G$ is countably infinite, Lemmas~\ref{L-positive
inf} and \ref{L-positive} combine to show that a $\mu$-entropy
tuple is a $\mu$-IE-tuple. Suppose now that $X$ is arbitrary. When
$G$ is finite, it is easily seen that the nondiagonal
$\mu$-IE-tuples and $\mu$-entropy tuples are both precisely the
nondiagonal tuples in $\supp (\mu )^k$. When $G$ is countably
infinite, write $X$ as a projective limit of a net of metrizable
spaces $X_j$ equipped with compatible $G$-actions and induced
Borel probability measures $\mu_j$. Then by Proposition~\ref{P-IE
basic}(5) the $\mu$-IE-tuples are the projective limits of the
$\mu_j$-IE-tuples. Since the image of a measure entropy tuple
under a factor map is clearly again a measure entropy tuple as
long as its image is nondiagonal, we conclude from the metrizable
case that every $\mu$-entropy tuple is a $\mu$-IE-tuple. Finally,
when $G$ is uncountably infinite, it follows from Lemma~\ref{L-min}
that the set of $\mu$-entropy tuples for $(X,G)$ is equal to the
intersection over the countable
subgroups $G'$ of $G$ of the sets consisting of the $\mu$-entropy
tuples for $(X,G' )$. It is also easily verified that the set of
$\mu$-IE-tuples for $(X,G)$ contains the intersection over the
countable subgroups $G'$ of $G$ of the sets consisting of
the $\mu$-IE-entropy tuples for $(X,G' )$. We thus obtain the
result.
\end{proof}

To prove the product formula for $\mu$-IE-tuples we will use
the Pinsker von Neumann algebra $\mathfrak{P}_X$, i.e., the $G$-invariant
von Neumann subalgebra of $\LX$ corresponding to the Pinsker $\sigma$-algebra
(see the beginning of the next section).
Denote by $E_X$ the conditional expectation $L^{\infty}(X, \mu)\rightarrow
\mathfrak{P}_X$. 
The following lemma appeared as Lemma~4.3 in \cite{LVRA}. Note
that the assumptions in \cite{LVRA} that $X$ is metrizable and $G=\Zb$ 
are not needed here.

\begin{lemma}\label{L-Pinsker}
Let $\cU=\{U_1, \dots, U_k\}$ be a Borel cover of $X$. 
Then $\prod^k_{i=1}E_X(\chi_{U^c_i})\neq 0$ if and only if $\hmeas_{\mu}(\cP)>0$ for 
every finite Borel partition $\cP$ finer than $\cU$ as a cover.
\end{lemma}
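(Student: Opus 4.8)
The plan is to prove the contrapositive biconditional: $\prod_{i=1}^k E_X(\chi_{U_i^c}) = 0$ $\mu$-almost everywhere if and only if there is a finite Borel partition $\cP$ finer than $\cU$ as a cover with $\hmeas_\mu (\cP ) = 0$. The bridge between the two sides is the standard fact, valid for any amenable $G$, that a finite Borel partition $\cP$ satisfies $\hmeas_\mu (\cP ) = 0$ precisely when $\cP$ is measurable modulo $\mu$-null sets with respect to the Pinsker $\sigma$-algebra $\Pi$, i.e.\ the $\sigma$-algebra for which $\mathfrak{P}_X$ consists of the $\Pi$-measurable functions. One implication is trivial since $\Pi$ has zero entropy; for the other, subadditivity of the entropy rate together with $G$-invariance of $\mu$ gives $\hmeas_\mu (\cP^F ) \le |F| \, \hmeas_\mu (\cP ) = 0$ for every finite $F \subseteq G$, so the $G$-invariant $\sigma$-algebra generated by $\cP$ has zero entropy and is therefore contained in $\Pi$ by maximality. (This is the only point at which the remark that metrizability and $G=\Zb$ are not needed comes into play.)

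For ``$\Leftarrow$'', suppose $\cP = \{ P_1 , \dots , P_m \}$ is finer than $\cU$, say $P_j \subseteq U_{i(j)}$, and $\hmeas_\mu (\cP ) = 0$. Then each $\chi_{P_j}$ lies in $\mathfrak{P}_X$, so by the module property of the conditional expectation and the fact that $P_j \cap U_{i(j)}^c = \emptyset$ we get $\chi_{P_j} \, E_X (\chi_{U_{i(j)}^c}) = E_X (\chi_{P_j} \chi_{U_{i(j)}^c}) = 0$; hence $E_X (\chi_{U_{i(j)}^c}) = 0$ a.e.\ on $P_j$, and therefore $\prod_i E_X (\chi_{U_i^c}) = 0$ a.e.\ on $P_j$. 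Since the $P_j$ partition $X$, the product vanishes a.e.

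For ``$\Rightarrow$'', set $g_i = E_X (\chi_{U_i^c}) = \mu (U_i^c \,|\, \Pi )$ and $B_i = \{ g_i = 0 \} \in \Pi$. As each $g_i \ge 0$, the hypothesis $\prod_i g_i = 0$ a.e.\ forces $\bigcup_i B_i = X$ modulo $\mu$, and disjointifying the $B_i$ gives a $\Pi$-measurable partition $\{ P_1 , \dots , P_k \}$ of $X$ mod $\mu$ with $P_i \subseteq B_i$. As $P_i$ is $\Pi$-measurable, $\mu (U_i^c \cap P_i ) = \int_{P_i} g_i \, d\mu = 0$, so $P_i \subseteq U_i$ up to a $\mu$-null set. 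Reassigning the points of the $\mu$-null set $(X \setminus \bigcup_i P_i ) \cup \bigcup_i (P_i \setminus U_i )$ to atoms which genuinely contain them (possible since $\cU$ covers $X$) yields an honest finite Borel partition $\cP$ finer than $\cU$ as a cover that differs from $\{ P_1 , \dots , P_k \}$ only on a null set, so $\hmeas_\mu (\cP ) = 0$. Taking the contrapositive finishes the proof.

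The one step I expect to need genuine care is the preliminary partition/$\Pi$-measurability equivalence in the general amenable setting; everything else is routine manipulation with conditional expectations and $\mu$-null sets.
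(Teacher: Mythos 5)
Your proof is correct. Note that the paper itself gives no argument for this lemma: it simply cites Lemma~4.3 of \cite{LVRA} and remarks that the metrizability and $G=\Zb$ hypotheses there can be dropped. What you have written is exactly the standard conditional-expectation proof that lies behind that citation, and it does go through in the stated generality, so it substantiates the paper's remark rather than diverging from it. The two hinges are the ones you identify: (i) the ``bridge'' that a finite Borel partition has $\hmeas_\mu(\cP)=0$ if and only if it is $\mathfrak{P}_X$-measurable modulo null sets, which is precisely the combination of the paper's definition of the Pinsker $\sigma$-algebra (generated by the zero-entropy finite partitions, so one direction is immediate) with the standard fact, quoted in Section~3 from \cite{ETJ} and valid for arbitrary amenable $G$ and arbitrary $X$, that this $\sigma$-algebra defines a zero-entropy factor; and (ii) the module property $E_X(fg)=fE_X(g)$ for $f\in\mathfrak{P}_X$, which gives the ``$\Leftarrow$'' direction, while the sets $B_i=\{E_X(\chi_{U_i^c})=0\}$ give the ``$\Rightarrow$'' direction. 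Two small points worth making explicit in a final write-up: choose Borel ($\mathfrak{P}_X$-measurable) representatives for the functions $E_X(\chi_{U_i^c})$ so that the $B_i$, and hence the disjointified $P_i$, are genuinely Borel; and perform the reassignment of the exceptional null set measurably, e.g.\ send each bad point to the atom indexed by the least $i$ with $x\in U_i$, so that the corrected partition is Borel, each atom is literally contained in some $U_i$, and it agrees with the $\mathfrak{P}_X$-measurable partition off a null set, whence its entropy is still zero. With those cosmetic fixes the argument is complete.
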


Combining Lemma~\ref{L-Pinsker}, Proposition~\ref{P-IE basic}(3), and
Theorem~\ref{T-IE E}, we obtain the following charaterization
of $\mu$-IE tuples. 

\begin{lemma}\label{L-measure IE char}
A tuple $\ox=(x_1, \dots, x_k)\in X^k$ is a $\mu$-IE tuple if and only
if for any Borel neighbourhoods $U_1 , \dots , U_k$ of $x_1 , \dots , x_k$, respectively,
one has $\prod^k_{i=1}E_X(\chi_{U_i})\neq 0$.
\end{lemma}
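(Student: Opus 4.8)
The plan is to deduce Lemma~\ref{L-measure IE char} directly from the three ingredients cited: Lemma~\ref{L-Pinsker}, Theorem~\ref{T-IE E}, and the basic properties of $\mu$-IE-tuples in Proposition~\ref{P-IE basic}. First I would dispose of the diagonal case: if $\ox=(x,\dots ,x)$ is diagonal, then on the one hand $\ox$ is a $\mu$-IE-tuple precisely when $x\in\supp(\mu)$ (this follows from Proposition~\ref{P-IE basic}(1) and (3), since a diagonal tuple of neighbourhoods of $x$ has positive upper $\mu$-independence density as soon as a single neighbourhood of $x$ does, i.e.\ as soon as $x\in\supp(\mu)=\IE_1^\mu(X)$); on the other hand, for any Borel neighbourhoods $U_1,\dots ,U_k$ of $x$ the product $\prod_i E_X(\chi_{U_i})$ is nonzero iff $E_X(\chi_U)\neq 0$ for $U=\bigcap_i U_i$ a neighbourhood of $x$, and since $E_X$ is a faithful conditional expectation on $L^\infty$, this happens iff $\mu(U)>0$, i.e.\ again iff $x\in\supp(\mu)$. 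So the two conditions agree in the diagonal case.

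Next I would treat the nondiagonal case. By Theorem~\ref{T-IE E}, a nondiagonal $\ox=(x_1,\dots ,x_k)$ is a $\mu$-IE-tuple iff it is a $\mu$-entropy tuple, i.e.\ iff whenever $V_1,\dots ,V_l$ are pairwise disjoint Borel neighbourhoods of the distinct points among $x_1,\dots ,x_k$, every Borel partition refining $\{V_1^\comp,\dots ,V_l^\comp\}$ has positive measure entropy. By Lemma~\ref{L-Pinsker} applied to the cover $\{V_1,\dots ,V_l\}$, this last condition is equivalent to $\prod_{j=1}^l E_X(\chi_{V_j^\comp})\neq 0$ for every such choice of pairwise disjoint neighbourhoods. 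The remaining task is purely bookkeeping: to pass from ``$\prod_{j=1}^l E_X(\chi_{V_j^\comp})\neq 0$ for all pairwise disjoint neighbourhoods $V_j$ of the distinct points'' to the cleaner statement ``$\prod_{i=1}^k E_X(\chi_{U_i})\neq 0$ for all neighbourhoods $U_i$ of $x_i$''. Note the sign flip: the lemma statement uses $\chi_{U_i}$, not $\chi_{U_i^\comp}$; writing $U_i^\comp = V_j$ where $x_i\notin V_j$, one sees that the $U_i$ here play the role of the \emph{complements} of the $V_j$, so the condition $\prod_i E_X(\chi_{U_i})\neq 0$ with $U_i$ a neighbourhood of $x_i$ matches $\prod_j E_X(\chi_{V_j^\comp})\neq 0$ once we relabel and account for repeated points.

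Concretely, I would argue as follows. Given neighbourhoods $U_i$ of $x_i$, we want $\prod_i E_X(\chi_{U_i})\neq 0$; by shrinking we may assume the $U_i$ are as small as we like, so it suffices to show this for a cofinal family of choices. Whenever $x_i=x_j$ we may take $U_i=U_j$, and since $E_X(\chi_{U_i})$ is a positive element whose square is comparable to itself only up to the multiplication in the abelian algebra $\mathfrak P_X$, the product over $i$ in a repeated class equals a positive power of a single $E_X(\chi_{U_i})$, which is nonzero iff $E_X(\chi_{U_i})\neq 0$; thus the product over all $i$ is nonzero iff it is nonzero for one representative per distinct point, and we reduce to the case where $x_1,\dots ,x_k$ are distinct and pairwise disjoint $U_i$'s suffice. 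Then $V_j:=U_j^\comp$ are Borel neighbourhoods of the points $\{x_i:i\neq j\}$, hence of ``the distinct points in a sublist'' — more precisely, for the $\mu$-entropy-tuple formulation one wants pairwise \emph{disjoint} neighbourhoods, so I would instead run the equivalence through Lemma~\ref{L-Pinsker} applied directly to the cover $\cU=\{U_1,\dots ,U_k\}$ of $X$ (assuming, as we may by shrinking, that $\bigcup_i U_i$ need not be all of $X$ — but Lemma~\ref{L-Pinsker} requires $\cU$ to be a cover, so one should instead take the cover $\{U_1,\dots ,U_k,W\}$ for a suitable extra open set $W$ disjoint from a smaller neighbourhood of each $x_i$, or observe that adding the complement of a small closed neighbourhood does not affect the vanishing of the relevant product since $E_X(\chi_W)$ for $W$ a neighbourhood of no $x_i$ can be made to not enter the product defining entropy tuples). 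The honest route, and the one I would commit to, is: $\ox$ nondiagonal is a $\mu$-IE-tuple $\iff$ ($\mu$-entropy tuple, by Theorem~\ref{T-IE E}) $\iff$ ($\hmeas_\mu(\cP)>0$ for every $\cP$ refining $\{V_1^\comp,\dots ,V_l^\comp\}$, all pairwise disjoint $V_j$) $\iff$ ($\prod_j E_X(\chi_{V_j^\comp})\neq 0$ for all such $V_j$, by Lemma~\ref{L-Pinsker}) $\iff$ ($\prod_i E_X(\chi_{U_i})\neq 0$ for all neighbourhoods $U_i$ of $x_i$), where the last step is the relabelling $U_i\leftrightarrow V_j^\comp$ together with the faithfulness of $E_X$ to absorb repeated points and the freedom to shrink neighbourhoods. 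The main obstacle is precisely this last bookkeeping step — reconciling the ``pairwise disjoint neighbourhoods of distinct points'' quantifier in the $\mu$-entropy-tuple definition with the ``arbitrary neighbourhoods of each $x_i$'' quantifier in the lemma, and handling repeated coordinates — but it is routine once one uses that $E_X$ is a faithful normal conditional expectation onto an abelian von Neumann subalgebra.
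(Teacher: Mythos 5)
Your overall strategy is exactly the paper's (the paper proves this lemma by simply combining Lemma~\ref{L-Pinsker}, Proposition~\ref{P-IE basic}(3), and Theorem~\ref{T-IE E}), and your handling of the diagonal case and of repeated coordinates via positivity and faithfulness of $E_X$ is sound. However, the step you yourself flag as the main obstacle --- translating the $\mu$-entropy-tuple condition into the product condition --- is carried out incorrectly. Lemma~\ref{L-Pinsker} must be applied to the cover $\{V_1^\comp,\dots ,V_l^\comp\}$ (the family $\{V_1,\dots ,V_l\}$ of pairwise disjoint small neighbourhoods is not a cover of $X$), and for that cover the product in Lemma~\ref{L-Pinsker} runs over the complements of the cover elements, i.e.\ it reads $\prod_{j=1}^l E_X(\chi_{V_j})\neq 0$, not $\prod_{j=1}^l E_X(\chi_{V_j^\comp})\neq 0$ as in your chain of equivalences. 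The condition you assert is in fact essentially vacuous: once the $V_j$ are small enough that $\sum_j \mu (V_j)<1$, the set $\bigcap_j V_j^\comp$ has positive measure, so $\prod_j E_X(\chi_{V_j^\comp})\geq E_X\big(\chi_{\bigcap_j V_j^\comp}\big)^l\neq 0$ regardless of whether $\ox$ is an entropy tuple; hence the displayed equivalence is false and your final chain does not prove the lemma. The subsequent ``sign flip'' discussion, the relabelling $U_i\leftrightarrow V_j^\comp$, and the worry about adjoining an auxiliary set $W$ to make $\{U_1,\dots ,U_k\}$ a cover all stem from this same mix-up: with the correct application of Lemma~\ref{L-Pinsker} the lemma's $U_i$ correspond to the small disjoint neighbourhoods $V_{j(i)}$ themselves, and there is no sign flip to reconcile.

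Once this is repaired the argument goes through as you intend. For nondiagonal $\ox$ with distinct entries $y_1,\dots ,y_l$, Theorem~\ref{T-IE E} together with Lemma~\ref{L-Pinsker} applied to $\{V_1^\comp,\dots ,V_l^\comp\}$ shows that $\ox$ is a $\mu$-IE-tuple if and only if $\prod_{j=1}^l E_X(\chi_{V_j})\neq 0$ for all pairwise disjoint Borel neighbourhoods $V_j$ of the $y_j$. Given arbitrary Borel neighbourhoods $U_i$ of the $x_i$, Hausdorffness lets you choose pairwise disjoint Borel neighbourhoods $V_j\subseteq \bigcap_{i\colon x_i=y_j}U_i$, and your pointwise argument in the abelian von Neumann algebra $\mathfrak{P}_X$ (positivity of $E_X$ and the fact that a product of nonnegative functions is nonzero wherever each factor is) handles both the multiplicities and the passage back to the arbitrary $U_i$; the diagonal case is Proposition~\ref{P-IE basic}(3) together with faithfulness of $E_X$, as you say.
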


\begin{theorem}\label{T-IE product}
Let $(Y,G)$ be another topological $G$-system and $\nu$ a $G$-invariant
Borel probability measure on $Y$. Then for all $k\geq 1$ we have
$\IE_{\mu\times\nu}^k (X\times Y) = \IE_\mu^k (X) \times \IE_\nu^k (Y)$.
\end{theorem}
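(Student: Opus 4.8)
The plan is to derive the theorem from the characterization of $\mu$-IE-tuples in Lemma~\ref{L-measure IE char}, combined with the identification of the Pinsker von Neumann algebra of a product,
\[ \mathfrak{P}_{X\times Y}=\mathfrak{P}_X\otimes\mathfrak{P}_Y , \]
equivalently $E_{X\times Y}(f\otimes g)=E_X(f)\otimes E_Y(g)$ for $f\in L^\infty(X,\mu)$ and $g\in L^\infty(Y,\nu)$ (here $X\times Y$ carries the product $G$-action, and $\mu\times\nu$ is $G$-invariant). The case $k=1$ is already covered by Proposition~\ref{P-IE basic}(3) together with $\supp(\mu\times\nu)=\supp(\mu)\times\supp(\nu)$, so the content is $k\geq 2$, and even $k=1$ will fall out of the argument below.

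Granting the Pinsker identity, the rest is bookkeeping. For the inclusion $\IE^k_{\mu\times\nu}(X\times Y)\subseteq\IE^k_\mu(X)\times\IE^k_\nu(Y)$ I would simply invoke Proposition~\ref{P-IE basic}(5): the coordinate projections $X\times Y\to X$ and $X\times Y\to Y$ are topological $G$-factor maps pushing $\mu\times\nu$ forward to $\mu$ and to $\nu$, hence they carry $\mu\times\nu$-IE-tuples to $\mu$-IE-tuples and $\nu$-IE-tuples. For the reverse inclusion, take $\ox=(x_1,\dots,x_k)\in\IE^k_\mu(X)$ and $\oy=(y_1,\dots,y_k)\in\IE^k_\nu(Y)$, and write $(\ox,\oy)=((x_1,y_1),\dots,(x_k,y_k))$. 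Since $E_{X\times Y}$ is positive and a product of nonnegative elements of a commutative von Neumann algebra is monotone in each factor, it is enough to verify the criterion of Lemma~\ref{L-measure IE char} at $(\ox,\oy)$ on basic product neighbourhoods $U_i\times V_i$. For those,
\[ \prod_{i=1}^{k}E_{X\times Y}(\chi_{U_i\times V_i})=\prod_{i=1}^{k}\big(E_X(\chi_{U_i})\otimes E_Y(\chi_{V_i})\big)=\Big(\prod_{i=1}^{k}E_X(\chi_{U_i})\Big)\otimes\Big(\prod_{i=1}^{k}E_Y(\chi_{V_i})\Big), \]
and the two tensor factors on the right are nonzero by Lemma~\ref{L-measure IE char} applied to $\ox$ and to $\oy$ respectively; a tensor product of two nonzero nonnegative functions is nonzero in $L^\infty(X\times Y,\mu\times\nu)$. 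Hence $(\ox,\oy)\in\IE^k_{\mu\times\nu}(X\times Y)$.

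It remains to establish $\mathfrak{P}_{X\times Y}=\mathfrak{P}_X\otimes\mathfrak{P}_Y$ (reducing, as in the proof of Theorem~\ref{T-IE E}, to metrizable $X$ and countable $G$). The inclusion $\mathfrak{P}_X\otimes\mathfrak{P}_Y\subseteq\mathfrak{P}_{X\times Y}$ is the routine half: the $G$-invariant $\sigma$-algebra $\mathfrak{P}_X\otimes\mathfrak{P}_Y$ has zero entropy, since any finite partition measurable with respect to it is approximated in conditional entropy by joins $\cQ\vee\cR$ of a partition $\cQ$ pulled back from $\mathfrak{P}_X$ and a partition $\cR$ pulled back from $\mathfrak{P}_Y$, for which $h_{\mu\times\nu}(\cQ\vee\cR)\leq h_{\mu\times\nu}(\cQ)+h_{\mu\times\nu}(\cR)=h_\mu(\cQ)+h_\nu(\cR)=0$; maximality of the Pinsker $\sigma$-algebra gives the inclusion. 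For the reverse inclusion I would argue symmetrically on the two coordinates. Since relative entropy over a factor is dominated by absolute entropy, $\mathfrak{P}_{X\times Y}$ has zero entropy relative to the $Y$-factor, hence lies in the relative Pinsker $\sigma$-algebra of $X\times Y$ over that factor; the latter equals $\mathfrak{P}_X\otimes(\text{full Borel }\sigma\text{-algebra of }Y)$ by the Rudolph--Weiss relative Pinsker formula (Theorem~4.10 of \cite{EMAGA}), which in the generality needed here is reached exactly as in the proof of Lemma~\ref{L-positive inf}: use a free weakly mixing auxiliary action, pass to the ergodic decomposition, and apply the Connes--Feldman--Weiss orbit equivalence to reduce to the ergodic $\Zb$-case. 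By symmetry $\mathfrak{P}_{X\times Y}$ also lies in $(\text{full Borel }\sigma\text{-algebra of }X)\otimes\mathfrak{P}_Y$, and intersecting these two — a function measurable both ways is fixed by the two commuting conditional expectations $E^{\mathfrak{P}_X}\otimes\id$ and $\id\otimes E^{\mathfrak{P}_Y}$, hence by their composite onto $\mathfrak{P}_X\otimes\mathfrak{P}_Y$ — yields $\mathfrak{P}_{X\times Y}\subseteq\mathfrak{P}_X\otimes\mathfrak{P}_Y$.

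The main obstacle is this last inclusion, i.e.\ the relative Pinsker formula for amenable group actions and its reduction to the integer case; once the Pinsker von Neumann algebra of the product is pinned down, the IE-tuple side is formal. This is precisely the detour through measure entropy tuples (via Lemma~\ref{L-measure IE char} and Theorem~\ref{T-IE E}) that, as remarked in the introduction, the measure-theoretic product formula seems to demand, unlike the purely combinatorial topological one.
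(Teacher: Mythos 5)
Your treatment of the IE-tuple part is exactly the paper's: one inclusion from Proposition~\ref{P-IE basic}(5), the other from the identity $E_{X\times Y}(f\otimes g)=E_X(f)\otimes E_Y(g)$ combined with Lemma~\ref{L-measure IE char} (your reduction to basic product neighbourhoods via positivity and monotonicity of the expectations is correct and is implicit in the paper), and the passage from metrizable $X,Y$ and countable $G$ to the general case by the projective-limit/countable-subgroup argument of Theorem~\ref{T-IE E}. Where you diverge is that the paper does not prove $\mathfrak{P}_{X\times Y}=\mathfrak{P}_X\otimes\mathfrak{P}_Y$ at all: it cites Danilenko \cite[Theorem 0.4(3)]{Dan} (and \cite[Theorem 4]{ETWP} for the ergodic case), whereas you attempt to derive it in-house.

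In that derivation there is a genuine gap at the crucial step. You assert that the relative Pinsker $\sigma$-algebra of $X\times Y$ over the $Y$-factor equals $\mathfrak{P}_X\otimes\sB_Y$ ``by Theorem~4.10 of \cite{EMAGA}, reached exactly as in the proof of Lemma~\ref{L-positive inf}.'' But in Lemma~\ref{L-positive inf} the second factor is not an arbitrary system: it is a purpose-built \emph{free, weakly mixing} auxiliary action $S$, and both properties are used essentially — weak mixing forces the ergodic decomposition of $X\times Y$ to be indexed by that of $X$ (so that Theorem~4.10 of \cite{EMAGA}, which is invoked there only for the a.e.\ \emph{ergodic} components and needs freeness together with a $\sY$-measurable orbit change to set up the Rudolph--Weiss orbit-equivalence machinery, becomes applicable fibrewise). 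For the $Y$ of Theorem~\ref{T-IE product} neither freeness nor any alignment of ergodic decompositions is available, the ergodic components of $X\times Y$ are not products of components, and the relative Pinsker formula $\Pi(X\times Y\,|\,\sY)=\mathfrak{P}_X\otimes\sB_Y$ in this non-ergodic, non-free generality is precisely the hard content of the cited Danilenko result; it does not follow by repeating the proof of Lemma~\ref{L-positive inf} verbatim (one must at least introduce a further free weakly mixing auxiliary system and rework the relative Pinsker analysis over the enlarged factor, which is essentially Danilenko's orbital argument). The remaining pieces of your sketch — the inclusion $\mathfrak{P}_X\otimes\mathfrak{P}_Y\subseteq\mathfrak{P}_{X\times Y}$ by subadditivity, and the identification of $(\mathfrak{P}_X\otimes\sB_Y)\cap(\sB_X\otimes\mathfrak{P}_Y)$ with $\mathfrak{P}_X\otimes\mathfrak{P}_Y$ via the commuting conditional expectations — are fine. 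So the proof becomes complete (and coincides with the paper's) if you simply cite the product formula for Pinsker algebras as known; as written, the justification you offer for it is not yet a proof.
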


\begin{proof} 
By Proposition~\ref{P-IE basic}(5) we have 
$\IE_{\mu\times\nu}^k (X\times Y) \subseteq 
\IE_\mu^k (X) \times \IE_\nu^k (Y)$. Thus we just need
to prove 
$\IE_\mu^k (X) \times \IE_\nu^k (Y)\subseteq \IE_{\mu\times\nu}^k (X\times Y)$.

Assume first that both $X$ and $Y$ are metrizable and
$G$ is countable. Then $\mathfrak{P}_{X\times Y}=\mathfrak{P}_X\otimes
\mathfrak{P}_{Y}$ \cite[Theorem 0.4(3)]{Dan} (see also \cite[Theorem 4]{ETWP}
for the ergodic case) and hence $E_{X\times Y}(f\otimes g)=
E_X(f)\otimes E_Y(g)$ for any $f\in L^{\infty}(X, \mu)$ and 
$g\in L^{\infty}(Y, \nu)$. Now the desired inclusion follows from 
Lemma~\ref{L-measure IE char}. 

The proof for the general case follows the argument in the proof of
Theorem~\ref{T-IE E}. 
\end{proof}

In the case $G=\Zb$, the product formula for measure entropy pairs was 
established in \cite{mu}, while for general measure entropy tuples 
it is implicit in Theorem~8.1 of \cite{LVRA}, whose
proof we have essentially followed here granted the general tensor product formula
for Pinsker von Neumann algebras. Notice that our 
IE-tuple viewpoint results in a particularly simple formula.


\section{Combinatorial independence and the Pinsker algebra}\label{S-Pinsker}

Continuing within the realm of entropy, we will assume throughout the
section that $(X,G)$ is a topological dynamical system with $G$ amenable
and $\mu$ is a $G$-invariant Borel probability measure on $X$.
Recall that the {\it Pinsker $\sigma$-algebra} is the $G$-invariant
$\sigma$-subalge-\linebreak bra of $\sB$ generated by all finite Borel partitions
of $X$ with zero entropy (or, equivalently, all two-element Borel partitions
of $X$ with zero entropy), and it defines the largest factor of the system with
zero entropy (see Chapter~18 of \cite{ETJ}).
The corresponding $G$-invariant von Neumann subalgebra of $\LX$
will be denoted by $\mathfrak{P}_X$ and referred to as the
{\em Pinsker von Neumann algebra}. In Theorem~\ref{T-Pinsker}
we will give various local descriptions of the Pinsker von Neumann algebra
in terms of combinatorial independence, $\ell_1$ geometry, and
c.p.\ approximation entropy.

The notion of c.p.\ (completely positive) approximation entropy was introduced by
Voiculescu in \cite{Voi} for $^*$-automorphisms of hyperfinite von Neumann
algebras preserving a faithful normal state.
We will formulate here a version
of the definition for amenable acting groups. So let $M$ be a
von Neumann algebra, $\sigma$ a faithful normal state on $M$, and $\beta$
a $\sigma$-preserving action of the discrete amenable group $G$ on $M$
by $^*$-automorphisms. For a finite set $\Upsilon\subseteq M$ and
$\delta > 0$ we write $\CPA_\sigma (\Upsilon , \delta )$ for the set of all
triples $(\varphi , \psi , B)$ where $B$ is a finite-dimensional
$C^*$-algebra and $\varphi : M\to B$ and $\psi : B\to M$ are unital
completely positive maps such that $\sigma\circ\psi\circ\varphi = \sigma$
and $\| (\psi\circ\varphi )(a) - a \|_\sigma < \delta$ for all
$a\in\Upsilon$. We then set
\[ \rcp_\sigma (\Upsilon , \delta ) = \inf \{ \rank\, B :
(\varphi , \psi , B)\in\CPA_\sigma (\Upsilon , \delta ) \} \]
if the set on the right is nonempty, which is always
the case if $M$ is commutative or hyperfinite. Otherwise we put
$\rcp_\sigma (\Upsilon , \delta ) = \infty$.
Write $\hcpa_\sigma (\beta , \Upsilon , \delta )$ for the limit
supremum of $\frac{1}{|F|} \ln \rcp_\sigma (\bigcup_{s\in F} \alpha_s (\Upsilon ), \delta )$
as $F$ becomes more and more invariant, and define
\begin{align*}
\hcpa_\sigma (\beta , \Upsilon ) &= \sup_{\delta > 0}
\hcpa_\sigma (\beta , \Upsilon , \delta ) , \\
\hcpa_\sigma (\beta ) &= \sup_{\Upsilon}
\hcpa_\sigma (\beta , \Upsilon ) ,
\end{align*}
where the last supremum is taken over all finite subsets $\Upsilon$ of $M$.
We refer to $\hcpa_\sigma (\beta , \Upsilon )$ as the
{\em c.p.\ approximation entropy} of $\beta$. When $G=\Zb$ and $M$ is commutative and has
separable predual, this coincides with Voiculescu's original definition by the arguments
leading to Corollary~3.8 in \cite{Voi}.

\begin{question} 
Does the above definition always coincide with Voiculescu's when $G=\Zb$?
\end{question}

By Corollary~3.8 in \cite{Voi}, when $X$ is metrizable, $G = \Zb$, and
the action is ergodic, the c.p.\ approximation entropy of the induced action $\alpha$
on $L^\infty (X,\mu )$ agrees with the measure entropy $h_\mu (X)$. The
arguments also work for general amenable $G$. It follows using the ergodic
decomposition of entropy (see the paragraph before Lemma~\ref{L-positive inf}) 
that when $X$ is metrizable
the Pinsker von Neumann algebra is the largest $G$-invariant von Neumann subalgebra
of $\LX$ on which the c.p.\ approximation entropy is zero.

We next define geometric analogues of upper and lower measure independence
density from Section~\ref{S-IE}.
Let $f\in\LX$. Let $p$ be a projection in $\LX$ and let $\lambda\geq 1$. We say that a
set $J\subseteq G$ is an {\em $\ell_1$-$\lambda$-isomorphism set for $f$ relative to $p$}
if $\{ p\alpha^i (f) : i\in J \}$ is $\lambda$-equivalent to the standard basis
of $\ell_1^J$.
For $\delta > 0$ denote by $\mathscr{P} (\mu ,\delta )$ the set of projections $p\in\LX$
such that $\mu (p) \geq 1-\delta$. For every finite subset $F$ of $G$,
$\lambda\geq 1$, and $\delta > 0$ we define
\[ \varphi_{f,\lambda ,\delta} (F) = \min_{p\in\mathscr{P} (\mu ,\delta )}
\max \big\{ |F\cap J| : J \text{ is an } \ell_1 \text{-} \lambda
\text{-isomorphism set for } f \text{ relative to } p \big\} . \]
Write $\upind_\mu (f,\lambda ,\delta )$ for the limit supremum of
$\frac{1}{|F|} \varphi_{f,\lambda ,\delta} (F)$ as $F$ becomes more and more invariant,
and $\lwind_\mu (f,\lambda ,\delta )$ for the corresponding limit infimum.
Set $\upind_\mu (f,\lambda ) = \sup_{\delta > 0} \upind_\mu (f,\lambda ,\delta )$
and $\lwind_\mu (f,\lambda ) = \sup_{\delta > 0} \lwind_\mu (f,\lambda ,\delta )$.
Finally, we define $\upind_\mu (f) = \sup_{\lambda\geq 1} \upind_\mu (f,\lambda )$
and $\lwind_\mu (f) = \sup_{\lambda\geq 1} \lwind_\mu (f,\lambda )$, and refer to these
quantities respectively as the {\em upper $\mu$-$\ell_1$-isomorphism density} and
{\em lower $\mu$-$\ell_1$-isomorphism density} of $f$.
On the topological side, for each $\lambda\geq 1$ the limit of
\[ \frac{1}{|F|} \max \big\{ |F\cap J| :
\{ \alpha^i (f) : i\in J \} \text{ is } \lambda
\text{-equivalent to the standard basis of } \ell_1^J \big\} \]
as $F$ becomes more and more invariant exists (see the end of Section~3 in \cite{Ind}),
and we refer to the supremum of these limits over all $\lambda\geq 1$ as the
{\it $\ell_1$-isomorphism density} of $f$.

Glasner and Weiss proved the next lemma for the real scalar case
\cite[Lemma 2.3]{GW}. The complex scalar version
follows by considering the map
$E\rightarrow (\ell^n_\infty )_{\Rb} \oplus_\infty (\ell^n_\infty )_{\Rb} =
(\ell^{2n}_\infty )_{\Rb}$
sending each $v\in E\subseteq \ell^n_{\infty}$ to the pair consisting of its real
and imaginary parts.

\begin{lemma}\label{L-GW}
For all $b > 0$ and $\delta > 0$ there exist
$c > 0$ and $\varepsilon > 0$ such that, for all sufficiently large $n$, if
$E$ is a subset of the unit ball of $\ell^n_{\infty}$ which is $\delta$-separated and
$|E|\ge e^{bn}$, then there are a $t\in [-1, 1]$ and a set
$J\subseteq \{1, 2, \dots, n\}$ for which
\begin{enumerate}
\item $|J|\ge cn$, and

\item either for every $\sigma\in \{0, 1\}^J$ there is a
$v\in E$ such that for all $j\in J$
\begin{alignat*}{2}
\re (v(j)) &\ge t+\varepsilon \hspace*{6mm} & &\text{if } \sigma(j)=1, \text{ and} \\
\re (v(j)) &\le t-\varepsilon & &\text{if } \sigma(j)=0,
\end{alignat*}
or for every $\sigma\in \{ 0,1 \}^J$ there is a $v\in E$
such that for all $j\in J$ the above holds with $\re (v(j))$
replaced by $\im (v(j))$.
\end{enumerate}
\end{lemma}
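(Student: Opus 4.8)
The plan is to deduce the complex case directly from the real case \cite[Lemma 2.3]{GW}, using the isometric identification $(\ell^n_\infty )_\Rb \oplus_\infty (\ell^n_\infty )_\Rb = (\ell^{2n}_\infty )_\Rb$ indicated above. First I would record the elementary geometry of the map $\Phi$ sending $v\in\ell^n_\infty$ to the pair $(\re v , \im v)$ of its coordinatewise real and imaginary parts. Since $|\re (v(j))| , |\im (v(j))| \leq |v(j)|$, the map $\Phi$ carries the unit ball of $\ell^n_\infty$ into the unit ball of $(\ell^{2n}_\infty )_\Rb$. If $v,w$ are distinct elements of a $\delta$-separated set $E$, then $|v(j) - w(j)| \geq \delta$ for some coordinate $j$, so from $|\re (v(j)) - \re (w(j))|^2 + |\im (v(j)) - \im (w(j))|^2 = |v(j)-w(j)|^2$ at least one of $|\re (v(j)) - \re (w(j))|$, $|\im (v(j)) - \im (w(j))|$ is $\geq \delta /\sqrt{2}$. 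Hence $\Phi (E)$ is a $(\delta /\sqrt{2})$-separated subset of the unit ball of $(\ell^{2n}_\infty )_\Rb$, and $\Phi$ is injective on $E$, so $|\Phi (E)| = |E| \geq e^{bn} = e^{(b/2)(2n)}$.

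Next I would invoke the real version with parameters $b' = b/2$ and $\delta' = \delta /\sqrt{2}$ in dimension $2n$. This yields $c' > 0$ and $\varepsilon > 0$ (depending only on $b,\delta$) such that for all sufficiently large $n$ there are a $t\in [-1,1]$ and a set $J' \subseteq \{ 1,\dots ,2n \}$ with $|J'| \geq c' (2n)$ such that for every $\sigma' \in \{ 0,1 \}^{J'}$ there is a $v' \in \Phi (E)$ with $v'(j) \geq t+\varepsilon$ when $\sigma' (j) = 1$ and $v'(j) \leq t-\varepsilon$ when $\sigma' (j) = 0$, for all $j\in J'$. Split $\{ 1,\dots ,2n \}$ into the ``real block'' $\{ 1,\dots ,n \}$ and the ``imaginary block'' $\{ n+1,\dots ,2n \}$. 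By pigeonhole, $J'$ meets one of these blocks in a set $J_0$ of cardinality at least $|J'|/2 \geq c' n$; let $J \subseteq \{ 1,\dots ,n \}$ be the image of $J_0$ under the identity (real block) or under $j\mapsto j-n$ (imaginary block), and set $c = c'$.

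Finally, given $\sigma \in \{ 0,1 \}^J$, I would transport it to the pattern on $J_0$ and extend it by $0$ off $J_0$ to a $\sigma' \in \{ 0,1 \}^{J'}$, the values off $J_0$ being irrelevant. The previous paragraph then gives a $v' = \Phi (v)$ with $v\in E$ realizing $\sigma'$ on all of $J'$, hence on $J_0$; since on the real block the coordinates of $\Phi (v)$ are exactly $\re (v(\cdot))$ and on the imaginary block exactly $\im (v(\cdot))$, this says precisely that $\re (v(j))$ — respectively $\im (v(j))$ — is $\geq t+\varepsilon$ when $\sigma (j)=1$ and $\leq t-\varepsilon$ when $\sigma (j)=0$, for all $j\in J$. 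The stated dichotomy is thus exactly the choice of which block captures the bulk of $J'$.

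The argument is essentially bookkeeping, so I do not expect a real obstacle; the only points needing a little care are the two harmless constant adjustments ($\delta\mapsto\delta/\sqrt{2}$ and $b\mapsto b/2$, legitimate because the real lemma is quantified over all positive $b,\delta$) and the observation that ``sufficiently large $n$'' for the complex statement is the same requirement as ``sufficiently large $2n$'' for the real one.
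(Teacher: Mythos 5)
Your reduction is correct and is precisely the argument the paper intends: the lemma is deduced from the real-scalar case of Glasner and Weiss via the map $v \mapsto (\re v , \im v)$ into $(\ell^{2n}_\infty)_\Rb$, with the same constant adjustments ($\delta \mapsto \delta/\sqrt{2}$, $b \mapsto b/2$) and the block pigeonhole supplying the real/imaginary dichotomy. Nothing further is needed.
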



The following is a consequence of Lemma~3.6 in \cite{Ind}.

\begin{lemma}\label{L-smaller}
There exists a $c > 0$ such that whenever $I$ is a finite set
and $A_{i,1}$, $A_{i,2}$, and $B_i$ for $i\in I$ are subsets of a given set such that
the collection $\{ (A_{i,1} \cup A_{i,2} , B_i ) : i\in I \}$ is independent,
there are a set $J\subseteq I$ with $|J| \geq c|I|$ and a $j\in \{ 1,2\}$ for which
the collection  $\{ (A_{i,j} , B_i ) : i\in J \}$ is independent.
\end{lemma}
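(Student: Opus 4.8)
The plan is to reduce directly to the combinatorial Lemma~3.6 of \cite{Ind} in its $k=2$ form, exactly as is done in the proof of Lemma~\ref{L-split} but now stripped of the measure-theoretic overlay. Recall that in the $k=2$ case that lemma supplies an absolute constant $c>0$ such that, for every $n\in\Nb$ and every $S\subseteq\{(1,0),(1,1),2\}^{\{1,\dots,n\}}$ for which the coordinate-collapsing map $\Gamma_n$ (sending $(1,0)$ and $(1,1)$ to $1$ and fixing $2$) restricts to a bijection of $S$ onto $\{1,2\}^{\{1,\dots,n\}}$, there is a $J\subseteq\{1,\dots,n\}$ with $|J|\geq cn$ and either $S|_J\supseteq\{(1,0),2\}^J$ or $S|_J\supseteq\{(1,1),2\}^J$. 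I would take this $c$ as the constant in the statement.

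First I would set $n=|I|$, enumerate $I=\{1,\dots,n\}$, and note that since $I$ is finite, independence of $\{(A_{i,1}\cup A_{i,2},B_i):i\in I\}$ says precisely that for every $\sigma\in\{1,2\}^{\{1,\dots,n\}}$ the set $\bigcap_{i:\sigma(i)=1}(A_{i,1}\cup A_{i,2})\cap\bigcap_{i:\sigma(i)=2}B_i$ is nonempty; choose a point $x_\sigma$ in it. For each $\sigma$ I would record, for each $i$ with $\sigma(i)=1$, which of $A_{i,1}$, $A_{i,2}$ contains $x_\sigma$ (favouring $A_{i,1}$ if both do). This defines $\tau_\sigma\in\{(1,0),(1,1),2\}^{\{1,\dots,n\}}$ by $\tau_\sigma(i)=2$ when $\sigma(i)=2$, $\tau_\sigma(i)=(1,0)$ when $\sigma(i)=1$ and $x_\sigma\in A_{i,1}$, and $\tau_\sigma(i)=(1,1)$ otherwise. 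Putting $S=\{\tau_\sigma:\sigma\in\{1,2\}^{\{1,\dots,n\}}\}$, one has $\Gamma_n(\tau_\sigma)=\sigma$, so distinct $\sigma$ yield distinct $\tau_\sigma$ and $\Gamma_n|_S$ is a bijection onto $\{1,2\}^{\{1,\dots,n\}}$; hence Lemma~3.6 of \cite{Ind} applies.

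The output is a set $J$ with $|J|\geq c|I|$ and, say, $S|_J\supseteq\{(1,0),2\}^J$ (the other alternative is symmetric and produces $j=2$ in place of $j=1$). To conclude I would check that $\{(A_{i,1},B_i):i\in J\}$ is independent: given any $\sigma':J\to\{1,2\}$, extend the prescription ``$(1,0)$ if $\sigma'(i)=1$, $2$ if $\sigma'(i)=2$'' to an element $\rho\in\{(1,0),2\}^J$, use the containment to pick $\tau_\sigma\in S$ with $\tau_\sigma|_J=\rho$, and observe that by construction $x_\sigma\in A_{i,1}$ whenever $\sigma'(i)=1$ and $x_\sigma\in B_i$ whenever $\sigma'(i)=2$, so the corresponding intersection contains $x_\sigma$.

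I do not anticipate a genuine obstacle. The only point requiring care is the bookkeeping that turns ``choose a set from the union $A_{i,1}\cup A_{i,2}$'' into the three-symbol alphabet $\{(1,0),(1,1),2\}$ in such a way that $\Gamma_n|_S$ is literally a bijection rather than merely a surjection, since that is precisely the hypothesis under which Lemma~3.6 of \cite{Ind} can be invoked verbatim; the tie-breaking convention when $x_\sigma$ lies in both $A_{i,1}$ and $A_{i,2}$ is what guarantees this.
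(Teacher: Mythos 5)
Your argument is correct and coincides with the paper's own route: the paper simply notes that the lemma ``is a consequence of Lemma~3.6 in \cite{Ind}'', and your reduction---encoding a choice point $x_\sigma$ for each $\sigma\in\{1,2\}^I$ via the three-symbol alphabet with a tie-breaking rule so that $\Gamma_n|_S$ is a bijection---is exactly the intended deduction, mirroring how Lemma~\ref{L-split} invokes that combinatorial lemma. The only implicit point, that checking nonemptiness for maps defined on all of $J$ suffices for independence over arbitrary finite subsets of $J$, is immediate.
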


\begin{lemma}\label{L-l1 to indep}
For every $\delta > 0$ there exist $c >0$ and $\varepsilon > 0$ such that, for every
compact Hausdorff space $Y$ and finite subset $\Theta$ of the unit ball of $C(Y)$
of sufficiently large cardinality, if the linear map
$\gamma : \ell_1^\Theta \to C(Y)$ sending the standard basis of $\ell_1^\Theta$
to $\Theta$ is an isomorphism with $\| \gamma^{-1} \|^{-1} \geq \delta$,
then there exist closed disks $B_1 , B_2 \subseteq\Cb$ of diameter at most
$\varepsilon /6$ with ${\rm dist} (B_1 , B_2 ) \geq\varepsilon$
and an $I\subseteq\Theta$ with $|I| \geq c|\Theta |$ such that the collection
$\{ (f^{-1} (B_1 ) , f^{-1} (B_2 )): f\in I \}$ is independent.
\end{lemma}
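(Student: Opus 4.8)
The plan is to reduce the $\ell_1$-isomorphism hypothesis to a separation statement about real or imaginary parts of the functions in $\Theta$, and then feed that into the Glasner--Weiss lemma (Lemma~\ref{L-GW}) to extract a large subset on which one coordinate separates two levels over every prescribed pattern. First I would recall the standard fact that if $\gamma : \ell_1^\Theta \to C(Y)$ is an isomorphism with $\|\gamma^{-1}\|^{-1} \ge \delta$, then for each sign pattern $\eta \in \{-1,1\}^\Theta$ (or $\{z \in \Cb : |z| = 1\}^\Theta$ in the complex case) there is a point $y_\eta \in Y$ witnessing $\big|\sum_{f \in \Theta} \eta_f f(y_\eta)\big|$ bounded below by a constant multiple of $\delta |\Theta|$; indeed testing $\gamma^{-1}$ against the norming functional of $\sum \eta_f f$ shows that the set $E = \{(f(y_\eta))_{f\in\Theta} : \eta\} \subseteq$ (unit ball of $\ell^\Theta_\infty$) is $\delta'$-separated for a $\delta'$ depending only on $\delta$, and has cardinality exponential in $|\Theta|$ (taking the $\eta$ to range over a suitably separated net of patterns). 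This is the ``enough patterns give enough separated evaluation vectors'' step.

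Next I would apply Lemma~\ref{L-GW} to this set $E \subseteq$ unit ball of $\ell^\Theta_\infty$ with the $b$ and $\delta$ coming from the previous paragraph: this produces $c > 0$, $\varepsilon > 0$, a threshold $t \in [-1,1]$, and a subset $J \subseteq \Theta$ with $|J| \ge c|\Theta|$ such that, say in the real-part case, for every $\tau \in \{0,1\}^J$ there is a $v \in E$ — hence a point $y \in Y$ — with $\re(f(y)) \ge t+\varepsilon$ when $\tau(f)=1$ and $\re(f(y)) \le t-\varepsilon$ when $\tau(f)=0$, for all $f \in J$. Now set $B_1$ to be a closed disk of diameter at most $\varepsilon/6$ centered on the vertical line $\re(z) = t + 3\varepsilon/4$ and $B_2$ a closed disk of the same diameter centered on $\re(z) = t - 3\varepsilon/4$; then $\re(z) \ge t+\varepsilon$ for $z \in B_1$ fails — so I should instead take $B_1, B_2$ to lie inside the half-planes $\re(z) \ge t+\varepsilon$ and $\re(z) \le t-\varepsilon$ respectively. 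The issue is that the witnessing points $y$ only control $\re(f(y))$, not $f(y)$ itself, so $f(y)$ need not land in a small disk. The fix is to intersect with a bounded region: since $\Theta$ lies in the unit ball of $C(Y)$, all values $f(y)$ lie in the closed unit disk, so one can cover the strip $\{z : |z|\le 1,\ \re(z) \ge t+\varepsilon\}$ by finitely many (a number depending only on $\varepsilon$) closed disks of diameter at most $\varepsilon/6$ and similarly for $\re(z) \le t-\varepsilon$, then use a pigeonhole/independence-stable argument to pass to a further subset of $J$ on which a single pair $(B_1, B_2)$ works.

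That pigeonhole step is where I expect the only real subtlety, and it is exactly what Lemma~\ref{L-smaller} (the consequence of Lemma~3.6 of \cite{Ind}) is designed for: the collection $\{(f^{-1}(\text{strip}_1), f^{-1}(\text{strip}_2)) : f \in J\}$ is independent by construction (each pattern $\tau$ is realized), each $\text{strip}_i$ is a finite union of the small disks, and Lemma~\ref{L-smaller} — applied iteratively, once for each of the finitely many disks in the decomposition, with the constant loss absorbed into a new $c > 0$ — yields a subset $I \subseteq J$ with $|I| \ge c'|J| \ge c'c|\Theta|$ and fixed disks $B_1, B_2$ such that $\{(f^{-1}(B_1), f^{-1}(B_2)) : f\in I\}$ is independent. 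The disks have diameter $\le \varepsilon/6$ and, since $B_1 \subseteq \{\re(z)\ge t+\varepsilon\}$ and $B_2 \subseteq \{\re(z)\le t-\varepsilon\}$, we get $\operatorname{dist}(B_1,B_2) \ge 2\varepsilon$, so after renaming $\varepsilon$ the required bounds $\operatorname{diam}(B_i)\le\varepsilon/6$ and $\operatorname{dist}(B_1,B_2)\ge\varepsilon$ hold. The imaginary-part alternative in Lemma~\ref{L-GW} is handled identically with horizontal half-planes, and in either case the final constants $c$ and $\varepsilon$ depend only on $\delta$ (through Lemma~\ref{L-GW} and the number of small disks needed), as required.
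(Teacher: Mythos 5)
The second half of your proposal is essentially the paper's own proof: apply Lemma~\ref{L-GW} to an exponentially large separated subset of the unit ball of $\ell_\infty^\Theta$, then cover the two half-plane regions (intersected with the unit disk, where all values lie) by finitely many disks of diameter at most $\varepsilon/6$, and use Lemma~\ref{L-smaller} iteratively to isolate a single pair of small disks; your distance bookkeeping at the end is fine.

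The gap is in your first step. From $\|\gamma^{-1}\|^{-1}\ge\delta$ you do get, for each sign pattern $\eta$, a witness point $y_\eta$ with $\big|\sum_{f\in\Theta}\eta_f f(y_\eta)\big|\ge\delta|\Theta|$, but it does not follow that the evaluation vectors $v_\eta=(f(y_\eta))_{f\in\Theta}$ are $\delta'$-separated, nor that there are exponentially many distinct ones, even if you restrict $\eta$ to a Hamming-separated net: each $\eta$ only pairs well against its own $v_\eta$, and nothing prevents two far-apart patterns from sharing a witness. Concretely, let $\Theta$ consist of the coordinate functions on $Y=\{-1,1\}^n$ (so $\|\gamma^{-1}\|^{-1}=1\ge\delta$); if $\eta,\eta'$ disagree on $d\le(1-\delta)n$ coordinates, the point $\omega$ that agrees with both off the disagreement set and splits it evenly satisfies $\sum_i\eta_i\omega_i=\sum_i\eta'_i\omega_i\approx n-d\ge\delta n$, so it is a legitimate witness for both and $v_\eta=v_{\eta'}$. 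Since (say for $\delta\le 1/2$) any exponentially large family of sign patterns contains pairs at Hamming distance at most $n/2\le(1-\delta)n$, an unlucky choice of witnesses defeats the separation claim for every net you could choose; so ``testing $\gamma^{-1}$ against the norming functional'' does not show what you need, and this is exactly the nontrivial point of the lemma.

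There are two ways to close it. One is a counting/concentration argument in your spirit: by Hoeffding, a fixed $w$ in the (slightly enlarged) unit ball of $\ell_\infty^\Theta$ satisfies $\big|\sum_f\eta_f w_f\big|\ge(\delta-\delta')|\Theta|$ for at most $2\cdot 2^{|\Theta|}e^{-c(\delta-\delta')^2|\Theta|}$ patterns $\eta$, so any $\delta'$-net of $\{v_\eta:\eta\}$ has cardinality at least $e^{c(\delta-\delta')^2|\Theta|}/2$, which produces the separated set required by Lemma~\ref{L-GW}. The paper avoids witnesses altogether: it bounds the contractive $(\delta/2)$-rank of $\Theta$ from below by $e^{a\delta^2|\Theta|/4}$ using Lemma~3.2 of \cite{DEBS}, and from above by the covering number of $Y$ in the pseudometric $d_\Theta(x,y)=\sup_{f\in\Theta}|f(x)-f(y)|$ via a partition-of-unity argument, so a maximal $(\delta/4)$-separated subset of $(Y,d_\Theta)$ is exponentially large and its evaluation image in $\ell_\infty^\Theta$ is the separated set to which Lemma~\ref{L-GW} is applied. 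With either repair, the rest of your argument goes through as in the paper.
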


\begin{proof}
Let $\delta > 0$.
Define a pseudometric $d_{\Theta}$ on $Y$ by
\[ d_{\Theta} (x, y)=\sup_{f\in\Theta}|f(x)-f(y)| \]
and pick a maximal $(\delta /4)$-separated subset $Z$ of $Y$. Then the
open balls $B(z, \delta /4)$ with radius $\delta /4$ and centre
$z$ for $z\in Z$ cover $Y$. A standard partition of unity argument
(see the proof of Proposition~4.8 in \cite{Voi}) yields the bound
$\rc(\Theta, \delta /2)\le |Z|$ for the contractive
$(\delta /2)$-rank of $\Theta$ as defined in \cite{DEBS}. By
Lemma~3.2 of \cite{DEBS} we have $\ln \rc(\Theta, \delta /2)\ge
|\Theta|a\|\gamma\|^{-2}(\|\gamma^{-1}\|^{-1}-\delta /2)^2$ for some universal
constant $a>0$. Thus $|Z|\ge
e^{|\Theta|a\|\gamma\|^{-2}(\|\gamma^{-1}\|^{-1}-\delta /2)^2}\ge
e^{|\Theta|a\delta^2 /4}$. Evaluation of the functions in $\Theta$ on the points of $Y$
yields a map $\psi$ from $Y$ to the unit ball of $\ell^{\Theta}_{\infty}$ such that
$\psi (Z)$ is $(\delta /4)$-separated. By Lemma~\ref{L-GW} there are
$c > 0$ and $\varepsilon > 0$ depending only on $a$ and $\delta$ such that
there exist closed disks $B_1$ and $B_2$ contained in the unit disk of $\Cb$
with ${\rm dist} (B_1 , B_2 ) \geq 4\varepsilon /3$
and an $I\subseteq\Theta$ with $|I| \geq c|\Theta |$ such that the collection
$\{ (f^{-1} (B_1 ) , f^{-1} (B_2 )): f\in I \}$ is independent. Now for some
$N\in\Nb$ depending on $\varepsilon$ we can cover each of $B_1$ and $B_2$
with $N$ disks of diameter at most $\varepsilon /6$. By repeated application
of Lemma~\ref{L-smaller} we can then replace each of $B_1$ and $B_2$ with one
of the smaller disks to obtain the result (with a smaller $c$).
\end{proof}

\begin{lemma}\label{L-rcp lower bd}
Let $\delta > 0$ and $\lambda > 0$.
Let $\Omega = \{ f_1 , \dots , f_n \}$ be a subset of the unit ball of
$L^\infty (X,\mu )$ and suppose that for all $g_1 , \dots , g_n$ in the unit ball of
$L^\infty (X,\mu )$ with $\max_{1\leq i\leq n} \| g_i - f_i \|_\mu < \delta$ there exists
an $I \subseteq \{ 1,\dots ,n \}$ of cardinality at least $dn$ for which the linear map
$\ell_1^I \to \spn \{ g_i : i\in I \}$ sending the standard basis element with index $i\in I$
to $g_i$ has an inverse with norm at most $\lambda$.
Then
\[ \ln\rcp_\mu (\Omega , \delta ) \geq an \]
for some constant $a>0$ which depends only on $\lambda$.
\end{lemma}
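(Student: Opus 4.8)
The plan is to reduce the statement to the completely positive approximation rank lower bound in \cite{DEBS}, but now in the $L^\infty$ / $L^2$-perturbation setting, by passing through Lemma~\ref{L-l1 to indep} and the combinatorial independence it produces. The starting point is to take a triple $(\varphi ,\psi ,B) \in \CPA_\mu (\Omega ,\delta )$ with $\rank\, B = \rcp_\mu (\Omega ,\delta )$ and set $g_i = (\psi\circ\varphi )(f_i )$. By definition of $\CPA_\mu$ we have $\| g_i - f_i \|_\mu < \delta$, and the $g_i$ lie (up to a harmless contraction, since $\psi\circ\varphi$ is unital completely positive) in the unit ball of $L^\infty (X,\mu )$, so the hypothesis applies: there is $I\subseteq\{1,\dots ,n\}$ with $|I|\geq dn$ such that the coordinate map $\ell_1^I \to \spn\{ g_i : i\in I\}$ has inverse of norm at most $\lambda$.

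The next step is to realize the $g_i$, $i\in I$, as continuous functions on a compact Hausdorff space so that Lemma~\ref{L-l1 to indep} can be invoked. Since each $g_i$ factors through the finite-dimensional $C^*$-algebra $B$, the family $\{ g_i : i\in I\}$ lies in the image of the unital completely positive map $\psi : B\to L^\infty(X,\mu)$; composing with the embedding of a maximal abelian subalgebra, or more directly using that $\{\varphi (f_i )\}$ is a finite subset of the unit ball of $B$ and $B$ itself sits inside $C(Y)$ for $Y$ its (finite) spectrum together with a small perturbation, one arranges a compact Hausdorff space $Y$ and a contractive set $\Theta = \{\tilde g_i : i\in I\}\subseteq C(Y)$ with the same $\ell_1$-isomorphism constant (up to an arbitrarily small loss). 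Applying Lemma~\ref{L-l1 to indep} with the constant $\delta$ there taken to be (a fixed function of) $\lambda^{-1}$, we obtain $c_0 >0$, $\varepsilon >0$, closed disks $B_1 , B_2\subseteq\Cb$ with $\dist (B_1 ,B_2 )\geq\varepsilon$, and a subset $I'\subseteq I$ with $|I'|\geq c_0 |I|\geq c_0 dn$ such that $\{ (\tilde g_i^{-1}(B_1 ),\tilde g_i^{-1}(B_2 )) : i\in I'\}$ is independent.

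Now independence of this collection of pairs of clopen subsets of $Y$ forces many distinct points of $Y$: for each $\sigma\in\{1,2\}^{I'}$ pick $y_\sigma \in\bigcap_{i\in I'}\tilde g_i^{-1}(B_{\sigma (i)})$; these $y_\sigma$ are pairwise $\varepsilon$-separated in the pseudometric $d_\Theta$, so $Y$ carries at least $2^{|I'|}$ points that are $\varepsilon$-separated with respect to $\Theta$. Following the partition-of-unity argument in the proof of Lemma~\ref{L-l1 to indep} (or Proposition~4.8 of \cite{Voi}) in reverse, this separation number is a lower bound for the contractive $\varepsilon'$-rank of $\Theta$ for a suitable $\varepsilon' >0$, and hence — transporting back through $\psi ,\varphi$ and using that the rank of $B$ dominates this approximation rank up to the dimension count — one gets $\rank\, B\geq 2^{c_0 dn}$, i.e. $\ln\rcp_\mu (\Omega ,\delta )\geq (c_0 d\ln 2)\, n$. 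Setting $a = c_0 d\ln 2$ gives the claim, and $c_0$ (hence $a$) depends only on $\lambda$ since $d$ is given and $\delta$ is fixed.

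The main obstacle I expect is the second step: passing cleanly from the $L^\infty$-world of the $g_i$ (which are only defined $\mu$-a.e.\ and live over the possibly non-metrizable space $X$) to a genuine $C(Y)$-picture with control on the $\ell_1$-isomorphism constant, without the $L^2$ versus $L^\infty$ distinction eroding the estimates. One must be careful that the isomorphism bound of norm $\le\lambda$ is with respect to the $L^\infty$-norm on the span, which is exactly what matches the sup-norm on $C(Y)$; the perturbation $\|g_i - f_i\|_\mu <\delta$ is measured in $\|\cdot\|_\mu$, so the interplay is used only to invoke the hypothesis, not in the $C(Y)$-estimate. Making the reduction to $C(Y)$ rigorous — most naturally by noting that $\spn\{g_i\}$ is finite-dimensional and choosing a state-separating family, or by working directly with $B$ and its spectrum — and tracking that the constant $a$ at the end genuinely depends only on $\lambda$, is where the real care is required.
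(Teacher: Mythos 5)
Your opening move is the same as the paper's: take $(\varphi ,\psi ,B)\in\CPA_\mu (\Omega ,\delta )$, set $g_i = (\psi\circ\varphi )(f_i )$ (these lie in the unit ball since $\psi\circ\varphi$ is unital completely positive, hence contractive, and $\| g_i - f_i \|_\mu < \delta$ by definition), and invoke the hypothesis to get $I$ with $|I|\geq dn$ and the $\ell_1$-$\lambda$-lower bound. The gap is in how you convert this into a lower bound on $\rank B$. Your plan is to view the relevant elements as continuous functions on ``the (finite) spectrum'' of $B$, apply Lemma~\ref{L-l1 to indep}, and count the $2^{|I'|}$ points forced by independence. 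But $B$ is an arbitrary finite-dimensional $C^*$-algebra, generically a direct sum of matrix algebras: it has no spectrum of $\rank B$ points on which its elements act as scalar functions, so the counting argument has nowhere to take place. None of the suggested repairs closes this: compressing to a maximal abelian subalgebra (or applying any conditional expectation) is contractive in the wrong direction and can destroy the lower $\ell_1$-estimate, so the $\lambda$-equivalence is not inherited; realizing the $g_i$ as continuous functions on the spectrum of $\LX$ does give independent pairs of subsets there, but the cardinality of that space has nothing to do with $\rank B$; and pushing the independence to the state space of $B$, where the elements $\varphi (f_i )$ become affine functions, forces only that the affine dimension of the state space is at least $|I'|$, i.e.\ $\rank B \gtrsim \sqrt{n}$ --- far short of the required $\ln\rcp_\mu (\Omega ,\delta )\geq an$.

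What is needed at exactly this point is a genuinely noncommutative fact: if the unit ball of a finite-dimensional $C^*$-algebra contains $m$ elements $\lambda$-equivalent to the standard basis of $\ell_1^m$, then $\ln\rank B\geq a(\lambda )m$. That is Lemma~3.1 of \cite{EID}, and the paper's proof consists of just two moves: transfer the $\ell_1$-lower bound from $\{ (\psi\circ\varphi )(f_i ) : i\in I \}$ back to $\{ \varphi (f_i ) : i\in I \}\subseteq B$ using only the contractivity of $\psi$ (so no commutative picture is ever needed), and then cite that lemma. Your Sauer--Shelah/independence counting would reprove the needed bound only when $B$ is commutative, and there is no a priori reduction of $\rcp_\mu$ to commutative approximating algebras, so as written the argument does not go through. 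Two minor further points: the infimum defining $\rcp_\mu (\Omega ,\delta )$ need not be attained, so one should argue for an arbitrary triple in $\CPA_\mu (\Omega ,\delta )$; and the constant you produce necessarily involves $d$ as well as $\lambda$, just as in the paper's formulation.
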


\begin{proof}
Let $(\varphi , \psi , B)\in \CPA_\mu (\Omega , \delta )$. Then there exists an
$I \subseteq \{ 1,\dots ,n \}$ of cardinality at least $dn$ for which the linear map
$\ell_1^I \to \spn \{ (\psi\circ\varphi )(f_i ) : i\in I \}$ sending the standard basis
element with index $i\in I$ to $g_i$ has an inverse with norm at most $\lambda$.
It follows using the operator norm contractivity of $\varphi$ and $\psi$
that for any scalars $c_i$ for $i\in I$ we have
\[ \bigg\| \sum_{i\in I} c_i \varphi (f_i ) \bigg\| \geq
\bigg\| \sum_{i\in I} c_i (\psi\circ\varphi )(f_i ) \bigg\|
\geq \lambda^{-1} \sum_{s\in I} | c_i | , \]
so that the subset
$\{ \varphi (f_i ) : i\in I \}$ of $B$ is $\lambda$-equivalent
to the standard basis of $\ell_1^I$. Lemma~3.1 of \cite{EID} then guarantees
the existence of a constant $a>0$ depending only on $\lambda$ such that
$\ln\rank (B) \geq an$, yielding the result.
\end{proof}

\begin{lemma}\label{L-rcp upper bd}
Let $\delta > 0$.
Let $\Omega = \{ f_1 , \dots , f_n \}$ be a subset of the unit ball of
$L^\infty (X,\mu )$ and for each $i=1,\dots ,n$ let $\cP_i$ be a finite Borel partition of
$X$ such that $\esssup_{x,y\in P} |f_i (x) - f_i (y)| < \delta$ for every $P\in\cP_i$.
Suppose that $\Hmeas (\cP ) \leq n\delta^2$ where $\cP = \bigvee_{i=1}^n \cP_i$. Then
\[ \ln\rcp_\mu (\Omega , \sqrt{\delta^2 + 4\delta} ) \leq 2n\delta \]
if $n$ is sufficiently large as a function of $\delta$.
\end{lemma}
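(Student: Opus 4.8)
The plan is to build an explicit element of $\CPA_\mu(\Omega,\sqrt{\delta^2+4\delta})$ whose finite-dimensional algebra $B$ has rank at most $e^{2n\delta}$, using the partition $\cP$ to discretize the functions $f_i$. First I would pass to a subpartition: let $\cY$ be the collection of atoms $P\in\cP$ with $\mu(P)\geq e^{-n\delta}$ and $\cZ$ the collection of atoms with $\mu(P)<e^{-n\delta}$; put $D=\bigcup\cZ$. The usual concavity estimate for $x\mapsto -x\ln x$ (exactly as in the proof of Lemma~\ref{L-mind}), together with the hypothesis $\Hmeas(\cP)\leq n\delta^2$, shows that $\mu(D)$ is small — of order $\delta$ — once $n$ is large, because $\Hmeas(\cP)\geq \mu(D)\cdot n\delta - (\text{lower-order terms})$ and simultaneously $|\cY|\leq e^{\Hmeas(\cP)/(-\ln\text{(threshold)})}$ type bounds give $|\cY|\leq e^{n\delta^2/(n\delta)\cdot n}$, i.e. $|\cY|\leq e^{n\delta}$ for $n$ large. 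The key quantitative point is that the number of ``large'' atoms is at most $e^{2n\delta}$ (absorbing constants into the exponent by taking $n$ large as a function of $\delta$), while the remaining mass $\mu(D)$ lost to small atoms is at most $O(\delta)$.

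Next I would define the approximating triple. Let $B=\Cb^{\cY\cup\{*\}}$ (one coordinate per large atom, plus one extra coordinate for the ``error'' region $D$), a commutative finite-dimensional $C^*$-algebra of rank $|\cY|+1\leq e^{2n\delta}$. Define $\varphi:L^\infty(X,\mu)\to B$ to be the conditional-expectation-type map sending $g$ to the tuple of its averages $\big(\tfrac1{\mu(P)}\int_P g\,d\mu\big)_{P\in\cY}$ together with $\tfrac1{\mu(D)}\int_D g\,d\mu$ in the $*$-coordinate (with the convention that empty/zero-measure pieces are dropped); this is unital and completely positive since it is a composition of a conditional expectation onto the coarser partition $\cY\cup\{D\}$ with the obvious identification. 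Define $\psi:B\to L^\infty(X,\mu)$ to send a tuple to the corresponding $\cP$-measurable step function (constant $=$ the given value on each $P\in\cY$ and constant $=$ the $*$-value on $D$); this is also unital completely positive, and $\mu\circ\psi\circ\varphi=\mu$ since both $\varphi$ and $\psi$ are compatible with the relevant conditional expectations and hence state-preserving.

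It then remains to estimate $\|(\psi\circ\varphi)(f_i)-f_i\|_\mu$ for each $i$. Split the integral $\int_X|(\psi\circ\varphi)(f_i)-f_i|^2\,d\mu$ over the large atoms and over $D$. On each $P\in\cY$, since $\cP$ refines $\cP_i$ we have oscillation of $f_i$ on $P$ less than $\delta$, so $|(\psi\circ\varphi)(f_i)-f_i|\leq\delta$ pointwise there (the value $\psi\varphi(f_i)$ is the average of $f_i$ over $P$, which lies within $\delta$ of every value of $f_i$ on $P$), contributing at most $\delta^2$. On $D$ we use only $\|f_i\|_\infty\leq 1$ and $\|\psi\varphi(f_i)\|_\infty\leq 1$, so the integrand is at most $4$, contributing at most $4\mu(D)$. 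Since $\mu(D)\leq\delta$ for $n$ large, $\|(\psi\circ\varphi)(f_i)-f_i\|_\mu^2\leq\delta^2+4\delta$, i.e. $\|(\psi\circ\varphi)(f_i)-f_i\|_\mu<\sqrt{\delta^2+4\delta}$, so $(\varphi,\psi,B)\in\CPA_\mu(\Omega,\sqrt{\delta^2+4\delta})$ and $\rcp_\mu(\Omega,\sqrt{\delta^2+4\delta})\leq\rank B\leq e^{2n\delta}$. I expect the only delicate point to be pinning down the constant in the exponent: one must be careful that the entropy bound $\Hmeas(\cP)\le n\delta^2$ really does force $|\cY|\le e^{2n\delta}$ and $\mu(D)\le\delta$ simultaneously for all large $n$, which is where the choice of the threshold $e^{-n\delta}$ and the ``$n$ sufficiently large'' clause get used; everything else is the routine verification that the conditional-expectation sandwich is unital, completely positive, and state-preserving.
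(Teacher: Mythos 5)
Your proposal is correct and follows essentially the same route as the paper: threshold the atoms of $\cP$ at measure $e^{-n\delta}$ (equivalently, apply Markov's inequality to the information function), take the $\mu$-preserving conditional expectation onto the algebra spanned by the large atoms and the leftover set of measure at most $\delta$, note its rank is at most $e^{n\delta}+1\le e^{2n\delta}$ for large $n$, and split the $L^2$ error as $\delta^2$ (oscillation on large atoms) plus $4\delta$ (trivial bound on the small-measure remainder). The only cosmetic wobble is your appeal to the concavity estimate of Lemma~\ref{L-mind} for $\mu(D)\le\delta$: the clean argument, which is what the paper uses, is simply that each small atom contributes at least $n\delta\,\mu(P)$ to $\Hmeas(\cP)$, so $\mu(D)\le \Hmeas(\cP)/(n\delta)\le\delta$ with no largeness assumption on $n$ needed for that step.
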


\begin{proof}
For a finite Borel partition $\cQ$ of $X$ we write $I(\cQ )$ for the information function
$-\sum_{Q\in\cQ} \boldsymbol{1}_Q \ln \mu (Q)$. Then $\Hmeas (\cP ) = \int_X I(\cP )\, d\mu$,
and so by our assumption the set $D$ on which the nonnegative function $I(\cP )/n$ takes
values less than $\delta$ has measure at least $1-\delta$. Then $\mu (P) \geq e^{-n\delta}$
for all $P\in\cP$ such that $\mu (P\cap D) \neq\emptyset$. Let $B$ be the linear span of
$\{ \boldsymbol{1}_{P\cap D} : P\in\cP \text{ and } \mu (P\cap D) \neq\emptyset \} \cup
\{ \boldsymbol{1}_{D^\comp} \}$.
Then $B$ is a unital $^*$-subalgebra of $L^\infty (X,\mu )$ and $\dim B \leq e^{n\delta} + 1$.
Taking the $\mu$-preserving conditional expectation $\varphi : L^\infty (X,\mu ) \to B$ and
the inclusion $\psi : B\to L^\infty (X,\mu )$ it is readily checked that
$(\varphi , \psi , B)\in \CPA_\mu (\Omega , \sqrt{\delta^2 + 4\delta} )$ so that
$\rcp_\mu (\Omega , \sqrt{\delta^2 + 4\delta} ) \leq e^{n\delta} + 1$, from which the desired
conclusion follows.
\end{proof}

Denote by $\Omega$ the pure state space of $\LX$ equipped with the
relative weak$^*$ topology, under which it is compact. When appropriate we will view elements
of $\LX$ as continuous functions on $\Omega$.
The action $\alpha$ of $G$ on $L^\infty (X,\mu )$ gives rise to a topological dynamical
system $(\Omega , G)$ with the action of $G$ defined by
$(s,\sigma )\mapsto\sigma\circ\alpha_{s^{-1}}$. Since $\mu$ defines a state on $\LX$
it gives rise to a $G$-invariant Borel probability measure on $\Omega$, which we will
also denote by $\mu$. For a projection
$p\in L^\infty (X,\mu )$ we write $\Omega_p$ for the clopen subset of
$\Omega$ supporting $p$.


\begin{theorem}\label{T-Pinsker}
Let $f\in\LX$. Let $\{ F_n \}_{n\in\Lambda}$
be a F{\o}lner net in $G$. Then the following are equivalent:
\begin{enumerate}
\item $f\notin\mathfrak{P}_X$,

\item there is a $\mu$-IE-pair $(\sigma_1 , \sigma_2 )\in \Omega\times\Omega$
such that $f(\sigma_1 ) \neq f(\sigma_2 )$,

\item there are $d>0$, $\delta > 0$, and $\lambda > 0$ such that,
for all $n$ greater than some $n_0 \in\Lambda$, 
whenever $g_s$ for $s\in F_n$ are elements of $\LX$
satisfying $\| g_s - \alpha_s (f) \|_\mu < \delta$
for every $s\in F_n$ there exists an $I \subseteq F_n$
of cardinality at least $d|F_n |$ for which the linear map $\ell_1^I \to
\spn \{ g_s : s\in I \}$ sending the standard basis element with index $s\in I$
to $g_s$ has an inverse with norm at most $\lambda$,

\item the same as {\rm (3)} with ``for all $n$ greater than some $n_0 \in\Lambda$'' 
replaced by ``for all $n$ in a cofinal subset of $\Lambda$'',

\item $\lwind_\mu (f) > 0$,

\item $\upind_\mu (f) > 0$,

\item $\hcpa_\mu (\alpha , \{ f \} ) > 0$,

\item $\hcpa_\mu (\beta ) > 0$ for the restriction $\beta$ of $\alpha$ to the
von Neumann subalgebra of $\LX$ dynamically generated by $f$.
\end{enumerate}
When the action is ergodic and either $X$ is metrizable or $G$ is countable, we can add:
\begin{enumerate}
\item[(9)] there is a $\delta > 0$ such that every $g\in\LX$ satisfying
$\| g - f \|_\mu < \delta$ has positive
$\ell_1$-isomorphism density with respect to the operator norm.
\end{enumerate}
When $f\in C(X)$ we can add:
\begin{enumerate}
\item[(10)] $f\notin C(Y)$ whenever $\pi : X\to Y$ is a topological $G$-factor map
such that $h_{\pi_* (\mu )} (Y) = 0$,

\item[(11)] there is a $\mu$-IE-pair $(x_1 , x_2 )\in X\times X$
such that $f(x_1 ) \neq f(x_2 )$.
\end{enumerate}
\end{theorem}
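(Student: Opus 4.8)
The plan is to work throughout in the topological model $\Omega$ of the system, namely the pure state space of $\LX$, on which $f$ becomes an honest continuous function, the $C^*$-norm is attained pointwise, and $(X,\mu,G)\cong(\Omega,\mu,G)$ so that $\mathfrak{P}_X=\mathfrak{P}_\Omega$ and every lemma of this section applies to $(\Omega,G)$. After normalising $\|f\|_\infty\le1$, the proof will establish the cycle $(1)\Rightarrow(5)\Rightarrow(6)\Rightarrow(7)\Rightarrow(1)$ together with $(1)\Rightarrow(2)\Rightarrow(1)$, $(7)\Leftrightarrow(8)$, and $(5)\Rightarrow(3)\Rightarrow(4)\Rightarrow(7)$ (the last with $(3)\Rightarrow(4)$ trivial); then $(9)$ is inserted under the ergodicity hypothesis and $(10),(11)$ when $f\in C(X)$. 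The general (non-metrizable, uncountable group) case of $(1)$--$(8)$ is reduced to metrizable $X$ and countable $G$ by writing $X$ as an inverse limit of metrizable factors and intersecting over countable subgroups, using Lemma~\ref{L-min}, exactly as in the proof of Theorem~\ref{T-IE E}.

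For $(1)\Rightarrow(5)$ and $(1)\Rightarrow(2)$: since $\mathfrak{P}_X$ is a $^*$-algebra generated by the zero entropy two-element partitions, $f\notin\mathfrak{P}_X$ forces a sub-level set $\{f<c\}$ of (the real or imaginary part of) $f$ to lie outside $\mathfrak{P}_X$, so $\cP=\{\{f<c\},\{f\ge c\}\}$ has $h_\mu(\cP)>0$; Lemma~\ref{L-pe to pd} then yields $\lwind_\mu(A_1,A_2)>0$ for the closed sets $A_1=\{f\le c-\tfrac1n\}$ and $A_2=\{f\ge c\}$ once $n$ is large. On $A_1$ and $A_2$ the values of $f$ are separated by $\tfrac1n$, so a standard separation-to-$\ell_1$ estimate (project onto a real-linear functional separating the two value ranges and compare the two extreme sign patterns) turns any independence set for $(A_1,A_2)$ relative to a clopen $D$ of measure $\ge1-\delta$ into an $\ell_1$-$\lambda$-isomorphism set for $f$ relative to $\chi_D$; as every projection of measure $\ge1-\delta$ is such a $\chi_D$, this gives $\lwind_\mu(f)\ge\lwind_\mu(A_1,A_2)>0$, which is $(5)$, hence $(6)$ since $\lwind_\mu\le\upind_\mu$. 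Applying Proposition~\ref{P-IE basic}(1) instead to the closed tuple $(A_1,A_2)$ produces a $\mu$-IE-pair in $A_1\times A_2$, which $f$ separates, giving $(2)$. Conversely, for $(2)\Rightarrow(1)$: given a $\mu$-IE-pair $(\sigma_1,\sigma_2)$ with $f(\sigma_1)\ne f(\sigma_2)$, pick disjoint closed disks $B_i\ni f(\sigma_i)$ and put $U_i=f^{-1}(\interior B_i)$; then $U_1\cap U_2=\emptyset$ and $(U_1,U_2)$ has positive upper $\mu$-independence density by definition of a $\mu$-IE-pair, so Lemma~\ref{L-positive} applied to the cover $\{U_1^\comp,U_2^\comp\}$ gives $h_\mu^-(\{U_1^\comp,U_2^\comp\})>0$; since $\{U_1,U_1^\comp\}$ refines this cover it has positive entropy, so $U_1\notin\mathfrak{P}_X$ and hence $f\notin\mathfrak{P}_X$. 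The reverse translation from the $\ell_1$-geometry of $f$ to combinatorial independence of level sets of $f$, needed below, is supplied by Lemma~\ref{L-l1 to indep} (and through it the Glasner--Weiss Lemma~\ref{L-GW}).

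For the operator-algebraic conditions: $(7)\Leftrightarrow(8)$ holds because $\rcp$ is unchanged on passing to the von Neumann subalgebra $N$ dynamically generated by $f$ (compose the approximating maps with the conditional expectation onto $N$, and with its inclusion, in the two directions) and by the finite-generation property of c.p.\ approximation entropy, so $\hcpa_\mu(\beta)=\hcpa_\mu(\beta,\{f\})=\hcpa_\mu(\alpha,\{f\})$. For $(7)\Rightarrow(1)$: if $f\in\mathfrak{P}_X$, choose a $\mathfrak{P}_X$-measurable---hence zero entropy---partition $\cP_0$ on whose atoms $f$ oscillates by less than $\delta$; then $\tfrac1{|F|}H_\mu(\bigvee_{s\in F}\alpha_s\cP_0)\to h_\mu(\cP_0)=0$, so Lemma~\ref{L-rcp upper bd} forces $\ln\rcp_\mu(\bigcup_{s\in F}\alpha_s\{f\},\sqrt{\delta^2+4\delta})\le2|F|\delta$ for $F$ sufficiently invariant, whence $\hcpa_\mu(\alpha,\{f\})=0$. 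For $(6)\Rightarrow(7)$ and $(5)\Rightarrow(3)\Rightarrow(4)\Rightarrow(7)$: an $L^2$-perturbation $\{g_s\}_{s\in F}$ with $\|g_s-\alpha_s f\|_\mu<\delta$ agrees with $\alpha_s f$ to within $\delta^{1/4}$ off a set $D_s$ of measure $\ge1-\delta^{3/2}$ by Chebyshev, so $D\colon s\mapsto D_s$ lies in $\mathscr{B}'(\mu,\delta^{3/2})$; passing to the \emph{primed} independence densities and using Proposition~\ref{P-alternative} (via the translation to level sets of $f$, where the primed/unprimed comparison of Lemma~\ref{L-prime} is available) one turns $\upind_\mu(f)>0$ into the assertion that for every such perturbation a positive-density subset of $\{g_s\}$ is $\lambda'$-equivalent to an $\ell_1$ basis, and then Lemma~\ref{L-rcp lower bd} gives $\ln\rcp_\mu(\bigcup_{s\in F}\alpha_s\{f\},\delta)\ge a|F|$ along the relevant cofinal subnet, which is $(7)$; reading the $\ell_1$ conclusion off directly is $(3)$ and $(4)$. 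Throughout, the failure of subadditivity of $\varphi_{\oA,\delta}$ noted after its definition is accommodated by keeping limit suprema and limit infima distinct.

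Finally, under ergodicity (with $X$ metrizable or $G$ countable), $(9)$ is added by upgrading the limit-along-the-F{\o}lner-net densities in $(5)$ to an honest positive density of iterates and removing the relativising projection, through a Rokhlin tower and ergodic-averaging argument (this is where Theorem~\ref{T-Rokhlin} enters), proving $(5)\Rightarrow(9)$ and $(9)\Rightarrow$ one of the conditions already in the cycle. When $f\in C(X)$: $(1)\Leftrightarrow(10)$ because the closed $G$-invariant $C^*$-subalgebra of $C(X)$ generated by the orbit of $f$ is $C(Y)$ for a topological $G$-factor $Y$ whose measure-theoretic $\sigma$-algebra lies inside $\mathfrak{P}_X$ precisely when $f\in\mathfrak{P}_X$, and any topological factor of zero entropy has $\sigma$-algebra inside $\mathfrak{P}_X$; and $(2)\Leftrightarrow(11)$ via the canonical factor map $q\colon\Omega\to X=\supp\mu$, using $f\circ q=f$ for $f\in C(X)$ and Proposition~\ref{P-IE basic}(5), which says $q^2$ carries $\mu$-IE-pairs of $\Omega$ onto those of $X$. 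The main obstacle is the interface between the \emph{uniform} formulations---independence and $\ell_1$-equivalence measured against a single large-measure set, as in $\upind_\mu$ and $\lwind_\mu$---and the \emph{nonuniform} ones, namely $L^2$-perturbations, c.p.\ approximation ranks, and the F{\o}lner-indexed conditions $(3)$, $(4)$, $(9)$: bridging these requires the Sauer--Perles--Shelah/Karpovsky--Milman combinatorics behind Lemma~\ref{L-density}, Lemma~\ref{L-prime} and Proposition~\ref{P-alternative}, together with a compactness and finite-net argument to extract, from the separating disks that Lemma~\ref{L-l1 to indep} produces for each F{\o}lner set and each relativising set, a single fixed pair of disjoint disks---equivalently, a fixed finite cover of the range of $f$---on which the independence density remains positive; the point-set topology of the totally disconnected model $\Omega$ has to be handled carefully at this step.
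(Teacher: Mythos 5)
Your architecture (the cycle through (1), (2), (5), (6), (7), plus (3), (4) and the add-ons) is reasonable, and several of your easy implications ((1)$\Rightarrow$(5), (2)$\Rightarrow$(1), (7)$\Rightarrow$(1) via Lemma~\ref{L-rcp upper bd}, the treatment of (10), (11)) are fine, some even more direct than the paper's. But the crucial junction --- passing from the \emph{uniform} conditions (5)/(6), where $\ell_1$-behaviour is measured against a single projection $p$, to the \emph{perturbative} conditions (3)/(4)/(7) --- is exactly where your argument has a genuine gap, and you have flagged it yourself without resolving it. Your plan is: use Lemma~\ref{L-l1 to indep} to convert $\upind_\mu (f)>0$ into positive independence density of a \emph{fixed} pair of level sets $f^{-1}(B_1), f^{-1}(B_2)$ (extracted by a ``compactness and finite-net'' pigeonhole over a finite family of disks), then invoke Lemma~\ref{L-prime}/Proposition~\ref{P-alternative} to pass to nonuniform relativizing sets $D_s$ coming from Chebyshev. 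The pigeonhole fails because of quantifier order: the disks produced by Lemma~\ref{L-l1 to indep} depend on the relativizing projection $p$ (and on $F$), whereas $\upind_\mu ((f^{-1}(B_1),f^{-1}(B_2)),\delta)>0$ requires, for each $F$, a large independence set relative to \emph{every} $D$ of measure $\geq 1-\delta$ for one and the same pair of disks; from ``for every $p$ there exists a pair in the net with large independence relative to $p$'' you cannot deduce ``there exists a pair with large independence relative to every $p$,'' since the minimum over $D$ sits inside the definition of $\varphi_{\oA,\delta}$. The paper's proof of (6)$\Rightarrow$(4) avoids ever producing a fixed pair of disks: given a perturbation $\{g_s\}$ with associated projections $p_s$, it forms the single projection $r=\sum_{\sigma\in\cS}\prod_{s\in F_n}p_{s,\sigma(s)}$ over sparse bad-coordinate patterns, applies the hypothesis to $r$ to get an $\ell_1$-isomorphism set relative to $r$, converts this to independence of level sets of $\alpha_s(f)$ on $\Omega_r$ with \emph{instance-dependent} disks via Lemma~\ref{L-l1 to indep}, and then transfers the independence to level sets of the $g_s$ using the structure of $r$ together with the counting Lemma~\ref{L-density}, finishing with Rosenthal--Dor. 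Some mechanism of this kind (or the fixed-disk statement obtained only \emph{after} the cycle is closed, via (2)) is what your bridge is missing; without it, (5)$\Rightarrow$(3) and (6)$\Rightarrow$(7) as you describe them do not go through.

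Two secondary points. First, your (8)$\Rightarrow$(7) rests on the claimed identity $\hcpa_\mu (\beta )=\hcpa_\mu (\beta ,\{f\})$ (``finite-generation property''), which is a Kolmogorov--Sinai-type generator theorem for c.p.\ approximation entropy that is not available; the paper instead proves (8)$\Rightarrow$(1) by taking an arbitrary finite $\Upsilon$ in the algebra dynamically generated by $f$, building a zero-entropy partition out of spectral projections of $f$ on whose atoms every element of $\Upsilon$ has small oscillation, and applying Lemma~\ref{L-rcp upper bd}; you should do the same. Second, the hard half of (9) is only gestured at: ``a Rokhlin tower and ergodic-averaging argument'' is not a proof that (9) implies one of (1)--(8). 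The paper's route uses the Jewett--Krieger theorem for free ergodic actions of countable amenable groups to realize the subsystem generated by $f$ as a uniquely ergodic topological model, the variational principle to see it has zero topological entropy, and Theorem~5.3 of \cite{DEBS} to conclude that a continuous $g$ close to $f$ in $\| \cdot \|_\mu$ has zero $\ell_1$-isomorphism density, plus a product-with-a-free-action trick when the action is not free and Lemma~\ref{L-min} when $G$ is uncountable; no substitute for this machinery is offered in your sketch. (Also note that the paper's proof of (1)--(8) needs no reduction to metrizable $X$ or countable $G$, so the inverse-limit reduction you propose at the outset is unnecessary.)
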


\begin{proof}
(1)$\Rightarrow$(2). Since the $\alpha$-invariant von Neumann subalgebra of $\LX$
generated by $f$ is also dynamically generated by the set of spectral projections
of $f$ over closed subsets of the complex plane, we can
find a clopen set $Z\subseteq\Omega$ corresponding to a spectral projection
of $f$ over $A$ for some set $A\subseteq\Cb$
such that the two-element clopen partition $\cZ = \{ Z , Z^\comp \}$ satisfies
$\hmeas_\mu (\Omega , \cZ ) > 0$. Using Lemma~\ref{L-pe to pd}
we can find a closed set $B\subseteq\Cb$ with $B\cap A = \emptyset$ such that the pair
$(Z,Z' )$ has positive $\mu$-independence density, where $Z'$ is the subset of $\Omega$
supporting the spectral projection of $f$ over $B$. By Proposition~\ref{P-IE basic}(1) there
is a $\mu$-IE-pair $(\sigma_1 , \sigma_2 )\in\Omega\times\Omega$ such that $\sigma_1 \in Z$
and $\sigma_2 \in Z'$. Then $f(\sigma_1 ) \in A$ while $f(\sigma_2 ) \in B$,
establishing (2).

(2)$\Rightarrow$(3). Let $(\sigma_1 ,\sigma_2 ) \in \Omega\times\Omega$ be a
$\mu$-IE-pair such that $f(\sigma_1 ) \neq f(\sigma_2 )$. Choose disjoint closed disks
$B_1 , B_2 \subseteq\Cb$ such that ${\rm diam}(B_1)={\rm diam}(B_2)\le
\frac{1}{10}{\rm dist}(B_1, B_2)$, $f(\sigma_1 )\in \interior (B_1 )$, and
$f(\sigma_2 )\in \interior (B_2 )$ and set $\varepsilon = \frac{1}{10}{\rm dist} (B_1 , B_2 )$. Choose
clopen neighbourhoods $A_1$ and $A_2$ of $\sigma_1$ and $\sigma_2$, respectively, such that
$f(A_1 ) \subseteq B_1$ and $f(A_2 ) \subseteq B_2$. Write $\oA$ for
the pair $(A_1 , A_2 )$. Since
$(\sigma_1 ,\sigma_2 )$ is a $\mu$-IE-pair there exists by Proposition~\ref{P-alternative}
a $\delta > 0$ such that $\lwind_{\mu}' (\oA , \delta ) > 0$. Take an $\eta > 0$ such that
whenever $h$ is an element of $L^\infty (X,\mu )$ for which
$\| h \|_\mu < \eta$ the set
$\{ x\in X : |h(x)| \leq \varepsilon \}$ has measure at least $1-\delta$.

Now let $n\in\Lambda$ and suppose that we are given $g_s \in \LX$ for $s\in F_n$
such that $\| g_s - \alpha_s (f) \|_\mu < \eta$
for every $s\in F_n$. For each $s\in F_n$ set
$D_s = \{ \sigma\in\Omega : |g_s (\sigma ) - \alpha_s (f)(\sigma )| \leq\varepsilon \}$,
which has measure at least $1 - \delta$ by our choice of $\eta$, and for
$s\in G\setminus F_n$ set $D_s = \Omega$.
Put $d = \lwind_{\mu}' (\oA , \delta ) /2$.
Assuming that $n > n_0$ for a suitable $n_0 \in\Lambda$,
there exist an independence set $I\subseteq F_n$
for $\oA$ relative to the map $s\mapsto D_s$ such that
$|I| \geq d |F_n |$.
The standard Rosenthal-Dor argument \cite{Dor} then shows that the linear map
$\ell_1^I \to\spn \{ g_s : s\in I \}$ sending the standard basis element with index
$s\in I$ to $g_s$ has an inverse with norm at most $\varepsilon^{-1}$,
yielding (3).

(3)$\Rightarrow$(4). Trivial.

(4)$\Rightarrow$(7). Apply Lemma~\ref{L-rcp lower bd}.

(3)$\Rightarrow$(5). We may assume that $\| f \| = 1$.
Let $d$, $\delta$, and $\lambda$ be as given by (3).
Then for any $p\in\mathscr{P} (\mu , \delta^2 )$ and $s\in G$ we have
$\| p\alpha_s (f) - \alpha_s (f) \|_\mu \leq \| p-1 \|_\mu \| f \|
\leq \delta $.
It follows that $\varphi_{f,\lambda ,\delta^2 } (F_n ) \geq d|F_n |$ for
every $n\in\Nb$, and hence $\lwind_\mu (f) \geq \lwind_\mu (f,\lambda ,\delta^2 )
\geq d > 0$.

(5)$\Rightarrow$(6). Trivial.

(6)$\Rightarrow$(4). We may assume that $G$ is infinite and $\| f \| = 1$. By (6)
there are a $\lambda\geq 1$ and a $\delta > 0$
such that $\upind_\mu (f,\lambda ,\delta ) > 0$.
Then there is a $d > 0$ and a cofinal set $L\subseteq\Lambda$ such that
$\varphi_{f,\lambda ,\delta} (F_n ) \geq d |F_n |$ for all $n\in L$.
Let $b$ be a positive number to be further specified below, and set
$\delta' = \delta b$.
Let $c>0$ and $\varepsilon > 0$ be as given by Lemma~\ref{L-l1 to indep} with respect
to $\delta = \lambda^{-1}$.
Take an $\eta > 0$ such that whenever $h$ is an element of
$L^\infty (X,\mu )$ for which $\| h \|_\mu < \eta$ the set
$\{ x\in X : |h(x)| \leq\varepsilon /12 \}$ has measure at least $1-\delta'$.

Now let $n\in L$, and suppose we are given $g_s \in \LX$ for $s\in F_n$ such that
$\| g_s - \alpha_s (f) \|_\mu < \eta$
for every $s\in F_n$. By our choice of $\eta$, for every $s\in F_n$ there
is a projection $p_s \in\mathscr{P} (\mu , \delta' )$ such that
$\| p_s (g_s - \alpha_s (f)) \| \leq\varepsilon /12$. Denote by $\cS$ the set
of all $\sigma\in \{ 1,2 \}^{F_n}$ such that $|\sigma^{-1} (2)| \leq b|F_n |$.
Setting $p_{s,1} = p_s$ and $p_{s,2} = p_s^\perp$
we define the projection $r = \sum_{\sigma\in\cS} \prod_{s\in F_n } p_{s,\sigma (s)}$.
Then
\[ \mu (r^\perp ) b|F_n | \leq \sum_{s\in F_n} \mu (p_s^\perp ) \leq |F_n | \delta' \]
and so $\mu (r^\perp ) \leq b^{-1} \delta' = \delta$. Hence there is an
$K\subseteq F_n$ with $|K|\geq d|F_n |$ such that $K$ is an
$\ell_1$-$\lambda$-isomorphism set for $f$ relative to $r$.

By our choice of $c$ and $\varepsilon$, assuming that $|F_n |$ is sufficiently large
we can find closed disks $B_1 , B_2 \subseteq\Cb$ of diameter at most $\varepsilon /6$
with ${\rm dist} (B_1 , B_2 ) \geq \varepsilon$ and a
$J\subseteq K$ with $|J| \geq c|K|$ such that the collection
\[ \big\{ \big( (\alpha_s (f)|_{\Omega_r} )^{-1} (B_1 ) ,
(\alpha_s (f)|_{\Omega_r} )^{-1} (B_2 ) \big) : s\in J \big\} \]
of pairs of subsets of $\Omega_r$ is independent.
Define the subsets $C_{s,1} = (g_s |_{\Omega_r} )^{-1} (B_1' )$
and $C_{s,2} = (g_s |_{\Omega_r} )^{-1} (B_2' )$ of $\Omega_r$, where
$B_1'$ (resp.\ $B_2'$) is the closed disk with the same centre as $B_1$
(resp.\ $B_2$) but with radius bigger by $\varepsilon / 12$.
Since $\max_{s\in J} \| p_s (g_s - \alpha_s (f)) \| \leq\varepsilon /12$,
for each $\sigma\in \{ 1,2 \}^J$ we can find by the
definition of $r$ a set $J_\sigma \subseteq J$ with
$|J\setminus J_\sigma | \leq b|F_n |$ such that
$\bigcap_{s\in J_\sigma } (\Omega_{p_s} \cap C_{s,\sigma (s)} ) \neq \emptyset$,
and we define $\rho_\sigma \in \{ 0,1,2 \}^J$ by
\[ \rho_\sigma (s) = \left\{ \begin{array}{l@{\hspace*{8mm}}l}
\sigma (s) &
\text{if } s\in J_\sigma , \\
0 &
\text{otherwise} .
\end{array} \right. \]
Since $\max_{\sigma\in\{ 1,2 \}^J} |\rho_\sigma^{-1} (0)| \leq 2^{b|F_n |}$, for every
$\rho\in \{ 0,1,2 \}^J$ the number of
$\sigma\in\{ 1,2 \}^J$ for which $\rho_\sigma = \rho$ is at most
$2^{b|F_n |}$, and so the set
$\cR = \big\{ \rho_\sigma : \sigma \in \{ 1,2 \}^J \big\}$ has
cardinality at least $2^{|J|} / 2^{d|F_n |} \geq 2^{(cd-b)|F_n |}$.
It follows by Lemma~\ref{L-density} that for a small enough $b$ not depending on $n$
there exists a $t > 0$ for which
we can find an $I\subseteq J$ with
$|I|\geq t|J| \geq tcd|F_n |$ such that
$\cR |_I \supseteq \{ 1,2 \}^I$.
Then the collection $\{ (C_{s,1} , C_{s,2} ) : s\in I \}$ is independent, and
since ${\rm dist} (B_1' , B_2' ) \geq 5\varepsilon /6 >
2\max (\diam (B_1' ), \diam (B_2' ))$
the standard Rosenthal-Dor argument \cite{Dor} shows that the linear map
$\ell_1^I \to\spn \{ g_s : s\in I \}$ sending the standard basis element with index $s\in I$
to $g_s$ has an inverse with norm at most $10\varepsilon^{-1}$.
We thus obtain (4).

(7)$\Rightarrow$(8). It suffices to note that if $N$ is an $G$-invariant
von Neumann subalgebra of $\LX$ then for every finite subset $\Theta\subseteq N$ we
have $\hcpa_{\mu |_N} (N , \Theta ) = \hcpa_\mu (\LX , \Theta )$,
i.e., for computing c.p.\ approximation entropy it doesn't matter
whether $\Theta$ is considered as a subset of $N$ or $\LX$. This follows from
the fact that there is a $\mu$-preserving conditional expectation from $\LX$
onto $N$ \cite[Prop.\ V.2.36]{Tak1}. See the proof of Proposition~3.5 in \cite{Voi}.

(8)$\Rightarrow$(1). Suppose that $f\in\mathfrak{P}_X$. Let $\Upsilon$ be a finite
subset of the von Neumann subalgebra of $\LX$ generated by $f$ and let $\delta > 0$.
Take a finite Borel partition $\cP$ of $X$ such that the characteristic functions of the atoms
of $\cP$ are spectral projections of $f$ and
$\sup_{g\in\Omega} \esssup_{x,y\in P} | g(x) - g(y) | < \delta$
for each $P\in\cP$. Then $\hmeas_\mu (X,\cP ) = 0$ by our assumption, and thus, since
we may suppose $G$ to be infinite (for otherwise the system has completely positive entropy),
we obtain $\hcpa_\mu (\beta , \Upsilon, \sqrt{\delta^2+4\delta} ) \leq 2\delta$ by Lemma~\ref{L-rcp upper bd}.
Hence (8) fails to hold. Thus (8) implies (1).

Assume now that $G$ is countable and the action is free and ergodic and let us show that (9)
is equivalent to the other conditions.

(3)$\Rightarrow$(9). Let $d$, $\delta$, and $\lambda$ be as given by (3). Let $g$ be an
element of $\LX$ such that $\| g - f \|_\mu < \delta$. Then
$\| \alpha_s (g) - \alpha_s (f) \|_\mu < \delta$
for all $s\in G$, and so for every $n\in\Nb$ there is an $I\subseteq F_n$
of cardinality at least $d|F_n |$ for which $\{ \alpha_s (g) : s\in I \}$ is
$\| g \|\lambda$-equivalent in the operator norm to the standard basis of $\ell_1^I$.
Thus $g$ has positive $\ell_1$-isomorphism density.

(9)$\Rightarrow$(8). Suppose that $G$ is countable. 
We will first treat the case that the action of $G$ on $X$ is free. Suppose contrary
to (8) that $\hcpa_\mu (\beta ) = 0$. Since $\alpha$ is free and ergodic so is $\beta$,
and since $G$ is countable the von Neumann subalgebra of $\LX$ dynamically 
generated by $f$ has separable predual.
We can thus apply the Jewett-Krieger theorem
for free ergodic measure-preserving actions of countable discrete amenable groups 
on Lebesgue spaces (see \cite{FUG}, which
shows the finite entropy case; the general case was announced
in \cite{SEM} but remains unpublished) to
obtain a topological $G$-system $(Y,G)$ with a unique invariant Borel probability
measure $\nu$ such that $\beta$ can be realized as the action of $G$ on
$L^\infty (Y,\nu )$ arising from the action of $G$ on $Y$. Now let $\delta > 0$ be as
given by (9). Take a function
$g\in C(Y) \subseteq L^\infty (Y,\nu )$ such that
$\| g - f \|_\mu < \delta$. Since the system $(Y,G)$ has zero topological entropy by the
variational principle \cite{JMO}, it follows by Theorem~5.3 of \cite{DEBS}
(which is stated for $\Zb$-systems but is readily seen to cover actions of
general amenable groups) that the
function $g$ has zero $\ell_1$-isomorphism density,
contradicting our choice of $\delta$. We thus obtain (9)$\Rightarrow$(8)
in the case that the action is free.

Suppose now that the action of $G$ on $X$ is not free.
Take a free weakly mixing measure-preserving acion of $G$ on a Lebesgue space
$(Z,\sZ ,\omega )$ (e.g., a Bernoulli shift). Then the product action on
$X\times Z$ is free and ergodic. Write $E$ for the conditional expectation of
$L^\infty (X\times Z , \mu\times\omega )$ onto $L^\infty (X,\mu )$. With $\delta > 0$ as
given by (9), for every $g\in L^\infty (X\times Z,\mu\times\omega )$ such that
$\| E(g) - f \|_\mu < \delta$ the function $E(g)$ has positive $\ell_1$-isomorphism
density, which implies that $g$ has positive $\ell_1$-isomorphism density
since $E$ is contractive and $G$-equivariant. Thus the function
$f\otimes\boldsymbol{1}$ in $L^\infty (X\times Z,\mu\times\omega )$ also satisfies (9)
for the same $\delta$. By the previous paragraph we obtain (8) for
$f\otimes\boldsymbol{1}$. But this is equivalent to (8) for $f$ itself,
yielding (9)$\Rightarrow$(8) when $G$ is countable.

Suppose that $G$ is uncountable and $X$ is metrizable. In this case we will
actually show (9)$\Rightarrow$(7). For every 
$s\in G$ write $\cE_s$ for the orthogonal complement in $L^2 (X,\mu )$
of the subspace of vectors fixed by $s$. Then the span of $\bigcup_{s\in G} \cE_s$ is dense 
in $L^2 (X,\mu )\ominus\Cb\boldsymbol{1}$ by ergodicity, and since $L^2 (X,\mu )$ is separable
there is a countable set $J\subseteq G$ such that the span of $\bigcup_{s\in J} \cE_s$  
is dense in $L^2 (X,\mu )\ominus\Cb\boldsymbol{1}$. It follows that the subgroup $H$
generated by $J$ does not fix any vectors in $L^2 (X,\mu )\ominus\Cb\boldsymbol{1}$.
This means that the action of $H$ on $X$ is ergodic, as is the action of any
subgroup of $G$ containing $H$. By Lemma~\ref{L-min} condition (9) holds
for the action of every subgroup of $G$ containing $H$, and thus for the action of
a countable such subgroup we get (9)$\Rightarrow$(8) by the two previous paragraphs 
and hence (9)$\Rightarrow$(7).
But if (7) fails for the action of $G$ then it fails for the action of every 
subgroup of $G$ containing some fixed countable subgroup $W$ of $G$ and in particular
for the action of the countable subgroup generated by $H$ and $W$, yielding a contradiction.

Finally, we suppose that $f\in C(X)$ and demonstrate the equivalence of (11) and (12)
with the other conditions.

(2)$\Rightarrow$(11). The inclusion $C(\supp(\mu ))\subseteq\LX$ gives rise at the spectral
level to a topological $G$-factor map $\Omega\to \supp (\mu )$, and so the implication
follows from Proposition~\ref{P-IE basic}(5).

(11)$\Rightarrow$(10). Use Proposition~\ref{P-IE basic}(5).

(10)$\Rightarrow$(11). Suppose that $f(x_1)=f(x_2)$ for
every $(x_1, x_2)\in \IE^\mu_2 (X)$. Set $E=\{(x, y)\in X\times X:f(x)=f(y)\}$.
Then $E$ is a closed equivalence relation on $X$.
Thus $\bigcap_{s\in G} sE$ is a $G$-invariant closed equivalence
relation on $X$ and hence gives rise to a topological $G$-factor $Y$ of $X$. In
particular, $f\in C(Y)$. Denote the factor map $X\rightarrow Y$ by $\pi$.
Our assumption says that $\IE^\mu_2 (X)\subseteq E$. Since $\IE^\mu_2 (X)$ is
$G$-invariant, $\IE^\mu_2 (X)\subseteq \bigcap_{s\in G} sE$. This means that
$(\pi\times \pi)(\IE^\mu_2 (X))\subseteq \triangle_Y$.
By (2) and (5) of Proposition~\ref{P-IE basic}, $h_{\pi_*(\mu)} (Y) = 0$.

(11)$\Rightarrow$(3). Apply the same argument as for (2)$\Rightarrow$(3).
\end{proof}

Theorem~\ref{T-Pinsker} shows that for general $X$
the Pinsker von Neumann algebra
is the largest $G$-invariant von Neumann subalgebra of $\LX$ on which
the c.p.\ approximation entropy is zero.

\begin{remark}
One interesting consequence of Theorem~\ref{T-Pinsker} is the following. 
In the case that $G$ is countable, 
if a weakly mixing measure-preserving action of $G$ on a 
Lebesgue space $(Y,\sY , \nu )$ does not have completely positive entropy, 
then it has a metrizable topological model
$(Z,G)$ for which the set $\IE^k (Z)$ of topological IE-tuples has 
zero $\nu^k$-measure for each $k\geq 2$. 
Indeed weak mixing implies that the product action of $G$ on $Y^k$ is ergodic 
with respect to $\nu^k$, so that for a topological model $(Z,G)$ and $k\geq 2$
the set $\IE^k (Z)$ has $\nu^k$-measure either zero or one. If
for every metrizable topological model $(Z,G)$ we had $\nu^k (\IE^k (Z)) = 1$ for
some $k\geq 2$, it would follow that every element of $L^\infty (Y,\nu )$
has positive $\ell_1$-isomorphism density, since such an element is a
continuous function for some metrizable topological model by the countability
of $G$ and hence separates a topological IE-pair.
But then $(Y,\sY , \nu , G)$ would have completely positive entropy by
Theorem~\ref{T-Pinsker}. Actually the weak mixing assumption 
can be weakened to the requirement that there be no sets of measure strictly
between zero and one with finite $G$-orbit.

We also point out that, in a related vein, if the topological system $(X,G)$
does not have completely positive entropy, then for a $G$-invariant Borel probability 
measure on $X$ the set $\IE^k (X)$ has zero product measure for each $k\geq 2$, unless
some nontrivial quotient of $(X,G)$ has points with positive induced measure. The reason
is that if $\IE^k (X)$ for some $k\geq 2$ has positive product measure then so does 
$\IE^k (Y)$ with respect to the induced measure for every quotient $(Y,G)$ of 
$(X,G)$, and if every point in such a quotient $(Y,G)$ has zero induced measure then 
the diagonal in $Y^k$ has zero product measure and hence does
not contain $\IE^k (Y)$, implying that $(Y,G)$ has positive topological entropy.
In particular, we see that if $(X,G)$ is minimal and does not have completely positive
entropy and $X$ is connected (and hence has no nontrivial finite quotients) then for 
every $G$-invariant Borel probability measure on $X$ the set
$\IE^k (X)$ has zero product measure for each $k\geq 2$. 
\end{remark}

At the extreme end of completely positive entropy where the Pinsker von Neumann
algebra reduces to the scalars, the picture topologizes
and we have the following result. Recall that a topological system is said to
have {\it completely positive entropy} if every nontrivial factor has positive
topological entropy, {\it uniformly positive entropy} if every nondiagonal
element of $X\times X$ is an entropy pair, and {\it uniformly positive entropy
of all orders} if for each $k\geq 2$ every nondiagonal element of $X^k$ is an
entropy tuple (see \cite[Chap.\ 19]{ETJ} and \cite{LVRA}).

\begin{theorem}\label{T-cpe}
Suppose that $X$ is metrizable or $G$ is countable.
Let $\boldsymbol{\Omega} = (\Omega , G)$ be the topological dynamical system
associated to $\oX = (X,\sB ,\mu ,G)$ as above. Then the following are equivalent:
\begin{enumerate}
\item $\oX$ has completely positive entropy,

\item every nonscalar element of $\LX$ has positive $\ell_1$-isomorphism density,

\item $\boldsymbol{\Omega}$ has completely positive entropy,

\item $\boldsymbol{\Omega}$ has uniformly positive entropy,

\item $\boldsymbol{\Omega}$ has uniformly positive entropy of all orders.
\end{enumerate}
\end{theorem}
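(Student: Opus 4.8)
The plan is to establish the cycle of implications $(1)\Rightarrow(2)\Rightarrow(3)\Rightarrow(4)\Rightarrow(5)\Rightarrow(1)$, using Theorem~\ref{T-Pinsker} as the bridge between the measure side and the combinatorial/geometric side, together with known facts from \cite{Ind} about topological IE-tuples. The key observation throughout is that $\oX$ has completely positive entropy precisely when the Pinsker $\sigma$-algebra is trivial, i.e. $\mathfrak{P}_X = \Cb\boldsymbol{1}$.

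For $(1)\Rightarrow(2)$: if $\oX$ has completely positive entropy then $\mathfrak{P}_X = \Cb\boldsymbol{1}$, so every nonscalar $f\in\LX$ lies outside $\mathfrak{P}_X$; by the equivalence $(1)\Leftrightarrow(9)$ of Theorem~\ref{T-Pinsker}, in the ergodic case we would be done, but here we must handle non-ergodic systems. The cleaner route is $(1)\Rightarrow(2)$ directly via condition $(6)$ of Theorem~\ref{T-Pinsker}: $f\notin\mathfrak{P}_X$ gives $\upind_\mu(f)>0$, and a standard argument (as in the proof of $(3)\Rightarrow(9)$, but now without perturbation since we want topological $\ell_1$-isomorphism density of $f$ itself) shows that positive upper $\mu$-independence density of spectral sets of $f$ forces positive $\ell_1$-isomorphism density in the operator norm. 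Actually the most economical path is to observe that $f$ nonscalar, $\mathfrak{P}_X$ trivial, hence $f\notin\mathfrak{P}_X$, hence by Theorem~\ref{T-Pinsker}$(2)$ there is a $\mu$-IE-pair $(\sigma_1,\sigma_2)\in\Omega\times\Omega$ with $f(\sigma_1)\neq f(\sigma_2)$; running the argument of $(2)\Rightarrow(3)$ with no perturbation ($g_s = \alpha_s(f)$) produces, over each F\o lner set $F_n$, an independence set $I\subseteq F_n$ of positive density relative to all of $X$ (take $D_s = \Omega$), and the Rosenthal--Dor argument then gives $\lambda$-equivalence of $\{\alpha_s(f):s\in I\}$ to the $\ell_1^I$ basis in operator norm; since these densities are along a F\o lner net and the relevant limit exists by \cite{Ind}, $f$ has positive $\ell_1$-isomorphism density.

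For $(2)\Rightarrow(3)$: viewing elements of $\LX$ as continuous functions on $\Omega$, every nonscalar $h\in C(\Omega)$ that arises from $\LX$ has positive $\ell_1$-isomorphism density; in particular, by Theorem~5.3 of \cite{DEBS} (valid for amenable $G$), positive $\ell_1$-isomorphism density of a continuous function implies it separates a topological IE-pair. So every nonscalar element of $\LX\subseteq C(\Omega)$ separates a topological IE-pair of $\boldsymbol\Omega$, which forces $\IE_2(\Omega)\setminus\Delta_2(\Omega)$ to be large enough that every nontrivial topological factor of $\boldsymbol\Omega$ has a nondiagonal IE-pair in its image, hence positive topological entropy by the local variational machinery; thus $\boldsymbol\Omega$ has completely positive entropy. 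Here one uses that $\LX$ separates points of $\Omega$, so a nontrivial factor $\boldsymbol\Omega\to\boldsymbol{Y}$ pulls back some nonscalar $f\in C(Y)\subseteq\LX$, whose topological IE-pair must then have nondiagonal image in $Y$.

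For $(3)\Rightarrow(4)\Rightarrow(5)$: these follow from the theory of topological IE-tuples in \cite{Ind} — completely positive entropy is equivalent to uniformly positive entropy of all orders via the fact that IE-pairs are entropy pairs and the product formula for IE-tuples (Theorem~3.15 of \cite{Ind}); more directly, by \cite{Ind} a topological system has completely positive entropy iff every nondiagonal tuple in every $X^k$ is an IE-tuple, which is exactly $(5)$, and $(5)\Rightarrow(4)$ is trivial (restrict to $k=2$, recalling nondiagonal IE-pairs are entropy pairs), while $(4)\Rightarrow(3)$ holds because uniform positive entropy forces every nontrivial factor to inherit a nondiagonal entropy pair. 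Finally $(5)\Rightarrow(1)$: if every nondiagonal tuple in $\Omega^k$ is a topological IE-tuple then in particular, since a $\mu$-IE-tuple is detected by Lemma~\ref{L-measure IE char} via non-vanishing of products of $E_X$ of characteristic functions, and since every nonscalar spectral projection is supported on a clopen subset of $\Omega$ whose complement is clopen, one deduces directly that $E_X(\chi_U)E_X(\chi_{U^c})\neq 0$ for every nontrivial clopen $U\subseteq\Omega$ — but this cannot happen unless $\mathfrak{P}_X$ contains only scalars, forcing $h_\mu$ of every nontrivial factor of $\oX$ to be positive by the characterization of the Pinsker algebra; hence $\oX$ has completely positive entropy. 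The main obstacle I anticipate is the non-ergodic, non-metrizable case in the passage between $(2)$ and $(3)$, where one must be careful that $\LX$ (not merely $C(X)$) separates points of $\Omega$ and that Theorem~5.3 of \cite{DEBS} applies to the continuous functions coming from $\LX$; this is where the hypothesis "$X$ metrizable or $G$ countable" is used, to guarantee enough separability for the reduction arguments (à la the final paragraphs of the proof of Theorem~\ref{T-Pinsker}) to go through.
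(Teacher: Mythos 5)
Your cycle breaks at two points. The first is the passage from (3) to (4) and (5). For a general topological system it is \emph{not} true that completely positive entropy is equivalent to every nondiagonal tuple being an IE-tuple: that condition is uniform positive entropy of all orders, and by Blanchard's examples u.p.e.\ is strictly stronger than c.p.e., so nothing in \cite{Ind} lets you run c.p.e.\ $\Rightarrow$ u.p.e.\ for an arbitrary system, and at that point of your cycle you only have (3) available. The implication does hold for the particular system $\boldsymbol{\Omega}$, but only because of its measure-theoretic structure: every Borel partition of $\Omega$ is $\mu$-equivalent to a clopen partition, so condition (1) forces every nontrivial clopen partition to have positive measure entropy, whence every nondiagonal tuple in $\Omega^k$ is a $\mu$-entropy tuple, hence a $\mu$-IE-tuple by Theorem~\ref{T-IE E}, hence an IE-tuple and (being nondiagonal) an entropy tuple. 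That is exactly how the paper proves (1)$\Rightarrow$(5) directly; starting from (3) alone you cannot reach (5), and the correct general implications run the other way, (5)$\Rightarrow$(4)$\Rightarrow$(3).

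The second gap is your step (5)$\Rightarrow$(1), which conflates topological IE-pairs with $\mu$-IE-pairs. Lemma~\ref{L-measure IE char} characterizes the non-vanishing of $\prod_i E_X(\chi_{U_i})$ in terms of $\mu$-IE-tuples, whereas (5) only supplies topological IE-tuples of $\boldsymbol{\Omega}$; these carry no information about the conditional expectation onto $\mathfrak{P}_X$ for the given measure $\mu$ (recall that even zero entropy systems admit u.p.e.\ topological models \cite{SEUPEM}, so ubiquitous topological independence does not by itself constrain a fixed invariant measure). The paper instead closes the loop via (2)$\Rightarrow$(1): a nonscalar $G$-invariant projection would have zero $\ell_1$-isomorphism density, so (2) forces ergodicity, and then (9)$\Rightarrow$(1) of Theorem~\ref{T-Pinsker} applies --- and this is precisely where the hypothesis that $X$ be metrizable or $G$ countable enters, not in the passage from (2) to (3) as you surmised (there is no separation issue there, since $C(\Omega)=\LX$ by construction). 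Your implications (1)$\Rightarrow$(2) and (2)$\Rightarrow$(3) are essentially sound --- for the latter it suffices to pull a nonscalar function back from a nontrivial factor and apply the contrapositive of Theorem~5.3 of \cite{DEBS} --- but the return path from the topological conditions to (1) must go through (2), not through (5).
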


\begin{proof}
(1)$\Rightarrow$(5). Every Borel partition of $\Omega$ is $\mu$-equivalent
to a clopen partition and thus every nontrivial such partition has positive entropy
by (1). It follows that, for each $k\geq 2$, every nondiagonal tuple in $\Omega^k$
is a $\mu$-entropy tuple and hence a $\mu$-IE-tuple by Theorem~\ref{T-IE E}.
Since $\mu$-IE-tuples are obviously IE-tuples and the latter are easily seen to be
entropy tuples when they are nondiagonal, we obtain (5).

(5)$\Rightarrow$(4)$\Rightarrow$(3). These implications hold for any topological $G$-system,
the first being trivial and the second being a consequence of the properties of entropy for
open covers with respect to taking extensions.

(3)$\Rightarrow$(2). Apply Corollary~5.5 of \cite{DEBS} as extended to actions
of discrete amenable groups.

(2)$\Rightarrow$(1). By (2) there do not exist any nonscalar $G$-invariant
projections in $\LX$, i.e., the system $\oX$ is ergodic. We can thus apply
(9)$\Rightarrow$(1) of Theorem~\ref{T-Pinsker}.
\end{proof}

For $G=\Zb$ the equivalence of (1), (3), (4), and (5) in Theorem~\ref{T-cpe}
can also be obtained from Section~3 of \cite{GW}.

One might wonder whether a similar type of topologization occurs at the other
extreme of zero entropy. Glasner and Weiss
showed however in \cite{SEUPEM} that every free ergodic $\Zb$-system has a minimal
topological model with uniformly positive entropy.

Using Theorem~\ref{T-Pinsker} and viewing joinings as equivariant unital positive
maps, we can give a linear-geometric proof of the disjointness of zero entropy systems
from completely positive entropy systems, which for measure-preserving actions of
discrete amenable groups on Lebesgue spaces was established in \cite{ETWP}
(see also Chapter~6 of \cite{ETJ}).
Recall that a {\em joining} between two measure-preserving $G$-systems $(Y,\sY ,\nu ,G)$ and
$(Z,\sZ ,\omega ,G)$ is a $G$-invariant probability measure on
$(Y\times Z, \sY\otimes\sZ )$ with $\nu$ and $\omega$ as marginals. The two systems
are said to be {\em disjoint} if $\nu\times\omega$ is the only joining between them.

\begin{proposition}\label{P-joining Pinsker}
Let $(Y,\sY ,\nu ,G)$ and $(Z,\sZ ,\omega ,G)$
be measure-preserving $G$-systems. Let $\varphi : L^\infty (Y,\nu ) \to L^\infty (Z,\omega )$
be a $G$-equivariant unital positive linear map such that $\omega\circ\varphi = \nu$. Then
$\varphi (\mathfrak{P}_X ) \subseteq \mathfrak{P}_Y$.
\end{proposition}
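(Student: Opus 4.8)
The plan is to prove the contrapositive using the equivalence of conditions~(1) and~(3) in Theorem~\ref{T-Pinsker}, exploiting that $\varphi$ is contractive for \emph{both} the operator norm and the $L^2$-norm. First I would record the two features of $\varphi$ that drive the argument. Since $L^\infty (Y,\nu )$ is commutative, $\varphi$ is automatically completely positive, so Kadison's inequality gives $\varphi (g)^* \varphi (g) \leq \varphi (g^* g)$; applying $\omega$ and using $\omega\circ\varphi = \nu$ then yields $\| \varphi (g) \|_\omega \leq \| g \|_\nu$ for all $g\in L^\infty (Y,\nu )$. Being unital and positive, $\varphi$ is also contractive for the operator norm. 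And $G$-equivariance gives $\alpha_s^Z (\varphi (g)) = \varphi (\alpha_s^Y (g))$ for every $s\in G$. Passing to topological models, I would assume $(Y,\sY ,\nu ,G)$ and $(Z,\sZ ,\omega ,G)$ are topological dynamical systems equipped with invariant Borel probability measures, so that Theorem~\ref{T-Pinsker} applies to each, and fix a common F{\o}lner net $\{ F_n \}_{n\in\Lambda}$ in $G$; write $\mathfrak{P}_Y$ and $\mathfrak{P}_Z$ for the respective Pinsker von Neumann algebras.

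Assuming toward a contradiction that $f\in\mathfrak{P}_Y$ while $\varphi (f)\notin\mathfrak{P}_Z$, I would apply the implication (1)$\Rightarrow$(3) of Theorem~\ref{T-Pinsker} to the function $\varphi (f)$ on $Z$, obtaining $d,\delta ,\lambda > 0$ and $n_0 \in\Lambda$ such that for every $n > n_0$ and every choice of $h_s \in L^\infty (Z,\omega )$, $s\in F_n$, with $\| h_s - \alpha_s^Z (\varphi (f)) \|_\omega < \delta$, there is an $I\subseteq F_n$ with $|I|\geq d|F_n |$ for which the linear map $\ell_1^I \to\spn \{ h_s : s\in I \}$ sending the $s$th basis vector to $h_s$ has inverse of norm at most $\lambda$. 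I would then verify that the \emph{same} $d,\delta ,\lambda ,n_0$ witness condition~(3) for $f$ on $Y$: given $n > n_0$ and $g_s \in L^\infty (Y,\nu )$ with $\| g_s - \alpha_s^Y (f) \|_\nu < \delta$, set $h_s = \varphi (g_s )$; the $L^2$-contractivity of $\varphi$ gives $\| h_s - \alpha_s^Z (\varphi (f)) \|_\omega = \| \varphi (g_s - \alpha_s^Y (f)) \|_\omega < \delta$, so the $Z$-statement produces $I\subseteq F_n$ with $|I|\geq d|F_n |$ and $\big\| \sum_{s\in I} c_s h_s \big\| \geq \lambda^{-1} \sum_{s\in I} |c_s|$ for all scalars $c_s$. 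Since $\sum_{s\in I} c_s h_s = \varphi \big( \sum_{s\in I} c_s g_s \big)$ and $\varphi$ is operator-norm contractive, this forces $\big\| \sum_{s\in I} c_s g_s \big\| \geq \lambda^{-1} \sum_{s\in I} |c_s|$, i.e.\ the map $\ell_1^I \to\spn \{ g_s : s\in I \}$ has inverse of norm at most $\lambda$. Thus condition~(3) holds for $f$ on $Y$, so $f\notin\mathfrak{P}_Y$ by Theorem~\ref{T-Pinsker}, contradicting $f\in\mathfrak{P}_Y$. Hence $\varphi (\mathfrak{P}_Y )\subseteq\mathfrak{P}_Z$.

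I do not expect a genuine obstacle here; the content is in selecting the right characterization. The point is that condition~(3) of Theorem~\ref{T-Pinsker} is one-directional: it can only be destroyed, never created, by pushing orbits forward through a map that contracts the $L^2$-metric (so that a perturbation remains a perturbation) and the operator norm (so that the $\ell_1$ lower bound survives). Running the argument instead through the $\mu$-IE-pair description~(2) would require $\varphi$ to induce a topological factor map on pure state spaces, which a general positive unital map does not; and the c.p.\ approximation entropy descriptions~(7)--(8) would require transporting completely positive approximations in the \emph{opposite} direction to $\varphi$. One minor point worth noting is that $\varphi$ need not be normal, so the argument deliberately avoids any appeal to a preadjoint $L^1 (Z,\omega ) \to L^1 (Y,\nu )$.
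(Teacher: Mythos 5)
Your proposal is correct and is essentially the paper's own argument: the paper also proves the result by noting that a unital positive $\varphi$ with $\omega\circ\varphi=\nu$ is contractive both for the operator norm and for the norms $\|\cdot\|_\nu$, $\|\cdot\|_\omega$, and then observing that condition (3) of Theorem~\ref{T-Pinsker} for $\varphi(f)$ passes to $f$ with the same witnessing constants $d$, $\delta$, $\lambda$, so the equivalence (1)$\Leftrightarrow$(3) gives the conclusion. Your write-up merely spells out the perturbation-transfer step and the reduction to topological models that the paper leaves implicit.
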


\begin{proof}
Since $\varphi$ is unital and positive it is operator norm contractive and
for every $f\in L^\infty (Y,\nu )$ we have
\[ \| \varphi (f) \|_\omega = \omega (\varphi (f)^* \varphi (f))^{1/2}
\leq \omega (\varphi (f^* f))^{1/2} = \nu (f^* f)^{1/2} = \| f \|_\nu , \]
that is, $\varphi$ is also contractive for the norms
$\| \!\cdot\! \|_\nu$ and $\| \!\cdot\! \|_\omega$. Thus if condition (3) in
Theorem~\ref{T-Pinsker} holds for a given $f\in L^\infty (Z,\omega )$ with
witnessing constants $d$, $\delta$, and $\lambda$ then it also holds
for every element of $\varphi^{-1} (\{ f \} )$ with the same witnessing constants.
The equivalence (1)$\Leftrightarrow$(3) in Theorem~\ref{T-Pinsker} now
yields the proposition.
\end{proof}

A joining $\eta$ between two measure-preserving systems
$\boldsymbol{Y} = (Y,\sY ,\nu ,G)$ and\linebreak $\boldsymbol{Z} = (Z,\sZ ,\omega ,G)$
gives rise as follows to a $G$-equivariant unital positive linear map
$\varphi : L^\infty (Y,\nu ) \to L^\infty (Z,\omega )$
such that $\omega\circ\varphi = \nu$ (this is a special case of
a construction for correspondences between von Neumann algebras \cite{Popa}).
Define the operator $S : L^2 (Z,\omega ) \to L^2 (Y\times Z,\eta )$
by $(S\xi )(y,z) = \xi (z)$ for all $\xi\in L^2 (Z,\omega )$ and $(y,z)\in Y\times Z$ and
the representation $\pi : L^\infty (Y,\nu ) \to \cB (L^2 (Y\times Z,\eta ))$ by
$(\pi (f) \zeta )(y,z) = f(y)\zeta (y,z)$ for all $f\in L^\infty (Y,\nu )$,
$\zeta\in L^2 (Y\times Z,\eta )$, and $(y,z)\in Y\times Z$. Then for $f\in L^\infty (Y,\nu )$
we set $\varphi (f) = S^* \pi (f) S$. It is easily checked that $S^* \pi (f) S$
commutes with every element of the commutant $L^\infty (Z,\omega )'$, so that
$\varphi (f) \in L^\infty (Z,\omega )'' = L^\infty (Z,\omega )$.
Now define the representation $\rho : L^\infty (Z,\omega ) \to \cB (L^2 (Y\times Z,\eta ))$
by $(\rho (g) \zeta )(y,z) = g(z)\zeta (y,z)$ for all $g\in L^\infty (Z,\omega )$,
$\zeta\in L^2 (Y\times Z,\eta )$, and $(y,z)\in Y\times Z$.
Then for $f\in L^\infty (Y,\nu )$ and $g\in L^\infty (Z,\omega )$ we have, with
$\boldsymbol{1}$ denoting the unit in the appropriate $L^\infty$ algebra,
\begin{align*}
\eta (\pi (f)\rho (g)) &= \langle \pi (f)\rho (g) ,
\boldsymbol{1}\otimes \boldsymbol{1} \rangle_\eta
= \langle \pi (f) \rho (g) S\boldsymbol{1}, S\boldsymbol{1} \rangle_\eta \\
&= \langle \pi (f) Sg\boldsymbol{1}, S\boldsymbol{1} \rangle_\eta
= \langle S^* \pi (f) Sg\boldsymbol{1}, \boldsymbol{1} \rangle_\omega \\
&= \omega (\varphi (f)g) .
\end{align*}
In the case that the image of $\varphi$ is the scalars, we see that $\eta$ gives rise to
the product state $\varphi\otimes\omega$ on
$L^\infty (Y,\nu ) \otimes L^\infty (Z,\omega )$ under composition with the
representation $f\otimes g \mapsto \pi (f)\rho (g)$, and furthermore $\varphi = \nu$
by the assumption on the marginals in the definition of joining.

\begin{corollary}\label{C-disjoint entropy}
Let $\boldsymbol{Y} = (Y,\sY ,\nu ,G)$ and $\boldsymbol{Z} = (Z,\sZ ,\omega ,G)$
be measure-preserving $G$-systems. Suppose that $\boldsymbol{Y}$ has zero entropy
and $\boldsymbol{Z}$ has completely positive entropy.
Then $\boldsymbol{Y}$ and $\boldsymbol{Z}$ are disjoint.
\end{corollary}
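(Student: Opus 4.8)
The plan is to pass from an arbitrary joining to the associated equivariant unital positive linear map constructed in the paragraph just before the corollary, and then to invoke Proposition~\ref{P-joining Pinsker} together with the standard reformulations of zero entropy and complete positive entropy in terms of the Pinsker von Neumann algebra.

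First I would let $\eta$ be any joining of $\boldsymbol{Y}$ and $\boldsymbol{Z}$ and form the $G$-equivariant unital positive linear map $\varphi : L^\infty (Y,\nu )\to L^\infty (Z,\omega )$ with $\omega\circ\varphi = \nu$ described in the discussion preceding the corollary, retaining its defining relation $\eta (\pi (f)\rho (g)) = \omega (\varphi (f)g)$ for $f\in L^\infty (Y,\nu )$ and $g\in L^\infty (Z,\omega )$. Since $\boldsymbol{Y}$ has zero entropy, its Pinsker $\sigma$-algebra is all of $\sY$, so $\mathfrak{P}_Y = L^\infty (Y,\nu )$; Proposition~\ref{P-joining Pinsker} (applied with $\boldsymbol{Y}$, $\boldsymbol{Z}$ here playing the roles of $X$, $Y$ there, after the harmless index relabelling) then gives $\varphi (L^\infty (Y,\nu )) = \varphi (\mathfrak{P}_Y ) \subseteq \mathfrak{P}_Z$.

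Next, because $\boldsymbol{Z}$ has completely positive entropy its Pinsker $\sigma$-algebra is trivial, i.e.\ $\mathfrak{P}_Z = \Cb\boldsymbol{1}$, so the image of $\varphi$ consists of scalars; combined with $\omega\circ\varphi = \nu$ this forces $\varphi (f) = \nu (f)\boldsymbol{1}$ for every $f\in L^\infty (Y,\nu )$. Substituting into the relation above yields $\eta (\pi (f)\rho (g)) = \nu (f)\omega (g)$ for all $f\in L^\infty (Y,\nu )$ and $g\in L^\infty (Z,\omega )$. Since the measurable rectangles generate $\sY\otimes\sZ$, the von Neumann algebra generated by $\pi (L^\infty (Y,\nu ))$ and $\rho (L^\infty (Z,\omega ))$ is all of $L^\infty (Y\times Z,\eta )$, so these values determine $\eta$; hence $\eta = \nu\times\omega$. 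As $\eta$ was an arbitrary joining, $\boldsymbol{Y}$ and $\boldsymbol{Z}$ are disjoint.

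There is no serious obstacle once Proposition~\ref{P-joining Pinsker} and Theorem~\ref{T-Pinsker} are available; the only points requiring a word of care are the (standard) equivalences ``$\boldsymbol{Y}$ has zero entropy $\Leftrightarrow$ $\mathfrak{P}_Y = L^\infty (Y,\nu )$'' and ``$\boldsymbol{Z}$ has completely positive entropy $\Leftrightarrow$ $\mathfrak{P}_Z = \Cb\boldsymbol{1}$'', and the observation that the numbers $\eta (\pi (f)\rho (g))$, as $f$ and $g$ range over the two $L^\infty$ algebras, pin down the measure $\eta$ on the joining.
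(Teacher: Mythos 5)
Your proposal is correct and follows essentially the same route as the paper: pass from an arbitrary joining to the associated $G$-equivariant unital positive map $\varphi$, use zero entropy of $\boldsymbol{Y}$ and complete positive entropy of $\boldsymbol{Z}$ to conclude via Proposition~\ref{P-joining Pinsker} that the image of $\varphi$ is the scalars, and then read off $\eta = \nu\times\omega$ from the relation $\eta (\pi (f)\rho (g)) = \omega (\varphi (f)g)$. The only difference is that you spell out explicitly the final step (that $\varphi = \nu(\cdot)\boldsymbol{1}$ pins down $\eta$ on rectangles and hence everywhere), which the paper delegates to the remark at the end of the paragraph constructing $\varphi$ from $\eta$.
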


\begin{proof}
As above, a joining $\eta$ between $\boldsymbol{Y}$ and $\boldsymbol{Z}$ gives rise to a
$G$-equivariant unital positive linear map $\varphi : L^\infty (Y,\nu ) \to L^\infty (Z,\omega )$
such that $\omega\circ\varphi = \nu$.
By Proposition~\ref{P-joining Pinsker} the image of such a map $\varphi$ must be
the scalars. Hence there is only the one joining $\nu\times\omega$.
\end{proof}


\section{Measure IN-tuples}\label{S-IN}

In this section $(X,G)$ is an arbitrary topological dynamical system
and $\mu$ a $G$-invariant Borel probability measure on $X$. We will
define $\mu$-IN-tuples and establish some properties in analogy
with $\mu$-IE-tuples. Here the role of measure entropy is played by
measure sequence entropy. The combinatorial phenomena responsible for the
properties of $\mu$-IE-tuples in Proposition~\ref{P-IE basic} apply equally
well to the sequence entropy framework,
and so it will essentially be a matter of recording the
analogues of various lemmas from Section~\ref{S-IE}. We will also
show that nondiagonal $\mu$-IN-tuples are the
same as $\mu$-sequence entropy tuples and derive the measure IN-tuple 
product formula.

For $\delta > 0$ we say that a finite tuple $\oA$ of subsets of $X$
has {\em $\delta$-$\mu$-independence density over arbitrarily large finite sets}
if there exists a $c>0$ such that for every $M>0$ there is a finite set
$F\subseteq G$ of cardinality at least $M$ which possesses the property that
every $D\in\mathscr{B}' (X,\delta )$ has a $\mu$-independence set $I\subseteq F$
relative to $D$ with $|I|\ge c|F|$. We say that $\oA$ has
{\em positive sequential $\mu$-independence density} if for some
$\delta > 0$ it has $\delta$-$\mu$-independence density over arbitrarily large
finite sets.

Arguing as in the proof of Lemma~\ref{L-split} yields:

\begin{lemma}\label{L-split sh}
Let $\oA = (A_1 , \dots , A_k )$ be a tuple of subsets of $X$ which has
positive sequential $\mu$-independence density.
Suppose that $A_1 = A_{1,1} \cup A_{1,2}$.
Then at least one of the tuples $\oA_1 = (A_{1,1} , A_2 , \dots , A_k )$ and
$\oA_2 = (A_{1,2} , A_2 , \dots , A_k )$ has
positive sequential $\mu$-independence density.
\end{lemma}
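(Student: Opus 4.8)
The plan is to run the proof of Lemma~\ref{L-split} essentially verbatim, with $\varphi_{\oA,\delta}$ and $\mathscr{B}(\mu,\delta)$ replaced throughout by the map-valued versions $\varphi'_{\oA,\delta}$ and $\mathscr{B}'(\mu,\delta)$, and then to append a short pigeonhole argument to account for the fact that ``positive sequential $\mu$-independence density'' is a condition over arbitrarily large finite subsets of $G$ rather than over a F{\o}lner net.

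First I would extract the combinatorial core exactly as in the proof of Lemma~\ref{L-split}: by Lemma~3.6 of \cite{Ind} there is a constant $c_k>0$, depending only on $k$, such that for \emph{any} map $D:G\to 2^X$, every finite independence set $I$ for $\oA$ relative to $D$ contains a subset $J$ with $|J|\ge c_k|I|$ which is an independence set relative to $D$ either for $\oA_1=(A_{1,1},A_2,\dots,A_k)$ or for $\oA_2=(A_{1,2},A_2,\dots,A_k)$. (The construction is unchanged: for each $\sigma\in\{1,\dots,k\}^I$ choose a point $x_\sigma\in\bigcap_{s\in I}(D_s\cap s^{-1}A_{\sigma(s)})$, record at the coordinates $s$ with $\sigma(s)=1$ whether $x_\sigma$ lies in $s^{-1}A_{1,1}$ or in $s^{-1}A_{1,2}$, and feed the resulting subset of $(\{(1,0),(1,1)\}\cup\{2,\dots,k\})^I$ into Lemma~3.6 of \cite{Ind}.) Since an independence set relative to one map remains an independence set relative to any pointwise-larger map, this yields, for every finite $F\subseteq G$ and every $\delta>0$, the inequality
\[ \max\{\varphi'_{\oA_1,\delta/2}(F),\ \varphi'_{\oA_2,\delta/2}(F)\}\ \ge\ c_k\,\varphi'_{\oA,\delta}(F) . \]
To see this, let $D_1^*,D_2^*\in\mathscr{B}'(\mu,\delta/2)$ attain the minima defining $\varphi'_{\oA_1,\delta/2}(F)$ and $\varphi'_{\oA_2,\delta/2}(F)$ (these minima are attained, since the maxima in question range over the finite set $\{0,1,\dots,|F|\}$), let $D^*$ be the map $s\mapsto(D_1^*)_s\cap(D_2^*)_s$, so that $D^*\in\mathscr{B}'(\mu,\delta)$, and take an independence set $I\subseteq F$ for $\oA$ relative to $D^*$ with $|I|\ge\varphi'_{\oA,\delta}(F)$ (possible because $D^*\in\mathscr{B}'(\mu,\delta)$). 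Applying the combinatorial fact to $D^*$ and $I$ produces $J$ with $|J|\ge c_k|I|$ that is an independence set for $\oA_1$ relative to $D^*$ or for $\oA_2$ relative to $D^*$, hence for $\oA_1$ relative to $D_1^*$ or for $\oA_2$ relative to $D_2^*$; since $D_1^*,D_2^*$ are the minimizers, this gives $\varphi'_{\oA_1,\delta/2}(F)\ge|J|$ or $\varphi'_{\oA_2,\delta/2}(F)\ge|J|$, and the displayed inequality follows.

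Next I would unwind the definitions: a finite $F\subseteq G$ enjoys the defining property of ``$\delta$-$\mu$-independence density over arbitrarily large finite sets'' with constant $c$ exactly when $\varphi'_{\oA,\delta}(F)\ge c|F|$. Thus the hypothesis supplies $\delta>0$ and $c>0$ such that for every $M$ there is $F$ with $|F|\ge M$ and $\varphi'_{\oA,\delta}(F)\ge c|F|$, and by the displayed inequality such an $F$ satisfies $\varphi'_{\oA_1,\delta/2}(F)\ge c_kc|F|$ or $\varphi'_{\oA_2,\delta/2}(F)\ge c_kc|F|$. Choosing one such $F=F_M$ for each $M\in\Nb$, the pigeonhole principle produces a fixed index $j\in\{1,2\}$ with $\varphi'_{\oA_j,\delta/2}(F_M)\ge c_kc|F_M|$ for infinitely many $M$; since $|F_M|\ge M$, these $F_M$ have unbounded cardinality, so $\oA_j$ has $(\delta/2)$-$\mu$-independence density over arbitrarily large finite sets with constant $c_kc$, hence positive sequential $\mu$-independence density.

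I do not expect a genuine obstacle: the statement is designed to mirror Lemma~\ref{L-split}. The one place needing attention is precisely this last step, where the passage from ``for every $F$, one of $\varphi'_{\oA_1,\delta/2}(F),\varphi'_{\oA_2,\delta/2}(F)$ is large'' to ``for a single fixed $j$, $\varphi'_{\oA_j,\delta/2}$ is large over arbitrarily large sets'' must be effected by pigeonhole rather than by the minimizer trick available for a F{\o}lner-type limit, and one has to note that the bound $|F_M|\ge M$ keeps the surviving witnessing sets arbitrarily large.
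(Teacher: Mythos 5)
Your proposal is correct and is essentially the argument the paper intends: the paper proves this lemma simply by saying ``arguing as in the proof of Lemma~\ref{L-split}'', and your adaptation (running that proof with $\varphi'_{\oA,\delta}$ and $\mathscr{B}'(\mu,\delta)$, using the pointwise intersection of the two minimizing maps, and finishing with a pigeonhole over arbitrarily large witnessing sets in place of the F{\o}lner limit) is exactly the adaptation required. The only difference is that you spell out details the paper leaves implicit, and you do so correctly.
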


In \cite{Ind} we defined a tuple $\ox = (x_1 , \dots, x_k )\in X^k$
to be an IN-tuple (or an IN-pair in the case $k=2$) if for every product
neighbourhood $U_1 \times\cdots\times U_k$ of $\ox$ the $G$-orbit of the tuple
$(U_1, \dots, U_k)$ has arbitrarily large finite independent subcollections.
Here is the measure-theoretic analogue:

\begin{definition}
We call a tuple $\ox = (x_1 , \dots , x_k )\in X^k$ a {\em
$\mu$-IN-tuple} (or {\em $\mu$-IN-pair} in the case $k=2$)
if for every product neighbourhood $U_1 \times\cdots\times U_k$ of
$\ox$ the tuple $(U_1 , \dots , U_k )$ has positive sequential
$\mu$-independence density. We denote the set of
$\mu$-IN-tuples of length $k$ by $\IN^\mu_k (X)$.
\end{definition}

Obviously every $\mu$-IN-tuple is a IN-tuple.

The following analogue of Lemma~\ref{L-pe to pd} follows
immediately from Lemma~\ref{L-mind}.

\begin{lemma}\label{L-pse to mind}
Let $\cP=\{P_1, P_2\}$ be a two-element Borel partition of $X$
such that $h_{\mu}(\cP; \mathfrak{s})>0$
for some sequence $\mathfrak{s}$ in $G$. Then there exists
$\varepsilon>0$ such that whenever $A_1\subseteq P_1$ and $A_2\subseteq P_2$
are Borel sets with $\mu(P_1\setminus A_1), \mu (P_2\setminus A_2)<\varepsilon$
the pair $\oA=(A_1, A_2)$ has positive sequential $\mu$-independence density.
\end{lemma}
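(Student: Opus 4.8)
The plan is to reduce Lemma~\ref{L-pse to mind} directly to Lemma~\ref{L-mind}, exactly as Lemma~\ref{L-pe to pd} was reduced to it in the entropy setting. Recall that $h_\mu(\cP;\fs) = \limsup_{n\to\infty}\frac1n H_\mu\bigl(\bigvee_{i=1}^n s_i^{-1}\cP\bigr)$, so the hypothesis $h_\mu(\cP;\fs)>0$ gives a number $d>0$ and an infinite set of indices $n$ for which $\frac1n H_\mu\bigl(\bigvee_{i=1}^n s_i^{-1}\cP\bigr)\ge d$. For such an $n$, set $F_n=\{s_1,\dots,s_n\}\subseteq G$ (we may assume the $s_i$ are distinct, or else pass to a subsequence; in any case $|F_n|\to\infty$), and observe $\cP^{F_n}=\bigvee_{i=1}^n s_i^{-1}\cP$, so that $\frac{H_\mu(\cP^{F_n})}{|F_n|}\ge d$ along this infinite family of sets $F_n$, whose cardinalities are unbounded.

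First I would invoke Lemma~\ref{L-mind} with this $d$: it supplies constants $\delta>0$, $c>0$, $M>0$ (independent of $(X,G)$ and $\mu$) such that whenever $F\subseteq G$ has $|F|\ge M$, $D\in\mathscr{B}'(\mu,\delta)$, $\cP=\{P_1,P_2\}$ satisfies $\frac{H_\mu(\cP^F)}{|F|}\ge d$, and $A_i\subseteq P_i$ are Borel with $\mu(P_i\setminus A_i)<\delta$ for $i=1,2$, then $(A_1,A_2)$ has a $\mu$-independence set $I\subseteq F$ relative to $D$ with $|I|\ge c|F|$. Now set $\varepsilon=\delta$. Given Borel sets $A_1\subseteq P_1$, $A_2\subseteq P_2$ with $\mu(P_1\setminus A_1),\mu(P_2\setminus A_2)<\varepsilon$, I would verify that $\oA=(A_1,A_2)$ has $\delta$-$\mu$-independence density over arbitrarily large finite sets: for every $M'>0$, among the sets $F_n$ above (whose sizes tend to infinity) pick one with $|F_n|\ge\max(M,M')$; by Lemma~\ref{L-mind} every $D\in\mathscr{B}'(\mu,\delta)$ admits a $\mu$-independence set $I\subseteq F_n$ for $\oA$ relative to $D$ with $|I|\ge c|F_n|$. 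This is precisely the defining condition (with constant $c$) for positive sequential $\mu$-independence density.

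I do not anticipate any real obstacle here: the combinatorial heavy lifting is already done inside Lemma~\ref{L-mind}, and the only substantive point is the bookkeeping that $h_\mu(\cP;\fs)>0$ yields an \emph{infinite} set of $n$ with $\frac1n H_\mu(\cP^{F_n})\ge d$ and hence arbitrarily large admissible $F_n$ — which is exactly what ``arbitrarily large finite sets'' in the definition of sequential $\mu$-independence density requires. The one mild subtlety worth a sentence in the writeup is handling possible repetitions in the sequence $\fs=\{s_j\}$; if $s_i=s_j$ for $i\ne j$ then $\bigvee_{i=1}^n s_i^{-1}\cP$ is a join over the \emph{set} $F_n$ of distinct elements, and $H_\mu(\cP^{F_n})$ is the same quantity, so nothing changes except that $|F_n|$ may be smaller than $n$ — but the $\limsup$ hypothesis still forces $|F_n|\to\infty$ along a suitable subsequence since $H_\mu(\cP^{F_n})\le |F_n|\ln 2$. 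With $\varepsilon$ chosen as above, the proof is complete by citing Lemma~\ref{L-mind}.
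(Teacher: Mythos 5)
Your proposal is correct and is essentially the paper's own argument: the paper proves this lemma simply by citing Lemma~\ref{L-mind}, exactly as you do, with $\varepsilon$ taken to be the $\delta$ from that lemma and the sets $F_n=\{s_1,\dots,s_n\}$ supplying arbitrarily large finite sets satisfying $\frac{1}{|F_n|}H_\mu(\cP^{F_n})\ge d$. Your remarks on repetitions in $\fs$ and on $|F_n|\to\infty$ via $H_\mu(\cP^{F_n})\le |F_n|\ln 2$ are the right (routine) bookkeeping and raise no issues.
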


Fix a sequence $\mathfrak{s}=\{s_j\}_{j\in \Nb}$ in $G$. Recalling the notation
$\varphi_{\oA, \delta}$ and $\varphi'_{\oA, \delta}$ from Subsection~\ref{SS-density},
for $\delta>0$ we set
\begin{align*}
\upind_\mu(\oA, \delta;\mathfrak{s}) &= \limsup_{n\to
\infty}\frac{1}{n}\varphi_{\oA, \delta}(\{s_1,\dots ,s_n\}), \\
\upind'_\mu(\oA, \delta;\mathfrak{s}) &= \limsup_{n\to
\infty}\frac{1}{n}\varphi'_{\oA, \delta}(\{s_1,\dots ,s_n\}), \\
\upind_\mu (\oA; \mathfrak{s} ) &= \sup_{\delta
> 0} \upind_\mu (\oA , \delta;\mathfrak{s} ).
\end{align*}
By Lemma~\ref{L-prime}, we have
\[ \upind_\mu (\oA;\mathfrak{s} ) = \sup_{\delta
> 0} \upind'_\mu (\oA , \delta;\mathfrak{s} ). \]
Clearly $\oA$ has positive sequential $\mu$-independence density
if and only if $\upind_{\mu} (\oA;\fs )>0$ for some sequence $\mathfrak{s}$ in $G$.

Let $\cU$ be a finite Borel cover of $X$. Recall that $\Hmeas (\cU )$ denotes the
infimum of the entropies $\Hmeas (\cP )$ over all finite Borel partitions $\cP$ of $X$
that refine $\cU$. For $\delta>0$ we set
\begin{align*}
\uph_{\comb , \mu}(\cU, \delta; \mathfrak{s}) &= \limsup_{n\to \infty}
\frac{1}{n}\ln N_{\delta} \bigg( \bigvee^n_{j=1}s^{-1}_j\cU \bigg) , \\
\uph_{\comb , \mu}(\cU; \mathfrak{s}) &=
\sup_{\delta>0}\uph_{\comb , \mu}(\cU, \delta; \mathfrak{s}),\\
h_{\mu}^-(\cU; \mathfrak{s}) &=
\limsup_{n\to \infty}\frac{1}{n}H \bigg( \bigvee^n_{j=1}s^{-1}_j\cU \bigg) , \\
h_{\mu}^+(\cU; \mathfrak{s}) &=
\inf_{\cP\succeq \cU}h_{\mu}(\cP;\mathfrak{s}) ,
\end{align*}
where the last infimum is taken over finite Borel partitions refining $\cU$.
Both $h_{\mu}^-(\cU; \mathfrak{s})$ and $h_{\mu}^+(\cU; \mathfrak{s})$
appeared in \cite{HMY} for the case of $G=\Zb$. We have
$h_{\mu}^-(\cU; \mathfrak{s})\le h_{\mu}^+(\cU;\mathfrak{s})$ trivially.

The next lemma is the analogue of Lemma~\ref{L-hlifting} and follows directly from
Lemma~\ref{L-Hlifting}.

\begin{lemma}\label{L-shlifting}
Let $\pi:X\to Y$ be a factor of $X$. For any finite Borel cover
$\cU$ of $Y$, one has
$$ h_{\mu}^-(\pi^{-1}\cU;\mathfrak{s})=h_{\pi_*(\mu)}^-(\cU;\mathfrak{s}).$$
\end{lemma}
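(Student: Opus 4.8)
The plan is to deduce the identity termwise from Lemma~\ref{L-Hlifting}, applied to the covers $\bigvee_{j=1}^{n} s_j^{-1}\cU$ of $Y$.

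First I would record two elementary facts about preimages of covers. Since $\pi$ is $G$-equivariant, for each $s\in G$ one has $s^{-1}(\pi^{-1}\cU) = \pi^{-1}(s^{-1}\cU)$ as covers of $X$ (indeed $\pi^{-1}(s^{-1}U) = \{x : \pi(sx)\in U\} = \{x : sx\in\pi^{-1}U\} = s^{-1}(\pi^{-1}U)$ for each $U\in\cU$); and since taking preimages under a map commutes with finite intersections, for any finite collection $\cV_1,\dots,\cV_n$ of covers of $Y$ one has $\pi^{-1}\big(\bigvee_{j=1}^{n}\cV_j\big) = \bigvee_{j=1}^{n}\pi^{-1}\cV_j$. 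Combining these, for every $n\in\Nb$,
\[ \bigvee_{j=1}^{n} s_j^{-1}(\pi^{-1}\cU) = \pi^{-1}\bigg(\bigvee_{j=1}^{n} s_j^{-1}\cU\bigg) . \]

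Next, fixing $n$ and applying Lemma~\ref{L-Hlifting} to the finite Borel cover $\cV_n := \bigvee_{j=1}^{n} s_j^{-1}\cU$ of $Y$, I obtain $H_\mu(\pi^{-1}\cV_n) = H_{\pi_*(\mu)}(\cV_n)$, that is,
\[ H_\mu\bigg(\bigvee_{j=1}^{n} s_j^{-1}(\pi^{-1}\cU)\bigg) = H_{\pi_*(\mu)}\bigg(\bigvee_{j=1}^{n} s_j^{-1}\cU\bigg) . \]
Dividing both sides by $n$ and taking $\limsup_{n\to\infty}$ yields $h_\mu^-(\pi^{-1}\cU;\fs) = h_{\pi_*(\mu)}^-(\cU;\fs)$, as asserted. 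This is exactly parallel to the way Lemma~\ref{L-hlifting} was obtained from Lemma~\ref{L-Hlifting} in the amenable setting, with the F{\o}lner average replaced by the average over $\{s_1,\dots,s_n\}$.

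There is no genuine obstacle here: the only points requiring a moment's thought are the verifications that $\pi^{-1}$ commutes with joins of covers and intertwines the two $G$-actions, and both are immediate from the definitions.
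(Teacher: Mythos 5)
Your proof is correct and follows exactly the route the paper intends: the paper derives Lemma~\ref{L-shlifting} directly from Lemma~\ref{L-Hlifting}, and your termwise application to the covers $\bigvee_{j=1}^n s_j^{-1}\cU$, using equivariance and the compatibility of $\pi^{-1}$ with joins, is precisely that argument spelled out.
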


The argument in the proof of Lemma~\ref{L-local} can also be used to show:

\begin{lemma}\label{L-local se}
We have $\delta\cdot \uph_{\comb , \mu}(\cU, \delta; \mathfrak{s})
\le h_{\mu}^-(\cU; \mathfrak{s})\le
\uph_{\comb , \mu}(\cU; \mathfrak{s})$.
\end{lemma}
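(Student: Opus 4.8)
The plan is to transcribe the proof of Lemma~\ref{L-local} essentially verbatim, replacing averages over increasingly invariant F{\o}lner sets by averages over the initial segments $\{s_1,\dots,s_n\}$ of the sequence $\fs$ and $\liminf$ by $\limsup$. Throughout I would write $\cV_n=\bigvee_{j=1}^n s_j^{-1}\cU$, so that $h_\mu^-(\cU;\fs)=\limsup_n\tfrac1n H_\mu(\cV_n)$ and $\uph_{\comb,\mu}(\cU,\delta;\fs)=\limsup_n\tfrac1n\ln N_\delta(\cV_n)$, and I would use that $H_\mu(\cV_n)$ is the infimum of $H_\mu(\cP)$ over finite Borel partitions $\cP$ refining $\cV_n$.

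For the first inequality $\delta\cdot\uph_{\comb,\mu}(\cU,\delta;\fs)\le h_\mu^-(\cU;\fs)$, I would fix $\varepsilon>0$ and, for each $n$, choose a finite Borel partition $\cP_n\succeq\cV_n$ with $\tfrac1n H_\mu(\cP_n)\le\tfrac1n H_\mu(\cV_n)+\varepsilon$. Then, exactly as in Lemma~\ref{L-local}, I would let $\cY_n$ be the collection of atoms of $\cP_n$ of $\mu$-measure at least $e^{-H_\mu(\cP_n)/\delta}$, put $D_n=\bigcup\cY_n$, observe via the information-function estimate that $\mu(D_n^\comp)\le\delta$ so that $D_n\in\mathscr{B}(\mu,\delta)$, and note that $D_n$ is covered by at most $|\cY_n|\le e^{H_\mu(\cP_n)/\delta}$ members of $\cV_n$ since $\cP_n\succeq\cV_n$. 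This gives $\tfrac1n\ln N_\delta(\cV_n)\le(\tfrac1n H_\mu(\cV_n)+\varepsilon)/\delta$; taking $\limsup_n$ and then $\varepsilon\to0$ yields the inequality.

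For the second inequality $h_\mu^-(\cU;\fs)\le\uph_{\comb,\mu}(\cU;\fs)$, I would fix $\delta\in(0,e^{-1})$, and for each $n$ pick $D_n\in\mathscr{B}(\mu,\delta)$ attaining $N_{D_n}(\cV_n)=N_\delta(\cV_n)$, a partition $\cY_n$ of $D_n$ finer than the restriction of $\cV_n$ with $|\cY_n|=N_{D_n}(\cV_n)$, and a partition $\cZ_n$ of $D_n^\comp$ finer than the restriction of $\cV_n$ with $|\cZ_n|\le|\cU|^n$. The concavity-and-monotonicity bookkeeping from the proof of Lemma~\ref{L-local} (using that $x\mapsto -x\ln x$ is concave on $[0,1]$, increasing on $[0,e^{-1}]$, decreasing on $[e^{-1},1]$, together with $\mu(D_n)\ge1-\delta\ge e^{-1}$ and $\mu(D_n^\comp)\le\delta\le e^{-1}$) would give
\[ H_\mu(\cV_n)\le -(1-\delta)\ln(1-\delta)-\delta\ln\delta+\ln N_{D_n}(\cV_n)+\delta\,n\ln|\cU| . \]
Dividing by $n$, taking $\limsup_n$ (the first two terms vanish in the limit), and then letting $\delta\to0$ would finish the proof.

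I expect no real obstacle here: the two small points to watch are that the minimum defining $N_\delta(\cV_n)$ is attained (it is, since $N_D(\cV_n)$ takes only finitely many values) so that the sets $D_n$ exist, and that the $\limsup$ of a sum is handled correctly (one summand is a convergent sequence). The restriction $\delta<e^{-1}$ in the second part is harmless since $\delta\to0$ at the end.
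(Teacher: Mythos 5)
Your proposal is correct and is exactly what the paper does: its proof of this lemma consists of the single remark that the argument of Lemma~\ref{L-local} carries over, and your transcription (initial segments $\{s_1,\dots,s_n\}$ in place of F{\o}lner sets, termwise bounds followed by $\limsup$, then $\varepsilon\to 0$ resp.\ $\delta\to 0$) is the intended adaptation, with the two minor points you flag (attainment of the minimum defining $N_\delta$, and $\limsup$ of a sum with one convergent summand) handled correctly.
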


Next we come to the analogue of Lemma~\ref{L-positive}.

\begin{lemma}\label{L-positive se}
For a finite Borel cover $\cU$ of $X$, the quantities
$h_{\mu}^-(\cU;\mathfrak{s})$ and $\uph_{\comb , \mu}(\cU;\mathfrak{s})$
are either both zero or both nonzero. If the complements
in $X$ of the members of $\cU$ are pairwise disjoint and $\oA$ is a tuple
consisting of these complements, then we may also add the third
quantity $\upind_{\mu}(\oA;\mathfrak{s})$ to the list.
\end{lemma}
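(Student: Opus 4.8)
The plan is to mirror the proof of Lemma~\ref{L-positive}, replacing each F{\o}lner-net ingredient by its sequence-entropy counterpart from the preceding lemmas. First, for the equivalence of $h_{\mu}^-(\cU;\fs)>0$ and $\uph_{\comb ,\mu}(\cU;\fs)>0$: the two-sided inequality $\delta\cdot\uph_{\comb ,\mu}(\cU,\delta;\fs)\le h_{\mu}^-(\cU;\fs)\le\uph_{\comb ,\mu}(\cU;\fs)$ of Lemma~\ref{L-local se} immediately gives that both quantities vanish together. Indeed if $h_{\mu}^-(\cU;\fs)=0$ then $\uph_{\comb ,\mu}(\cU,\delta;\fs)\le h_{\mu}^-(\cU;\fs)/\delta=0$ for every $\delta>0$, whence $\uph_{\comb ,\mu}(\cU;\fs)=0$; conversely $\uph_{\comb ,\mu}(\cU;\fs)=0$ forces $h_{\mu}^-(\cU;\fs)=0$ by the right-hand inequality. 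This is the routine half.

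For the third quantity, suppose the complements $A_i=U_i^\comp$ of the members of $\cU$ are pairwise disjoint and put $\oA=(A_1,\dots,A_k)$. The plan is to run the combinatorial translation between covering numbers $N_\delta$ of $\cU^{\{s_1,\dots,s_n\}}$ and $\mu$-independence sets for $\oA$ exactly as in Lemma~\ref{L-positive}, but now along the fixed sequence $\fs$ rather than over increasingly invariant F{\o}lner sets. Concretely, fix $\delta>0$ and a finite $F=\{s_1,\dots,s_n\}$. Given any $D\in\mathscr{B}(\mu,\delta)$, consider the associated word set $S\subseteq\{0,1,\dots,k\}^{\{1,\dots,n\}}$ recording, for a point of $D$, in which of the sets $s_j^{-1}A_1,\dots,s_j^{-1}A_k$ (coordinate value $0$ if in none) it lies; Lemma~3.3 of \cite{Ind} says that a set $W\subseteq\{1,\dots,n\}$ over which $S$ surjects onto $\{1,\dots,k\}^W$ produces, essentially, a $\mu$-independence set for $\oA$ relative to $D$ of that size, while Lemma~\ref{L-converse} says conversely that if no such large $W$ exists then $N_D(\cU^F)$ (which is exactly $F_S$ in the notation before Lemma~\ref{L-converse}) is forced to be exponentially large. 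Taking the minimum over $D$ and dividing by $n$, letting $n\to\infty$ along $\fs$, one obtains: $\upind_\mu(\oA;\fs)>0$ if and only if $\uph_{\comb ,\mu}(\cU;\fs)>0$. (One should note here, as in the remark after Lemma~\ref{L-mind} and in the proof of Lemma~\ref{L-prime}, that for purposes of positivity one may freely pass between $\varphi_{\oA,\delta}$ and $\varphi'_{\oA,\delta}$, so the primed/unprimed distinction in the definition of $\upind_\mu(\oA;\fs)$ via $\mathscr{B}'$ is harmless.)

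The step I expect to require the most care is the direction $\uph_{\comb ,\mu}(\cU;\fs)>0 \Rightarrow \upind_\mu(\oA;\fs)>0$, i.e.\ the quantitative use of Lemma~\ref{L-converse} together with Lemma~\ref{L-density} to convert ``the number of $\cW$-sets needed to cover $S$ is at least $(k/(k-1))^{\epsilon n}$'' into ``there is a coordinate subset $I$ of size $\ge an$ with $S|_I\supseteq\{1,\dots,k\}^I$''. This is precisely the bookkeeping carried out in the F{\o}lner setting inside Lemma~\ref{L-positive} (via Lemma~3.3 of \cite{Ind} and Lemma~\ref{L-converse}); since all of those combinatorial statements are about a \emph{single} finite index set $\{1,\dots,n\}$ and involve no invariance hypotheses, they apply verbatim with $F=\{s_1,\dots,s_n\}$, and the only change is that the $\limsup$ is taken along $\fs$ rather than over the F{\o}lner net. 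Everything else is a transcription, so I would simply write ``The first assertion follows from Lemma~\ref{L-local se}. If $\oA$ is as in the statement, then Lemma~3.3 of \cite{Ind} and Lemma~\ref{L-converse} yield the equivalence of $\uph_{\comb ,\mu}(\cU;\fs)>0$ and $\upind_\mu(\oA;\fs)>0$, arguing exactly as in the proof of Lemma~\ref{L-positive}.''
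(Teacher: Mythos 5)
Your proposal matches the paper's proof: the first assertion is deduced from the inequalities of Lemma~\ref{L-local se}, and the equivalence with $\upind_{\mu}(\oA;\mathfrak{s})>0$ is obtained by applying Lemma~3.3 of \cite{Ind} together with Lemma~\ref{L-converse} along the sets $\{s_1,\dots,s_n\}$, exactly as in Lemma~\ref{L-positive}. The argument is correct and essentially identical to the paper's.
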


\begin{proof}
The first assertion is a consequence of Lemma~\ref{L-local}.
For a tuple $\oA$ as in the lemma statement, Lemma~3.3 of \cite{Ind} and
Lemma~\ref{L-converse} show that
$\uph_{\comb , \mu}(\cU;\mathfrak{s})>0$ if and only if
$\upind_{\mu}(\oA;\mathfrak{s})>0$.
\end{proof}

\begin{proposition}\label{P-IN basic}
The following hold:
\begin{enumerate}
\item Let $\oA = (A_1 , \dots , A_k )$ be a tuple of closed subsets of $X$ which
has positive sequential $\mu$-independence density.
Then there exists a $\mu$-IN-tuple
$(x_1 , \dots , x_k )$ with $x_j \in A_j$ for $j=1, \dots ,k$.

\item $\IN^\mu_2 (X) \setminus \Delta_2 (X)$ is nonempty if and only if the system
$(X, \sB , \mu ,G)$ is nonnull.

\item $\IN^\mu_1(X)={\rm supp}(\mu)$ when $G$ is an infinite group.

\item $\IN^\mu_k (X)$ is a closed $G$-invariant subset of $X^k$.

\item Let $\pi:X\rightarrow Y$ be a topological $G$-factor map.
Then $\pi^k (\IN^\mu_k(X))=\IN^{\pi_*(\mu)}_k(Y)$.
\end{enumerate}
\end{proposition}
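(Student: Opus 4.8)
The plan is to follow the proof of Proposition~\ref{P-IE basic} essentially line by line, replacing each entropy lemma of Section~\ref{S-IE} by the sequence-entropy analogue recorded above (Lemmas~\ref{L-split sh}, \ref{L-pse to mind}, \ref{L-local se}, \ref{L-positive se}, \ref{L-shlifting}). For (1), I would argue exactly as in Proposition~\ref{P-IE basic}(1): use Lemma~\ref{L-split sh} to split each closed set $A_j$ into ever smaller closed pieces, retaining at each stage a sub-tuple with positive sequential $\mu$-independence density, and then extract points $x_j\in A_j$ by the usual compactness argument.

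For (2) the ``only if'' half is the cleaner one. Given a nondiagonal $\mu$-IN-pair $(x_1,x_2)$, choose disjoint open neighbourhoods $U_1\ni x_1$ and $U_2\ni x_2$; then $(U_1,U_2)$ has positive sequential $\mu$-independence density, so $\upind_\mu((U_1,U_2);\fs)>0$ for some sequence $\fs$ in $G$. The cover $\cU=\{U_1^\comp,U_2^\comp\}$ has pairwise disjoint complements, so Lemmas~\ref{L-positive se} and \ref{L-local se} give $h_\mu^-(\cU;\fs)>0$, whence $h_\mu^+(\cU;\fs)\ge h_\mu^-(\cU;\fs)>0$; since any finite Borel partition refining $\cU$ then has positive $\fs$-sequence entropy, $h_\mu(X;\fs)>0$ and the system is nonnull. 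For ``if'', nonnullness yields a sequence $\fs$ and a finite Borel partition $\cQ$ with $h_\mu(\cQ;\fs)>0$; writing $\cQ$ as a join of finitely many two-element partitions and using the subadditivity $h_\mu(\cP_1\vee\cP_2;\fs)\le h_\mu(\cP_1;\fs)+h_\mu(\cP_2;\fs)$ (valid for the limit supremum) produces a two-element partition $\cP=\{P_1,P_2\}$ with $h_\mu(\cP;\fs)>0$. Now Lemma~\ref{L-pse to mind} gives an $\varepsilon>0$; by regularity of $\mu$ pick closed $A_i\subseteq P_i$ with $\mu(P_i\setminus A_i)<\varepsilon$, so that $(A_1,A_2)$ has positive sequential $\mu$-independence density, and part (1) delivers a $\mu$-IN-pair $(x_1,x_2)$ with $x_i\in A_i$, necessarily nondiagonal since $A_1\cap A_2=\emptyset$.

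For (3) (with $G$ infinite): if $x\in\supp(\mu)$ then every neighbourhood $U$ of $x$ has $\mu(U)>0$, and the proof of Lemma~\ref{L-supp} — which uses no amenability and carries over verbatim to the map version $\mathscr{B}'(\mu,\delta)$ — shows that the $1$-tuple $(U)$ has $\delta$-$\mu$-independence density over \emph{every} finite subset of $G$ for suitable $\delta>0$; since $G$ is infinite these include arbitrarily large sets, so $x\in\IN^\mu_1(X)$. Conversely, if $x\notin\supp(\mu)$, use regularity of the compact Hausdorff space $X$ to pick an open $U\ni x$ with $\mu(\overline{U})=0$ and set $D_s=X\setminus s^{-1}\overline{U}$; then $D\in\mathscr{B}'(\mu,\delta)$ for all $\delta$, while $D_s\cap s^{-1}U=\emptyset$ for each $s$, so the only independence set for $(U)$ relative to $D$ is empty and $x\notin\IN^\mu_1(X)$. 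For (4), closedness is immediate (if $\ox^{(n)}\to\ox$ are $\mu$-IN-tuples and $U_1\times\cdots\times U_k$ is a product neighbourhood of $\ox$, then eventually each $U_j$ is a neighbourhood of $x_j^{(n)}$), and $G$-invariance follows from the substitution $h\mapsto s^{-1}h$: if $J$ is an independence set for $(s^{-1}U_1,\dots,s^{-1}U_k)$ relative to $D$ then $sJ$ is one for $(U_1,\dots,U_k)$ relative to $\tilde D$ with $\tilde D_h:=D_{s^{-1}h}$ (of the same infimal measure), and translating the witnessing finite sets by $s^{-1}$ transfers positive sequential $\mu$-independence density.

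For (5), the inclusion $\pi^k(\IN^\mu_k(X))\subseteq\IN^{\pi_*(\mu)}_k(Y)$ is direct: given $\ox\in\IN^\mu_k(X)$ and neighbourhoods $V_j$ of $y_j:=\pi(x_j)$, the sets $\pi^{-1}V_j$ are neighbourhoods of $x_j$, so $(\pi^{-1}V_1,\dots,\pi^{-1}V_k)$ has positive sequential $\mu$-independence density; and for any $D'\in\mathscr{B}'(\pi_*(\mu),\delta)$, putting $D_s=\pi^{-1}(D'_s)\in\mathscr{B}'(\mu,\delta)$, any independence set for $(\pi^{-1}V_j)$ relative to $D$ is also one for $(V_j)$ relative to $D'$ of the same size, since $\pi^{-1}\big(\bigcap_s(D'_s\cap s^{-1}V_{\sigma(s)})\big)=\bigcap_s(D_s\cap s^{-1}\pi^{-1}V_{\sigma(s)})$ and a set with nonempty preimage is nonempty. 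For the reverse inclusion, let $\oy\in\IN^{\pi_*(\mu)}_k(Y)$; the length-one case is part (3) together with $\supp(\pi_*(\mu))=\pi(\supp(\mu))$, and a tuple with repeated coordinates reduces to a shorter tuple with distinct coordinates because $(U,U,U_3,\dots,U_k)$ and $(U,U_3,\dots,U_k)$ have the same independence sets relative to any $D$. So assume $k\ge2$ and the coordinates of $\oy$ distinct, and choose pairwise disjoint open $V_j\ni y_j$. Then $(V_1,\dots,V_k)$ has positive sequential $\pi_*(\mu)$-independence density, so by Lemmas~\ref{L-positive se} and \ref{L-shlifting} we get $h_\mu^-(\pi^{-1}\cV;\fs)>0$ for the cover $\cV=\{V_1^\comp,\dots,V_k^\comp\}$ of $Y$ and some $\fs$; applying Lemma~\ref{L-positive se} again, $(\pi^{-1}V_1,\dots,\pi^{-1}V_k)$, and hence also the tuple of closed sets $(\pi^{-1}\overline{V_1},\dots,\pi^{-1}\overline{V_k})$, has positive sequential $\mu$-independence density, so part (1) supplies a $\mu$-IN-tuple $\ox^V$ with $x_j^V\in\pi^{-1}\overline{V_j}$. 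Letting $(V_1,\dots,V_k)$ run over a basis of pairwise disjoint neighbourhoods of $\oy$ and passing to a convergent subnet of the $\ox^V$, we obtain $\ox\in\IN^\mu_k(X)$ by part (4) with $\pi(x_j)\in\bigcap_V\overline{V_j}=\{y_j\}$ (by regularity of $Y$), i.e. $\pi^k(\ox)=\oy$. The main obstacle is exactly this last construction: the combinatorial transfer of independence sets only runs one way, so the reverse direction of (5) must route through the two-sided identities for the local entropy quantities in Lemmas~\ref{L-shlifting} and \ref{L-positive se}, keeping the witnessing sequence $\fs$ fixed throughout, and the passage from the closed neighbourhoods $\overline{V_j}$ down to the points $y_j$ needs a net/compactness argument in a possibly non-metrizable $Y$; the rest is a routine transcription of the proof of Proposition~\ref{P-IE basic}.
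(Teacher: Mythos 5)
Your proposal is correct and takes essentially the same route as the paper, which proves (1)--(5) by exactly the citations you expand: Lemma~\ref{L-split sh} plus compactness for (1), Lemmas~\ref{L-pse to mind} and \ref{L-positive se} for (2), Lemma~\ref{L-supp} for (3), and (1), (3), (4) together with Lemmas~\ref{L-shlifting} and \ref{L-positive se} for (5). Two cosmetic points only: the map-relative version of Lemma~\ref{L-supp} needed in (3) is not quite ``verbatim'' but follows from the same averaging trick used for the sets $E$ and $W$ in Lemmas~\ref{L-prime} and \ref{L-mind}, and in the repeated-coordinate reduction for (5) one should intersect the (possibly different) neighbourhoods of a repeated point before comparing independence sets; neither affects the argument.
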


\begin{proof}
(1) Apply Lemma~\ref{L-split sh} and a compactness argument.

(2) As is well known and easy to show,
$(X, \mu)$ is nonnull if and only if there is a two-element
Borel partition of $X$ with positive sequence entropy
with respect to some sequence in $G$. We thus obtain the ``if'' part by (1) and
Lemma~\ref{L-pse to mind}. For the ``only if'' part apply Lemma~\ref{L-positive se}.

(3) This follows from Lemma~\ref{L-supp}.

(4) Trivial.

(5) This follows from (1), (3), (4) and Lemmas~\ref{L-shlifting} and
\ref{L-positive se}.
\end{proof}

The concept of measure sequence entropy tuple originates in \cite{HMY},
which deals with the case $G=\Zb$. The
definition works equally well for general $G$. Thus for $k\geq 2$ we say that a
nondiagonal tuple $(x_1 , \dots , x_k ) \in X^k$ is a
{\it sequence entropy tuple for $\mu$} if whenever $U_1 , \dots , U_l$
are pairwise disjoint Borel neighbourhoods of the distinct points in
the list $x_1 , \dots , x_k$, every Borel partition of $X$ refining
the cover $\{ U_1^\comp , \dots , U_l^\comp \}$ has
positive measure sequence entropy with respect to some sequence in $G$.
To show that nondiagonal $\mu$-IN-tuples are the same as $\mu$-sequence
entropy tuples, it suffices by Lemma~\ref{L-positive se} to prove that if
$\cU$ is a cover of $X$ consisting of the complements of neighbourhoods of
the points in a $\mu$-sequence entropy tuple then
$h_{\mu}^-(\cU;\mathfrak{s})>0$ for some sequence $\mathfrak{s}$ in $G$.
For $G=\Zb$ this was done by Huang, Maass, and Ye in Theorem~3.5 of \cite{HMY}.
Their methods readily extend to the general case, as we will now indicate.

Given a unitary representation $\pi : G\to\cB (\cH )$, the Hilbert space $\cH$
orthogonally decomposes into
two $G$-invariant closed subspaces $\cH_\wm$ and $\cH_\cpct$ such that
$\pi$ is weakly mixing on $\cH_\wm$ and the $G$-orbit of every vector in $\cH_\cpct$
has compact closure \cite{Gode}. For our $\mu$-preserving action of $G$
on $X$, considering its associated unitary representation of $G$
on $L^2 (X,\mu )$ there exists by Theorem~7.1 of \cite{Zimmer}
a $G$-invariant von Neumann subalgebra $\mathfrak{D}_X \subseteq \LX$ such that
$L^2 (X,\mu )_\cpct = L^2 (\mathfrak{D}_X ,\mu |_{\mathfrak{D}_X} )$.
The following lemma generalizes part of Theorem~2.3 of \cite{HMY} with
essentially the same proof. In \cite{HMY} $X$ is assumed to be metrizable, but
that is not necessary here.

\begin{lemma}\label{L-seq rel}
Let $\cP$ be a finite Borel partition of $X$. Then there is a sequence
$\fs$ in $G$ such that $\hmeas_\mu(\cP; \fs) \geq \Hmeas (\cP | \mathfrak{D}_X )$.
\end{lemma}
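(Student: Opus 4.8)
The plan is to exploit the orthogonal splitting $L^2(X,\mu) = L^2(\mathfrak{D}_X,\mu|_{\mathfrak{D}_X}) \oplus (L^2(X,\mu) \ominus L^2(\mathfrak{D}_X,\mu))$, where the second summand carries a weakly mixing representation of $G$, and to realize the conditional entropy $\Hmeas(\cP \mid \mathfrak{D}_X)$ as an honest sequence entropy by choosing a sequence $\fs$ along which the atoms of $\cP^{\{s_1,\dots,s_n\}}$ behave, in the weakly mixing directions, like an independent sequence. First I would recall the standard reformulation: $\Hmeas(\cP \mid \mathfrak{D}_X)$ equals $\Hmeas(\cP)$ minus the ``deterministic'' part contributed by the compact (Kronecker-type) factor $\mathfrak{D}_X$, and since $\mathfrak{D}_X$ is generated by functions whose $G$-orbits are precompact in $L^2$, the information that the partition elements carry about the past along a generic sparse sequence is controlled by the weakly mixing part. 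The key input is condition (2) in the characterization of weak mixing recalled in the excerpt: for every finite set of vectors and every $\varepsilon>0$ there is a group element making all the pairwise inner products small. Iterating this, one builds inductively a sequence $\fs = \{s_j\}$ such that for each $n$, the conditional measures $\mu(\,\cdot\mid s_j^{-1}\cP,\ j<n)$ of the atoms of $s_n^{-1}\cP$ are, up to a small error, independent of the $\sigma$-algebra generated by the earlier iterates \emph{relative to} $\mathfrak{D}_X$; equivalently the $L^2$ projections of the indicator functions $\alpha_{s_n}(\mathbf 1_{P})$ onto the span of the earlier $\alpha_{s_j}(\mathbf 1_{P'})$ are nearly captured by their projections onto $L^2(\mathfrak{D}_X,\mu)$.

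Concretely, I would argue as follows. Fix an enumeration of the finitely many atoms of $\cP$ and consider the vectors $\mathbf 1_P - E(\mathbf 1_P\mid\mathfrak{D}_X)$ in the weakly mixing subspace. Using the thickly syndetic strengthening (condition (3) in the weak mixing list), choose $s_1 = e$ and, having chosen $s_1,\dots,s_{n-1}$, pick $s_n$ so that $|\langle \alpha_{s_n}(\mathbf 1_P - E(\mathbf 1_P\mid\mathfrak{D}_X)),\ \alpha_{s_j}(\mathbf 1_{P'} - E(\mathbf 1_{P'}\mid\mathfrak{D}_X))\rangle|$ is smaller than $\varepsilon_n$ for all $j<n$ and all atoms $P,P'$, where $\sum_n\varepsilon_n$ is tiny. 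A Gram-matrix / near-orthogonality estimate then shows that, modulo the contribution of $\mathfrak{D}_X$, the family $\{\alpha_{s_j}(\cP)\}_{j=1}^n$ is ``asymptotically independent'': writing $\cD$ for a fine partition approximating $\mathfrak{D}_X$, one gets
\[
\Hmeas\Big(\bigvee_{j=1}^n s_j^{-1}\cP \ \Big|\ \cD\Big) \geq n\big(\Hmeas(\cP\mid\mathfrak{D}_X) - o(1)\big),
\]
by the subadditivity and the near-independence; and since $\Hmeas\big(\bigvee_{j=1}^n s_j^{-1}\cP\big) \geq \Hmeas\big(\bigvee_{j=1}^n s_j^{-1}\cP \mid \cD\big)$, dividing by $n$ and taking $\limsup$ yields $\hmeas_\mu(\cP;\fs) \geq \Hmeas(\cP\mid\mathfrak{D}_X)$. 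The passage from ``conditioning on $\mathfrak{D}_X$'' to ``conditioning on a finite partition $\cD$'' requires approximating the conditional expectation onto $\mathfrak{D}_X$ by conditional expectations onto finite partitions with controlled error, which is routine since $L^2(\mathfrak{D}_X,\mu)$ is spanned by the precompact orbits.

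The main obstacle I anticipate is making the ``asymptotic independence relative to $\mathfrak{D}_X$'' quantitatively precise enough to push through the entropy estimate: near-orthogonality of indicator functions in $L^2$ does not immediately give near-independence of partitions in the entropy sense, because entropy is sensitive to small perturbations only in an averaged way. The right move, which is exactly what Huang--Maass--Ye do in Theorem~2.3 of \cite{HMY}, is to work with the conditional information function and a martingale/telescoping argument: write $\Hmeas(\bigvee_{j=1}^n s_j^{-1}\cP \mid \mathfrak{D}_X) = \sum_{j=1}^n \Hmeas(s_j^{-1}\cP \mid \bigvee_{i<j} s_i^{-1}\cP \vee \mathfrak{D}_X)$ and show each summand is at least $\Hmeas(\cP\mid\mathfrak{D}_X) - \varepsilon_j$ by choosing $s_j$ so that $s_j^{-1}\cP$ is nearly conditionally independent of $\bigvee_{i<j}s_i^{-1}\cP$ given $\mathfrak{D}_X$ — which is arranged using weak mixing on the orthocomplement of $L^2(\mathfrak{D}_X,\mu)$ together with the fact that $\mathfrak{D}_X$ is $G$-invariant. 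I would then invoke continuity of conditional entropy in the appropriate sense to absorb the errors. Since the excerpt explicitly says Huang--Maass--Ye's methods extend with essentially the same proof, I would carry out this telescoping argument verbatim, only replacing their use of $\Zb$-weak mixing with the general discrete-group weak mixing characterizations (1)--(3) quoted earlier in the paper and the decomposition $\cH = \cH_\wm \oplus \cH_\cpct$ from \cite{Gode}.
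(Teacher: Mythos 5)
The central step of your argument has a genuine gap: the claim that, using weak mixing of the representation on $L^2(X,\mu)\ominus L^2(\mathfrak{D}_X,\mu)$, one can choose $s_j$ so that $s_j^{-1}\cP$ is nearly conditionally independent of $\bigvee_{i<j}s_i^{-1}\cP$ \emph{given} $\mathfrak{D}_X$, and hence that $\Hmeas\big(s_j^{-1}\cP\mid\bigvee_{i<j}s_i^{-1}\cP\vee\mathfrak{D}_X\big)\geq\Hmeas(\cP\mid\mathfrak{D}_X)-\varepsilon_j$. Weak mixing of that representation only controls the scalar correlations $\langle U_s\xi,\zeta\rangle$ of vectors $\xi$ in the orthocomplement against finitely many \emph{fixed} test vectors $\zeta$; it says nothing about the conditional expectations $E(U_s\xi\cdot\boldsymbol{1}_Q\mid\mathfrak{D}_X)$, whose smallness is what independence relative to $\mathfrak{D}_X$ would require (there the relevant test function depends on $s$). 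Such relative independence would amount to relative weak mixing of the system over its Kronecker factor, which fails for compact extensions. Concretely, take the Anzai skew product $T(x,y)=(x+\alpha,y+x)$ on $\Tb^2$ with $\alpha$ irrational, so that $\mathfrak{D}_X$ is the algebra of the base rotation, and let $\cP$ be the partition of the fibre circle into two half-circles (times the base). Then $\Hmeas(\cP\mid\mathfrak{D}_X)=\ln 2$, but on each fibre the join $\bigvee_{j=1}^n s_j^{-1}\cP$ is a partition of the circle into at most $2n$ arcs, so $\Hmeas\big(\bigvee_{j=1}^n s_j^{-1}\cP\mid\mathfrak{D}_X\big)\leq\ln(2n)$ for \emph{every} choice of $s_1,\dots,s_n$; since this quantity is exactly the sum of your telescoped summands, they cannot each be $\geq\ln 2-\varepsilon_j$ with small $\varepsilon_j$. (The lemma itself is not contradicted: the unconditioned entropies $\Hmeas(\bigvee_{j\leq n}s_j^{-1}\cP)$ can still grow like $n\ln 2$, the missing information being carried by the base coordinates.) The same objection applies to the ``independence relative to $\mathfrak{D}_X$'' mechanism in your first paragraph, and it is also a misattribution: the Huang--Maass--Ye argument does not condition on the Kronecker algebra in the telescoping.

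The correct route, which is the paper's, drops $\mathfrak{D}_X$ from the conditioning and uses only the scalar statement weak mixing actually delivers: for a fixed finite partition $\cQ=\{Q_1,\dots,Q_l\}$ (in the recursion, $\cQ=\bigvee_{i<n}s_i^{-1}\cP$) and $\eta>0$, the set of $s$ with $|\langle U_s(\boldsymbol{1}_{P_i}-E(\boldsymbol{1}_{P_i}\mid\mathfrak{D}_X)),\boldsymbol{1}_{Q_j}\rangle|<\eta$ for all $i,j$ is thickly syndetic, hence nonempty. For such $s$ one has $\mu(s^{-1}P_i\cap Q_j)\approx\langle U_sE(\boldsymbol{1}_{P_i}\mid\mathfrak{D}_X),\boldsymbol{1}_{Q_j}\rangle$, and Jensen's inequality (concavity of $-x\ln x$ applied over each atom $Q_j$) then gives directly
\[
\Hmeas(s^{-1}\cP\mid\cQ)\;\geq\;\sum_{i}\int_X -U_sE(\boldsymbol{1}_{P_i}\mid\mathfrak{D}_X)\ln\big(U_sE(\boldsymbol{1}_{P_i}\mid\mathfrak{D}_X)\big)\,d\mu-\varepsilon\;=\;\Hmeas(\cP\mid\mathfrak{D}_X)-\varepsilon,
\]
with no relative independence needed. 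Choosing $s_n$ recursively with $\varepsilon=2^{-n}$ and applying the ordinary chain rule $\Hmeas(\bigvee_{i=1}^n s_i^{-1}\cP)=\sum_{k}\Hmeas\big(s_k^{-1}\cP\mid\bigvee_{i<k}s_i^{-1}\cP\big)$ (again without $\mathfrak{D}_X$) yields $\hmeas_\mu(\cP;\fs)\geq\Hmeas(\cP\mid\mathfrak{D}_X)$.
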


\begin{proof}
First we show that, given a finite Borel partition $\cQ$ of $X$ and an
$\varepsilon > 0$, the set of all $s\in G$ such that
$\Hmeas (s^{-1} \cP | \cQ ) \geq \Hmeas (\cP | \mathfrak{D}_X ) - \varepsilon$
is thickly syndetic. Write $\cP = \{ P_1 , \dots , P_k \}$ and
$\cQ = \{ Q_1 ,\dots , Q_l \}$ and denote by $E$ the $\mu$-preserving conditional
expectation onto $\mathfrak{D}_X$. Since
$\boldsymbol{1}_A - E(\boldsymbol{1}_A ) \in L^2 (X,\mu )_\wm$ for
every Borel set $A\subseteq X$ and thick syndeticity is preserved under taking
finite intersections, for each $\eta > 0$ the set of all $s\in G$ such that
$\sup_{1\leq i\leq k,1\leq j\leq l} | \langle U_s (\boldsymbol{1}_{P_i} -
E(\boldsymbol{1}_{P_i} )),\boldsymbol{1}_{Q_j} \rangle | < \eta$
is thickly syndetic. It follows that for all $s$ in some thickly syndetic set
we have, using the concavity of the function $x\mapsto -x\ln x$,
\begin{align*}
\Hmeas (s^{-1} \cP | \cQ ) + \varepsilon
&\geq \sum_{i=1}^k \sum_{j=1}^l -\langle U_s E(\boldsymbol{1}_{P_i} ) ,
\boldsymbol{1}_{Q_j} \rangle \ln \bigg(
\frac{\langle U_s E(\boldsymbol{1}_{P_i} ) , \boldsymbol{1}_{Q_j} \rangle}{\mu (Q_j)} \bigg) \\
&\geq \sum_{i=1}^k \int_X -U_s E(\boldsymbol{1}_{P_i} )
\ln (U_s E(\boldsymbol{1}_{P_i} ))\, d\mu \\
&= \Hmeas (\cP | \mathfrak{D}_X ) ,
\end{align*}
as desired.

We can now recursively construct a sequence $\fs = \{s_1 = e, s_2 , s_3 , \dots \}$
in $G$ such that $\Hmeas (s_n^{-1} \cP | \bigvee_{i=1}^{n-1} s_i^{-1} \cP ) \geq
\Hmeas (\cP | \mathfrak{D}_X ) - 2^{-n}$ for each $n>1$. Using the identity
$\Hmeas (\bigvee_{i=1}^n s_i^{-1} \cP ) = \Hmeas (\bigvee_{i=1}^{n-1}
s_i^{-1} \cP ) + \Hmeas (s_n^{-1} \cP | \bigvee_{i=1}^{n-1} s_i^{-1} \cP )$ we then
get
\[ \hmeas_\mu (\cP; \fs) = \limsup_{n\to\infty} \frac1n \sum_{k=1}^n
\Hmeas (s_k^{-1} \cP | \textstyle{\bigvee}_{i=1}^{k-1} s_i^{-1} \cP ) \geq
\Hmeas (\cP | \mathfrak{D}_X ) . \]
\end{proof}

Using Lemma~\ref{L-seq rel} we can now argue as in the proof of Theorem~3.5 of \cite{HMY}
to deduce that $\hmeas_{\mu}^-(\cU;\mathfrak{s})>0$ for some sequence $\mathfrak{s}$ in $G$
whenever $\cU$ is a cover of $X$ whose elements are the complements of neighbourhoods of
the points in a $\mu$-sequence entropy tuple (it can be checked that the
metrizability hypothesis on $X$ in \cite{HMY} is not necessary in this case).
In \cite{HMY} the authors use the fact that $\mathfrak{D}_X$-measurable partitions have
zero measure sequence entropy for all sequences, which for $G=\Zb$ and metrizable $X$
is contained in \cite{Kush}. In our more general setting we can appeal to
Theorem~\ref{T-null} from the next section. We thus obtain the desired result:

\begin{theorem}\label{T-IN SE}
For every $k\geq 2$, a nondiagonal tuple in
$X^k$ is a $\mu$-IN-tuple if and only if it is a $\mu$-sequence entropy tuple.
\end{theorem}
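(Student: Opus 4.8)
The plan is to prove the two implications separately, isolating the real content of the converse in the inequality $h_\mu^-(\cU;\fs)>0$ for a suitable sequence $\fs$, exactly as foreshadowed in the discussion preceding and following Lemma~\ref{L-seq rel}.

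For the implication ``nondiagonal $\mu$-IN-tuple $\Rightarrow$ $\mu$-sequence entropy tuple'' I would start from a nondiagonal $\mu$-IN-tuple $\ox=(x_1,\dots,x_k)$ and a list $U_1,\dots,U_l$ of pairwise disjoint Borel neighbourhoods of the distinct points $y_1,\dots,y_l$ among the $x_j$, then pick open neighbourhoods $V_i\subseteq U_i$ of $y_i$. Writing $x_j=y_{\tau(j)}$, the product $V_{\tau(1)}\times\cdots\times V_{\tau(k)}$ is a neighbourhood of $\ox$, so it has positive sequential $\mu$-independence density; since $\tau$ is onto, a $\mu$-independence set (relative to any $D$) for this tuple is automatically a $\mu$-independence set for $(V_1,\dots,V_l)$, so the latter has positive sequential $\mu$-independence density. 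Lemma~\ref{L-positive se} applied to the cover $\cU=\{V_1^\comp,\dots,V_l^\comp\}$ (whose members have pairwise disjoint complements) then yields a sequence $\fs$ with $h_\mu^-(\cU;\fs)>0$, and since every Borel partition refining $\{U_1^\comp,\dots,U_l^\comp\}$ also refines $\cU$, each such partition has sequence entropy at least $h_\mu^+(\cU;\fs)\ge h_\mu^-(\cU;\fs)>0$ with respect to $\fs$. This is precisely the definition of a $\mu$-sequence entropy tuple.

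For the converse, given a nondiagonal $\mu$-sequence entropy tuple $\ox$ and a product neighbourhood $U_1\times\cdots\times U_k$ of it, I would choose pairwise disjoint open neighbourhoods $V_i$ of the distinct points $y_i$ (there are $l\ge 2$ of them) with $V_i\subseteq\bigcap_{j:x_j=y_i}U_j$, and put $\cU=\{V_1^\comp,\dots,V_l^\comp\}$. The hypothesis says every Borel partition refining $\cU$ has positive measure sequence entropy for some sequence; running the argument of Theorem~3.5 of \cite{HMY}, which feeds Lemma~\ref{L-seq rel} together with the vanishing of sequence entropy on $\mathfrak{D}_X$-measurable partitions (Theorem~\ref{T-null}) into a recursive construction, produces a single sequence $\fs$ with $h_\mu^-(\cU;\fs)>0$. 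By Lemma~\ref{L-positive se}, $(V_1,\dots,V_l)$ has positive sequential $\mu$-independence density; transferring along $\tau$ and then enlarging each $V_{\tau(j)}$ to $U_j$ (both operations only enlarging the relevant intersections) shows that $(U_1,\dots,U_k)$ has positive sequential $\mu$-independence density, so $\ox$ is a $\mu$-IN-tuple.

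The main obstacle will be the step in the converse that upgrades ``every refining partition of $\cU$ has positive sequence entropy along some sequence of its own'' to the existence of one sequence $\fs$ with $h_\mu^-(\cU;\fs)>0$; this is where the compact versus weakly mixing dichotomy for the unitary representation of $G$ on $L^2(X,\mu)$ enters, via Lemma~\ref{L-seq rel}, and where I would need to confirm that the Huang--Maass--Ye argument does not actually use the metrizability of $X$ and that Theorem~\ref{T-null} may legitimately be substituted for the Kushnirenko-type vanishing result invoked there.
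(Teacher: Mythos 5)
Your proposal is correct and follows essentially the same route as the paper: both directions are reduced via Lemma~\ref{L-positive se} to the quantities $h_\mu^-(\cU;\fs)$ and $\upind_\mu(\cdot;\fs)$, and the hard direction rests on extending the argument of Theorem~3.5 of \cite{HMY} by means of Lemma~\ref{L-seq rel}, with Theorem~\ref{T-null} (which is independent of this result, so no circularity) replacing Kushnirenko's theorem for the nullness of $\mathfrak{D}_X$-measurable partitions. Your handling of the passage between the $k$-tuple of neighbourhoods and the $l$ distinct points via the surjection $\tau$ is exactly the routine reduction the paper leaves implicit.
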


To establish the product formula for $\mu$-IN-tuples we will make use of the maximal
null von Neumann algebra $\mathfrak{N}_X \subseteq \LX$, which corresponds to the largest
factor of the system with zero sequence entropy for all sequences 
(see the beginning of the next section).
Denote by $E'_X$ the conditional expectation $L^{\infty}(X, \mu)\rightarrow
\mathfrak{N}_X$. 
The following lemma is the analogue of Lemma~\ref{L-N Pinsker} and 
appeared as Lemma~3.3 in \cite{HMY}. 
Note that the assumptions in \cite{HMY} that 
$X$ is metrizable and $G=\Zb$ are not needed here.

\begin{lemma}\label{L-N Pinsker}
Let $\cU=\{U_1, \dots, U_k\}$ be a Borel cover of $X$. 
Then $\prod^k_{i=1}E'_X(\chi_{U^c_i})\neq 0$ if and only if for every
finite Borel partition $\cP$ finer than $\cU$ as a cover one has
$\hmeas_{\mu}(\cP; \mathfrak{s})>0$ for some sequence $\mathfrak{s}$ in $G$. 
\end{lemma}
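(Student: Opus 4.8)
The plan is to imitate the proof of Lemma~\ref{L-Pinsker} (the entropy analogue; cf.\ also Lemma~3.3 of \cite{HMY}), the only new ingredient being the partition description of the maximal null von Neumann algebra recalled at the start of Section~\ref{S-null}: a finite Borel partition $\cP$ of $X$ is $\mathfrak{N}_X$-measurable if and only if $\hmeas_\mu (\cP ; \fs ) = 0$ for every sequence $\fs$ in $G$. One direction of this is clear since $\hmeas_\mu (\cP ; \fs )$ is dominated by the sequence entropy of the factor generated by the $G$-orbit of $\cP$; for the other one uses that the partitions which are null along every sequence are closed under joins, via $\hmeas_\mu (\cP \vee \cQ ; \fs ) \le \hmeas_\mu (\cP ; \fs ) + \hmeas_\mu (\cQ ; \fs )$ (from subadditivity of Shannon entropy over joins and of $\limsup$), under the $G$-action, and under approximation, so that they generate a $G$-invariant sub-$\sigma$-algebra whose associated factor is still null along every sequence. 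This characterization, together with the definition of $\mathfrak{N}_X$, is the one genuinely nonroutine point; once it is in hand the remainder is formal, and in particular the metrizability of $X$ and the hypothesis $G = \Zb$ from \cite{HMY} play no role.

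For the forward implication I would argue by contradiction: assume $\prod_{i=1}^k E'_X (\chi_{U_i^\comp}) \neq 0$ but that some finite Borel partition $\cP$ refining $\cU$ satisfies $\hmeas_\mu (\cP ; \fs ) = 0$ for all $\fs$, so that $\cP$ is $\mathfrak{N}_X$-measurable. Choosing for each atom $P\in\cP$ an index $i(P)$ with $P\subseteq U_{i(P)}$ and setting $V_i = \bigcup \{ P\in\cP : i(P) = i \}$, the sets $V_1 ,\dots ,V_k$ are $\mathfrak{N}_X$-measurable, satisfy $V_i \subseteq U_i$, and cover $X$. Hence $\chi_{U_i^\comp} \le \chi_{V_i^\comp} = E'_X (\chi_{V_i^\comp})$, and applying the positive map $E'_X$ gives $0 \le E'_X (\chi_{U_i^\comp}) \le \chi_{V_i^\comp}$ for each $i$, whence $0 \le \prod_{i=1}^k E'_X (\chi_{U_i^\comp}) \le \prod_{i=1}^k \chi_{V_i^\comp} = \chi_{\bigcap_{i=1}^k V_i^\comp} = 0$, a contradiction.

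For the reverse implication I would again argue by contradiction: assume every finite Borel partition refining $\cU$ has positive sequence entropy along some sequence, but $\prod_{i=1}^k E'_X (\chi_{U_i^\comp}) = 0$. Put $C_i = \{ x\in X : E'_X (\chi_{U_i^\comp})(x) = 0 \}$, which is $\mathfrak{N}_X$-measurable. Since the functions $E'_X (\chi_{U_i^\comp})$ have vanishing product $\mu$-a.e., the sets $C_i$ cover $X$ modulo a null set; and since $C_i \in \mathfrak{N}_X$ the conditional expectation identity yields $\mu (C_i \cap U_i^\comp ) = \int_{C_i} \chi_{U_i^\comp}\, d\mu = \int_{C_i} E'_X (\chi_{U_i^\comp})\, d\mu = 0$, so $C_i \subseteq U_i$ up to a null set. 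Disjointifying ($P_1 = C_1$ and $P_j = C_j \setminus (C_1 \cup\dots\cup C_{j-1})$ for $j\ge 2$) and correcting on a null set, using that $\cU$ covers $X$, one obtains a finite Borel partition $\cP$ refining $\cU$ whose atoms are $\mathfrak{N}_X$-measurable; then $\cP$ itself is $\mathfrak{N}_X$-measurable, so $\hmeas_\mu (\cP ; \fs ) = 0$ for every $\fs$, contradicting the assumption.
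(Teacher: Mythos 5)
Your proof is correct. The paper does not actually prove this lemma itself --- it cites Lemma~3.3 of \cite{HMY} and merely observes that the metrizability and $G=\Zb$ hypotheses there are not needed --- and your argument (the characterization of $\mathfrak{N}_X$-measurable finite partitions as exactly those with zero sequence entropy along every sequence, obtained from subadditivity, $G$-invariance and the approximation inequality $\hmeas_\mu(\cP;\fs)\le \hmeas_\mu(\cQ;\fs)+\Hmeas(\cP|\cQ)$, followed by the two conditional-expectation computations) is precisely the standard argument that this citation stands in for, so it matches the intended proof in substance.
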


Combining Lemma~\ref{L-N Pinsker}, Proposition~\ref{P-IN basic}(3), and
Theorem~\ref{T-IN SE}, we obtain the following analogue
of Lemma~\ref{L-measure IE char}. 

\begin{lemma}\label{L-measure IN char}
When $G$ is infinite, 
a tuple $\ox=(x_1, \dots, x_k)\in X^k$ is a $\mu$-IN tuple if and only
if for any Borel neighbourhoods $U_1 , \dots , U_k$ of $x_1 , \dots , x_k$, respectively,
one has $\prod^k_{i=1}E'_X(\chi_{U_i})\neq 0$.
\end{lemma}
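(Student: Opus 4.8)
The plan is to derive Lemma~\ref{L-measure IN char} by combining the three ingredients already assembled, exactly as Lemma~\ref{L-measure IE char} was obtained in the entropy setting. First I would unwind the definition: $\ox = (x_1,\dots,x_k)$ is a $\mu$-IN-tuple precisely when for every product neighbourhood $U_1\times\cdots\times U_k$ of $\ox$ the tuple $(U_1,\dots,U_k)$ has positive sequential $\mu$-independence density, i.e.\ $\upind_\mu(\oU;\fs)>0$ for some sequence $\fs$ in $G$, where $\oU=(U_1,\dots,U_k)$.

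Next I would pass from open neighbourhoods to arbitrary Borel neighbourhoods. In one direction this is immediate since an open neighbourhood is a Borel neighbourhood. For the converse, given Borel neighbourhoods $U_i$ of $x_i$ one shrinks to open neighbourhoods $V_i\subseteq U_i$ of $x_i$; then $\prod_i E'_X(\chi_{V_i})\neq 0$ implies $\prod_i E'_X(\chi_{U_i})\neq 0$ since $\chi_{V_i}\le\chi_{U_i}$ and $E'_X$ is positive, so $E'_X(\chi_{U_i})\ge E'_X(\chi_{V_i})\ge 0$ with the product of the larger positive elements still nonzero (using that all the projections/functions involved commute in the abelian algebra $\LX$). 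Thus it suffices to prove the equivalence for open neighbourhoods, which lets me work with the cover $\cU=\{U_1^\comp,\dots,U_k^\comp\}$ whose members are closed and whose complements are the open sets $U_i$.

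The core of the argument is then the chain: $\ox$ is a $\mu$-IN-tuple $\iff$ for every product open neighbourhood $U_1\times\cdots\times U_k$ the complement-cover $\cU$ satisfies $\upind_\mu(\oU;\fs)>0$ for some $\fs$ $\iff$ (by Lemma~\ref{L-positive se}, applied to the tuple $\oU$ of complements of the members of $\cU$, which are pairwise\dots) — here I must be slightly careful, since Lemma~\ref{L-positive se} requires the complements of the members of $\cU$ to be \emph{pairwise disjoint}, so I should first reduce to the case that $\ox$ consists of distinct points with pairwise disjoint neighbourhoods, exactly as in the definition of sequence entropy tuple and as done for $\mu$-IE-tuples. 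Granting that reduction, Lemma~\ref{L-positive se} gives $\upind_\mu(\oU;\fs)>0 \iff \uph_{\comb,\mu}(\cU;\fs)>0 \iff h_\mu^-(\cU;\fs)>0$, and by Theorem~\ref{T-IN SE} (equivalently via Lemma~\ref{L-N Pinsker}) this happens for some $\fs$ for every such cover $\cU$ if and only if $\ox$ is a $\mu$-sequence entropy tuple, which by Lemma~\ref{L-N Pinsker} is equivalent to $\prod_{i=1}^k E'_X(\chi_{U_i})\neq 0$ for all Borel neighbourhoods. The hypothesis that $G$ is infinite enters through Proposition~\ref{P-IN basic}(3), needed to handle the length-one case $k=1$ (where $\IN_1^\mu(X)=\supp(\mu)$ must agree with $\{x : E'_X(\chi_U)\neq 0$ for all neighbourhoods $U$ of $x\}=\supp(\mu)$).

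I expect the main obstacle to be bookkeeping rather than depth: one must correctly match the "distinct points / pairwise disjoint neighbourhoods" normalization used implicitly in Lemma~\ref{L-positive se} and in the definition of $\mu$-sequence entropy tuple with the statement's quantification over \emph{arbitrary} Borel neighbourhoods of possibly-coinciding points, and one must verify that Lemma~\ref{L-N Pinsker}'s "for every finite Borel partition $\cP$ finer than $\cU$" condition is exactly the defining condition for $\ox$ to be a $\mu$-sequence entropy tuple for the cover $\cU$ built from those neighbourhoods. Once these identifications are made the proof is a one-line composition of Lemmas~\ref{L-N Pinsker}, \ref{L-positive se} (via Theorem~\ref{T-IN SE}), Proposition~\ref{P-IN basic}(3), and the positivity of $E'_X$, mirroring verbatim the derivation of Lemma~\ref{L-measure IE char}.
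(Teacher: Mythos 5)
Your proposal is correct and follows essentially the same route as the paper, which obtains the lemma precisely by combining Lemma~\ref{L-N Pinsker}, Theorem~\ref{T-IN SE}, and Proposition~\ref{P-IN basic}(3) (the latter, together with faithfulness and positivity of $E'_X$, handling the diagonal/repeated-point and length-one cases where the infiniteness of $G$ enters), with the same routine reductions between arbitrary Borel neighbourhoods and pairwise disjoint neighbourhoods of the distinct points. Your detour through Lemma~\ref{L-positive se} is harmless but not needed once Theorem~\ref{T-IN SE} is invoked.
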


The following is the analogue of Theorem~\ref{T-IE product}. 

\begin{theorem}\label{T-IN product}
Let $(Y,G)$ be another topological $G$-system and $\nu$ a $G$-invariant
Borel probability measure on $Y$. Then for all $k\geq 1$ we have
$\IN_{\mu\times\nu}^k (X\times Y) = \IN_\mu^k (X) \times \IN_\nu^k (Y)$.
\end{theorem}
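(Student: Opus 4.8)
The plan is to follow the template established by the proof of Theorem~\ref{T-IE product}, substituting the IN-tuple machinery for the IE-tuple machinery. First I would establish the easy inclusion $\IN_{\mu\times\nu}^k (X\times Y) \subseteq \IN_\mu^k (X) \times \IN_\nu^k (Y)$ directly from Proposition~\ref{P-IN basic}(5), applying it to the two coordinate projections $X\times Y \to X$ and $X\times Y \to Y$. So the substance is the reverse inclusion $\IN_\mu^k (X) \times \IN_\nu^k (Y)\subseteq \IN_{\mu\times\nu}^k (X\times Y)$.

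For the reverse inclusion I would first treat the base case in which $X$ and $Y$ are metrizable and $G$ is countable (and, as needed, infinite — for finite $G$ every system is null so $\mathfrak N_X = \LX$ and the statement degenerates). The key input is a tensor product formula $\mathfrak{N}_{X\times Y} = \mathfrak{N}_X \otimes \mathfrak{N}_Y$ for the maximal null von Neumann algebras, which then gives $E'_{X\times Y}(f\otimes g) = E'_X(f)\otimes E'_Y(g)$ for $f\in L^\infty(X,\mu)$, $g\in L^\infty(Y,\nu)$. Granting this, if $\ox = (x_1,\dots,x_k) \in \IN_\mu^k(X)$ and $\oy = (y_1,\dots,y_k)\in\IN_\nu^k(Y)$, then for Borel neighbourhoods $U_i$ of $x_i$ and $V_i$ of $y_i$ we have, using Lemma~\ref{L-measure IN char}, $\prod_{i=1}^k E'_X(\chi_{U_i}) \neq 0$ and $\prod_{i=1}^k E'_Y(\chi_{V_i}) \neq 0$, so
\[ \prod_{i=1}^k E'_{X\times Y}(\chi_{U_i\times V_i}) = \Big( \prod_{i=1}^k E'_X(\chi_{U_i}) \Big) \otimes \Big( \prod_{i=1}^k E'_Y(\chi_{V_i}) \Big) \neq 0, \]
and since the sets $U_i\times V_i$ run over a neighbourhood basis of $(x_i,y_i)$ in $X\times Y$, Lemma~\ref{L-measure IN char} applied to $X\times Y$ gives $((x_1,y_1),\dots,(x_k,y_k))\in\IN_{\mu\times\nu}^k(X\times Y)$, which is the claim.

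From the metrizable countable case one reduces to the general case exactly as in the proof of Theorem~\ref{T-IE E}: first handle arbitrary $X$ with $G$ countable by writing $X$ as a projective limit of metrizable $G$-systems $X_j$ with induced measures $\mu_j$ and using Proposition~\ref{P-IN basic}(5) to express $\mu$-IN-tuples as projective limits of $\mu_j$-IN-tuples (noting that the image of a sequence entropy tuple under a factor map is again one when nondiagonal); then handle uncountable $G$ using Lemma~\ref{L-min}, realizing the set of $\mu$-IN-tuples for $(X,G)$ in terms of the sets of $\mu$-IN-tuples for $(X,G')$ over countable subgroups $G'\le G$. Since $\IN_\mu^1(X) = \supp(\mu)$ for infinite $G$ by Proposition~\ref{P-IN basic}(3) and $\supp(\mu\times\nu) = \supp(\mu)\times\supp(\nu)$, the case $k=1$ is immediate; for $k\ge 2$ the above reduction applies.

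The main obstacle is the tensor product formula $\mathfrak{N}_{X\times Y} = \mathfrak{N}_X \otimes \mathfrak{N}_Y$ for maximal null von Neumann algebras, the sequence-entropy analogue of the Pinsker algebra tensor formula \cite[Theorem 0.4(3)]{Dan} cited in the proof of Theorem~\ref{T-IE product}. I would expect this to follow from the identification of $\mathfrak{N}_X$ as (a von Neumann algebra built from) the maximal null factor together with the fact that nullness is characterized by the compactness dichotomy discussed in the introduction; concretely, one inclusion is clear since $\mathfrak N_X\otimes\mathfrak N_Y$ is a null factor of $X\times Y$, and the reverse can be argued by showing that any null factor of $X\times Y$ is subordinate to the product of the maximal null factors, using that a partition not measurable with respect to $\mathfrak N_X\otimes\mathfrak N_Y$ has positive sequence entropy for some sequence. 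If a direct reference or short argument for this formula is not at hand, an alternative is to bypass it and instead argue combinatorially with sequential $\mu$-independence density directly, mirroring Lemma~\ref{L-split sh} and a product-set argument, but the von Neumann algebra route is cleaner and parallels the IE case, so I would pursue it first.
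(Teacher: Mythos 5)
Your overall architecture matches the paper's proof: one inclusion from Proposition~\ref{P-IN basic}(5), and the other from the multiplicativity $E'_{X\times Y}(f\otimes g)=E'_X(f)\otimes E'_Y(g)$ combined with Lemma~\ref{L-measure IN char}. But the entire substance of the theorem is the tensor product formula $\mathfrak{N}_{X\times Y}=\mathfrak{N}_X\otimes\mathfrak{N}_Y$, and this is exactly the point you leave unproven. Your sketch for its hard inclusion --- ``the reverse can be argued by showing that any null factor of $X\times Y$ is subordinate to the product of the maximal null factors, using that a partition not measurable with respect to $\mathfrak{N}_X\otimes\mathfrak{N}_Y$ has positive sequence entropy for some sequence'' --- is circular: that a non-$(\mathfrak{N}_X\otimes\mathfrak{N}_Y)$-measurable partition has positive sequence entropy is precisely the assertion $\mathfrak{N}_{X\times Y}\subseteq\mathfrak{N}_X\otimes\mathfrak{N}_Y$ that needs proof, and no mechanism is offered for it. The paper's mechanism is the one you gesture at with ``the compactness dichotomy'' but never pin down: by Theorem~\ref{T-null} one has $\mathfrak{N}_X=\mathfrak{D}_X$ (null equals compact), and since the tensor product of a weakly mixing unitary representation with any unitary representation is weakly mixing, $L^2(X\times Y,\mu\times\nu)_\cpct=L^2(X,\mu)_\cpct\otimes L^2(Y,\nu)_\cpct$, whence $\mathfrak{D}_{X\times Y}=\mathfrak{D}_X\otimes\mathfrak{D}_Y$ and the formula follows. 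Also note that if the sequence-entropy analogue of Lemma~\ref{L-seq rel} is not available to you, positivity of sequence entropy for non-$\mathfrak{D}$-measurable partitions is itself nontrivial, so you cannot treat it as a known black box.

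A secondary point: your reduction to metrizable $X$, $Y$ and countable $G$ via projective limits and Lemma~\ref{L-min}, modelled on Theorems~\ref{T-IE E} and \ref{T-IE product}, is unnecessary here. That reduction was forced in the IE case because the Pinsker tensor formula of Danilenko requires Lebesgue spaces and countable groups; in the IN case none of the ingredients (Theorem~\ref{T-null}, Lemma~\ref{L-N Pinsker}, Lemma~\ref{L-measure IN char}, the $L^2$ weak mixing/compact decomposition) needs metrizability or countability, so the argument runs directly in full generality. The only case distinction the paper makes is that for finite $G$ both sides are empty, whereas your proposal only parenthetically notes the finite case and otherwise carries the extra reduction machinery that buys nothing.
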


\begin{proof} 
When $G$ is finite, both sides are empty. So
we may assume that $G$ is infinite. 
By Proposition~\ref{P-IN basic}(5) we have 
$\IN_{\mu\times\nu}^k (X\times Y) \subseteq 
\IN_\mu^k (X) \times \IN_\nu^k (Y)$. Thus we just need
to prove 
$\IN_\mu^k (X) \times \IN_\nu^k (Y)\subseteq \IN_{\mu\times\nu}^k (X\times Y)$.

Since the tensor product of a weakly mixing unitary representation
of $G$ and any other unitary representation of $G$ is weakly mixing, we
have $L^2 (X\times Y,\mu\times\nu )_\cpct=L^2 (X, \mu)_\cpct
\otimes L^2(Y,\nu)_\cpct$.
It follows that 
$\mathfrak{D}_{X\times Y}=\mathfrak{D}_X\otimes \mathfrak{D}_Y$.
By Theorem~\ref{T-null} from the next section we have $\mathfrak{N}_{X}=
\mathfrak{D}_X$. Thus $\mathfrak{N}_{X\times Y}=\mathfrak{N}_X\otimes 
\mathfrak{N}_Y$ and hence
$E'_{X\times Y}(f\otimes g)=
E'_X(f)\otimes E'_Y(g)$ for any $f\in L^{\infty}(X, \mu)$ and 
$g\in L^{\infty}(Y, \nu)$. Now the desired inclusion follows from 
Lemma~\ref{L-measure IN char}. 
\end{proof}

In the case $G=\Zb$, the product formula for measure sequence
entropy tuples is implicit in Theorem~4.5 of \cite{HMY}, and we have
essentially applied the argument from there granted the fact
that for general $G$ the maximal null factor is the same as the maximal 
isometric factor, as shown by Theorem~\ref{T-null}.


\section{Combinatorial independence and the maximal null factor}\label{S-null}

We will continue to assume that $(X,G)$ is an arbitrary topological dynamical
system and $\mu$ is a $G$-invariant Borel probability measure on $X$.
In analogy with the Pinsker $\sigma$-algebra in the context of entropy,
the $G$-invariant $\sigma$-subalgebra of $\sB$ generated by all finite Borel
partitions of $X$ with zero sequence entropy for all sequences
(or, equivalently, all two-element Borel partitions of $X$ with zero sequence
entropy for all sequences) defines the largest
factor of the system with zero sequence entropy for all sequences (see \cite{HMY}).
The corresponding $G$-invariant von Neumann subalgebra of $\LX$
will be denoted by $\mathfrak{N}_X$ and referred to as the
{\em maximal null von Neumann algebra}. The system $(X,\sB , \mu , G)$ is said
to be {\it null} if $\mathfrak{N}_X = \LX$ (i.e., if it has zero measure sequence
entropy for all sequences) and {\it completely nonnull} if $\mathfrak{N}_X = \Cb$.
Kushnirenko showed that an ergodic $\Zb$-action on a Lebesgue space is
isometric if and only if $\mathfrak{N}_X = \LX$ \cite{Kush}. As Theorem~\ref{T-null}
will demonstrate more generally, $\mathfrak{N}_X$ always coincides with
$\mathfrak{D}_X$, as defined prior to Lemma~\ref{L-seq rel}.

Our main goal in this section is to establish Theorem~\ref{T-null}, which gives
various local descriptions of the maximal null factor in analogy with
Theorem~\ref{T-Pinsker}. To a large extent
the same arguments apply and we will simply refer to the
appropriate places in the proof Theorem~\ref{T-Pinsker}. On the other hand,
several conditions appear in Theorem~\ref{T-null} which have no analogue in
the entropy setting, reflecting the fact that there is a particularly strong
dichotomy between nullness and nonnullness.
This dichotomy hinges on the orthogonal decomposition of $L_2 (X,\mu )$ into the
$G$-invariant closed subspaces $L_2 (X,\mu )_\wm$ and $L_2 (X,\mu )_\cpct$
(as described prior to Lemma~\ref{L-seq rel}) and the relationship
between compact orbit closures and finite-dimensional subrepresentations
recorded below in Proposition~\ref{P-cpct finite}.

To define the sequence analogue of c.p.\ approximation
entropy, let $M$ be a von Neumann algebra, $\sigma$ a faithful normal state on
$M$, and $\beta$ a $\sigma$-preserving action of the discrete group $G$
on $M$ by $^*$-automorphisms. Let $\fs = \{ s_n \}_n$ be a sequence in $G$.
Recall the quantities $\rcp_\sigma (\cdot , \cdot )$ from the beginning of
Section~\ref{S-Pinsker}. 
For a finite set $\Upsilon\subseteq M$ and $\delta > 0$ we set
\[ \hcpa_\sigma^\fs (\beta , \Upsilon , \delta ) = \limsup_{n\to\infty}
\frac1n \ln \rcp_\sigma \bigg( \bigcup_{i=1}^n \beta_{s_i} (\Upsilon ), \delta \bigg) \]
and define
\begin{align*}
\hcpa_\sigma^\fs (\beta , \Upsilon ) &= \sup_{\delta > 0}
\hcpa_\sigma^\fs (\beta , \Upsilon , \delta ) , \\
\hcpa_\sigma^\fs (\beta ) &= \sup_{\Upsilon}
\hcpa_\sigma^\fs (\beta , \Upsilon )
\end{align*}
where the last supremum is taken over all finite subsets $\Upsilon$ of $M$.
We call $\hcpa_\sigma^\fs (\beta , \Upsilon )$ the {\em sequence c.p.\
approximation entropy} of $\beta$.

In analogy with the upper $\mu$-$\ell_1$-isomorphism density from Section~\ref{S-Pinsker},
given a sequence $\fs = \{ s_n \}_n$ in $G$, $f\in\LX$, $\lambda\geq 1$, and $\delta > 0$
we set
\[ \upind_\mu (f,\lambda ,\delta ; \fs ) =
\limsup_{n\to\infty} \frac1n \varphi_{f,\lambda ,\delta} (\{ s_1 , \dots , s_n \} ) \]
and define
\begin{align*}
\upind_\mu (f,\lambda ; \fs ) &= \sup_{\delta > 0} \upind_\mu (f,\lambda ,\delta ; \fs ) , \\
\upind_\mu (f; \fs ) &= \sup_{\lambda\geq 1} \upind_\mu (f,\lambda ; \fs ) .
\end{align*}
We could also define the lower version but this is less significant for our applications,
in which we would always be able to pass to a subsequence.

To establish (10)$\Rightarrow$(5) in Theorem~\ref{T-null}
we will need the relationship between relatively compact orbits and
finite-dimensional invariant subspaces given by Proposition~\ref{P-cpct finite},
which is presumably well known. For this we record a couple of lemmas.

\begin{lemma}\label{L-AP to compact}
Suppose that $G$ acts on a Banach space $V$ by
isometries. Then the action factors through a compact Hausdorff group
(for a strongly continuous action on $V$ and a homomorphism
from $G$ into this group) if and only if the norm closure of
the orbit of each vector is compact.
\end{lemma}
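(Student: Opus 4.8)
The plan is to prove both directions of the equivalence by relating the orbit-closure condition to almost periodicity of orbits and then invoking the structure of almost periodic functions / the Peter--Weyl theory for compact groups.

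\smallskip

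\emph{($\Leftarrow$) Suppose each orbit closure $\overline{Gv}$ is norm-compact.} First I would observe that compactness of $\overline{Gv}$ forces the map $s\mapsto sv$ to be \emph{almost periodic} in the sense that $\{sv:s\in G\}$ is totally bounded; combined with the isometry hypothesis, this says that for each $\xi\in V^*$ the matrix coefficient $s\mapsto\langle sv,\xi\rangle$ is a weakly almost periodic (indeed almost periodic) function on $G$. The standard way to package this is: let $\mathcal{K}$ be the closure of the image of $G$ in the group of isometries of $V$, taken in the topology of pointwise norm-convergence on $V$. Because every orbit closure is compact, Tychonoff gives that $\mathcal{K}$ is compact in the product topology $\prod_{v\in V}\overline{Gv}$, and one checks $\mathcal{K}$ is a group (inverses exist because each element is a surjective isometry, using compactness to produce the inverse as a limit) and that multiplication and inversion are continuous — so $\mathcal{K}$ is a compact Hausdorff topological group. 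The tautological homomorphism $G\to\mathcal{K}$ has dense image, and the action of $\mathcal{K}$ on $V$ is strongly continuous by construction of the topology. This gives the factorization.

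\smallskip

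\emph{($\Rightarrow$) Suppose the action factors through a compact Hausdorff group $K$, strongly continuously, via $G\to K$ with $K\curvearrowright V$ by isometries.} Fix $v\in V$. Then $\overline{Gv}\subseteq\overline{Kv}$, and $\overline{Kv}=Kv$ is the continuous image of the compact space $K$ under the strongly continuous orbit map $k\mapsto kv$, hence compact. So $\overline{Gv}$, being a closed subset of the compact set $Kv$, is compact. This direction is essentially immediate.

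\smallskip

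The main obstacle is the forward construction of the compact group $\mathcal{K}$ in the ($\Leftarrow$) direction — specifically verifying that $\mathcal{K}$, topologized as a subspace of $\prod_{v}\overline{Gv}$, is genuinely a \emph{topological} group (joint continuity of multiplication and continuity of inversion) and that the resulting action on $V$ is strongly continuous. Joint continuity of multiplication is the delicate point: it uses that each element of $\mathcal{K}$ is an isometry (so the family is equicontinuous) together with compactness, in the spirit of the Ellis joint-continuity theorem or a direct $\varepsilon$-argument. I would handle this by a standard equicontinuity estimate: for $T,T'\in\mathcal{K}$ close to $T_0,T_0'$ at finitely many vectors, $\|TT'v-T_0T_0'v\|\le\|T(T'v-T_0'v)\|+\|TT_0'v-T_0T_0'v\|=\|T'v-T_0'v\|+\|TT_0'v-T_0T_0'v\|$, and both terms are controlled since $T'v$ is near $T_0'v$ and $T$ is near $T_0$ at the vector $T_0'v$. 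Inversion continuity and the group axioms then follow routinely, and strong continuity of $K\curvearrowright V$ is built into the choice of topology. One should also note (though it is not strictly needed for the statement) that this is the analytic shadow of the decomposition $L^2(X,\mu)_{\mathrm{cpct}}$ used prior to Lemma~\ref{L-seq rel}, which is presumably why the lemma is being recorded here.
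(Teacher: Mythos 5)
Your proof is correct and follows essentially the same route as the paper: take the closure of the image of $G$ in the isometries of $V$ for the strong operator topology, identify it with a closed (hence compact) subset of $\prod_{v\in V}\overline{Gv}$, and use the equicontinuity estimate (joint strong-operator continuity of multiplication on the unit ball of $\cB(V)$) to see that this closure is a compact Hausdorff group acting strongly continuously, the converse being immediate. Your explicit handling of inverses and of joint continuity just fills in what the paper dismisses as ``it follows easily.''
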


\begin{proof}
The ``only if'' part is obvious. Suppose that
the action is compact. Denote by $E$ the closure of the image of $G$ in the space
$\cB (V)$ of bounded linear operators on $V$ with respect to the strong operator topology.
Then $E$ is prescisely the closure of $\{(sv)_{v\in V}:s\in G\}$ in
$\prod_{v\in V}\overline{Gv}$. Thus
$E$ is a compact Hausdorff space. Note that multiplication
on the unit ball of $\cB (V)$ is jointly continuous for
the strong operator topology. It follows easily that $E$ is a compact Hausdorff
group of isometric operators on $V$ and that the action of $E$ on $V$ is strongly
continuous. This yields the ``if'' part.
\end{proof}

A {\it compactification} of $G$ is a pair $(\Gamma, \varphi)$ where
$\Gamma$ is a compact Hausdorff group and $\varphi$ is
a homomorphism from $G$ to $\Gamma$ with dense image.
The Bohr compactification $\overline{G}$ of $G$ is the spectrum of the space
of almost periodic bounded functions on $G$ and has the universal property
that every compactification of $G$ factors through it (see \cite{AH}).

\begin{lemma}\label{L-AP to compact 2}
Suppose that $G$ acts on a von Neumann algebra
$M$ by $^*$-automorphisms. Let $\sigma$ be a $G$-invariant faithful
normal state on $M$ such that the induced unitary representation of $G$ on
$L^2 (M,\sigma )$ has the property that the norm closure of
the orbit of each vector is compact. Then the action
factors through an ultraweakly continuous action of $\overline{G}$ on $M$.
\end{lemma}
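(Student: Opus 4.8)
The plan is to combine Lemma~\ref{L-AP to compact} with the standard GNS machinery for von Neumann algebras. First I would apply Lemma~\ref{L-AP to compact} to the Banach space $V = L^2 (M,\sigma )$, on which $G$ acts by the isometries coming from the induced unitary representation (the $G$-invariance of $\sigma$ makes these unitaries). By hypothesis each orbit closure is compact, so the action factors through a compact Hausdorff group $\Gamma$: there is a strongly continuous isometric action of $\Gamma$ on $L^2 (M,\sigma )$ and a homomorphism $G\to\Gamma$ with dense image. In fact, as in the proof of Lemma~\ref{L-AP to compact}, $\Gamma$ can be taken to be the strong-operator closure $E$ of the image of $G$ in $\cB (L^2 (M,\sigma ))$. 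Since $\Gamma$ is a compactification of $G$ in the sense defined before Lemma~\ref{L-AP to compact 2}, the universal property of the Bohr compactification $\overline{G}$ gives a continuous surjective homomorphism $\overline{G}\to\Gamma$, and composing we get a strongly continuous action of $\overline{G}$ on $L^2 (M,\sigma )$ extending the action of $G$.

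The next step is to transport this action from the Hilbert space back to $M$. Here I would use that $M$ sits inside $\cB (L^2 (M,\sigma ))$ via the GNS representation and that the original $G$-action is implemented by the unitaries $u_s$, i.e.\ $\beta_s (a) = u_s a u_s^*$ for $a\in M$. For $g\in\overline{G}$, letting $\pi (g)$ denote the unitary on $L^2 (M,\sigma )$ given by the extended action, I would define $\tilde\beta_g (a) = \pi (g) a \pi (g)^*$ for $a\in M$. One must check that $\tilde\beta_g$ maps $M$ into $M$: this follows because $\tilde\beta_g$ is a strong limit (over the net in $G$ with image converging to $g$) of the $^*$-automorphisms $\beta_s$ of $M$, and $M$ is strongly closed in $\cB (L^2 (M,\sigma ))$, using boundedness (all these maps are contractive on the unit ball where multiplication is jointly strongly continuous). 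The same net argument shows $\tilde\beta_g$ is a $^*$-automorphism of $M$, that $g\mapsto\tilde\beta_g$ is a homomorphism, and that $\sigma$ is $\overline{G}$-invariant since $\sigma = \langle\,\cdot\,\xi_\sigma ,\xi_\sigma\rangle$ with the cyclic vector $\xi_\sigma$ fixed by all $\pi (g)$ (the fixed-vector property passes to the limit).

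Finally I would verify ultraweak continuity of $g\mapsto\tilde\beta_g$ on $M$, which amounts to continuity of $g\mapsto\sigma (b\,\tilde\beta_g (a))$ for $a,b\in M$ (since $\sigma$ is faithful and normal, these functionals, together with their translates, are norm-dense in the predual, and a bounded net argument upgrades weak$^*$ pointwise continuity on this dense set to ultraweak continuity of the automorphism group on all of $M_*$). But $\sigma (b\,\tilde\beta_g (a)) = \langle \pi (g) a\xi_\sigma , b^*\xi_\sigma\rangle$, which is continuous in $g$ by the strong continuity of $\pi$ on $L^2 (M,\sigma )$. This completes the construction.

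I expect the main obstacle to be the passage from the Hilbert-space action to an honest action on $M$ — specifically, verifying that $\pi (g)M\pi (g)^* \subseteq M$ for every $g\in\overline{G}$ rather than just for $g$ in the dense image of $G$. The clean way to handle this is the net/bounded-convergence argument sketched above: the unit ball of $M$ is strongly compact-closed in $\cB (L^2 (M,\sigma ))$ and $\Ad\pi (g)$ restricted to the unit ball is a strong limit of the $\Ad u_s$, each of which preserves the unit ball of $M$. Everything else (homomorphism property, state invariance, ultraweak continuity) then follows by routine limiting arguments of the same flavour, so no separate difficulty arises.
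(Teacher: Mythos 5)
Your argument is correct and follows essentially the same route as the paper: apply Lemma~\ref{L-AP to compact} to the GNS Hilbert space, factor the resulting strongly continuous unitary representation through $\overline{G}$ via the universal property of the Bohr compactification, and then observe that conjugation by the extended unitaries preserves $M$. The paper states this last step tersely; your net/strong-limit verification of $\pi(g)M\pi(g)^*\subseteq M$ and of the ultraweak continuity is exactly the routine detail the paper leaves implicit.
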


\begin{proof}
Denote the unitary on $L^2 (M,\sigma )$ corresponding to $s\in G$ by $U_s$.
By Lemma~\ref{L-AP to compact} the unitary
representation $s\mapsto U_s$ of $G$ factors through a strongly continuous
unitary representation of $\overline{G}$. Denote the unitary on
$L^2 (M,\sigma )$ corresponding to $t\in \overline{G}$ by $U_t$. Note that the
action of $s\in G$ on $M$ is conjugation by $U_s$. It
follows that the conjugation by $U_t$ for each $t\in \overline{G}$
preserves $M$.
\end{proof}

For any ultraweakly continuous action of a locally compact group $\Gamma$ on
a von Neumann algebra as automorphisms, there is a $\Gamma$-invariant
ultraweakly dense unital $C^*$-subalgebra of the von Neumann algebra on
which the action of $\Gamma$ is strongly continuous \cite[Lemma 7.5.1]{Ped}.
For any strongly continuous action of a compact group on a Banach space
as isometries, the subspace of
elements whose orbit spans a finite-dimensional subspace is
dense \cite[Theorem III.5.7]{BD}. Thus we have:

\begin{proposition}\label{P-cpct finite}
Under the hypotheses of Lemma~\ref{L-AP to compact 2},
there are a $\overline{G}$-invariant ultraweakly dense unital $C^*$-subalgebra
$A$ of $M$ on which the action of $\overline{G}$ is strongly continuous and a
norm dense $^*$-subalgebra $B$ of $A$ such that the orbit of every
element in $B$ spans a finite-dimensional subspace.
\end{proposition}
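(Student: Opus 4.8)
The plan is to assemble the statement directly from the three auxiliary results already quoted immediately before it. First I would invoke Lemma~\ref{L-AP to compact 2}: under its hypotheses (which are exactly the hypotheses of Proposition~\ref{P-cpct finite}) the given action of $G$ on $M$ factors through an ultraweakly continuous action of the Bohr compactification $\overline{G}$ on $M$. Since $\overline{G}$ is a compact, hence locally compact, group, I can then apply the cited Lemma~7.5.1 of \cite{Ped} to this $\overline{G}$-action: this produces a $\overline{G}$-invariant ultraweakly dense unital $C^*$-subalgebra $A\subseteq M$ on which the action of $\overline{G}$ is \emph{strongly} continuous. This takes care of the first half of the proposition.

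The second half is to find, inside $A$, a norm-dense $^*$-subalgebra $B$ whose elements have finite-dimensional orbit-span. Here I would use the representation-theoretic structure of strongly continuous isometric actions of compact groups: the restriction of the $\overline{G}$-action to $A$ is such an action (on the underlying Banach space, by isometric $^*$-automorphisms), so by the cited Theorem~III.5.7 of \cite{BD} the set $B_0$ of elements of $A$ whose $\overline{G}$-orbit spans a finite-dimensional subspace is norm-dense in $A$. The only point to check is that $B_0$ can be taken to be a $^*$-subalgebra rather than merely a dense subspace. For this I would note that $B_0$ is automatically closed under the $^*$-operation (the orbit of $a^*$ is the image of the orbit of $a$ under the isometric conjugate-linear involution, hence spans a space of the same finite dimension), and closed under multiplication and linear combinations: if $a$ and $b$ lie in finite-dimensional $\overline{G}$-invariant subspaces $V_a, V_b$, then $a+b$ lies in $V_a+V_b$ and $ab$ lies in the span of $\{v w : v\in V_a, w\in V_b\}$, both of which are finite-dimensional and $\overline{G}$-invariant since the action is by algebra automorphisms. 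Thus $B:=B_0$ is the desired norm-dense $^*$-subalgebra of $A$, and every element of $B$ has finite-dimensional orbit-span.

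Essentially nothing here is hard: the proposition is a bookkeeping statement that chains together Lemma~\ref{L-AP to compact 2}, the $C^*$-density lemma for locally compact group actions, and the classical Peter--Weyl-type density theorem for compact group actions on Banach spaces. The one substantive observation, which I would spell out in a sentence or two, is the verification that the finite-dimensional-orbit elements form a $^*$-algebra, which follows purely formally from the fact that $\overline{G}$ acts by $^*$-automorphisms. I would therefore write the proof as: apply Lemma~\ref{L-AP to compact 2} to pass to $\overline{G}$; apply \cite[Lemma 7.5.1]{Ped} to get $A$; apply \cite[Theorem III.5.7]{BD} to the strongly continuous isometric $\overline{G}$-action on $A$ to get a dense set of finite-orbit-span elements; and observe that this set is a $^*$-subalgebra, completing the proof.

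\begin{proof}
By Lemma~\ref{L-AP to compact 2} the action of $G$ on $M$ factors through an
ultraweakly continuous action of $\overline{G}$ on $M$ by $^*$-automorphisms.
Since $\overline{G}$ is a compact group, by \cite[Lemma 7.5.1]{Ped} there is a
$\overline{G}$-invariant ultraweakly dense unital $C^*$-subalgebra $A$ of $M$
on which the action of $\overline{G}$ is strongly continuous. The restriction
of this action to $A$ is a strongly continuous action of the compact group
$\overline{G}$ on the Banach space $A$ by isometries, so by
\cite[Theorem III.5.7]{BD} the set $B$ of elements $a\in A$ whose $\overline{G}$-orbit
spans a finite-dimensional subspace is norm dense in $A$. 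It remains to observe
that $B$ is a $^*$-subalgebra. Indeed, if $a,b\in B$ lie in the finite-dimensional
$\overline{G}$-invariant subspaces $V_a$ and $V_b$ respectively, then $a+b$ and every
scalar multiple of $a$ lie in $V_a+V_b$, the element $ab$ lies in the linear span of
$\{ vw : v\in V_a , w\in V_b \}$, and $a^*$ lies in $V_a^*$; all of these spaces are
finite-dimensional and $\overline{G}$-invariant because $\overline{G}$ acts by
$^*$-automorphisms. Hence $B$ is a norm dense $^*$-subalgebra of $A$ with the
stated property.
\end{proof}
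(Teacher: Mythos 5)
Your proof is correct and follows exactly the route the paper takes: factor through $\overline{G}$ via Lemma~\ref{L-AP to compact 2}, apply \cite[Lemma 7.5.1]{Ped} to obtain $A$, and apply \cite[Theorem III.5.7]{BD} to get the dense set of finite-orbit-span elements. Your explicit check that these elements form a $^*$-subalgebra is a detail the paper leaves implicit, and it is verified correctly.
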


The following is a local version of Theorem~5.2 of \cite{Huang} and is a consequence
of the proof given there in conjunction with the Rosenthal-Dor $\ell_1$ theorem, which
asserts that a bounded sequence in a Banach space has either a weakly Cauchy
subsequence or a subsequence equivalent to the standard basis of $\ell_1$ \cite{ell1,Dor}.

\begin{lemma}\label{L-tame-cpct}
Let $f$ be a function in $\LX$ whose $G$-orbit does not contain an infinite subset
equivalent to the standard basis of $\ell_1$. Then $f\in L^2 (X,\mu )_\cpct$.
\end{lemma}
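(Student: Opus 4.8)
The plan is to pass to the topological model $(\Omega ,G)$ on the pure state space of $\LX$, where $f$ and all of its translates become honest continuous functions, and then to combine the Rosenthal--Dor dichotomy with a dominated convergence argument to show that the $G$-orbit of $f$ is norm-precompact in $L^2 (X,\mu ) = L^2 (\Omega ,\mu )$. By the orthogonal decomposition $L^2 (X,\mu ) = L^2 (X,\mu )_\wm \oplus L^2 (X,\mu )_\cpct$ recalled in the discussion preceding Lemma~\ref{L-seq rel}, together with weak mixing on the first summand, $L^2 (X,\mu )_\cpct$ is exactly the set of vectors with norm-precompact orbit: by weak mixing $0$ lies in the weak closure of the orbit of every vector in $L^2 (X,\mu )_\wm$, so a nonzero such vector cannot have precompact orbit, since its orbit closure would then be norm-compact and hence weakly closed, forcing $0$ into the norm closure of an orbit consisting of vectors of equal nonzero length (compare also Lemma~\ref{L-AP to compact}). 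So it suffices to prove that the orbit of $f$ is norm-precompact in $L^2 (X,\mu )$.

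Write $O_f = \{ \alpha_s f : s\in G \}$, viewed inside $C(\Omega )$, and note $\| \alpha_s f \|_\infty = \| f \|_\infty$ for every $s$. Let $(h_n )$ be any sequence in $O_f$. If it takes only finitely many distinct values it has a constant subsequence; otherwise, since by hypothesis no infinite subset of $O_f$ --- in particular no subsequence of $(h_n )$ --- is equivalent to the standard basis of $\ell_1$, the Rosenthal--Dor theorem \cite{ell1,Dor} supplies a weakly Cauchy subsequence $(h_{n_k} )$. Testing against the point masses $\delta_\sigma \in C(\Omega )^*$ shows that $(h_{n_k} )$ converges pointwise on $\Omega$ to a bounded Baire-class-one (hence Borel) function $h_\infty$ with $|h_{n_k} |, |h_\infty | \le \| f \|_\infty$. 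Since $\mu$ is a probability measure, dominated convergence yields $\| h_{n_k} - h_\infty \|_\mu \to 0$, and $h_\infty$ lies in the $\| \cdot \|_\mu$-closure of $O_f$. Thus every sequence in $O_f$ has a subsequence converging in $L^2 (X,\mu )$ to a point of that closure, and it follows routinely that the closure is sequentially compact; being a closed subset of the metric space $L^2 (X,\mu )$ it is then compact, so $O_f$ is norm-precompact. By the first paragraph, $f\in L^2 (X,\mu )_\cpct$.

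The argument is short; the one step that deserves care --- and the place where the combinatorial hypothesis is converted into a spectral conclusion, following the proof of Theorem~5.2 of \cite{Huang} --- is the upgrade from the weakly Cauchy, equivalently pointwise convergent, subsequences furnished by Rosenthal--Dor on $C(\Omega )$ to genuine $L^2$-norm convergence. This rests on the uniform boundedness of the orbit and the finiteness of $\mu$, and it is precisely for this that one must first move to the model $\Omega$, where pointwise evaluation is available; working abstractly in $\LX$ the pointwise convergence step would have no meaning.
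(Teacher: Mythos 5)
Your proof is correct and is essentially the paper's own argument: the paper establishes this lemma only by citing the Rosenthal--Dor dichotomy together with the proof of Theorem~5.2 of \cite{Huang}, i.e.\ exactly the route you spell out (weakly Cauchy subsequences of the orbit in $C(\Omega )$, pointwise convergence via point masses, dominated convergence to get $L^2$-precompactness of the orbit, and the standard fact that a precompact orbit forces the weakly mixing component to vanish). The only detail left implicit in your first paragraph, that one should project onto $L^2 (X,\mu )_\wm$ (a $G$-equivariant contraction) to reduce a general vector with precompact orbit to the weakly mixing case, is routine.
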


\noindent The converse of lemma~\ref{L-tame-cpct} is false. Indeed by \cite{SEUPEM}
every free ergodic $\Zb$-system has a minimal topological model with uniformly positive
entropy, which means in particular that there are $L^\infty$ functions whose $G$-orbit
has a positive density subset equivalent to the standard basis of $\ell_1$.

In the following theorem $(\Omega , G)$ is the topological
$G$-system associated to $(X,\sB ,G,\mu )$ described before Theorem~\ref{T-Pinsker}.

\begin{theorem}\label{T-null}
Let $f\in\LX$. Then the following are equivalent:
\begin{enumerate}
\item $f\notin\mathfrak{N}_X$,

\item there is a $\mu$-IN-pair $(\sigma_1 , \sigma_2 )\in \Omega\times\Omega$
such that $f(\sigma_1 ) \neq f(\sigma_2 )$,

\item there are $d>0$, $\delta>0$, and $\lambda>0$ such that for any $M>0$
there is some finite subset $F\subseteq G$ with $|F|\geq M$ such that
whenever $g_s$ for $s\in F$ are elements of $\LX$
satisfying $\| g_s - \alpha_s (f) \|_\mu < \delta$
for every $s\in F$ there exists an $I \subseteq F$
of cardinality at least $d|F|$ for which the linear map $\ell_1^I \to
\spn \{ g_s : s\in I \}$ sending the standard basis element with index $s\in I$
to $g_s$ has an inverse with norm at most $\lambda$,

\item $\upind_\mu (f;\fs) > 0$ for some sequence $\fs$ in $G$,

\item $f\notin L^2 (X,\mu )_\cpct$,

\item $\hcpa_\mu^\fs (\alpha , \{ f \} ) > 0$ for some sequence $\fs$ in $G$,

\item $\hcpa_\mu^\fs (\beta ) > 0$ for some sequence $\fs$ in $G$ where $\beta$ is the
restriction of $\alpha$ to the von Neumann subalgebra of $M$ dynamically generated by $f$.

\item there is a $\delta > 0$ such that every $g\in\LX$ satisfying
$\| g -f \|_\mu < \delta$ has an infinite $\ell_1$-isomorphism set,

\item there is a $\delta > 0$ such that every $g\in\LX$ satisfying
$\| g -f \|_\mu < \delta$ has arbitrarily large $\lambda$-$\ell_1$-isomorphism sets
for some $\lambda > 0$,

\item there is a $\delta > 0$ such that every $g\in\LX$ satisfying
$\| g -f \|_\mu < \delta$ has noncompact orbit closure in the operator norm.
\end{enumerate}
When $f\in C(X)$ we can add:
\begin{enumerate}
\item[(11)] $f\notin C(Y)$ whenever $\pi : X\to Y$ is a topological $G$-factor map
such that $\pi_* (\mu )$ is null,

\item[(12)] there is a $\mu$-IE-pair $(x_1 , x_2 )\in X\times X$
such that $f(x_1 ) \neq f(x_2 )$.
\end{enumerate}
\end{theorem}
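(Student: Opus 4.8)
The plan is to mirror the structure of the proof of Theorem~\ref{T-Pinsker} as closely as possible, handling the genuinely new conditions (5), (8), (9), (10), which encode the nullness/nonnullness dichotomy, by means of the compact/weakly mixing decomposition $L^2(X,\mu) = L^2(X,\mu)_\wm \oplus L^2(X,\mu)_\cpct$, the identification $L^2(X,\mu)_\cpct = L^2(\mathfrak{D}_X,\mu|_{\mathfrak{D}_X})$, the structural Proposition~\ref{P-cpct finite}, and Lemma~\ref{L-tame-cpct}. First I would establish the ``entropy-style'' cycle exactly as in Theorem~\ref{T-Pinsker}: the implication (1)$\Rightarrow$(2) uses that the von Neumann algebra dynamically generated by $f$ is generated by spectral projections of $f$, together with Lemma~\ref{L-pse to mind} and Proposition~\ref{P-IN basic}(1), to produce a $\mu$-IN-pair in $\Omega\times\Omega$ separated by $f$; the implications (2)$\Rightarrow$(3), (3)$\Rightarrow$(6), (3)$\Rightarrow$(4), (4)$\Rightarrow$(6)$\leftrightarrow$(7), and (7)$\Rightarrow$(1) are the sequence-entropy analogues of the corresponding steps in the proof of Theorem~\ref{T-Pinsker}, using Lemmas~\ref{L-rcp lower bd}, \ref{L-rcp upper bd}, \ref{L-l1 to indep}, \ref{L-smaller}, \ref{L-GW}, the Rosenthal--Dor argument, and Lemma~\ref{L-positive se} in place of Lemma~\ref{L-positive}, with F{\o}lner nets replaced throughout by sequences $\{s_1,\dots,s_n\}$ of arbitrarily large finite length. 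One must be careful that ``positive sequential $\mu$-independence density'' is witnessed along the \emph{same} sequence throughout a given chain of implications, but this is only bookkeeping.

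Next I would close the loop through the new conditions. The key bridge is $\mathfrak{N}_X = \mathfrak{D}_X$, which is forced by the whole equivalence once established, so the argument must be arranged to prove this as part of the theorem rather than assuming it. Concretely: (1)$\Leftrightarrow$(5) amounts to showing $\mathfrak{N}_X$ equals the von Neumann algebra whose $L^2$-completion is $L^2(X,\mu)_\cpct$. For $\mathfrak{D}_X \subseteq \mathfrak{N}_X$, I would invoke Proposition~\ref{P-cpct finite} to see that $\mathfrak{D}_X$-measurable finite partitions are approximated by partitions generated by functions with finite-dimensional orbit span, which have zero sequence entropy for every sequence (a direct Kushnirenko-type estimate: the join $\bigvee_{j\le n} s_{i_j}^{-1}\cP$ lies inside a fixed finite-dimensional algebra up to arbitrarily small error, so its entropy is $o(n)$). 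For the reverse containment $\mathfrak{N}_X \subseteq \mathfrak{D}_X$, I would use (1)$\Rightarrow$(2)$\Rightarrow$(3)$\Rightarrow$(4)$\Rightarrow$\dots already in hand together with Lemma~\ref{L-tame-cpct}: if $f\notin\mathfrak{D}_X$-completion, then $f\notin L^2(X,\mu)_\cpct$, so by Lemma~\ref{L-tame-cpct} its orbit contains an infinite $\ell_1$-subset, giving (8); and one shows (8)$\Rightarrow$(3) by a Rosenthal--Dor/perturbation argument parallel to (2)$\Rightarrow$(3). The implications (3)$\Rightarrow$(8)$\Rightarrow$(9)$\Rightarrow$(10) and (10)$\Rightarrow$(5) follow: (8)$\Rightarrow$(9) is trivial, (9)$\Rightarrow$(10) because an operator-norm-relatively-compact orbit spans (by Proposition~\ref{P-cpct finite} applied to the $C^*$-algebra it generates) something with no arbitrarily large $\ell_1$-subsets, and (10)$\Rightarrow$(5) is the contrapositive of Lemma~\ref{L-tame-cpct} combined with Proposition~\ref{P-cpct finite}: $f\in L^2(X,\mu)_\cpct$ gives a strongly continuous $\overline{G}$-action on a dense $C^*$-subalgebra whose finite-spectrum elements are dense, so some operator-norm perturbation of $f$ has precompact orbit. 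Finally (5)$\Rightarrow$(1) is immediate from $\mathfrak{N}_X = \mathfrak{D}_X$ once that is proved, or can be obtained directly via the Kushnirenko estimate above.

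For the continuous case, (2)$\Rightarrow$(12) and (12)$\Rightarrow$(11) use Proposition~\ref{P-IN basic}(5) and the factor map $\Omega\to\supp(\mu)$ exactly as the corresponding steps (2)$\Rightarrow$(11) and (11)$\Rightarrow$(10) in Theorem~\ref{T-Pinsker}; (11)$\Rightarrow$(12) follows the (10)$\Rightarrow$(11) argument there, forming the closed $G$-invariant equivalence relation $\bigcap_{s}s\{(x,y):f(x)=f(y)\}$, passing to the quotient $Y$, and using Proposition~\ref{P-IN basic}(2),(5) to conclude $\pi_*(\mu)$ is null; and (12)$\Rightarrow$(3) repeats the (2)$\Rightarrow$(3) argument verbatim at the level of $X$ rather than $\Omega$. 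The main obstacle I anticipate is the careful treatment of $\mathfrak{N}_X = \mathfrak{D}_X$ without circularity — in particular making the Kushnirenko-type vanishing estimate precise for general (possibly uncountable) amenable or even arbitrary $G$, and ensuring that the ``arbitrarily large finite sets'' formulation of sequential independence density interacts correctly with the compactness arguments underlying Proposition~\ref{P-cpct finite}. Everything else is a faithful transcription of the entropy case.
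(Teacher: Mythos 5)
Your skeleton for the cycle (1)$\Rightarrow$(2)$\Rightarrow$(3)$\Leftrightarrow$(4), (3)$\Rightarrow$(6)$\Rightarrow$(7)$\Rightarrow$(1), and for the continuous-function conditions, matches the paper, which indeed just transcribes the corresponding steps of Theorem~\ref{T-Pinsker}. Your treatment of the ``easy'' compactness direction (if $f\in L^2(X,\mu)_\cpct$ then $f\in\mathfrak{N}_X$, via a Kushnirenko-type estimate, in place of the paper's (6)$\Rightarrow$(5) argument with finite-dimensional subalgebras and $\rcp_\mu$) is a legitimate variant, and your (9)$\Rightarrow$(10) and (10)$\Rightarrow$(5) are essentially the paper's (though for (9)$\Rightarrow$(10) total boundedness alone suffices; Proposition~\ref{P-cpct finite} is not needed there, and for (10)$\Rightarrow$(5) the approximant with precompact orbit must be produced as a $\|\cdot\|_\mu$-perturbation via ultraweak density plus operator-norm density, not merely an operator-norm perturbation).

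The genuine gap is the return route from (5) (equivalently (8), (9), (10)) to (1). You propose $(5)\Rightarrow(8)\Rightarrow(3)$, with $(8)\Rightarrow(3)$ ``by a Rosenthal--Dor/perturbation argument parallel to (2)$\Rightarrow$(3)''; but the (2)$\Rightarrow$(3) argument is powered by the positive sequential $\mu$-independence density of a $\mu$-IN-pair, which supplies, inside arbitrarily large finite sets $F$, independence sets of proportional size relative to the maps $s\mapsto D_s$, and it is exactly this combinatorial input that lets one handle separate perturbations $g_s$ of each $\alpha_s(f)$ with a uniform $\lambda$ and density $d$. Condition (8) supplies none of this: it only gives, for each single $L^2$-perturbation $g$ of $f$, some infinite $\ell_1$-isomorphism subset of the orbit of $g$, with no uniform constant, no positive density in finite sets, and no tolerance of independent perturbations of individual orbit elements, so the Rosenthal--Dor mechanism has nothing to act on. Your fallbacks are also not available: ``(5)$\Rightarrow$(1) is immediate from $\mathfrak{N}_X=\mathfrak{D}_X$'' is circular (that identity is what is being proved), and the Kushnirenko estimate only yields the containment $\mathfrak{D}_X\cap\LX\subseteq\mathfrak{N}_X$, i.e.\ the opposite direction. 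What is missing is precisely the paper's Lemma~\ref{L-seq rel}: if $f\notin L^2(X,\mu)_\cpct$ then some finite partition $\cP$ measurable with respect to the algebra generated by $f$ is not $\mathfrak{D}_X$-measurable, and by the weak-mixing/thick-syndeticity argument there is a sequence $\fs$ with $h_\mu(\cP;\fs)\geq \Hmeas(\cP\,|\,\mathfrak{D}_X)>0$, giving (5)$\Rightarrow$(1) directly; the paper then closes (8),(9),(10) through (10)$\Rightarrow$(5) rather than through (3). Without this (or an equivalent substitute) your argument does not establish that (5), (8), (9), (10) imply (1)--(7). (A smaller point: your derivation of (8) from (5) should note that $L^2(X,\mu)_\cpct$ is a closed subspace, so every $g$ with $\|g-f\|_\mu$ less than the distance from $f$ to it also lies outside and Lemma~\ref{L-tame-cpct} applies to each such $g$, not just to $f$.)
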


\begin{proof}
(1)$\Rightarrow$(2) Argue as for (1)$\Rightarrow$(2) in Theorem~\ref{T-Pinsker}
using Lemma~\ref{L-pse to mind} and Proposition~\ref{P-IN basic}(1) instead of
Lemma~\ref{L-pe to pd} and Proposition~\ref{P-IE basic}(1).

(2)$\Rightarrow$(3). Apply the same argument as for (2)$\Rightarrow$(3) in
Theorem~\ref{T-Pinsker}, replacing $\lwind_\mu (\oA , \delta )$ by
$\lwind_\mu (\oA , \delta ; \fs )$ for a suitable sequence $\fs$ in $G$.

(3)$\Leftrightarrow$(4). Use the arguments for (6)$\Rightarrow$(4) and
(3)$\Rightarrow$(6) in the proof of Theorem~\ref{T-Pinsker}.

(3)$\Rightarrow$(6). Argue as for (4)$\Rightarrow$(7) in Theorem~\ref{T-Pinsker}.

(6)$\Rightarrow$(7). As in the case of complete positive approximation entropy,
if $N$ is an $G$-invariant von Neumann subalgebra of $\LX$ and $\fs$ is a sequence in
$G$ then for every finite subset $\Theta\subseteq N$ we have
$\hcpa_{\mu |_N}^\fs (N , \Theta ) = \hcpa_\mu^\fs (M , \Theta )$,
which follows from the fact that there is a $\mu$-preserving conditional expectation
from $\LX$ onto $N$ \cite[Prop.\ V.2.36]{Tak1} (cf.\ Proposition~3.5 in \cite{Voi}).

(7)$\Rightarrow$(1). This can be deduced from Lemma~\ref{L-rcp upper bd} in the same
way that (8)$\Rightarrow$(1) of Theorem~\ref{T-Pinsker} was.

(6)$\Rightarrow$(5). Suppose that $f\in L^2 (X,\mu )_\cpct$. Let $\delta > 0$.
Then the $G$-orbit $\{ \alpha_s (f) : s\in G \}$ contains a finite $\delta$-net $\Omega$
for the $L^2$-norm. Take a finite Borel partition $\cP$ of $X$
such that $\sup_{g\in\Omega} \esssup_{x,y\in P} | g(x) - g(y) | < \delta$ for each
$P\in\cP$. Let $B$ be the $^*$-subalgebra of $M$ generated by $\cP$ and let
$\varphi$ be the $\mu$-preserving condition expectation of $L^\infty (X,\mu )$ onto $B$.
Now for every $s\in G$ we can find a $g\in\Omega$ such that
$\| \alpha_s (f) - g \|_\mu \leq\delta$ so that
\[ \| \varphi (\alpha_s (f)) - \alpha_s (f) \|_\mu
\leq \| \varphi (\alpha_s (f) - g) \|_\mu + \| \varphi (g) - g \|_\mu +
\| g - \alpha_s (f) \|_\mu < 3\delta . \]
Taking the inclusion $\psi : B\hookrightarrow L^\infty (X,\mu )$ it follows that
for every finite set $F\subseteq G$ we have
$(\varphi , \psi , B)\in\CPA_\mu (\{ \alpha_s (f) : s\in F \} , 3\delta )$
and hence $\rcp_\mu (\{ \alpha_s (f) : s\in F \} , 3\delta ) \leq \dim B$. Since $\delta$
is arbitrary we conclude that $\hcpa_\mu^\fs (\alpha , \{ f \} ) = 0$ for every
sequence $\fs$ in $G$.

(5)$\Rightarrow$(1). Since $f\notin L^2 (X,\mu )_\cpct$ the restriction of $\alpha$ to
the von Neumann algebra $N$ dynamically generated by $f$ has nonzero weak mixing
component at the unitary level,
and so there exists a finite partition $\cP$ of $X$ that is $N$-measurable but not
$\mathfrak{D}_X$-measurable (where $\mathfrak{D}_X$ is as defined prior to
Lemma~\ref{L-seq rel}). By Lemma~\ref{L-seq rel} there is a sequence $\fs$ in $G$ such that
$\hmeas_\mu^\fs (X,\cP ) \geq \Hmeas (\cP | \mathfrak{D}_X ) > 0$, from which we infer
that $f\notin\mathfrak{N}_X$.

(5)$\Rightarrow$(8). This follows by observing that if
$\{ g_k \}_{k\in\Nb}$ were a sequence in $\LX$ converging to $f$ in the $\mu$-norm
such that each $g_k$ lacks an infinite $\ell_1$-isomorphism set, then we would have
$g_k \in L^2 (X,\mu )_\cpct$ for each $k$ by Lemma~\ref{L-tame-cpct} and hence
$f\in L^2 (X,\mu )_\cpct$.

(8)$\Rightarrow$(9). Trivial.

(9)$\Rightarrow$(10). It is easy to see that if an element $g$ of $\LX$ has arbitrarily large
$\lambda$-$\ell_1$-isomorphism sets for some $\lambda > 0$ then its $G$-orbit fails
to have a finite $\varepsilon$-net for some $\varepsilon > 0$ depending on $\lambda$
and $\| g \|$.

(10)$\Rightarrow$(5). Suppose contrary to (5) that $f\in L^2 (X,\mu )_\cpct$. Then the
restriction of $\alpha$ to the von Neumann algebra $N$ dynamically generated by $f$
has the property that the norm closure of the $G$-orbit of each vector in $L^2 (N,\mu )$
is compact. By Proposition~\ref{P-cpct finite} this contradicts (10).

Suppose now that $f\in C(X)$. To prove (2)$\Rightarrow$(12), observe that the
inclusion\linebreak $C(\supp (\mu ))\subseteq \LX$ gives rise to a topological $G$-factor map
$\Omega\to \supp (\mu )$, so that we can apply Proposition~\ref{P-IN basic}(5).
For (12)$\Rightarrow$(3) apply the same argument as for (2)$\Rightarrow$(3).
For (11)$\Rightarrow$(12) argue as for (11)$\Rightarrow$(12) in
Theorem~\ref{T-Pinsker}, this time using Proposition~\ref{P-IN basic}.
Finally, for (12)$\Rightarrow$(11) use Proposition~\ref{P-IN basic}(5).
\end{proof}

As pointed out at the beginning of the section and as used in the proof of
Theorem~\ref{T-IN SE}, Theorem~\ref{T-null} shows that a measure-preserving system is
isometric if and only if it is null, which in the case of a $\Zb$-action on a
Lebesgue space is a result of Kushnirenko \cite{Kush}. Note that Theorem~\ref{T-null}
does not depend in any way on Theorem~\ref{T-IN SE}. In conjunction with
Theorem~\ref{T-Pinsker}, Theorem~\ref{T-null} gives
a geometric explanation for the well-known fact that isometric
measure-preserving systems have zero entropy.

Condition (8) in Theorem~\ref{T-null} is the analogue of tameness from topological
dynamics \cite{tame, Ind}. Its equivalence with the other conditions shows that
tameness as distinct from nullness is a specifically topological-dynamical phenomenon.
This equivalence relies in part, via Lemma~\ref{L-tame-cpct}, on the local argument
used by Huang in the case $G=\Zb$ to prove that if $X$ is metrizable and the
system $(X,G)$ is tame then every $G$-invariant Borel probability measure on $X$ is
measure null \cite[Theorem 5.2]{Huang}. The following example illustrates that
the converse of Huang's result fails in an extreme way.

\begin{example}\label{E-tame non-measure null}
By Lemma~7.2 of \cite{Ind}, when $G$ is Abelian, every nontrivial
metrizable weakly mixing system $(X, G)$ is completely untame. We
will show how to construct a weakly mixing uniquely ergodic
subshift $(X,\Zb )$ with the invariant measure supported at a
fixed point. We indicate first how to construct weakly mixing
subshifts $(X,\Zb)$ with $X\subseteq \{0, 1\}^{\Zb}$. We shall
construct two elements $p$ and $q$ in $\{0, 1\}^{\Zb}$ so that
$(p, q)$ is a transitive point for $X\times X$ where $X$ is the
orbit closure of $p$, and determine two increasing sequences
$0=a_1<a_2<\dots$ and $0=a'_1<a'_2<\dots$ of nonnegative integers with
$a_n\le a'_n<a_{n+1}$ for all $n$. Set $p(k)=q(k)=0$ unless
$a_n\le k\le a'_n$ for some $n$. Set $p(0)=1$ and $q(0)=0$.
Suppose that we have determined $a_1,\dots , a_m$ and $a'_1,\dots,
a'_m$ and $p(k)$ and $q(k)$ for all $k\le a'_m$. Take $a_{m+1}$ to
be any integer bigger than $\max(m, a'_{m})$. If $m+1 \equiv 1
\mod 3$, we take $a'_{m+1}=a_{m+1}+2m$ and set $q$ to be $0$ on
the interval $[a_{m+1}, a'_{m+1}]$ while setting $p$ on $[a_{m+1},
a'_{m+1}]$ to be the shift of $q$ on $[-m, m]$. If $m+1 \equiv 2 \mod
3$, we take $a'_{m+1}=a_{m+1}+2m$ and set $p$ to be $0$ on the
interval $[a_{m+1}, a'_{m+1}]$ while setting $q$ on $[a_{m+1},
a'_{m+1}]$ to be the shift of $p$ on $[-m, m]$. If $m+1 \equiv 0 \mod
3$, consider the set $S$ consisting of the sequences of values of $p$ 
over the finite subintervals of $(-\infty, a'_{m}]$. Consider pairs of elements 
in $S$ of the same length which don't appear as the sequence of values of $(p,q)$ 
on some finite subinterval of $(-\infty, a'_m]$. Choose one such pair
$(f, g)$ with the smallest length $d$. Set $a'_{m+1}=a_{m+1}+d-1$
and set $p$ and $q$ to be $f$ and $g$, respectively, on the interval $[a_{m+1},
a'_{m+1}]$. Then it is clear that $(p, q)$ is a
transitive point for $X\times X$ where $X$ is the orbit closure
of $p$.

In general, note that if $U$ is an open subset of $X$ such that there is
an infinite subset $H$ of $G$ for which the sets $hU$ for $h\in H$ are
pairwise disjoint, then $\mu(U)=0$ for any invariant Borel
probability measure $\mu$ on $X$.
Denote by $Y$ the complement of the union of all such $U$. Then
every invariant Borel probability measure $\mu$ of $X$ is supported on
$Y$. We claim that in the construction above, by choosing $a_{m+1}$
large enough at each step, we can arrange for $Y$ to consist of only the point $0$.
Then $(X, \Zb)$ is uniquely ergodic and the invariant measure is supported
at $0$. Note that $Y$ is always an invariant closed subset of $X$.
Let $V$ be the subset of $X$ consisting of elements with
value $1$ at $0$.
It suffices to find an infinite subset $H=\{h_1, h_2, \dots \}$ of $\Zb$
such that the sets $hV$ for $h\in H$ are pairwise disjoint.
Set $h_1=0$. Suppose that we have determined $a_1,\dots, a_m$ and
$a'_1,\dots, a'_m$ and $h_1,\dots, h_m$ and
$p(k)$ and $q(k)$ for all $k\le a'_m$. Take $h_{m+1}>h_m+a'_m-a_1$
and $a_{m+1}>a'_m+h_{m+1}-h_1$.
\end{example}

The following theorem addresses the extreme case of complete nonnullness,
where we see the same kind of topologization as in the entropy setting
of Theorem~\ref{T-cpe}. For the
definitions of the topological-dynamical properties of complete nonnullness,
complete untameness, uniform nonnullness of all orders, and uniform untameness
of all orders, see Sections~5 and 6 of \cite{Ind}.

\begin{theorem}\label{T-WM equivalence}
Let $\oX = (X,\sX ,\mu ,G)$ be a measure-preserving dynamical system. Let
$\boldsymbol{\Omega} = (\Omega , G)$ be the associated topological dynamical system
on the spectrum $\Omega$ of $L^\infty (X,\mu )$. Then the following are equivalent:
\begin{enumerate}
\item $\oX$ is weakly mixing,

\item $\oX$ is completely nonnull,

\item for every nonscalar $f\in\LX$ there is a $\lambda\geq 1$ such that for
every $m\in\Nb$ there exists a set $I\subseteq G$ of cardinality $m$ such that
$\{ \alpha_s (f) : s\in I \}$ is $\lambda$-equivalent to the standard basis
of $\ell_1^m$,

\item every nonscalar element of $\LX$ has an infinite $\ell_1$-isomorphism set,

\item $\boldsymbol{\Omega}$ is completely nonnull,

\item $\boldsymbol{\Omega}$ is completely untame,

\item $\boldsymbol{\Omega}$ is uniformly nonnull of all orders,

\item $\boldsymbol{\Omega}$ is uniformly untame of all orders.
\end{enumerate}
\end{theorem}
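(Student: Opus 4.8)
The plan is to prove the purely measure-theoretic equivalences $(1)\Leftrightarrow(2)\Leftrightarrow(3)\Leftrightarrow(4)$, then the key implication $(2)\Rightarrow(8)$ by a direct mixing construction, and finally to close the cycle via $(8)\Rightarrow(7)\Rightarrow(5)$, $(8)\Rightarrow(6)$, and $(5),(6)\Rightarrow(1)$. We may assume $G$ is infinite, since a finite group produces only null systems, so that all eight conditions then reduce to the triviality of $\oX$.

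For $(1)\Leftrightarrow(2)$: weak mixing of $\oX$ says the representation of $G$ on $L^2(X,\mu)\ominus\Cb\boldsymbol{1}$ has no nonzero finite-dimensional subrepresentation; since the compact part of a unitary representation is the closed span of its finite-dimensional invariant subspaces (Lemma~\ref{L-AP to compact} together with \cite[Thm.~III.5.7]{BD}, as in the proof of Proposition~\ref{P-cpct finite}), this is equivalent to $L^2(X,\mu)_\cpct=\Cb\boldsymbol{1}$, which by the equivalence of conditions $(1)$ and $(5)$ in Theorem~\ref{T-null} holds iff every nonscalar $f\in\LX$ lies outside $\mathfrak{N}_X$, i.e.\ iff $\mathfrak{N}_X=\Cb$. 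Granting this, $(2)\Rightarrow(3)$ and $(2)\Rightarrow(4)$ follow by applying conditions $(9)$ and $(8)$ of Theorem~\ref{T-null} to each nonscalar $f$ with $g=f$ (enlarging $\lambda$ to $\max(\lambda,1)$ for $(3)$); and $(3)\Rightarrow(2)$, $(4)\Rightarrow(2)$ are seen contrapositively, since if $\mathfrak{N}_X=\mathfrak{D}_X\neq\Cb$ then Proposition~\ref{P-cpct finite} yields a nonscalar $f\in\LX$ whose $G$-orbit spans a finite-dimensional subspace, and such an $f$ admits neither an infinite $\ell_1$-isomorphism set nor arbitrarily large $\lambda$-$\ell_1$-isomorphism sets for any $\lambda$.

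The main point is $(2)\Rightarrow(8)$. Here $C(\Omega)=\LX$, the space $\Omega$ is totally disconnected with clopen sets corresponding to the projections of $\LX$, and $\mu$ is faithful on $\LX$; thus a nonempty clopen subset of $\Omega$ has positive $\mu$-measure, and, writing $A_i=\Omega_{p_i}$ with $B_i\subseteq X$ the set having indicator $p_i$, one has $\bigcap_{s\in F}s^{-1}A_{\sigma(s)}\neq\emptyset$ precisely when $\mu\big(\bigcap_{s\in F}s^{-1}B_{\sigma(s)}\big)>0$. Given a nondiagonal $(x_1,\dots,x_k)\in\Omega^k$ and pairwise disjoint clopen neighbourhoods $A_i\ni x_i$, the $B_i$ are pairwise disjoint of positive measure. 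Using weak mixing in the form that, for $\xi,\zeta\in L^2(X,\mu)\ominus\Cb\boldsymbol{1}$ and $\varepsilon>0$, the set $\{s\in G:|\langle\alpha_s\xi,\zeta\rangle|<\varepsilon\}$ is thickly syndetic (hence infinite, and finite intersections of such sets are again thickly syndetic), one constructs recursively an infinite $J=\{s_1,s_2,\dots\}\subseteq G$ with $\mu\big(\bigcap_{s\in F}s^{-1}B_{\sigma(s)}\big)>0$ for every finite $F\subseteq J$ and every $\sigma\colon F\to\{1,\dots,k\}$: at stage $m+1$ there are finitely many positive-measure sets $D=\bigcap_{s\in F}s^{-1}B_{\sigma(s)}$ with $F\subseteq\{s_1,\dots,s_m\}$, and with $\eta_i=\boldsymbol{1}_{B_i}-\mu(B_i)\boldsymbol{1}$ one has $\mu(D\cap s^{-1}B_i)=\mu(D)\mu(B_i)+\langle\alpha_{s^{-1}}\eta_i,\boldsymbol{1}_D\rangle$, so any $s_{m+1}\notin\{s_1,\dots,s_m\}$ for which all of $|\langle\alpha_{s_{m+1}^{-1}}\eta_i,\boldsymbol{1}_D\rangle|$ are below $\tfrac12\min_{D,i}\mu(D)\mu(B_i)$ keeps every new intersection of positive measure, and such $s_{m+1}$ exist by weak mixing. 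Translating back, $J$ is an independence set for $(A_1,\dots,A_k)$, so $(x_1,\dots,x_k)$ is an IT-tuple; as this holds for all $k$ and all nondiagonal tuples, $\boldsymbol{\Omega}$ is uniformly untame of all orders.

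To close the cycle: $(8)\Rightarrow(7)$ since an infinite independent subcollection contains arbitrarily large finite ones. From the order-two part of $(8)$ (resp.\ of $(7)$), and since IT-pairs and IN-pairs push forward under factor maps (an immediate consequence of equivariance), every nontrivial topological factor of $\boldsymbol{\Omega}$ carries a nondiagonal IT-pair (resp.\ IN-pair) and so is nontame (resp.\ nonnull); thus $(8)\Rightarrow(6)$ and $(7)\Rightarrow(5)$. Finally $(5)\Rightarrow(1)$ and $(6)\Rightarrow(1)$: if $\oX$ is not weakly mixing then $\mathfrak{D}_X=\mathfrak{N}_X\neq\Cb$, so Proposition~\ref{P-cpct finite} gives a nontrivial $G$-invariant ultraweakly dense unital $C^*$-subalgebra $A$ of $\mathfrak{D}_X\subseteq C(\Omega)$ on which the $G$-action extends to a strongly continuous action of the Bohr compactification $\overline{G}$; the corresponding nontrivial topological factor $(Y,G)$ of $\boldsymbol{\Omega}$ is then equicontinuous, hence both tame and null (cf.\ \cite{Ind,HMY}), contradicting $(6)$ and $(5)$. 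The main obstacle is the construction in $(2)\Rightarrow(8)$ — producing genuinely infinite, rather than merely arbitrarily large finite, independence sets simultaneously for all clopen tuples, which is precisely where the rigidity of nullness (via weak mixing) does its work; a secondary point requiring care is checking in $(5),(6)\Rightarrow(1)$ that the factor $(Y,G)$ is equicontinuous and hence tame and null.
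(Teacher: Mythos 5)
Your argument is correct, but it takes a genuinely different route from the paper's. The paper proves (1)$\Rightarrow$(8) by citing Theorems~8.2, 8.6 and 9.10 of \cite{Ind}, passes to (7), (5) and (6) by the general topological facts of Sections~5--6 of \cite{Ind}, then uses (6)$\Rightarrow$(4) via Propositions~6.4 and 6.6 of \cite{Ind} and (5)$\Rightarrow$(3) via Proposition~5.4 and Theorem~5.8 of \cite{DEBS}, and closes the cycle by reading (4)$\Rightarrow$(2), (3)$\Rightarrow$(2), (2)$\Rightarrow$(1) off Theorem~\ref{T-null}. You instead (i) establish (1)$\Leftrightarrow$(2)$\Leftrightarrow$(3)$\Leftrightarrow$(4) purely on the measure side from Theorem~\ref{T-null} together with Proposition~\ref{P-cpct finite} (the finite-dimensional orbit span killing $\ell_1$ behaviour), (ii) reprove the content of the cited results of \cite{Ind} in this special setting by a self-contained recursion: the clopen-set/positive-measure dictionary on $\Omega$ plus thick syndeticity of the small-correlation sets produces infinite independence sets for every tuple of clopen neighbourhoods, giving (2)$\Rightarrow$(8) directly, and (iii) close the cycle (5),(6)$\Rightarrow$(1) contrapositively: if $\oX$ is not weakly mixing then $\mathfrak{D}_X\neq\Cb$, and Proposition~\ref{P-cpct finite} yields a nontrivial factor of $\boldsymbol{\Omega}$ through which the Bohr compactification acts continuously, hence an equicontinuous, so null and tame, nontrivial factor. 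What your route buys is self-containedness: beyond Theorem~\ref{T-null}, Proposition~\ref{P-cpct finite} and the fact that nonnullness (untameness) is witnessed by nondiagonal IN-pairs (IT-pairs), you avoid the quantitative $\ell_1$ results of \cite{DEBS} and the independence theorems of \cite{Ind} entirely; what the paper's route buys is brevity and parallelism with Theorem~\ref{T-cpe}. Two small repairs: in (2)$\Rightarrow$(8) drop the disjointness assumption on the clopen neighbourhoods --- it is never used in your recursion, and uniform untameness of all orders requires every tuple (including those with repeated or equal coordinates) to be an IT-tuple, which your construction gives verbatim once the $A_i$ are merely nonempty clopen; and in the recursion, to invoke the thick-syndeticity form of weak mixing replace $\boldsymbol{1}_D$ by its centred version $\zeta_D=\boldsymbol{1}_D-\mu(D)\boldsymbol{1}$ (harmless, since $\alpha_{s^{-1}}\eta_i\perp\boldsymbol{1}$) and note that $|\langle\alpha_{s^{-1}}\eta_i,\zeta_D\rangle|=|\langle\alpha_s\zeta_D,\eta_i\rangle|$, so the relevant set of $s$ is indeed thickly syndetic and in particular infinite.
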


\begin{proof}
(1)$\Rightarrow$(8). Use Theorems~8.2, 8.6, and 9.10 of \cite{Ind}.

(8)$\Rightarrow$(7)$\Rightarrow$(5) and (8)$\Rightarrow$(6).
These implications hold for any topological system (see Sections~5 and 6 of \cite{Ind}).

(6)$\Rightarrow$(4). Apply Propositions~6.4 and 6.6 of \cite{Ind}.

(5)$\Rightarrow$(3). Apply Proposition~5.4 and Theorem~5.8 of \cite{DEBS}.

(4)$\Rightarrow$(2), (3)$\Rightarrow$(2), and (2)$\Rightarrow$(1). Apply Theorem~\ref{T-null}.
\end{proof}

In analogy with Proposition~\ref{P-joining Pinsker},
if $(Y,\sY ,\nu ,G)$ and $(Z,\sZ ,\omega ,G)$
are measure-preserving $G$-systems and $\varphi : L^\infty (Y,\nu ) \to L^\infty (Z,\omega )$
is a $G$-equivariant unital positive linear map such that $\omega\circ\varphi = \nu$, then
$\varphi (\mathfrak{N}_X ) \subseteq \mathfrak{N}_Y$. One can deduce this using the
characterization of functions in the maximal null von Neumann algebra in terms of either
$\ell_1$-isomorphism sets or compact orbit closures in $L^2$.
In particular we see that isometric systems are disjoint from weakly mixing systems.
Of course it is well known more generally that distal systems are disjoint from weak mixing
systems (see Chapter~6 of \cite{ETJ}).

\end{document}